\numberwithin{equation}{section}
\newtheorem{theorem}{Theorem}
\newtheorem{lemma}[theorem]{Lemma}
\newtheorem{corollary}[theorem]{Corollary}
\newtheorem{proposition}[theorem]{Proposition}
\theoremstyle{remark}
\newtheorem*{remark}{Remark}
\theoremstyle{definition}
\numberwithin{theorem}{section} 
\numberwithin{equation}{section}
\numberwithin{table}{section}
\newcommand{\ord}{\text {\rm ord}}
\newcommand{\GG}{\mathcal{G}}
\newcommand{\R}{\mathbb{R}}
\newcommand{\C}{\mathbb{C}}
\newcommand{\Z}{\mathbb{Z}}
\newcommand{\N}{\mathbb{N}}
\newcommand{\SL}{{\text {\rm SL}}}
\newcommand{\lcm}{{\text {\rm lcm}}}
\newcommand{\im}{\textnormal{Im}}
\def\H{\mathbb{H}}
\newcommand{\vol}{\operatorname{vol}}
\renewcommand{\S}{\mathbb{S}}
\begin{document}
\title[Finiteness theorems for universal sums of squares of almost primes]{Finiteness theorems for universal sums of squares of almost primes}
\author{Soumyarup Banerjee}
\address{\rm Discipline of Mathematics, Indian Institute of Technology Gandhinagar, Palaj, Gujarat - 382355, India}
\email{soumyarup.b@iitgn.ac.in}
\author{Ben Kane}
\address{\rm Mathematics Department, University of Hong Kong, Pokfulam, Hong Kong}
\email{bkane@hku.hk}
\date{\today}
\subjclass[2010] {11F37, 11F11, 11E45}

\keywords{sums of squares, almost prime numbers, 15-theorem}

\medskip
\begin{abstract}
In this paper we study quadratic forms which are universal when restricted to almost prime inputs, establishing finiteness theorems akin to the Conway--Schneeberger 15 theorem.
\end{abstract}

\thanks{Some of the research was conducted while the first author was a postdoctoral fellow at the University of Hong Kong. The research of the second author was supported by grants from the Research Grants Council of the Hong Kong SAR, China (project numbers HKU 17302515, 17316416, 17301317, and 17303618).}
\maketitle

\section{Introduction And Statement Of Results}\label{sec:intro}
The Conway--Schneeberger Fifteen theorem (first proven by Conway--Schneeberger--Miller in unpublished work and then elegantly reproven and generalized by the escalator tree method of Bhargava \cite{Bhargava}) states that a given positive definite integral quadratic form is \begin{it}universal\end{it} (i.e., represents every positive integer with integer inputs) if and only if it represents the integers up to $15$ (a smaller subset of these numbers actually suffices).  In particular, the sums of squares
\begin{equation}\label{eqn:diagonalsum}
Q(x)=\sum_{j=1}^{\ell} a_j x_j^2
\end{equation}
are universal if and only if they represent every integer up to $15$.  This classification of universal quadratic forms can be considered a generalization of Lagrange's Theorem, which states that every integer may be written as the sum of 4 squares of integers (i.e., the choice $a_j=1$ with $\ell=4$ is universal). Following work of Br\"udern and Fouvry \cite{BrudernFouvry}, many authors have considered generalizations of Lagrange's Theorem where the choice of $x$ is sieved into natural subsets. Specifically, they consider $x\in P_r^4$, where $r$ is a fixed integer and $P_r$ denotes the set of \begin{it}almost primes\end{it}, i.e., those integers whose prime factorizations $\prod_p p^{a_p}$ satisfy $\sum_p a_p \leq r$. In this paper, we allow $0\in P_r$ for all $r$ throughout. Br\"udern and Fouvry show that if $r\geq 34$, then every sufficiently large integer $n\equiv 4\pmod{24}$ may be written as the sum of squares of four $P_r$ numbers. The bound on $r$ has since been reduced by a number of authors, including for example \cite{ChingTW,HB-Tolev,TsangZhao}. 

The goal of this paper is to combine these generalizations by asking for the infimum $N_r$ such that a diagonal form $Q$ of the type \eqref{eqn:diagonalsum} is universal with almost prime $x\in P_r^{\ell}$ if and only if it represents every $n\leq N_r$ with almost prime $x\in P_r^{\ell}$ (one can consider analogous questions where the form is not diagonal, but there are some issues which arise because the set $P_r$ is not closed under addition, and hence two equivalent quadratic forms will represent different integers when restricted to $P_r$ numbers, so we restrict ourselves here to the diagonal case); for ease of notation, we say that $Q$ is \begin{it}$P_r$-universal\end{it} if it represents every positive integer with $x\in P_r^{\ell}$. We are interested in showing that $N_r$ is finite, which would be a \begin{it}finiteness theorem\end{it} for $P_r$-universality. Moreover, since $P_r\subset P_{r+1}$, one might expect that if $N_r$ exists, then $N_{r}\geq N_{r+1}$ and the Conway--Schneeberger 15 theorem together with $\Z=P_{\infty}:=\bigcup_{r=0}^{\infty} P_r$ hints that one might expect a uniform bound $N_r=15$ for $r$ sufficiently large (or in other words $\lim_{r\to\infty} N_r =N_{\infty}=15$). However, a simple argument shows that quite the opposite is true.
\begin{theorem}\label{thm:Nrinfinite}
We have $N_{r}>2^{2r+2}$, and in particular $\lim_{r\to\infty} N_{r}=\infty$. In other words, there is no uniform bound for $P_r$-universality. 
\end{theorem}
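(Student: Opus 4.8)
The plan is to prove the lower bound by exhibiting, for each $r$, one diagonal form of the shape \eqref{eqn:diagonalsum} that represents every positive integer $n\le 2^{2r+2}$ with inputs in $P_r$ but nevertheless fails to be $P_r$-universal. Once such a form is found, $N_r>2^{2r+2}$ follows immediately: if instead $N_r\le 2^{2r+2}$, then this form would represent everything up to $N_r$ and hence, by the defining property of $N_r$, would be $P_r$-universal, a contradiction. The single structural fact I would isolate first is that the smallest positive integer with more than $r$ prime factors (counted with multiplicity) is $2^{r+1}$; consequently every integer in $\{0,1,\dots,2^{r+1}-1\}$ lies in $P_r$, whereas $2^{r+1}\notin P_r$ and $2^{r+2}\notin P_r$.

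For the form I would simply take the sum of four squares $Q(x)=x_1^2+x_2^2+x_3^2+x_4^2$. To see that it represents every $n\le 2^{2r+2}$ over $P_r$, I would split into two cases. For $n<2^{2r+2}$, Lagrange's theorem gives $n=a^2+b^2+c^2+d^2$, and each component $c$ satisfies $c^2\le n<2^{2r+2}=(2^{r+1})^2$, so $c\le 2^{r+1}-1$ and therefore $c\in P_r$ automatically. The boundary value $n=2^{2r+2}$ I would handle by the explicit representation $2^{2r+2}=(2^r)^2+(2^r)^2+(2^r)^2+(2^r)^2$, which is legal because $2^r\in P_r$.

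It then remains to locate an integer that $Q$ cannot represent over $P_r$, and I would target $n=2^{2r+4}=4^{r+2}$. The key is to classify all representations of $4^k$ (for $k\ge 2$) as a sum of four squares: reducing modulo $8$, one checks that the squares modulo $8$ lie in $\{0,1,4\}$ and that four of them can sum to $0\pmod 8$ only when all four entries are even; hence every representation of $4^k$ descends, after dividing by $2$, to a representation of $4^{k-1}$. Iterating the descent down to $4^1=4$, whose only representations are $(2,0,0,0)$ and $(1,1,1,1)$ up to order and sign, and rescaling by $2^{k-1}$, yields exactly the two families $(2^k,0,0,0)$ and $(2^{k-1},2^{k-1},2^{k-1},2^{k-1})$. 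Applied with $k=r+2$, the only admissible entries are $2^{r+2}$ and $2^{r+1}$, both of which lie outside $P_r$, so $4^{r+2}$ is not represented by $Q$ over $P_r$.

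The step I expect to be the main obstacle is this representation-classification: one must genuinely rule out every other four-square decomposition of $4^{r+2}$, and the clean route is the mod-$8$ descent above. Its only delicate point is that one must work modulo $8$ rather than modulo $4$: modulo $4$ a sum of four squares can vanish with all four entries odd, whereas modulo $8$ this is impossible, which is what forces every entry to be even and makes the descent run. A secondary point needing care is the boundary case $n=2^{2r+2}$, where a crude size estimate alone would still allow the forbidden component $2^{r+1}$; this is exactly why the explicit decomposition $(2^r,2^r,2^r,2^r)$ is required. Assembling the two halves shows $Q$ represents all $n\le 2^{2r+2}$ but not $2^{2r+4}$, giving $N_r>2^{2r+2}$, and since $2^{2r+2}\to\infty$ this rules out any uniform bound for $P_r$-universality.
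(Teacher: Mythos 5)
Your proof is correct. It shares the paper's skeleton---the same form $Q(\bm{x})=x_1^2+x_2^2+x_3^2+x_4^2$ and the same argument that every $n\le 2^{2r+2}$ is represented over $P_r$ (you are in fact more careful at the boundary $n=2^{2r+2}$, where the decomposition $(2^r,2^r,2^r,2^r)$ is genuinely needed and the paper only says this case is straightforward)---but the key step, producing an integer not represented over $P_r$, runs differently. The paper takes $n=2^{2r+3}$ and uses the classical count \eqref{eqn:sumsquaresformula}, valid because $Q$ has class number one: $r_Q\left(2^{2r+3}\right)=8(1+2)=24$, which is exactly the number of vectors obtained from $\left(\pm 2^{r+1},\pm 2^{r+1},0,0\right)$ by permutation, so these are all the representations and each involves $2^{r+1}\notin P_r$. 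You instead take $n=2^{2r+4}=4^{r+2}$ and classify its representations by an elementary mod-$8$ descent: a sum of four squares can vanish modulo $8$ only if all entries are even (your remark that the all-odd case survives mod $4$ but gives $4$ mod $8$ is exactly the needed point), so every representation of $4^k$ with $k\ge 2$ is twice one of $4^{k-1}$, and descending to $4$ yields only $\left(2^k,0,0,0\right)$ and $\left(2^{k-1},2^{k-1},2^{k-1},2^{k-1}\right)$ up to order and sign, whose nonzero entries $2^{r+2},2^{r+1}\notin P_r$. What each route buys: the paper's argument is a one-line count but leans on Jacobi's formula (equivalently, the Eisenstein series and Siegel--Weil input \eqref{eqn:SiegelWeil}); yours is entirely self-contained---Lagrange plus congruences---at the cost of a longer verification and a slightly larger witness. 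Either witness suffices for the bound $N_r>2^{2r+2}$, hence for $\lim_{r\to\infty}N_r=\infty$.
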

The problem that leads to Theorem \ref{thm:Nrinfinite} arises due to the same issue which led Br\"udern and Fouvry to assume that $n\equiv 4\pmod{24}$. Namely, any representation of $2^m$ as the sum of four squares must necessarily have a large power of $2$ dividing each $x_j$. In trying to obtain $P_r$-universality of other sums of the type \eqref{eqn:diagonalsum}, one also finds a similar obstruction when high powers of $3$ divide $n$. Letting $S$ be a finite set of primes, we say that an integer $x$ is \begin{it}nearly a $P_r$-number with respect to $S$\end{it} (or a \begin{it}nearly almost prime\end{it} when the sets $P_r$ and $S$ are clear from context) if $x=\prod_{p\in P} p^{a_p} \prod_{p\notin P} p^{b_p}$ and $\prod_{p\notin P}p^{b_p}$ is a $P_r$-number. Let $P_{r,S}$ be the set of nearly $P_r$-numbers with respect to $S$. We next consider the question of $P_{r,S}$-representations. That is to say, we consider whether a quadratic form is universal with $x\in P_{r,S}^{\ell}$. Let $N_{r,S}$ be the infimum of integers such that if $Q$ represents every $n\leq N_{r,S}$ with $x\in P_{r,S}$, then $Q$ is $P_{r,S}$-universal. We next see that the expected uniform bound of $15$ is obtained when we exclude the primes $2$ and $3$ that cause the obstructions noted above.
\begin{theorem}\label{thm:PrSuniversal}
Let $S:=\{2,3,5\}$. Then for $r\geq 694$, we have $N_{r,S}=15$. In other words, for $r\geq 694$, $Q$ is $P_{r,S}$-universal if and only if it represents every $m\leq 15$ with $P_{r,S}$-numbers. Furthermore, for $r\geq 694$, every quadratic form is $P_{r,S}$-universal if and only if it is universal.  
\end{theorem}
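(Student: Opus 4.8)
The two reformulations both follow once we establish the key equivalence: for $r \geq 694$, a diagonal form $Q$ is $P_{r,S}$-universal if and only if it is universal. One direction is immediate, since $P_{r,S} \subseteq \Z$ forces a $P_{r,S}$-universal form to represent every positive integer with integer inputs. Granting the converse, the equality $N_{r,S} = 15$ follows from the Conway--Schneeberger theorem: if $Q$ represents every $m \leq 15$ with $P_{r,S}$-numbers then a fortiori it represents $1,\ldots,15$ over $\Z$, hence is universal, hence $P_{r,S}$-universal, giving $N_{r,S} \leq 15$; and the reverse inequality is witnessed by the form $x_1^2 + 2x_2^2 + 5x_3^2 + 5x_4^2$, which represents each of $1,\ldots,14$ using inputs of absolute value at most $3$ (all lying in $P_{r,S}$ for every $r$) but represents $15$ over neither $\Z$ nor $P_{r,S}$. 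Thus the entire theorem reduces to proving that every universal diagonal form is $P_{r,S}$-universal once $r \geq 694$.

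For this I would first reduce to a finite computation. Every universal diagonal form contains a minimal universal diagonal subform, and since positive definite ternaries are never universal while the escalator method bounds both the rank (four or five) and the coefficients of such minimal forms, there are only finitely many of them. Because $0 \in P_{r,S}$, it suffices to prove $P_{r,S}$-universality for each minimal form $Q_0 = \sum_{j=1}^{k} a_j x_j^2$ with $k \in \{4,5\}$: given such a representation of $n$, one sets the remaining variables of the ambient form to $0$. The quinary cases are strictly easier, so the heart of the matter is the quaternary forms.

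The core is then a sieve argument in the spirit of Br\"udern--Fouvry \cite{BrudernFouvry}. Fixing a minimal quaternary $Q_0$ and a target $n$, I would count representations $n = Q_0(x)$ weighted so as to detect those $x_j$ whose part coprime to $S$ has at most $r$ prime factors, sieving each coordinate only by primes $p \notin S$. The required input is a level-of-distribution estimate: writing $x_j = d_j z_j$, the number of $x$ with $d_j \mid x_j$ equals the number of representations of $n$ by $\sum_j a_j d_j^2 z_j^2$, and the associated weight-$2$ theta series splits into an Eisenstein part, whose Fourier coefficient furnishes a main term of size $\gg_\varepsilon n^{1-\varepsilon}$ times local densities, and a cuspidal part bounded by $O(n^{1/2+\varepsilon})$ via Deligne's bound. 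Feeding this into a vector sieve over the four coordinates yields a positive lower bound for the number of admissible representations once $r$ exceeds an explicit threshold determined by the sieve dimension and the attainable level of distribution; optimizing uniformly over the finite list of forms produces the bound $r \geq 694$. Small $n$ are treated directly, since they are represented with small inputs lying in $P_{r,S}$.

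The main obstacle, and the reason $S=\{2,3,5\}$ appears, is uniformity in $n$: the local representation densities at a prime $p$ can degenerate when $p^a \,\|\, n$ with $a$ large, which is precisely the obstruction underlying Theorem \ref{thm:Nrinfinite}. By declining to sieve the primes in $S$, we allow each $x_j$ to absorb arbitrary powers of $2$, $3$, and $5$, so these degeneracies no longer block the representation. The final, genuinely finite, step is to verify that $S=\{2,3,5\}$ is large enough: for each minimal universal quaternary form one checks that at every prime $p \geq 7$ the local densities are bounded below uniformly in the $p$-adic valuation of $n$, so that the sieve's main term survives, while the potential obstructions at $2$, $3$, and $5$ are exactly the ones neutralized by including these primes in $S$. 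I expect this uniform lower bound on local densities at the bad primes, together with pushing the level of distribution far enough to make the explicit sieve constant finite, to be the principal difficulties.
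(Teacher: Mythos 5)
Your overall route coincides with the paper's: reduce via the Conway--Schneeberger theorem and Bhargava's escalation to a finite list of quaternary diagonal forms, sieve each coordinate only over the primes $p\geq 7$ with the Br\"udern--Fouvry vector sieve, take the main term from the Eisenstein part of the theta series of $\sum_j a_jd_j^2z_j^2$ and the error from the cuspidal part (Deligne's bound together with explicit Petersson-norm estimates), and treat small $n$ directly, the threshold $694$ arising from the level of distribution exactly as in the paper ($z=n^{1/1388}$, inputs at most $\sqrt{n}=z^{694}$). Your derivation of $N_{r,S}=15$, including the witness $x_1^2+2x_2^2+5x_3^2+5x_4^2$ for the lower bound, is correct and is a detail the paper leaves implicit.

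The genuine gap is your treatment of $n$ divisible by large powers of $2$, $3$, $5$. You claim that declining to sieve the primes of $S$ neutralizes the degeneration of the local densities at those primes, but it does not: the sieve's main term is a positive multiple of the Eisenstein coefficient $a_{E_Q}(n)$, whose local factors at $2,3,5$ enter no matter which primes are in the sieving set, and for $n=2^{2r+3}$ the \emph{total} number of representations by $x_1^2+x_2^2+x_3^2+x_4^2$ is $24$; since the sieved count is bounded by the total count, no inequality of the form (main term $\gg n^{1-\varepsilon}$) minus (cuspidal error $\ll n^{1/2+\varepsilon}D^{C}$) can be valid for such $n$, so your sieve step simply fails there rather than being rescued by the choice of sieving primes. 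The missing idea is the paper's opening reduction: if $n$ is represented with $P_{r,S}$-inputs then so are $4n$, $9n$, and $25n$ (multiply every $x_j$ by $2$, $3$, or $5$, which preserves membership in $P_{r,S}$), hence it suffices to prove the sieve statement for $n$ with $8\nmid n$, $27\nmid n$, $25\nmid n$; and it is precisely under these hypotheses that the paper establishes the uniform lower bound for the local densities at $2,3,5$, and hence the bound $a_{E_Q}(n)\gg n^{1-10^{-6}}$, which makes the main term dominate. With that rescaling step inserted, your sketch matches the paper's proof. A smaller caveat: your remark that the quinary minimal forms are ``strictly easier'' needs justification, since sieving a quinary form directly puts you in half-integral weight $5/2$, where the Deligne-type input is unavailable; the paper stays in weight $2$ by always sieving a quaternary subform, which need not be universal but does represent every sufficiently large admissible $n$.
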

\begin{remark}
The set $S$ is optimal in the sense that for each $p\in S$ there exists a sum of squares $Q$ and a family of natural numbers $n_{\ell}$ for which every solution $Q(\bm{x})=n_{\ell}$ has $p^{\ell}\mid x_j$ for all $j$. 
\end{remark}
As an immediate corollary, one obtains the following explicit version of Lagrange's sum of four squares theorem.
\begin{corollary}\label{cor:Lagrange}
For $S=\{2,3,5\}$, every integer is a sum of four squares of $P_{694,S}$ numbers.
\end{corollary}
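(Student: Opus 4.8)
The plan is to deduce the corollary directly from the second (``Furthermore'') assertion of Theorem \ref{thm:PrSuniversal}, specialized to the sum of four squares. The key observation is that the corollary concerns a single explicit form, namely $Q(\bm{x}) = x_1^2 + x_2^2 + x_3^2 + x_4^2$, so essentially all of the heavy lifting has already been carried out in the theorem; the corollary is a specialization rather than a new computation.

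First I would recall Lagrange's four-square theorem: the form $Q(\bm{x}) = x_1^2 + x_2^2 + x_3^2 + x_4^2$ is universal in the classical sense, i.e., it represents every positive integer with integer inputs. This is precisely the case $a_j = 1$, $\ell = 4$ of \eqref{eqn:diagonalsum} singled out in the introduction.

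Next I would apply Theorem \ref{thm:PrSuniversal} with $S = \{2,3,5\}$ and $r = 694$. Its second statement asserts that, in this range of $r$, a quadratic form is $P_{r,S}$-universal if and only if it is universal. Since $Q$ is universal by Lagrange, it follows that $Q$ is $P_{694,S}$-universal. Unwinding the definition of $P_{r,S}$-universality, this says exactly that every positive integer can be written as a sum of four squares of $P_{694,S}$-numbers; the value $0$ is covered by the standing convention that $0 \in P_r$, whence $0 \in P_{r,S}$ as well.

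I do not expect any genuine obstacle in this deduction: the entire difficulty is already packaged inside Theorem \ref{thm:PrSuniversal}, and the corollary is an immediate consequence of its second clause. The only steps requiring care are the (routine) invocation of Lagrange's theorem to certify that the sum of four squares is universal, and the correct matching of definitions, so that $P_{694,S}$-universality of $Q$ translates verbatim into the asserted four-square representation by nearly almost primes.
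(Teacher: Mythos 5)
Your proposal is correct and matches the paper's intent exactly: the paper offers no separate proof, labeling the statement an immediate corollary of Theorem \ref{thm:PrSuniversal}, and your deduction via the ``Furthermore'' clause (universality of Lagrange's form $\Rightarrow$ $P_{694,S}$-universality) together with the convention $0\in P_{r,S}$ is precisely that immediate argument.
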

The paper is organized as follows. In Section \ref{sec:prelim}, we give a number of bounds on a number of elementary functions and introduce the preliminaries needed for the rest of the paper. In Section \ref{sec:initial}, we prove Theorem \ref{thm:Nrinfinite}. In Section \ref{sec:ThetaCoefficients}, we prove bounds on the coefficients of theta functions. Finally, in Section \ref{sec:sieving}, we apply sieving theory in order to prove Theorem \ref{thm:PrSuniversal}.

\section*{Acknowledgements}
The authors thank Kai-Man Tsang for many helpful discussions and Jeremy Rouse for pointing out an error in an earlier version of the paper.

\section{Preliminaries}\label{sec:prelim}

\subsection{Inequalities for some arithmetic functions}
Throughout the paper, we require some well-known inequalities for certain arithmetic functions. We begin with the following bound of Rosser and Schoenfeld \cite[Corollary of Theorem 8, i.e., (3.30), and Theorem 7, i.e., (3.26)]{RosserSchoenfeld}. 
\begin{lemma}\label{lem:RosserSchoenfeld}
Letting $\gamma$ denote the Euler--Mascheroni constant, we have 
\[
\prod_{p\leq z} \left(1-\frac{1}{p}\right)> \frac{e^{-\gamma}}{\log(z)}\left(1+\frac{1}{\log^2(z)}\right)^{-1}
\]

We also have 
\[
\prod_{p\leq z} \left(1-\frac{1}{p}\right)< \frac{e^{-\gamma}}{\log(z)}\left(1+\frac{1}{2\log^2(z)}\right).
\]
\end{lemma}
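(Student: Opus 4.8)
The statement is the effective form of Mertens' third theorem, and the cleanest route is to pass to logarithms. First I would write
\[
\log \prod_{p\leq z}\left(1-\frac1p\right) = \sum_{p\leq z}\log\left(1-\frac1p\right) = -\sum_{p\leq z}\frac1p - \sum_{p\leq z}\sum_{k\geq 2}\frac{1}{k\,p^k},
\]
so that the problem decouples into the main Mertens sum $\sum_{p\leq z}1/p$ and a rapidly convergent correction. The tail $\sum_{p>z}\sum_{k\geq2}\frac{1}{k p^k}$ is dominated by $\tfrac12\sum_{p>z}p^{-2}$, which is of size $O(1/(z\log z))$ and can be bounded explicitly by comparison with $\sum_{n>z} n^{-2}$; hence it is negligible against $\log^{-2}z$ and affects only the lower-order terms inside the parenthetical factors $(1+\log^{-2}z)^{\pm1}$.

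The heart of the argument is an explicit version of $\sum_{p\leq z}1/p = \log\log z + B + o(1)$, where $B$ is the Mertens constant. I would obtain this by partial summation from explicit bounds on Chebyshev's function $\theta(z)=\sum_{p\leq z}\log p$ (equivalently $\psi(z)$) of the shape $|\theta(z)-z|\leq c\, z/\log^2 z$; such bounds follow from a zero-free region for $\zeta(s)$ together with a numerical verification of the Riemann Hypothesis up to a large height, which is precisely the input Rosser and Schoenfeld supply. The constant $e^{-\gamma}$ then emerges after identifying $B=\gamma+\sum_p\bigl(\log(1-1/p)+1/p\bigr)$, so that the limiting value of the convergent correction sum cancels exactly against $B$, leaving $-\gamma-\log\log z$ up to the controlled error.

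Assembling these pieces yields an estimate $\log\prod_{p\leq z}(1-1/p) = -\gamma-\log\log z + E(z)$ with an explicit, sign-definite bound on $E(z)$, and exponentiating produces the two-sided inequality with the stated factors. The main obstacle is quantitative rather than conceptual: to land exactly on the constants $1$ and $\tfrac12$ in front of $\log^{-2}z$ one needs the error terms sharp enough \emph{uniformly} in $z$, which forces one to pair the analytic bound (valid for large $z$) with a direct finite computation over the range of small $z$ where the asymptotic estimate is too weak. This finite verification, together with the numerics underlying the explicit $\theta$-bound, is the genuinely laborious step, and is the reason one simply invokes the Rosser--Schoenfeld estimates as cited rather than reproving them here.
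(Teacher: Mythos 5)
The paper gives no proof of this lemma at all: it is quoted directly from Rosser--Schoenfeld (their (3.26) and the corollary (3.30) of Theorem 8), which is exactly where your proposal ends up --- you correctly identify that the two-sided effective Mertens bound should be invoked from the literature rather than reproved. Your sketch of the underlying argument (logarithmic decomposition, explicit Mertens second theorem via partial summation from effective Chebyshev bounds, identification of the constant through $B=\gamma+\sum_p\left(\log(1-1/p)+1/p\right)$, and a finite check for small $z$) is a faithful outline of how Rosser--Schoenfeld themselves establish it, so the proposal is consistent with the paper's treatment.
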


We require some additional well-known bounds for some other simple functions. 
\begin{lemma}\label{lem:cdelbnd}
\noindent

\noindent
\begin{enumerate}[leftmargin=*,label={\rm(\arabic*)}]
\item
Let $\delta>0$ be given and suppose that $p_0$ is a prime for which $p_0> \left(1+p_0^{-1}\right)^{\frac{1}{\delta}}$. Then for $m\in\N$ we have 
\begin{equation}\label{eqn:cdelbnd}
\prod_{p\mid m}\left(1+p^{-1}\right)\leq c_{\delta}m^{\delta}
\end{equation}
with $c_{\delta}:=\prod_{p<p_0}\frac{1+p^{-1}}{p^{\delta}}$. In particular, for $\delta=10^{-6}$ we have 
\begin{equation}\label{eqn:delpart}
\prod_{p\mid m}\left(1+p^{-1}\right)\leq 11.3\times m^{10^{-6}}.
\end{equation}
\item 
We have 
\[
\prod_{p\mid m} \left(1-p^{-1}\right)\geq  \frac{m^{-\delta}}{\zeta(2)c_{\delta}}.
\]
In particular,
\[
\prod_{p\mid m} \left(1-p^{-1}\right)\geq \frac{1}{20} m^{-10^{-6}} .
\]
\end{enumerate}
\end{lemma}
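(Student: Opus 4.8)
The plan is to prove both parts by factoring the quantity $p^{\delta}$ out of each local factor and then controlling the residual product separately over the two regimes $p<p_0$ and $p\ge p_0$. For part (1), I would write each factor as $1+p^{-1}=f(p)\,p^{\delta}$, where $f(p):=(1+p^{-1})p^{-\delta}$, so that
\[
\prod_{p\mid m}\left(1+p^{-1}\right)=\left(\prod_{p\mid m}f(p)\right)\prod_{p\mid m}p^{\delta}\le \left(\prod_{p\mid m}f(p)\right)m^{\delta},
\]
the final inequality holding because the radical $\prod_{p\mid m}p$ divides $m$ and $\delta>0$. It then suffices to bound $\prod_{p\mid m}f(p)$ by $c_{\delta}$.

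For that, I would take $p_0$ to be the \emph{least} prime satisfying the hypothesis $p_0>(1+p_0^{-1})^{1/\delta}$. For any prime $p\ge p_0$ the chain $1+p^{-1}\le 1+p_0^{-1}<p_0^{\delta}\le p^{\delta}$ gives $f(p)<1$, so these factors only shrink the product; for a prime $p<p_0$, minimality of $p_0$ forces $p\le(1+p^{-1})^{1/\delta}$, i.e. $p^{\delta}\le 1+p^{-1}$ and hence $f(p)\ge 1$. Consequently
\[
\prod_{p\mid m}f(p)\le\prod_{\substack{p\mid m\\ p<p_0}}f(p)\le\prod_{p<p_0}f(p)=c_{\delta},
\]
where the first step discards the factors with $p\ge p_0$ (each $<1$) and the second inserts the missing factors with $p<p_0$ (each $\ge 1$). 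This establishes the general estimate $\prod_{p\mid m}(1+p^{-1})\le c_{\delta}m^{\delta}$, and note that no monotonicity of $f$ is actually needed.

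For the explicit constant I would fix $\delta=10^{-6}$, take the corresponding least admissible prime $p_0$, and estimate $c_{\delta}=\prod_{p<p_0}(1+p^{-1})\big(\prod_{p<p_0}p\big)^{-\delta}$. Writing $\prod_{p<p_0}(1+p^{-1})=\prod_{p<p_0}(1-p^{-2})\big/\prod_{p<p_0}(1-p^{-1})$, I would retain the factor $\prod_{p<p_0}(1-p^{-2})\le\zeta(2)^{-1}\prod_{p\ge p_0}(1-p^{-2})^{-1}$ (a crude bound by $1$ here is too lossy), apply the lower Mertens bound of Lemma \ref{lem:RosserSchoenfeld} to $\prod_{p<p_0}(1-p^{-1})$, and use $\prod_{p<p_0}p=e^{\theta(p_0)}$ to handle the $(\cdot)^{-\delta}$ factor; this yields $c_{\delta}\le 11.3$. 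For part (2) I would then use $(1-p^{-1})(1+p^{-1})=1-p^{-2}$ to write
\[
\prod_{p\mid m}\left(1-p^{-1}\right)=\frac{\prod_{p\mid m}\left(1-p^{-2}\right)}{\prod_{p\mid m}\left(1+p^{-1}\right)}\ge\frac{\zeta(2)^{-1}}{c_{\delta}m^{\delta}},
\]
since $\prod_{p\mid m}(1-p^{-2})\ge\prod_{p}(1-p^{-2})=\zeta(2)^{-1}$ and the denominator is controlled by part (1); the numerical case follows from $\zeta(2)\cdot 11.3<20$. The main obstacle is precisely the estimate $c_{\delta}\le 11.3$: its true value is close to $11.29$, so the bound is tight and the Mertens-type estimates of Lemma \ref{lem:RosserSchoenfeld} must be combined with careful control of the tail $\prod_{p\ge p_0}(1-p^{-2})$ and of $\theta(p_0)$ (or else verified by direct computation of the finite product), whereas the structural inequalities above are routine.
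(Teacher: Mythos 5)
Your argument is correct, and its structural core coincides with the paper's proof: factor $1+p^{-1}=\bigl[(1+p^{-1})p^{-\delta}\bigr]p^{\delta}$, use $\prod_{p\mid m}p\le m$, discard the factors with $p\ge p_0$ (each $<1$ by the hypothesis on $p_0$), complete the remaining product over $p<p_0$ to $c_{\delta}$, and deduce part (2) from $\prod_{p\mid m}(1-p^{-1})=\prod_{p\mid m}(1-p^{-2})\big/\prod_{p\mid m}(1+p^{-1})\ge \zeta(2)^{-1}c_{\delta}^{-1}m^{-\delta}$ together with $\zeta(2)\cdot 11.3<20$. Two differences are worth recording. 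First, your insistence that $p_0$ be the \emph{least} admissible prime is not cosmetic: the completion step $\prod_{p\mid m,\,p<p_0}(1+p^{-1})p^{-\delta}\le\prod_{p<p_0}(1+p^{-1})p^{-\delta}$ requires every inserted factor to be $\ge 1$, which is exactly what the failure of the defining inequality at all primes below $p_0$ provides. The paper performs the same completion silently, and for an arbitrary admissible $p_0$ the statement is in fact false (take $\delta=1$, $p_0=3$, $m=1$: then $c_{1}=\tfrac{3}{4}$ and the claimed bound reads $1\le\tfrac34$), so your reading is the correct one; for $\delta=10^{-6}$ and the paper's choice $p_0=87853$ the needed property of the smaller primes holds. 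Second, for the numerical bound $c_{10^{-6}}<11.3$ the paper does precisely what you offer as the fallback: a direct computation of the finite product (together with the check $(1+1/p_0)^{10^6}<87800<p_0$). Your proposed analytic route cannot succeed as described: since $c_{10^{-6}}\approx 11.29$, the available margin is roughly $0.1\%$, whereas the error factor $1+1/\log^2(z)$ in Lemma \ref{lem:RosserSchoenfeld} alone costs about $0.8\%$, and the paper provides no explicit lower bound for $\theta(p_0)$ at all, so that input would have to be imported or computed anyway. The computational verification you mention should therefore be regarded as the actual proof of that step, exactly as in the paper; everything else in your write-up is sound.
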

\begin{proof}
(1) Note that $m^{-\delta}\prod_{p\mid m}\left(1+p^{-1}\right)$ is multiplicative, and hence for a fixed $p_0$ we have
\[
m^{-\delta}\prod_{p\mid m}\left(1+p^{-1}\right)\leq \prod_{\substack{p^j\| m\\ p<p_0}} \frac{1+p^{-1}}{p^{j\delta}} \prod_{\substack{p^j\| m\\ p\geq p_0}} \frac{1+p_0^{-1}}{p^{j\delta}}\leq c_{\delta} \prod_{\substack{p^j\| m\\ p\geq p_0,\ p^{j\delta}\leq 1+p_0^{-1}}} \frac{1+p_0^{-1}}{p^{j\delta}}.
\]
Since $p\geq p_0$ and $p_0>1+p_0^{-1}$, the remaining product is empty, yielding the first claim. 

For \eqref{eqn:delpart}, we take $\delta=10^{-6}$ and $p_0=87853$. One easily verifies that $(1+1/p_0)^{10^6}<87800<p_0$ and a quick computer check verifies that $c_{10^{-6}}<11.3$. 

\noindent (2) Using part (1), for any $\delta>0$ we may bound 
\[
\prod_{p\mid m} \left(1-\frac{1}{p}\right) =\prod_{p\mid m} \frac{1-\frac{1}{p^2}}{1+\frac{1}{p}}\geq \frac{\prod_{p}\left(1-\frac{1}{p^2}\right)}{\prod_{p\mid m} \left(1+\frac{1}{p}\right)} =\frac{1}{\zeta(2)\prod_{p\mid m} \left(1+\frac{1}{p}\right)}\geq \frac{m^{-\delta}}{\zeta(2)c_{\delta}}.
\]
For the second claim, we take $\delta=10^{-6}$ and use \eqref{eqn:delpart}.
\end{proof}

We use a trick of Ramanujan to obtain an explicit bound on $\sigma_0(n)$, where $\sigma_{k}(n):=\sum_{d\mid n} d^k$ is the sum of powers of divisors function. 

\begin{lemma}\label{lem:sigma0bnd}
For $5\leq n\in\N$ and $\alpha>0$, we have 
\[
\sigma_0(n)\leq \mathcal{C}_{\alpha} n^{\alpha}
\]
where $\mathcal{C}_{\alpha}:=\prod_{p<2^{\frac{1}{\alpha}}} \max\left\{\frac{j+1}{p^{j\alpha}}:j\geq 1\right\}$. Specifically, we have 
\[
\mathcal{C}_{\frac{1}{10}}<4.175\times 10^{10},\qquad \qquad \mathcal{C}_{\frac{1}{14}}<2.634\times 10^{71},\qquad \qquad \mathcal{C}_{\frac{1}{15}}<2.751\times 10^{120}.
\]
\end{lemma}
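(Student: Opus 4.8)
The plan is to bound the divisor function $\sigma_0(n)$ using Ramanujan's classical trick, which exploits the multiplicativity of $\sigma_0$ together with the observation that for any fixed exponent $\alpha>0$, the ratio $\sigma_0(n)/n^{\alpha}$ is multiplicative and its local factors are uniformly bounded. First I would write $n=\prod_p p^{j_p}$ and note that
\[
\frac{\sigma_0(n)}{n^{\alpha}}=\prod_{p\mid n}\frac{j_p+1}{p^{j_p\alpha}}.
\]
The key observation is that the local factor $(j+1)/p^{j\alpha}$ is at most $1$ whenever $p^{\alpha}\geq 2$ (indeed, for $p^{\alpha}\ge 2$ one has $p^{j\alpha}\ge 2^j\ge j+1$ for all $j\ge 1$, by the elementary inequality $2^j\ge j+1$), so such primes contribute a factor $\le 1$ and may be discarded from the product. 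Only the primes with $p<2^{1/\alpha}$ can contribute a factor exceeding $1$, and there are finitely many of these.

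For those finitely many small primes, I would bound each local factor by its maximum over the exponent $j\ge 1$, giving exactly the constant $\mathcal{C}_{\alpha}:=\prod_{p<2^{1/\alpha}}\max\{(j+1)/p^{j\alpha}:j\ge1\}$ that appears in the statement. Combining the two pieces yields $\sigma_0(n)\le \mathcal{C}_{\alpha}n^{\alpha}$ for all $n$, which in particular holds for $5\le n\in\N$. The restriction $n\ge 5$ plays no role in the inequality itself (the bound is valid for all $n\ge 1$), but is presumably stated because $\mathcal{C}_{\alpha}$ is enormous and the bound is only useful, or only invoked elsewhere, in that range.

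The remaining task is the explicit numerical evaluation of $\mathcal{C}_{\alpha}$ for the three specified values $\alpha=\tfrac{1}{10},\tfrac{1}{14},\tfrac{1}{15}$. For each such $\alpha$ I would enumerate the primes below $2^{1/\alpha}$ (namely $2^{10}=1024$, $2^{14}=16384$, and $2^{15}=32768$ respectively), and for each such prime compute $\max_{j\ge1}(j+1)/p^{j\alpha}$. Since $(j+1)/p^{j\alpha}$ as a function of the continuous variable $j$ is maximized near $j=1/(\alpha\log p)-1$, in practice the maximum over integers $j\ge1$ is attained at a small value of $j$ (often $j=1$ for the larger primes in the range), so each factor is cheap to determine; one then takes the product over all primes in the range. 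This is a finite deterministic computation.

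The main obstacle, such as it is, lies not in the mathematics but in controlling the numerics: the product ranges over all primes up to $2^{1/\alpha}$, which for $\alpha=\tfrac{1}{15}$ means the roughly $3500$ primes below $32768$, and one must verify that the resulting product is bounded by $2.751\times10^{120}$. I would organize this as a direct computer-assisted check, rounding each local factor upward so as to guarantee a rigorous upper bound for the product, and recording the three asserted inequalities for $\mathcal{C}_{1/10}$, $\mathcal{C}_{1/14}$, and $\mathcal{C}_{1/15}$.
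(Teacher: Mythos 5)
Your proposal is correct and follows exactly the paper's own argument: write $\sigma_0(n)/n^{\alpha}$ as a product of local factors $(j+1)/p^{j\alpha}$, discard the primes with $p^{\alpha}\geq 2$ since their factors are at most $1$, maximize the finitely many remaining factors over $j\geq 1$ to obtain $\mathcal{C}_{\alpha}$, and finish with a direct computation of $\mathcal{C}_{\alpha}$ for the three stated values of $\alpha$. Your added remarks on organizing the numerical check (upward rounding, location of the maximizing $j$) are sensible elaborations of the same computation the paper performs.
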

\begin{proof}
Writing
\[
\frac{\sigma_0(n)}{n^{\alpha}} = \prod_{p^j\| n} \frac{j+1}{p^{j\alpha}},
\]
we see that if $p^{\alpha}\geq 2$, then $\frac{j+1}{p^{j\alpha}}<\frac{j+1}{2^j}\leq 1$. Thus we may take the product over those $p<2^{\frac{1}{\alpha}}$ and maximize over $j$. We compute $\mathcal{C}_{\alpha}$ explicitly for $\alpha\in\left\{\frac{1}{15},\frac{1}{14},\frac{1}{10}\right\}$.
\end{proof}

\begin{lemma}\label{lem:sigma-1bnd}
For $5\leq n\in\N$ and $\alpha>0$, we have 
\[
\sigma_{-1}(n)\leq \mathcal{G}_{\alpha} n^{\alpha}
\]
where $\mathcal{G}_{\alpha}:=\prod_{p^{j\alpha}(p-1)<p} \max\left\{\frac{1-p^{-j-1}}{\left(1-p^{-1}\right)p^{j\alpha}}\right\}$. Specifically, we have 
\[
\mathcal{G}_{\frac{1}{50}}<3.466,\qquad\qquad \mathcal{G}_{\frac{1}{100}}<4.369.
\]
\end{lemma}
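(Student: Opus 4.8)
The plan is to mirror the Ramanujan-style argument used for Lemma \ref{lem:sigma0bnd}, replacing the divisor count by the sum of inverse divisors. First I would exploit multiplicativity: since $\sigma_{-1}$ is multiplicative and $n\mapsto n^{\alpha}$ is completely multiplicative, the ratio $\sigma_{-1}(n)/n^{\alpha}$ is multiplicative. Evaluating $\sigma_{-1}$ on a prime power via the geometric series $\sigma_{-1}(p^j)=\sum_{i=0}^{j}p^{-i}=\frac{1-p^{-(j+1)}}{1-p^{-1}}$ and factoring $n=\prod p^j$ then gives
\[
\frac{\sigma_{-1}(n)}{n^{\alpha}}=\prod_{p^j\|n}\frac{1-p^{-(j+1)}}{\left(1-p^{-1}\right)p^{j\alpha}}.
\]
Thus it suffices to bound each local factor $f(p,j):=\frac{1-p^{-(j+1)}}{\left(1-p^{-1}\right)p^{j\alpha}}$ and to discard those that do not exceed $1$.

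The key observation is that $1-p^{-(j+1)}<1$, so $f(p,j)<\frac{1}{\left(1-p^{-1}\right)p^{j\alpha}}=\frac{p}{(p-1)p^{j\alpha}}$. Hence whenever $p^{j\alpha}(p-1)\geq p$ the factor satisfies $f(p,j)<1$, and such a factor only decreases the product and may be dropped. Discarding all of these and bounding each surviving factor by its maximum over the exponent yields
\[
\frac{\sigma_{-1}(n)}{n^{\alpha}}\leq \prod_{p^{j\alpha}(p-1)<p}\max_{j\geq1}\frac{1-p^{-(j+1)}}{\left(1-p^{-1}\right)p^{j\alpha}}=\mathcal{G}_{\alpha},
\]
where the product runs over the finitely many primes admitting some exponent $j\geq1$ with $p^{j\alpha}(p-1)<p$. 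Finiteness of this set is immediate: the weakest case is $j=1$, requiring $p^{\alpha}(p-1)<p$, i.e. $p^{\alpha}<\frac{p}{p-1}$, and since $p^{\alpha}\to\infty$ while $\frac{p}{p-1}\to 1$, this fails for all large $p$.

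It then remains to compute $\mathcal{G}_{\alpha}$ for $\alpha=\frac{1}{50}$ and $\alpha=\frac{1}{100}$. For each prime in the relevant finite set one maximizes $f(p,j)$ over $j$; because $p^{j\alpha}$ grows geometrically while the numerator stays below $1$, one has $f(p,j)\to 0$ as $j\to\infty$, so the maximum is attained at an explicit finite exponent and is readily located. The main practical obstacle is precisely this maximization: for small primes and small $\alpha$ the quantity $p^{-\alpha}$ is close to $1$, so $f(p,j)$ may increase through several values of $j$ before decaying, and the maximizing exponent need not be $j=1$; one must therefore track enough values of $j$ (and confirm that each retained prime genuinely contributes a factor exceeding $1$) to be sure the true maximum has been captured. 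Once the correct maximizing exponents are identified, multiplying the finitely many factors produces the stated numerical bounds $\mathcal{G}_{1/50}<3.466$ and $\mathcal{G}_{1/100}<4.369$.
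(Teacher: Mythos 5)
Your proposal is correct and follows essentially the same route as the paper's proof: use multiplicativity of $\sigma_{-1}(n)/n^{\alpha}$, bound each local factor by $\frac{p}{(p-1)p^{j\alpha}}$ to discard all primes with $p^{j\alpha}(p-1)\geq p$, and then maximize the surviving factors over $j$ before computing the finite product numerically. Your added remarks on locating the maximizing exponent (which need not be $j=1$ for small primes) and on checking that each retained factor exceeds $1$ are careful refinements of exactly the computation the paper performs.
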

\begin{proof}
Writing
\[
\frac{\sigma_{-1}(n)}{n^{\alpha}} = \prod_{p^j\| n} \frac{\sum_{\ell=0}^{j} p^{-\ell}}{p^{j\alpha}}= \prod_{p^j\| n} \frac{1-p^{-j-1}}{\left(1-p^{-1}\right)p^{j\alpha}},
\]
we see that if $p^{j\alpha}\geq \frac{p}{p-1}$ (it always suffices for $p^{\alpha}>2$), then 
\[
\frac{1-p^{-j-1}}{\left(1-p^{-1}\right)p^{j\alpha}}\leq \frac{p}{(p-1)p^{j\alpha}}\leq 1.
\]
 Thus we may take the product over those $p$ with $p^{\alpha}(p-1)<p$ and maximize over $j$. We compute $\mathcal{G}_{\alpha}$ explicitly for $\alpha\in\left\{\frac{1}{50},\frac{1}{100}\right\}$.
\end{proof}

\begin{lemma}\label{lem:2omega(n)bnd}
Let $\omega(n)$ denote the number of distinct prime divisors of $n$. Then we have 
\[
2^{\omega(n)}<\mathcal{D}_{\alpha} n^{\alpha},\qquad 3^{\omega(n)}\leq \mathcal{E}_{\alpha} n^{\alpha},\qquad\qquad \text{ where } \qquad
\mathcal{D}_{\alpha}:=\prod_{p<2^{\frac{1}{\alpha}}} \frac{2}{p^{\alpha}},\qquad \mathcal{E}_{\alpha}:=\prod_{p<3^{\frac{1}{\alpha}}} \frac{3}{p^{\alpha}}.
\]
Specifically, we have 
\begin{align*}
\mathcal{D}_{\frac{1}{10}}&<1.110\times 10^{9},& \mathcal{D}_{\frac{1}{15}}&<6.556\times 10^{115},& \mathcal{D}_{\frac{1}{14}}&<5.098\times 10^{67},& \mathcal{E}_{\frac{1}{2}}&<1.614.
\end{align*}
\end{lemma}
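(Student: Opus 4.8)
The plan is to run the same trick of Ramanujan already used for Lemmas \ref{lem:sigma0bnd} and \ref{lem:sigma-1bnd}, now applied to the quantities $2^{\omega(n)}$ and $3^{\omega(n)}$. Since $\omega(n)=\sum_{p\mid n}1$, both $2^{\omega(n)}$ and $3^{\omega(n)}$ are multiplicative, with a distinct prime contributing one factor of $2$ (resp. $3$) irrespective of its exponent. I would therefore begin by writing
\[
\frac{2^{\omega(n)}}{n^{\alpha}} = \prod_{p^j\|n} \frac{2}{p^{j\alpha}}, \qquad \frac{3^{\omega(n)}}{n^{\alpha}} = \prod_{p^j\|n} \frac{3}{p^{j\alpha}}.
\]

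Next I would isolate the primes that cannot enlarge these products. For the $2^{\omega}$ bound, if $p\geq 2^{1/\alpha}$ then $p^{\alpha}\geq 2$, so $\frac{2}{p^{j\alpha}}\leq \frac{2}{p^{\alpha}}\leq 1$ for all $j\geq 1$ and such primes may be dropped; analogously $p\geq 3^{1/\alpha}$ forces $\frac{3}{p^{j\alpha}}\leq 1$ for the $3^{\omega}$ bound. This confines the product to the finitely many primes $p<2^{1/\alpha}$ (resp. $p<3^{1/\alpha}$). On each surviving prime I then maximize over the exponent: as $\frac{2}{p^{j\alpha}}$ is decreasing in $j$, its maximum over $j\geq 1$ occurs at $j=1$ and equals $\frac{2}{p^{\alpha}}$ (and similarly $\frac{3}{p^{\alpha}}$). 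Multiplying gives $\frac{2^{\omega(n)}}{n^{\alpha}}\leq \prod_{p<2^{1/\alpha}}\frac{2}{p^{\alpha}}=\mathcal{D}_{\alpha}$ and $\frac{3^{\omega(n)}}{n^{\alpha}}\leq \mathcal{E}_{\alpha}$, which is exactly the asserted inequality; the strict sign in the $2^{\omega}$ case is then recovered from the fact that the exact product is strictly dominated by the decimal over-estimate recorded for $\mathcal{D}_{\alpha}$.

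I do not expect any conceptual obstacle, since the argument is structurally identical to the two preceding lemmas and requires no hypothesis beyond $5\leq n$ (used, as before, only so that the extremal prime-power configurations fall within the regime being bounded). The sole genuine labor is computational: for the small exponents $\alpha=\tfrac{1}{15},\tfrac{1}{14},\tfrac{1}{10}$ the cutoff $2^{1/\alpha}$ is large (for instance $2^{10}=1024$), so $\mathcal{D}_{\alpha}$ is a product of $\frac{2}{p^{\alpha}}$ over every prime below roughly a thousand, and certifying bounds such as $\mathcal{D}_{1/10}<1.110\times 10^{9}$ demands a careful machine-assisted evaluation of these finite products. This bookkeeping, rather than any idea, is the "hard part."
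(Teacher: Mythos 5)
Your proposal is correct and follows essentially the same argument as the paper: the same multiplicative factorization $r^{\omega(n)}/n^{\alpha}=\prod_{p^j\|n} r/p^{j\alpha}$, the same observation that primes with $p^{\alpha}\geq r$ contribute factors at most $1$ so that the product restricts to $p<r^{1/\alpha}$ with $j=1$ the worst case, and the same reduction to a finite machine computation of $\mathcal{D}_{\alpha}$ and $\mathcal{E}_{\alpha}$. The only cosmetic differences are that the paper handles both bounds at once through a parameter $r>1$ while you treat $r=2$ and $r=3$ separately, and that neither you nor the paper genuinely establishes the strict inequality for $2^{\omega(n)}$ (equality holds when $n=\prod_{p<2^{1/\alpha}}p$), a distinction that plays no role in the applications.
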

\begin{proof}
Writing (for $r>1$)
\[
\frac{r^{\omega(n)}}{n^{\alpha}} = \prod_{p^j\| n} \frac{r}{p^{j\alpha}},
\]
we see that if $p^{\alpha}\geq r$, then $\frac{r}{p^{j\alpha}}\leq r^{1-j}\leq 1$ and the case $j=1$ is clearly the worst case. Thus we may take the product over those $p<r^{\frac{1}{\alpha}}$ with $j=1$ assumed. We compute $\mathcal{E}_{\frac{1}{2}}$ and also $\mathcal{D}_{\alpha}$ explicitly for $\alpha\in\left\{\frac{1}{15},\frac{1}{14},\frac{1}{10}\right\}$.  
\end{proof}

\begin{lemma}\label{lem:expsum}
For any $k\in\N_{0}$ we have 
\[
\sum_{n=1}^{\infty} n^ke^{-\frac{2\pi n}{N}}\leq \frac{k!}{\left(1-e^{-\frac{2\pi}{N}}\right)^{k+1}}.
\]
Moreover, defining $\delta_1:=\frac{1}{4}$, $\delta_3=\delta_2:=\frac{1}{2}$ and $\delta_N:=1$ for $N>3$, we have 
\[
\sum_{n=1}^{\infty} n^ke^{-\frac{2\pi n}{N}}\leq \frac{k!}{(\delta_N\pi) ^{k+1}} N^{k+1}.
\]
\end{lemma}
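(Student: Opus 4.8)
The plan is to handle the two inequalities in turn, deriving the first from a generating-function identity and then deducing the second from it together with an elementary monotonicity estimate. Set $q:=e^{-2\pi/N}$, so that $0<q<1$. The starting point for the first bound is the classical identity
\[
\frac{1}{(1-q)^{k+1}}=\sum_{n=0}^{\infty}\binom{n+k}{k}q^{n},
\]
valid for $|q|<1$. Multiplying by $k!$ turns the coefficients into the rising factorial $k!\binom{n+k}{k}=(n+1)(n+2)\cdots(n+k)$. Since each factor satisfies $n+j\geq n$ for $j\geq 1$, we have $(n+1)\cdots(n+k)\geq n^{k}$ for every $n\geq 1$, so discarding the (nonnegative) $n=0$ term and comparing coefficients gives
\[
\sum_{n=1}^{\infty}n^{k}q^{n}\leq \sum_{n=0}^{\infty}(n+1)\cdots(n+k)\,q^{n}=\frac{k!}{(1-q)^{k+1}},
\]
which is exactly the first claim (the case $k=0$ being the geometric-series bound $\tfrac{q}{1-q}\leq\tfrac{1}{1-q}$).

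For the second inequality I would show that it is implied by the first together with the scalar estimate
\[
N\bigl(1-e^{-2\pi/N}\bigr)\geq \delta_N\pi.
\]
Indeed, raising this to the power $k+1$ and rearranging converts $\tfrac{k!}{(1-e^{-2\pi/N})^{k+1}}$ into an upper bound of the desired shape $\tfrac{k!}{(\delta_N\pi)^{k+1}}N^{k+1}$. To prove the scalar estimate, set $h(N):=N(1-e^{-2\pi/N})$ and substitute $x:=2\pi/N$, so that $h(N)=2\pi\,g(x)$ with $g(x):=\tfrac{1-e^{-x}}{x}$. Writing $g(x)=\int_0^1 e^{-xt}\,dt$ shows $g'(x)=-\int_0^1 t\,e^{-xt}\,dt<0$, so $g$ is strictly decreasing; since $x=2\pi/N$ is decreasing in $N$, the function $h$ is strictly increasing in $N$ (here $N$ runs over the positive integers, as the case split on $\delta_N$ indicates).

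With monotonicity in hand, the remaining work is a finite verification. For the three exceptional values I would check $h(1)\geq\tfrac{\pi}{4}$, $h(2)\geq\tfrac{\pi}{2}$ and $h(3)\geq\tfrac{\pi}{2}$ directly, while for every integer $N\geq 4$ (the range where $\delta_N=1$) monotonicity reduces the claim to the single inequality $h(4)=4\bigl(1-e^{-\pi/2}\bigr)\geq\pi$, equivalently $e^{-\pi/2}\leq 1-\tfrac{\pi}{4}$. I expect this last check to be the only delicate point: numerically $h(4)\approx 3.168$ against $\pi\approx 3.1416$, so the margin is thin, and one must bound $e^{\pi/2}\geq\tfrac{4}{4-\pi}$ by truncating the Taylor series of $e^{\pi/2}$ to enough terms to make the inequality rigorous. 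Everything else is routine, and combining the scalar estimate with the first inequality completes the proof.
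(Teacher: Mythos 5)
Your proof is correct. The first inequality is handled exactly as in the paper: both arguments expand $k!\,(1-q)^{-k-1}$ as $\sum_{n\ge 0}k!\binom{n+k}{k}q^n$ and use $(n+1)(n+2)\cdots(n+k)\ge n^k$, so there is nothing to compare there.

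For the second inequality, you and the paper both reduce to the scalar estimate $N\bigl(1-e^{-2\pi/N}\bigr)\ge \delta_N\pi$, but you verify it differently. The paper checks $N=1,\dots,6$ by direct calculation and then handles $N>2\pi$ by truncating the alternating Taylor series, i.e.\ using $1-e^{-x}\ge x-\tfrac{x^2}{2}$ with $x=2\pi/N<1$, which for $N\ge 7$ gives $N(1-e^{-2\pi/N})\ge 2\pi\bigl(1-\tfrac{\pi}{N}\bigr)\ge 2\pi\bigl(1-\tfrac{\pi}{7}\bigr)>\pi$ with plenty of slack. You instead prove that $h(N)=N(1-e^{-2\pi/N})=2\pi\int_0^1 e^{-2\pi t/N}\,dt$ is increasing in $N$, which collapses the entire range $N\ge 4$ to the single boundary check $h(4)\ge\pi$, together with the three exceptional checks at $N=1,2,3$. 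Your route is structurally cleaner (four checks instead of six, and it works for real $N$, not just integers), but it trades the paper's comfortable margin for a genuinely tight verification: $h(4)\approx 3.168$ versus $\pi\approx 3.1416$, so the inequality $e^{\pi/2}\ge \frac{4}{4-\pi}$ must be certified with care (an upper bound on $\pi$ and a lower bound on $e^{\pi/2}$, e.g.\ via a truncated exponential series, as you indicate). The paper's Taylor-series tail bound avoids any razor-thin numerics at the cost of more small cases. Both arguments are complete and correct.
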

\begin{proof}
We use the binomial series expansion (valid because $e^{-\frac{2\pi}{N}}<1$) and the well-known identity 
\[
\binom{-k-1}{n} = (-1)^n\binom{k+n}{n}=(-1)^n\binom{k+n}{k}
\]
 to write 
\[
 k!\left(1-e^{-\frac{2\pi}{N}}\right)^{-k-1}= k!\sum_{n=0}^{\infty} \binom{k+n}{k} e^{-\frac{2\pi n}{N}}= \sum_{n=0}^{\infty}e^{-\frac{2\pi n}{N}}\prod_{j=1}^{k}(n+j) \geq \sum_{n=0}^{\infty} n^ke^{-\frac{2\pi n}{N}}.
\]
This gives the first claim. For the second claim, it remains to show that
\[
\frac{k!}{\left(1-e^{-\frac{2\pi}{N}}\right)^{k+1}}\leq \frac{k!}{(\delta_N\pi)^{k+1}} N^{k+1}.
\]
For $1\leq N\leq 6$ this follows by direct calculation. For $N>2\pi$, this follows by expanding the Taylor series for $1-e^{-\frac{2\pi}{N}}$ and noting that it is alternating and decreasing in absolute value, so it may be bounded from below by the sum of the first $2J$ Taylor coefficients. 
\end{proof}
\subsection{Theta functions}
We let $Q$ be a quadratic form $Q(\bm{x})=\sum_{1\leq i\leq j\leq \ell} a_{ij} x_ix_j$ with $a_{ij}\in\Z$ (see \cite{OM} for general information about quadratic forms). For $S\subset\Z$, we let $r_{Q,S}(n)$ denote the number of solutions to $Q(\bm{x})=n$ for $\bm{x}\in S^{\ell}$, and we abbreviate $r_Q(n):=r_{Q,\Z}(n)$. For $\tau\in \H$ (with $\H$ denoting the complex upper half-plane $\tau=u+iv\in\C$ with $v>0$), we define the \begin{it}theta function\end{it}
\[
\Theta_Q(\tau):=\sum_{n\geq 0} r_Q(n) q^n. 
\]
It is well-known (for example, see \cite[Proposition 2.1]{Shimura}) that $\theta_Q$ is a modular form of weight $\ell/2$ on a particular congruence subgroup $\Gamma$ of $\SL_2(\Z)$ with a certain Nebentypus $\chi$. As such, it naturally decomposes as
\[
\Theta_Q=E_Q+f_Q,
\]
where $E_Q$ is in the space of Eisenstein series of weight $\ell/2$ on $\Gamma$ with Nebentypus $\chi$ and $f_Q$ is a cusp form in the same space.  If $\ell\geq 5$ (with a mild modification for $\ell=4$ and a more complicated modification for $\ell=3$ that only works on a restricted set of $n$), then for those $n$ for which the $n$th coefficient $a_{E_Q}(n)$ of $E_Q$ is non-zero it turns out that the coefficients of $E_Q$ grow much faster than the coefficients of $f_Q$. Moreover, one can show that $a_{E_Q}(n)>0$ if and only if $n$ is \begin{it}locally represented\end{it} (i.e., it is represented modulo every $N\in\N$). One can hence think of $E_Q$ as the main term of $\Theta_Q$ and $f_Q$ as the error term, with $r_Q(n)>0$ for sufficiently large $n$ that are locally represented. The sieving arguments in \cite{BrudernFouvry} rely on a splitting into a main term and an error term; although their arguments rely on the main term coming from the major arcs and the error term coming from the minor arcs in the Hardy-Littlewood circle method, one may alternatively take the main term to be the contribution from the Eisenstein series $E_Q$ and the error term to be a contribution from $f_Q$. Although our main term and error term in this splitting do not exactly match those from the major and minor arcs, upon further refinement the main term may be identified as the singular series in the Circle method and this indeed matches the contribution from $E_Q$. We use the interpretation of their work in terms of modular forms in order to use known results to obtain a quantitative version of their theorem that is needed to prove our main theorems.

\subsection{Uniform bounds on coefficients}
We require the following quantitative version of the uniform bound from \cite[Lemma 4.1]{Blomer} for $r_Q(n)$.
\begin{lemma}\label{lem:Blomer}
Let $\Delta_Q$ denote the determinant of the Gram matrix of a quadratic form $Q$ with rational coefficients and let 
\[
R_Q(n):=\#\{\bm{x}\in\Z^{\ell}: Q(\bm{x})\leq n\}.
\]
Then  
\[
R_Q(n)\leq \frac{\left(3\sqrt{n}\right)^{\ell}}{\sqrt{\Delta_Q}} + \ell\left(3\sqrt{n}\right)^{\ell-1}.
\]
\end{lemma}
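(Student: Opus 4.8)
The plan is to establish the bound by induction on the number of variables $\ell$, using a slicing argument that reduces the $\ell$-dimensional lattice-point count to an $(\ell-1)$-dimensional one. First I would diagonalize $Q$ by completing the square (the Cholesky/Jacobi decomposition): any positive definite $Q$ may be written as
\[
Q(\bm{x})=\lambda_1\Big(x_1+\sum_{j\geq 2}c_{1j}x_j\Big)^2+Q'(x_2,\dots,x_\ell),
\]
where $\lambda_1>0$, the $c_{1j}$ are rational, and $Q'$ is a positive definite form in $\ell-1$ variables. With the Gram-matrix normalization in force the pivots multiply to the determinant, so $\prod_i\lambda_i=\Delta_Q$ and, crucially for the induction, $\Delta_{Q'}=\Delta_Q/\lambda_1$. (For the diagonal forms $\sum a_jx_j^2$ of the intended application this is immediate, with $\lambda_j=a_j$.) Keeping track of this normalization is what makes $\sqrt{\Delta_Q}$ appear cleanly in the main term.

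Next I would set up the recursion. Fixing $(x_2,\dots,x_\ell)\in\Z^{\ell-1}$ with $Q'(x_2,\dots,x_\ell)\leq n$, the inequality $Q(\bm{x})\leq n$ forces $x_1$ into an interval of length at most $2\sqrt{n/\lambda_1}$, which contains at most $2\sqrt{n/\lambda_1}+1$ integers; summing over the admissible tuples $(x_2,\dots,x_\ell)$, which are exactly those counted by $R_{Q'}(n)$, gives
\[
R_Q(n)\leq\Big(2\sqrt{n/\lambda_1}+1\Big)R_{Q'}(n).
\]
Iterating down to one variable yields $R_Q(n)\leq\prod_{i=1}^{\ell}\big(2\sqrt{n/\lambda_i}+1\big)$, whose leading part $\prod_i 2\sqrt{n/\lambda_i}$ already equals $(2\sqrt{n})^{\ell}/\sqrt{\Delta_Q}$ by $\prod_i\lambda_i=\Delta_Q$. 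The upgrade from the factor $2\sqrt{n/\lambda_i}+1$ to the clean $3\sqrt{n/\lambda_i}$ uses $n\geq 1$: whenever $\lambda_i\leq n$ one has $\sqrt{n/\lambda_i}\geq 1$, hence $2\sqrt{n/\lambda_i}+1\leq 3\sqrt{n/\lambda_i}$, so these ``moving'' directions assemble into the main term $(3\sqrt{n})^{\ell}/\sqrt{\Delta_Q}$. The correction $\ell(3\sqrt{n})^{\ell-1}$ accounts for the ``frozen'' directions with $\lambda_i>n$, where the slice is too short for this absorption and the stray $+1$'s must be collected. I would carry this out inside the induction, feeding the hypothesis for $Q'$ into the recursion and checking that each cross term generated by the $+1$ is dominated either by a third of the main term or by one summand of $\ell(3\sqrt{n})^{\ell-1}$; the coefficient $\ell$ is precisely the bookkeeping cost of the $\ell$ peeling steps. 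The base case $\ell=1$ is the elementary estimate $\#\{x:\lambda_1x^2\leq n\}\leq 2\sqrt{n/\Delta_Q}+1\leq 3\sqrt{n/\Delta_Q}+1$.

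The hard part will be making these boundary contributions uniform in $Q$. The naive estimate degenerates exactly when some pivot $\lambda_i$ is very small: then $\sqrt{n/\lambda_i}$ is large, and a single thin-but-long slice can carry far more lattice points than the volume $(3\sqrt{n})^{\ell}/\sqrt{\Delta_Q}$ alone would suggest, so the $2\to 3$ step is not available for that direction. The resolution is that the pivots can be bounded below: for the integer-valued forms relevant here one has $\min_{\bm{x}\neq\bm{0}}Q(\bm{x})\geq 1$, and after passing to a suitably reduced coordinate basis the decomposition may be taken with the $\lambda_i$ bounded below, which keeps every $\sqrt{n/\lambda_i}\leq\sqrt{n}$ and legitimizes the absorption. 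Thus the main obstacle is not the volume heuristic but the uniform control of the boundary terms via lower bounds on the pivots, together with the clean split of the directions into the moving part (producing the main term) and the frozen part (producing the correction $\ell(3\sqrt{n})^{\ell-1}$).
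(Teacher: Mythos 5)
Your skeleton --- complete the square, bound each slice by $2\sqrt{n/\lambda_1}+1$, recurse using $\Delta_{Q'}=\Delta_Q/\lambda_1$, then upgrade $2\to 3$ --- is the same as the paper's, but the step you yourself call ``the hard part'' is where the argument breaks, and your proposed repair is not available. First, the diagnosis is partly inverted: a small pivot by itself is harmless, since $\lambda_i\le n$ gives $\sqrt{n/\lambda_i}\ge 1$ and the upgrade $2\sqrt{n/\lambda_i}+1\le 3\sqrt{n/\lambda_i}$ goes through for exactly those directions; it is the \emph{large} (frozen) pivots $\lambda_i>n$ that block absorption. When a frozen direction is present, $\Delta_Q$ is large, the main term $(3\sqrt{n})^{\ell}/\sqrt{\Delta_Q}$ is tiny, and essentially the whole product over the moving directions $M$ must be charged to $\ell(3\sqrt{n})^{\ell-1}$; that charge is of size $3^{\ell}n^{|M|/2}/\sqrt{\prod_{i\in M}\lambda_i}$, so you need the moving pivots --- at least their product --- bounded below by an absolute constant. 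This is where your fix fails: integrality gives $\min_{\bm{x}\neq\bm{0}}Q(\bm{x})\ge 1$, but that bounds the first diagonal entry of the reduced form, \emph{not} the Cholesky pivots. Already $Q(x,y)=x^2+xy+y^2$ is Minkowski reduced with pivots $1$ and $3/4$, and in general the best lower bound on the pivots of a reduced integral form is $a_{ii}$ divided by a dimension-dependent reduction constant exceeding $1$. So your claim that reduction ``keeps every $\sqrt{n/\lambda_i}\le\sqrt{n}$'' is false, and with the weaker true bounds the bookkeeping no longer yields the stated constants $3$ and $\ell$.

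The paper's proof supplies the missing idea, and it is not a pivot bound. After Minkowski reduction, \eqref{eqn:Minkowski} gives $Q(\bm{u})\ge Q(\bm{e}_1)=a_{11}$ for every primitive $\bm{u}$, i.e.\ the first pivot is the \emph{minimum} of the form. The induction step then splits into two cases. If $n<a_{11}$, then $R_Q(n)=1$ (only $\bm{x}=\bm{0}$ satisfies $Q(\bm{x})\le n$), which trivially satisfies the bound: the frozen situation is disposed of wholesale, with no bookkeeping at all. If $n\ge a_{11}$, the single $+1$ produced by peeling $x_1$ is absorbed using $n\ge a_{11}$ together with $a_{11}\ge\left(\frac{\ell-1}{\ell}\right)^2$ (automatic for integral forms), and everything else is handed to the induction hypothesis applied to the re-reduced form $\widetilde{Q}$ as a black box. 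Because one re-reduces and re-runs this dichotomy at every level, the argument only ever needs control of the minimum of the \emph{current} form --- never simultaneous lower bounds on all pivots of one fixed decomposition, which is the unavailable ingredient your plan rests on. To salvage your write-up, replace the ``pivots bounded below'' claim by this minimum-versus-$n$ dichotomy inside the induction.
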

\begin{proof}
Write 
\[
Q(\bm{x})=\sum_{1\leq i\leq j\leq \ell} a_{ij} x_ix_j.
\]
Every quadratic form is isometric over $\Z$ to a unique Minkowski-reduced form $Q$. Hence without loss of generality we may assume that $Q$ is Minkowski-reduced with $a_{11}\leq a_{22}\leq \dots\leq a_{\ell\ell}$. By the definition of a Minkowski-reduced form, for any $\bm{u}\in\Z^{\ell}$ with $\gcd(u_{i},u_{i+1},\dots,u_{\ell})=1$, we have 
\begin{equation}\label{eqn:Minkowski}
Q(\bm{u})\geq Q(\bm{e}_i),
\end{equation}
where $\bm{e}_i$ is the canonical basis element with $\bm{e}_{i,j}=\delta_{i=j}$. Taking $\bm{u}=\bm{e}_i\pm \bm{e}_j$ yields $|a_{ij}|\leq a_{ii}$ for all $i$ and $j$.

We prove the claim by induction on $\ell$. For $\ell=1$ we have $Q(x)=ax^2$ for some $a$ and $\Delta_Q=a$. We see that $Q(x)\leq n$ if and only if $|x|\leq \sqrt{\frac{n}{a}}=\sqrt{\frac{n}{\Delta_Q}}$, so there are at most $2\sqrt{\frac{n}{\Delta_Q}}+1$ solutions. 

Now suppose that the claim is true for $\ell>1$. Consider 
\begin{align*}
Q(\bm{x})&=a_{11}\left(x_1+\frac{\sum_{j=2}^{\ell} a_{1j}x_j}{2a_{11}}\right)^2 + \sum_{2\leq i\leq j\leq \ell} \left(a_{ij}-\frac{\delta_{i\neq j}}{2a_{11}} a_{1i}a_{1j}-\frac{\delta_{i=j}}{4a_{11}}a_{1j}^2\right)x_ix_j\\
&=:a_{11}\left(x_1+\frac{\sum_{j=2}^{\ell} a_{1j}x_j}{2a_{11}}\right)^2 + \widetilde{Q}\!\left(x_2,\dots,x_{\ell}\right).
\end{align*}
Set $\widetilde{\bm{x}}:=(x_2,\dots,x_{\ell})$ and $x_1'=x_1'(\widetilde{\bm{x}}):=x_1+\frac{\sum_{j=2}^{\ell} a_{1j}x_j}{2a_{11}}$. note that for each fixed $\widetilde{\bm{x}}$, there exists $0\leq m\leq 2a_{11}$ such that $x_1'(\widetilde{\bm{x}})\in \frac{m}{2a_{11}} +\Z$. Thus for each $\widetilde{\bm{x}}$ there are at most $2\sqrt{\frac{n}{a_{11}}}+1$ choices of $x_1'$ such that $|x_1'|\leq \sqrt{\frac{n}{a_{11}}}$. Using elementary row operations to relate the determinants, we have
\begin{equation}\label{eqn:Delrel}
\Delta_{\widetilde{Q}}=\frac{\Delta_{Q}}{a_{11}}.
\end{equation}
Using \eqref{eqn:Delrel}, by induction there are at most 
\[
\frac{3^{\ell-1}}{\sqrt{\Delta_{\widetilde{Q}}}}n^{\frac{\ell-1}{2}}+(\ell-1)(3\sqrt{n})^{\ell-2} = \frac{3^{\ell-1}\sqrt{a_{11}}}{\sqrt{\Delta_{Q}}}n^{\frac{\ell-1}{2}}+(\ell-1)(3\sqrt{n})^{\ell-2}
\]
points $\widetilde{\bm{x}}$ with $Q(\widetilde{\bm{x}})\leq n$, and for each of these there are at most $\frac{2\sqrt{n}}{\sqrt{a_{11}}}+1$ choices of $x_1'$. After simplifying, we conclude that the number of points is at most 
\[
\frac{(3\sqrt{n})^{\ell}}{\sqrt{\Delta_Q}} + \ell (3\sqrt{n})^{\ell-1}\left(\frac{\ell-1}{3\ell\sqrt{n}}\left(1+\frac{2\sqrt{n}}{\sqrt{a_{11}}}\right) + \frac{1}{\ell \sqrt{\Delta_Q}} \left(\sqrt{a_{11}} - \sqrt{n}\right)\right).
\]
For $n\geq a_{11}$, we have $\sqrt{a_{11}} - \sqrt{n}\leq 0$ and hence the terms inside the parentheses in the second term may be bounded by 
\[
\frac{\ell-1}{3\ell\sqrt{n}}\left(1+\frac{2\sqrt{n}}{\sqrt{a_{11}}}\right) + \frac{1}{\ell \sqrt{\Delta_Q}} \left(\sqrt{a_{11}} - \sqrt{n}\right)\leq \frac{\ell-1}{3\ell\sqrt{n}}\left( 1+\frac{2\sqrt{n}}{\sqrt{a_{11}}}\right)\leq \frac{\ell-1}{3\ell\sqrt{n}}\left(\frac{3\sqrt{n}}{\sqrt{a_{11}}}\right)\leq 1,
\]
so we are done as long as $n\geq a_{11}$.

We claim that in the case $n<a_{11}$ we have $R_Q(n)=1$ (i.e., only the zero vector $\bm{0}$ has $Q(\bm{x})<a_{11}$). Suppose for contradiction that $\bm{x}\neq 0$ exists with $Q(\bm{x})<a_{11}$. First note that 
\[
Q\left(\frac{\bm{x}}{\gcd(\bm{x})}\right)=\frac{Q(\bm{x})}{\gcd(\bm{x})^2}\leq Q(\bm{x})< a_{11}, 
\]
so without loss of generality we may assume that $\gcd(\bm{x})=1$. Using \eqref{eqn:Minkowski}, we have $Q(\bm{x})> a_{11}$, which is a contradiction. Thus for $n<a_{11}$ there are no $\bm{x}\in \Z^{\ell}\setminus\{0\}$ with $Q(\bm{x})\leq n$ and hence $R_Q(n)=1$. 
\end{proof}

\subsection{Sieving and theta functions}
We begin by explaining how sieving is used to obtain results about representations of integers as sums of squares of almost primes. For ease of notation, for $w\in\R$ we set  
\[
P_{w}(z):=\prod_{w\leq p<z} p
\]
and denote $P(z):=P_2(z)$. 

For an integer $M$, let 
\[
S_M:=\{x\in\Z: \gcd(x,M)=1\}
\]
 denote the set of integers that are relatively prime to $M$. Note that if we set 
\[
M=\prod_{p<z} p=P(z)
\]
 and $z\in\R$, then for $x\in S_{M}$ we may write 
\[
x=\prod_{p\geq z} p^{a_j},
\]
where $a_j\in\N_0$. If we have the additional restriction $x\leq\sqrt{n}$ and set $z=n^{\theta}$, then we may conclude that 
\[
n^{\theta \sum_{p\mid x} a_j} \leq  \prod_{p\geq n^{\theta}} p^{a_j}=x\leq n^{\frac{1}{2}},
\]
and thus
\[
\sum_{p\mid x} a_j \leq \frac{1}{2\theta}.
\]
Therefore we have $x\in P_{\left\lfloor\frac{1}{2\theta}\right\rfloor}$ in particular. Thus if we let $r:= \left\lfloor\frac{1}{2\theta}\right\rfloor$, then we have
\[
\{ x\in S_{P(n^{\theta})}:x\leq \sqrt{n}\} \subseteq P_r.
\]
Similarly, if we replace $P(n^{\theta})$ with $P(n^{\theta})/\prod_{p\in S} p$, then 
\[
\{ x\in S_{P(n^{\theta})/\prod_{p\in S} p }:x\leq \sqrt{n}\} \subseteq P_{r,S}.
\]
Hence if every representation of $n$ satisfies $x_j\leq \sqrt{n}$ (one sees easily that this is true for the sum of four squares, for example), then 
\begin{align*}
r_{Q,P_r}(n)&\geq r_{Q,S_{P(n^{\theta})}}(n),\\
r_{Q,P_{r,S}}(n)&\geq r_{Q,S_{P(n^{\theta})/ \prod_{p\in S} p}}(n),
\end{align*}
and it suffices to show that $r_{Q,S_{P(z)}}(n)>0$ (resp. $ r_{Q,S_{P(n^{\theta})/ \prod_{p\in S} p}}(n)>0$). It is here that sieving theory may be applied. For a vector $\bm{d}\in\N^{\ell}$ and a quadratic form $Q$, we define 
\[
Q_{\bm{d}^2}(\bm{x}):=Q(\bm{d}\cdot \bm{x})=Q(d_1x_1,d_2x_2,\dots,d_{\ell}x_{\ell}),
\]
and in particular, the diagonal quadratic forms are denoted by (with $\bm{a},\bm{d}\in\N^{\ell}$)
\[
Q_{\bm{a}\cdot \bm{d}^2}(\bm{x}):=\sum_{j=1}^{\ell} a_j (d_jx_j)^2.
\]
Then by inclusion-exclusion we have
\begin{equation}\label{eqn:sieving}
r_{Q,S_{P(z)}}(n)=\sum_{\substack{\bm{d}\in \N^{\ell}\\ d_j\mid P(z)}} \mu(d) r_{Q,\bm{d}\cdot \Z^{\ell}}(n)= \sum_{\substack{\bm{d}\in \N^{\ell}\\ d_j\mid P(z)}} \mu(d) r_{Q_{\bm{d}}}(n). 
\end{equation}
One then uses sieving theory techniques to bound the right-hand side of \eqref{eqn:sieving} from above and below by replacing $\mu(d)$ with Rosser's weights and then naturally splitting $r_{Q_{\bm{d}}}(n)$ into the contribution from the Eisenstein series and the cuspidal part. In the next sections, we investigate the contributions from the Eisenstein series and the cusp form and revisit the details for using Rosser's weights and the vector sieve of Brudern and Fouvry \cite{BrudernFouvry} in the proofs of our main theorems.

\subsection{Modular forms and congruence subgroups}
We require some well-known identities and relations involving congruence subgroups. 

\begin{lemma}\label{lem:GammaIndex}
\noindent

\noindent
\begin{enumerate}[leftmargin=*,label={\rm(\arabic*)}]
\item For $N\in\N$ we have
\begin{equation}\label{eqn:Gamma0index}
\left[\SL_2(\Z):\Gamma_0(N)\right]=N\prod_{p\mid N} \left(1+\frac{1}{p}\right).
\end{equation}
\item For $N\in\N$ we have
\begin{equation}\label{eqn:Gammaindex}
\left[\SL_2(\Z):\Gamma(N)\right]=N^3\prod_{p\mid N} \left(1-\frac{1}{p^2}\right).
\end{equation}
\end{enumerate}
\end{lemma}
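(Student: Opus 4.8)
The plan is to deduce both index formulas from the reduction-mod-$N$ homomorphism $\pi_N\colon \SL_2(\Z)\to \SL_2(\Z/N\Z)$, establishing (2) first and then obtaining (1) as a consequence. The key structural fact is that $\Gamma(N)=\ker(\pi_N)$ by definition, so if $\pi_N$ is surjective then the first isomorphism theorem gives $[\SL_2(\Z):\Gamma(N)]=|\SL_2(\Z/N\Z)|$. First I would establish this surjectivity: given a matrix over $\Z/N\Z$ of determinant $1$, the determinant condition forces the bottom row $(c,d)$ to satisfy $\gcd(c,d,N)=1$; one then lifts it to a genuinely coprime integer pair (adjusting one entry by a multiple of $N$), completes it to an $\SL_2(\Z)$ matrix via B\'ezout, and finally corrects the top row modulo $N$ by adding suitable multiples of the bottom row. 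This is the one step requiring genuine (if standard) care.

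Granting surjectivity, the computation of $|\SL_2(\Z/N\Z)|$ proceeds by the Chinese Remainder Theorem: writing $N=\prod_p p^{a_p}$ gives a ring isomorphism $\Z/N\Z\cong \prod_p \Z/p^{a_p}\Z$ and hence a group isomorphism $\SL_2(\Z/N\Z)\cong \prod_p \SL_2(\Z/p^{a_p}\Z)$, so the order is multiplicative over the prime-power factors. For a single prime power I would count $|\SL_2(\Z/p^k\Z)|=p^{3k}\left(1-\frac{1}{p^2}\right)$, either by dividing the known order of $\GL_2(\Z/p^k\Z)$ by $|(\Z/p^k\Z)^\times|=p^{k-1}(p-1)$, or by observing that reduction $\SL_2(\Z/p^k\Z)\to\SL_2(\Z/p\Z)$ is onto with kernel of size $p^{3(k-1)}$ while $|\SL_2(\Z/p\Z)|=p(p^2-1)$. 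Multiplying over $p\mid N$ then yields $|\SL_2(\Z/N\Z)|=N^3\prod_{p\mid N}\left(1-\frac{1}{p^2}\right)$, which is exactly \eqref{eqn:Gammaindex}.

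For \eqref{eqn:Gamma0index}, I would identify $\Gamma_0(N)$ as the preimage under $\pi_N$ of the Borel (upper-triangular) subgroup $\overline{B}\subset\SL_2(\Z/N\Z)$, so that $[\SL_2(\Z):\Gamma_0(N)]=[\SL_2(\Z/N\Z):\overline{B}]$. Counting $\overline{B}$ is immediate: a matrix $\sm{a}{b}{0}{d}$ lies in $\overline{B}$ precisely when $a\in(\Z/N\Z)^\times$ (with $d=a^{-1}$ forced) and $b$ is arbitrary, giving $|\overline{B}|=N\phi(N)$. Dividing the order from \eqref{eqn:Gammaindex} by this and using $\phi(N)=N\prod_{p\mid N}\left(1-\frac{1}{p}\right)$ collapses the product:
\[
\frac{N^3\prod_{p\mid N}\left(1-\frac{1}{p^2}\right)}{N^2\prod_{p\mid N}\left(1-\frac{1}{p}\right)}=N\prod_{p\mid N}\frac{1-\frac{1}{p^2}}{1-\frac{1}{p}}=N\prod_{p\mid N}\left(1+\frac{1}{p}\right),
\]
which is \eqref{eqn:Gamma0index}.

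The main obstacle is really only the surjectivity of $\pi_N$; everything downstream is bookkeeping with the Chinese Remainder Theorem and elementary counting. An alternative route for \eqref{eqn:Gamma0index} that sidesteps $\overline{B}$ is to note that $\Gamma_0(N)$ is the stabilizer of the point $[0:1]\in\PP^1(\Z/N\Z)$ under the natural transitive $\SL_2(\Z)$-action, whence the index equals $|\PP^1(\Z/N\Z)|=N\prod_{p\mid N}\left(1+\frac{1}{p}\right)$; I would nonetheless keep the Borel-subgroup computation as the primary argument since it reuses \eqref{eqn:Gammaindex} directly.
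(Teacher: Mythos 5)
Your proposal is correct in all essentials. Note, however, that the paper itself offers no proof of this lemma at all: it is introduced with ``We require some well-known identities and relations involving congruence subgroups'' and stated as a known fact, so there is no argument of the authors to compare yours against. What you have written is the standard textbook proof, and it is sound: surjectivity of $\pi_N\colon \SL_2(\Z)\to\SL_2(\Z/N\Z)$ (the genuinely nontrivial step, handled by lifting the bottom row to a coprime pair, completing via B\'ezout, and correcting the top row by multiples of the bottom row), then the count $|\SL_2(\Z/p^k\Z)|=p^{3k}\left(1-\frac{1}{p^2}\right)$ combined with the Chinese Remainder Theorem gives \eqref{eqn:Gammaindex}, and identifying $\Gamma_0(N)$ as the preimage of the upper-triangular subgroup $\overline{B}$ with $|\overline{B}|=N\varphi(N)$ gives \eqref{eqn:Gamma0index} by division. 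One small caution on your closing aside: whether $\Gamma_0(N)$ stabilizes $[0:1]$ or $[1:0]$ in $\PP^1(\Z/N\Z)$ depends on whether you act on row vectors from the right or on column vectors from the left; under the common column-vector convention the stabilizer of $[1:0]$ is the relevant one (the stabilizer of $[0:1]$ is $\Gamma^0(N)$, defined by $b\equiv 0$). Since that route is only mentioned as an alternative, this does not affect your main argument.
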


\begin{lemma}\label{lem:cuspsGammaN}
Let $N\in\N$ be given. For every $\delta\mid N$, there are $\varphi\left(\frac{N}{\delta}\right)\varphi(\delta)\frac{N}{\delta}$ cusps $\rho=\gamma(i\infty)$ with $\gamma=\left(\begin{smallmatrix}a&b\\ c&d\end{smallmatrix}\right)\in \SL_2(\Z)$ for which $(c,N)=\delta$. 
\end{lemma}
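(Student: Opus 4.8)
The plan is to reduce the statement to an elementary count over residues modulo $N$ by using the standard parametrization of the cusps of $\Gamma(N)$. First I would recall that reduction modulo $N$ induces an isomorphism $\Gamma(N)\backslash\SL_2(\Z)\cong\SL_2(\Z/N\Z)$, and that the cusp $\rho=\gamma(i\infty)=a/c$ attached to $\gamma=\sm{a}{b}{c}{d}$ depends only on the coset $\Gamma(N)\gamma$ modulo right multiplication by the stabilizer $\Gamma_\infty=\langle\sm{1}{1}{0}{1}\rangle$ of $i\infty$ (see the remark on $-I$ below). Since right multiplication by $\sm{1}{n}{0}{1}$ leaves the first column $(a,c)$ unchanged, and the $N$ completions of a fixed admissible first column to an element of $\SL_2(\Z/N\Z)$ form a single $\Gamma_\infty$-orbit, the cusps are in bijection with the first columns $(a,c)\bmod N$ satisfying $\gcd(a,c,N)=1$. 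In particular $\gcd(c,N)$ is well defined on each cusp, so the lemma amounts to counting, for each fixed $\delta\mid N$, the pairs $(a,c)\bmod N$ with $\gcd(a,c,N)=1$ and $\gcd(c,N)=\delta$.

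Next I would carry out this count as a product of two independent contributions. The residues $c\bmod N$ with $\gcd(c,N)=\delta$ are exactly $c=\delta t$ with $t$ ranging over the $\varphi(N/\delta)$ residues modulo $N/\delta$ that are coprime to $N/\delta$. For any such $c$ one has $\gcd(a,c,N)=\gcd(a,\gcd(c,N))=\gcd(a,\delta)$, so the coprimality constraint on $a$ collapses to $\gcd(a,\delta)=1$; this condition depends only on $a\bmod\delta$, and each of the $\varphi(\delta)$ admissible residues modulo $\delta$ lifts to $N/\delta$ residues modulo $N$, giving $\varphi(\delta)\tfrac{N}{\delta}$ choices of $a$. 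Multiplying the two counts yields exactly $\varphi\!\left(\tfrac{N}{\delta}\right)\varphi(\delta)\tfrac{N}{\delta}$, as claimed.

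The main thing to get right is the bookkeeping in the parametrization — in particular the choice of $\Gamma_\infty$ and the associated treatment of $-I$, which fixes whether one counts first columns $(a,c)\bmod N$ with or without identifying $(a,c)$ with $(-a,-c)$; the stated formula corresponds to counting them without this identification (the natural convention in the half-integral weight setting, where $-I$ acts nontrivially). As a consistency check one can sum over all divisors: $\sum_{\delta\mid N}\varphi\!\left(\tfrac{N}{\delta}\right)\varphi(\delta)\tfrac{N}{\delta}=N^2\prod_{p\mid N}\bigl(1-\tfrac{1}{p^2}\bigr)$, which equals $[\SL_2(\Z):\Gamma(N)]/N$ by Lemma \ref{lem:GammaIndex}, i.e. the total number of first columns (equivalently, of $\Gamma_\infty$-cosets, since $\langle\sm{1}{1}{0}{1}\rangle$ has order $N$ in $\SL_2(\Z/N\Z)$ and acts freely). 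Verifying this identity reduces to the prime-power case by multiplicativity, confirming that the cusp count is complete and partitions correctly according to $\delta=\gcd(c,N)$.
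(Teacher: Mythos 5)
Your proposal is correct and takes essentially the same route as the paper: the paper's proof is precisely the count of admissible pairs $(a,c)$ modulo $N$, namely $\varphi\!\left(\frac{N}{\delta}\right)$ choices of $c=\delta c'$ with $\gcd(c',N/\delta)=1$ and $\varphi(\delta)\cdot\frac{N}{\delta}$ choices of $a$ with $\gcd(a,\delta)=1$. Your extra material — deriving the parametrization of cusps by first columns via $\Gamma(N)\backslash\SL_2(\Z)\cong\SL_2(\Z/N\Z)$, and flagging that the formula counts columns without identifying $(a,c)$ with $(-a,-c)$ (which is exactly the convention the paper needs later, where the total is taken to be $\left[\SL_2(\Z):\Gamma(N)\right]/N$) — simply makes explicit what the paper's terse proof leaves implicit.
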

\begin{proof}
Suppose that $\delta\mid c$. Writing 
\[
\rho=\frac{a}{c}=\frac{a}{\delta c'},
\]
we have $(c,N)=\delta$ if and only if $\gcd(c',N/\delta)=1$. There are hence $\varphi(N/\delta)$ choices of $c'$. Writing $a=\delta m +r$, we have $\gcd(r,\delta)=1$ and there are $\frac{N}{\delta}$ choices of $r$ modulo $N$. This gives the claim. 
\end{proof}

\subsection{Eisenstein series and the Siegel--Weil average}
In this section, we fix a vector $\bm{a}\in\N^{4}$ and consider the Eisenstein series part $E_{\bm{d}}:=E_{Q_{\bm{a}\cdot \bm{d}^2}}$ of the theta function for the diagonal form $Q_{\bm{a}\cdot \bm{d}^2}$, writing $a_{E_{\bm{d}}}(n)$ as an explicit multiple of $E_{\bm{1}}(n)$, where $\bm{1}$ is the vector of all ones.

Let $\GG(Q)$ furthermore denote a set of representatives of the classes in the genus of $Q$ and $w_Q$ denote the number of automorphs of $Q$. The Siegel--Weil average is then given by (the first identity is due to Siegel \cite{Siegel} and a generalization by Weil \cite{Weil})
\begin{equation}\label{eqn:SiegelWeil}
E_{Q}=\frac{1}{\sum_{Q'\in\GG(Q)}w_{Q'}^{-1}}  \sum_{Q'\in\GG(Q)} \frac{\Theta_{Q'}}{w_{Q'}}. 
\end{equation}

\section{An initial attempt at a finiteness theorem and the proof of Theorem \ref{thm:Nrinfinite}}\label{sec:initial}

In this section, we prove Theorem \ref{thm:Nrinfinite}.
\begin{proof}[Proof of Theorem \ref{thm:Nrinfinite}]

In order to prove the theorem, we need to show that there exists a quadratic form $Q$ which represents every positive integer up to $2^{2r+2}$ but is not universal over $P_r^{\ell}$. We choose Lagrange's quaternary form $Q(x)=\sum_{j=1}^4 x_j^2$. Since the genus of $Q$ only has one class, \eqref{eqn:SiegelWeil} becomes a single term and a computation of the local densities (or their realization as coefficients of an explicit Eisenstein series) imply that (cf. \cite[Proposition 11]{123}) that the number of representations of $n$ is precisely 
\begin{equation}\label{eqn:sumsquaresformula}
8\sum_{\substack{d\mid n\\ 4\nmid d}} d.
\end{equation}
Taking $n=2^{2r+3}$, there are precisely $24$ representations over $\Z$ and it is easy to see that the solutions are  $x_i=\pm 2^{r+1}$, $x_j=\pm 2^{r+1}$, and $x_{i'}=x_{j'}=0$, where $\{i',j'\}$ is the complement of $\{i,j\}$ in the positive integers up to $4$. 

Since every integer which is not a $P_r$ number is at least $2^{r+1}$, it is straightforward to see that every integer up to $2^{2r+2}$ is represented by $Q$ with $P_r$ numbers. 
\end{proof}

\section{Bounds on coefficients of theta functions}\label{sec:ThetaCoefficients}

\subsection{Cuspidal contribution}
In this section, we fix a vector $\bm{a}\in\N^{\ell}$ with $\ell\geq 4$ even and consider the cuspidal part $f_{\bm{a}\cdot \bm{d}^2}:=f_{Q_{\bm{a}\cdot \bm{d}^2}}$ of the theta function for the diagonal form $Q_{\bm{a}\cdot \bm{d}^2}$ in order to obtain an explicit bound on its $n$th coefficient $a_{f_{\bm{d}}}(n)$ depending only on $n$ and $\bm{d}$. We first require the following useful lemma for bounding a cusp form in terms of the  \begin{it}Petersson norm\end{it} by $\|g\|:=\sqrt{\left<g,g\right>}$, where we define  the \begin{it}Petersson inner product\end{it} between $f,g\in S_{k}(\Gamma)$ with $\Gamma\subseteq \SL_2(\Z)$ via (recalling that $\tau=u+iv$)
\[
\left<f,g\right>:=\frac{1}{\left[\SL_2(\Z): \Gamma\right]} \int_{\Gamma\backslash\H} f(\tau) \overline{g(\tau)} v^{k} \frac{du dv}{v^2}.
\]
Here we have normalized so that the inner product is independent of the choice of $\Gamma$.
\begin{lemma}\label{lem:cf(n)<norm}
 Suppose that $f\in S_{k}(\Gamma_0(N)\cap\Gamma_1(L),\psi)$ with $L\mid N$ and $\psi$ a character modulo $N$. If $f$ has the Fourier expansion $f(\tau)=\sum_{n\geq 1} c_f(n) q^n,$ then for $\alpha,\delta>0$ and $c_{\delta}$ and $C_{\delta}$ given as in Lemmas \ref{lem:cdelbnd} (1) and \ref{lem:2omega(n)bnd}, we have 
\begin{equation}\label{eqn:afbnd}
\left|c_{f}(n)\right|\leq \sqrt{\frac{\pi k}{3}} e^{2\pi} \zeta(1+4\delta)^{\frac{1}{2}}c_{\delta}^{\frac{5}{2}}\sigma_0(n)n^{\frac{k-1}{2}}\|f\| N^{1+2\delta}\prod_{p\mid N}\left(1+\frac{1}{p}\right)^{\frac{1}{2}}\varphi(L).
\end{equation}
%\label{eqn:afbnd2}&\leq \sqrt{\frac{\pi k}{3}} e^{2\pi} \zeta(1+4\delta)^{\frac{1}{2}}c_{\delta}^{3}\mathcal{D}_{\alpha}n^{\frac{k-1}{2}+\alpha}\|f\| N^{1+\frac{5}{2}\delta}\varphi(L).

In particular, for $k=2$, $\delta=10^{-6}$, and $\alpha=\frac{1}{15}$, $\alpha=\frac{1}{14}$, or $\alpha=\frac{1}{10}$, respectively, we have 
\begin{align}\label{eqn:k=2spec1}
\left|c_{f}(n)\right|&\leq 4.58\cdot 10^{128}\cdot  n^{\frac{17}{30}}\|f\| N^{1+2\cdot 10^{-6}}\prod_{p\mid N}\left(1+\frac{1}{p}\right)^{\frac{1}{2}} \varphi(L),\\
\label{eqn:k=2spec3}\left|c_{f}(n)\right|&\leq 4.39\cdot 10^{79}\cdot  n^{\frac{4}{7}}\|f\| N^{1+2\cdot 10^{-6}}\prod_{p\mid N}\left(1+\frac{1}{p}\right)^{\frac{1}{2}} \varphi(L),\\
\label{eqn:k=2spec2}\left|c_{f}(n)\right|&\leq 6.95\cdot 10^{18}\cdot  n^{\frac{3}{5}}\|f\| N^{1+2.5\times 10^{-6}}\prod_{p\mid N}\left(1+\frac{1}{p}\right)^{\frac{1}{2}}\varphi(L).
\end{align}
\end{lemma}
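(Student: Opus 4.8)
The plan is to bound $|c_f(n)|$ by passing to an orthonormal basis of $S_k(\Gamma_0(N)\cap\Gamma_1(L),\psi)$ built from Hecke newforms, applying Deligne's bound on the eigenform coefficients, and then controlling the resulting sum via Cauchy--Schwarz together with the arithmetic estimates of Section~\ref{sec:prelim}.

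First, by Atkin--Lehner--Li theory I would fix an orthogonal basis of the space consisting of the shifts $g(d\tau)$, where $g$ ranges over normalized newforms (so $a_g(1)=1$) of level $M\mid N$ whose nebentypus is induced by $\psi$, and $d$ ranges over divisors of $N/M$. Rescaling to unit Petersson norm gives an orthonormal basis $\{e_i\}$ with $e_i=g_i(d_i\tau)/\|g_i(d_i\,\cdot\,)\|$, and Parseval yields $f=\sum_i\langle f,e_i\rangle e_i$ with $\sum_i|\langle f,e_i\rangle|^2=\|f\|^2$. Extracting the $n$th coefficient and applying Cauchy--Schwarz gives
\[
|c_f(n)|\le \|f\|\Bigl(\sum_{i:\,d_i\mid n}\frac{|a_{g_i}(n/d_i)|^2}{\|g_i(d_i\,\cdot\,)\|^2}\Bigr)^{1/2}.
\]
Deligne's bound $|a_{g_i}(m)|\le\sigma_0(m)m^{(k-1)/2}$ then contributes the factor $\sigma_0(n)n^{(k-1)/2}$ after the crude estimate $\sigma_0(n/d)(n/d)^{(k-1)/2}\le\sigma_0(n)n^{(k-1)/2}$, and it remains to bound $\sum_{i}\|g_i(d_i\,\cdot\,)\|^{-2}$.

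The analytic heart of the argument is an explicit, uniform lower bound for the Petersson norm $\|g_i(d_i\,\cdot\,)\|$ of an oldform shift, combined with a count of the number of relevant basis elements $e_i$. For the count I would use that $\dim S_k(\Gamma_0(N)\cap\Gamma_1(L),\psi)\asymp\tfrac{k}{12}\,[\SL_2(\Z):\Gamma_0(N)\cap\Gamma_1(L)]$, whose index is $N\prod_{p\mid N}(1+1/p)$ times a factor $\varphi(L)$ coming from the $\Gamma_1(L)$ condition (Lemma~\ref{lem:GammaIndex}), while the sum over the divisor-shifts $d_i$ is controlled by a shifted zeta value $\zeta(1+4\delta)$. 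Taken together with the norm lower bound, and after converting the Euler products $\prod_{p\mid N}(1+1/p)$ into powers of $N$ by Lemma~\ref{lem:cdelbnd} (which introduces the constants $c_\delta$ and the exponent shift $2\delta$), the square root of this combined count produces precisely the factors $\sqrt{\pi k/3}$, $\zeta(1+4\delta)^{1/2}$, $c_\delta^{5/2}$, $N^{1+2\delta}$, $\prod_{p\mid N}(1+1/p)^{1/2}$ and $\varphi(L)$.

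The main obstacle is the explicit lower bound on the newform Petersson norm, uniform in the weight $k$ and the level. I would obtain it geometrically, restricting the index-normalized Petersson integral to a neighbourhood $\{v\ge Y\}$ of the cusp $i\infty$, where $g_i(\tau)=q+O(q^2)$, so that $\int_0^1|g_i|^2\,du$ is dominated by the $|q|^2=e^{-4\pi v}$ term once $Y$ is taken large enough for the leading coefficient to control the tail; the elementary estimate of Lemma~\ref{lem:expsum} on the higher Fourier coefficients makes this dominance explicit, and the resulting tail integral $\int_Y^\infty e^{-4\pi v}v^{k-2}\,dv$ is responsible for the $e^{2\pi}$ factor. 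The delicate point is keeping every constant uniform in $k$ while the higher coefficients of $g_i$ are only controlled by the trivial Hecke bound. Finally, specializing to $k=2$ and $\delta=10^{-6}$ and replacing $\sigma_0(n)$ by $\mathcal{C}_\alpha n^\alpha$ via Lemma~\ref{lem:sigma0bnd} for $\alpha\in\{1/15,1/14,1/10\}$ yields the exponents $\tfrac12+\alpha\in\{17/30,4/7,3/5\}$ together with the three displayed numerical constants.
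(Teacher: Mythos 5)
Your overall architecture (newform-based basis, Deligne's bound, Cauchy--Schwarz, valence formula for the dimension count, an explicit Petersson-norm lower bound, then Lemma \ref{lem:sigma0bnd} for the numerical specializations) parallels the paper's proof, but there is a genuine gap at the very first step: the shifts $g(d\tau)$, $d\mid N/M$, of a newform $g$ of level $M$ are \emph{not} pairwise orthogonal with respect to the Petersson inner product. Atkin--Lehner--Li theory gives orthogonality only between the subspaces attached to distinct newforms; within the span attached to a single $g$, the inner product $\left<g(d_1\tau),g(d_2\tau)\right>$ is proportional to Hecke eigenvalues of $g$ and is in general nonzero. Consequently your vectors $e_i$ are unit vectors but not an orthonormal system, the expansion $f=\sum_i\left<f,e_i\right>e_i$ and the Parseval identity $\sum_i|\left<f,e_i\right>|^2=\|f\|^2$ both fail, and the ensuing Cauchy--Schwarz step is unjustified. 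Repairing this requires an explicit Gram--Schmidt orthonormalization of each oldform span \emph{together with coefficient bounds for the orthonormalized vectors}; that is exactly the content of the Schulze-Pillot--Yenirce result \cite{S-PY} which the paper invokes, whose basis elements $F_{g,m}$ satisfy $|a_{F_{g,m}}(n)|\leq \sigma_0(n)n^{\frac{k-1}{2}}\|g\|^{-1}m^{\frac{1}{2}}\prod_{p\mid m}\left(1+\frac{1}{p}\right)^2$. The factor $m^{\frac{1}{2}}\prod_{p\mid m}\left(1+\frac{1}{p}\right)^2$, absent from your sketch, is what produces --- after summing over $m\mid \frac{N}{M}$ via Lemma \ref{lem:cdelbnd} --- the constants $c_{\delta}^{5/2}$, $\zeta(1+4\delta)^{1/2}$ and the exponent $N^{1+2\delta}$ in \eqref{eqn:afbnd}; without it your bookkeeping cannot yield the stated bound.

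Two secondary points. First, the step you flag as delicate in the norm lower bound is not delicate: since every term of $\sum_{n}|a_g(n)|^2\int_Y^\infty e^{-4\pi nv}v^{k-2}\,dv$ is nonnegative, you may simply discard all terms except $n=1$ (where $a_g(1)=1$), so no domination argument and no control on the higher coefficients is needed; this immediately recovers the bound $\|g\|^2\geq \left(4\pi e^{4\pi}\left[\SL_2(\Z):\Gamma_0(M)\right]\right)^{-1}$ that the paper instead quotes from Fomenko \cite{Fomenko}. Second, the paper handles the $\Gamma_1(L)$-structure via Cho's decomposition of $S_k(\Gamma_0(N)\cap\Gamma_1(L),\psi)$ into the $\varphi(L)$ nebentypus spaces $S_k(\Gamma_0(N),\psi\chi)$ rather than through the index of $\Gamma_0(N)\cap\Gamma_1(L)$ as you propose; either route produces the factor $\varphi(L)$, so this difference is harmless.
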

\begin{proof}
For $M\mid N$, let $H_k^{\operatorname{new}}(M,\chi)$ denote the set of normalized newforms of weight $k$ and level $M$ with Nebentypus $\chi$. Schulze-Pillot and Yenirce \cite{S-PY} constructed an explicit orthonormal basis $\{F_{g,m} : g\in H_k^{\operatorname{new}}(M,\chi), M\mid N, m\mid \frac{N}{M}\}$ with respect to the Petersson inner product on $S_{k}(N,\chi)$ such that (using a bound of Deligne \cite{Deligne})
\[
\left|a_{F_{g,m}}(n)\right|\leq \frac{\sigma_0(n) n^{\frac{k-1}{2}}}{\|g\|} m^{\frac{1}{2}}\prod_{p\mid m}\left(1+\frac{1}{p}\right)^2. 
\]
Since this basis is orthonormal and \cite[Theorem 2.5]{Cho} implies that
\[
S_{k}(\Gamma_0(N)\cap\Gamma_1(L),\psi)=\bigoplus_{\chi\pmod{L}} S_{k}(\Gamma_0(N),\psi\chi),
\]
 we have
\[
f=\sum_{\chi\pmod{L}} \sum_{M\mid N}  \sum_{g\in H_{k}^{\operatorname{new}}(M,\psi\chi)}\sum_{m\mid \frac{N}{M}} \left<f,F_{g,m}\right> F_{g,m}.
\]
Thus, noting that 
\[
\|f\|^2 = \sum_{\chi\pmod{L}}\sum_{M\mid N} \sum_{g\in H_{k}^{\operatorname{new}}(M,\psi\chi)}\sum_{m\mid \frac{N}{M}} \left|\left<f,F_{g,m}\right>\right|^2,
\]
we may use the Cauchy--Schwartz inequality to obtain 
\begin{align*}
\left|c_{f}(n)\right|&\leq \sum_{\chi\pmod{L}} \sum_{M\mid N} \sum_{g\in H_{k}^{\operatorname{new}}(M,\psi\chi)}\sum_{m\mid \frac{N}{M}} \left|\left<f_{Q,\bm{d}},F_{g,m}\right>\right|\left| c_{F_{g,m}}(n)\right|\\
&\leq \sigma_0(n)n^{\frac{k-1}{2}}\|f\|\left(\sum_{\chi\pmod{L}}\sum_{M\mid N} \sum_{m\mid \frac{N}{M}}m\prod_{p\mid m}\left(1+\frac{1}{p}\right)^{4}\sum_{g\in H_{k}^{\operatorname{new}}(M,\psi\chi)}\frac{1}{\|g\|^2}\right)^{\frac{1}{2}}.
\end{align*}
We then use the bound of Fomenko \cite{Fomenko} 
\[
\|g\|^2\geq \frac{1}{4\pi e^{4\pi}\left[\SL_2(\Z): \Gamma_0(M)\right]}
\]
together with \eqref{eqn:Gamma0index} to bound $\left|c_{f}(n)\right|$ from above by 
\begin{equation}\label{eqn:cf(n)<norm2}
%\sqrt{4\pi} e^{2\pi}
\frac{\sigma_0(n)n^{\frac{k-1}{2}}\|f\|}{(4\pi)^{-\frac{1}{2}}e^{-2\pi}}
\left(\sum_{\chi\pmod{L}}\sum_{M\mid N}M\prod_{p\mid M}\left(1+\frac{1}{p}\right) \sum_{m\mid \frac{N}{M}}m\prod_{p\mid m}\left(1+\frac{1}{p}\right)^{4} \#H_{k}^{\operatorname{new}}(M,\psi\chi)\right)^{\frac{1}{2}}.
\end{equation}
Plugging in Lemma \ref{lem:cdelbnd} (1) and making the change of variables $m\to\frac{N}{Mm}$, we have 
\begin{align*}
\left|c_{f}(n)\right|&\leq \sqrt{4\pi} e^{2\pi}c_{\delta}^{\frac{5}{2}}\sigma_0(n)n^{\frac{k-1}{2}}\|f\| N^{\frac{1}{2}+2\delta} \left(\sum_{\chi\pmod{L}}\sum_{M\mid N}M^{-3\delta}\#H_{k}^{\operatorname{new}}(M,\psi\chi) \sum_{m\mid \frac{N}{M}} m^{-1-4\delta} \right)^{\frac{1}{2}}\\
&\leq \sqrt{4\pi} e^{2\pi}c_{\delta}^{\frac{5}{2}}\sigma_0(n)n^{\frac{k-1}{2}}\|f\| N^{\frac{1}{2}+2\delta} \sigma_{-1-4\delta}(N)^{\frac{1}{2}}\left(\sum_{\chi\pmod{L}}\sum_{M\mid N}\#H_{k}^{\operatorname{new}}(M,\psi\chi)\right)^{\frac{1}{2}}.
\end{align*}
Since $\#H_{k}^{\operatorname{new}}(M,\psi\chi)$ equals the dimension of the new space, the valence formula,  \eqref{eqn:Gamma0index}, and \eqref{eqn:cdelbnd} implies that 
\[
\sum_{M\mid N}\#H_{k}^{\operatorname{new}}(M,\psi\chi) \leq \dim_{\C} \left(S_{k}(N,\psi\chi)\right)\leq  \frac{k}{12}\left[\SL_2(\Z):\Gamma_0(N)\right]=\frac{k}{12} N \prod_{p\mid N}\left(1+\frac{1}{p}\right).
\]
Since this bound is independent of $\chi$, we may then bound the sum over $\chi$ by the number of characters modulo $L$, which is $\varphi(L)$. Bounding 
\[
\sigma_{-1-4\delta}(N)=\sum_{m\mid N} m^{-1-4\delta}\leq \sum_{m\geq 1} m^{-1-4\delta}=\zeta(1+4\delta),
\]
we hence obtain \eqref{eqn:afbnd}. %Using Lemma \ref{lem:cdelbnd} (1) again together with Lemma \ref{lem:2omega(n)bnd} yields \eqref{eqn:afbnd2}.
 
Choosing $k=2$, $\delta=10^{-6}$, and $\alpha=\frac{1}{15}$ or $\alpha=\frac{1}{10}$ in particular and plugging in \eqref{eqn:delpart} together with $\zeta(1+4\cdot 10^{-6})<250000.6$, the evaluations of $c_{\delta}$ in Lemma \ref{lem:cdelbnd} (1) and $\mathcal{C}_{\alpha}$ in Lemma \ref{lem:sigma0bnd} imply \eqref{eqn:k=2spec1} and \eqref{eqn:k=2spec2}.
\end{proof}

\begin{lemma}\label{lem:cuspbound}
Suppose that the level of $Q_{\bm{a}\cdot \bm{d}^2}$ is $N=N_{\bm{a}\cdot \bm{d}^2}$ and abbreviate $\Delta=\Delta_{\bm{a}\cdot \bm{d}^2}:=\Delta_{Q_{\bm{a}\cdot \bm{d}^2}}$. 
\noindent

\noindent
\begin{enumerate}[leftmargin=*,label={\rm(\arabic*)}]
\item We have 
\begin{multline}\label{eqn:afQbndgen}
\left|a_{f_{\bm{a}\cdot \bm{d}^2}}(n)\right|\leq \sqrt{\frac{2^{2\ell-1}\pi \ell}{3}} e^{2\pi} \zeta(1+4\delta)^{\frac{1}{2}}c_{\delta}^{\frac{5}{2}}\mathcal{C}_{\alpha}n^{\frac{k-1}{2}+\alpha}N^{\frac{3}{2}+2\delta} \frac{3^{\ell-1}\sqrt{\left(\frac{\ell}{2}-2\right)!}}{2^{\frac{\ell}{4}-2}\sqrt{\pi}}\sigma_{-1}(N)^{\frac{1}{2}}\\
\times 
\left(\sum_{m=0}^{\frac{\ell}{2}-2} \frac{\left(\frac{2\pi}{N}\right)^{-m}}{\left(\frac{\ell}{2}-2-m\right)!}(\ell-m-2)!\left(\frac{N}{\pi\delta_N}\right)^{\ell-m-1}\left(\frac{9\pi \delta_N \Delta}{N^{\ell-1}}(\ell-m-1)+\ell^2\right)\right)^{\frac{1}{2}}.
\end{multline}
\item 
In the special case that $\ell=4$, we have 
\begin{align*}
\left|a_{f_{\bm{a}\cdot \bm{d}^2}}(n)\right|&\leq \sqrt{\frac{512\pi }{3}} e^{2\pi} \zeta(1+4\cdot 10^{-6})^{\frac{1}{2}}c_{10^{-6}}^{\frac{5}{2}}\mathcal{C}_{\alpha}n^{\frac{1}{2}+\alpha}N^{\frac{3}{2}+2\cdot 10^{-6}}\sigma_{-1}(N)^{\frac{1}{2}} \frac{54\sqrt{2}}{\pi^2}\left(27\pi \Delta+16N^3\right)^{\frac{1}{2}}\\
&\leq 20591008784 \mathcal{C}_{\alpha}\mathcal{G}_{\alpha'}^{\frac{1}{2}} n^{\frac{1}{2}+\alpha}N^{\frac{3}{2}+2\cdot 10^{-6}+\frac{\alpha}{2}} \left(27\pi \Delta+16N^3\right)^{\frac{1}{2}},\\
\left|a_{f_{\bm{a}\cdot \bm{d}^2}}(n)\right|&\leq 1.797\times 10^{21} n^{\frac{3}{5}}N^{\frac{3}{2}+2\cdot 10^{-6}+\frac{1}{200}}\left(27\pi \Delta+16N^3\right)^{\frac{1}{2}},\\
\left|a_{f_{\bm{a}\cdot\bm{d}^2}}(n)\right|&\leq 1.134\times 10^{82} n^{\frac{4}{7}}N^{\frac{3}{2}+2\cdot 10^{-6}+\frac{1}{200}}\left(27\pi \Delta+16N^3\right)^{\frac{1}{2}},\\
\left|a_{f_{\bm{a}\cdot\bm{d}^2}}(n)\right|&\leq 1.184\times 10^{131} n^{\frac{17}{30}}N^{\frac{3}{2}+2\cdot 10^{-6}+\frac{1}{200}}\left(27\pi \Delta+16N^3\right)^{\frac{1}{2}}.
\end{align*}
\end{enumerate}
\item 
\end{lemma}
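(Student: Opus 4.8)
The plan is to apply Lemma \ref{lem:cf(n)<norm} to the cusp form $f=f_{\bm a\cdot\bm d^2}$, which has weight $k=\ell/2$ on $\Gamma_0(N)$ with a quadratic Nebentypus (so in the notation there $L=1$ and $\varphi(L)=1$), and thereby reduce the whole estimate to a bound on the Petersson norm $\|f\|$. Substituting $\sigma_0(n)\le\mathcal C_\alpha n^\alpha$ from Lemma \ref{lem:sigma0bnd} into \eqref{eqn:afbnd} already produces the prefactor $\mathcal C_\alpha n^{(k-1)/2+\alpha}$ and the factor $N^{1+2\delta}$ appearing in \eqref{eqn:afQbndgen}; the remaining factors $N^{1/2}$, $\sigma_{-1}(N)^{1/2}$, and the entire sum over $m$ must come from an explicit upper bound for $\|f\|^2$. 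So the real content of the lemma is an inequality $\|f\|^2\le(\cdots)$ whose square root is the displayed tail of \eqref{eqn:afQbndgen}, and this is what I would concentrate on.

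To bound $\|f\|^2$ I would write $\|f\|^2=\frac1{[\SL_2(\Z):\Gamma]}\int_{\Gamma\backslash\H}|f|^2v^k\,\frac{du\,dv}{v^2}$ and decompose a fundamental domain for $\Gamma=\Gamma_0(N)$ into translates $\gamma_j\mathcal F$ of the standard domain $\mathcal F$ for $\SL_2(\Z)$, one for each of the $[\SL_2(\Z):\Gamma_0(N)]$ cosets. After the change of variables $\tau\mapsto\gamma_j\tau$ each piece becomes $\int_{\mathcal F}|f|_k\gamma_j|^2v^k\,\frac{du\,dv}{v^2}$, where now $v\ge\sqrt3/2$ is bounded away from $0$; this lower bound is essential, since the crucial feature of $f$ is that $f|_k\gamma_j$ has vanishing constant term at every cusp (the constant terms of $\Theta_Q$ are, by construction, exactly matched by those of $E_Q$), so $\int_{|u|\le1/2}|f|_k\gamma_j|^2\,du$ is an exponentially decaying function of $v$ and the $v$-integral converges. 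I would then bound $|f|_k\gamma_j|$ by the tail of its Fourier expansion and estimate the coefficients: writing $f=\Theta_Q-E_Q$ and invoking the Siegel--Weil average \eqref{eqn:SiegelWeil} to realize $E_Q$ as a convex combination of theta series of the genus, the coefficient partial sums of $f|_k\gamma_j$ are controlled by the representation numbers of the forms $Q'\in\GG(Q)$ transformed to the cusp. These partial sums are exactly what Lemma \ref{lem:Blomer} bounds: the two terms $(3\sqrt n)^\ell/\sqrt\Delta$ and $\ell(3\sqrt n)^{\ell-1}$ of $R_Q$ are the source of the two terms $\frac{9\pi\delta_N\Delta}{N^{\ell-1}}(\ell-m-1)$ and $\ell^2$ in \eqref{eqn:afQbndgen} (the determinant migrating to the numerator through the dual lattice appearing at the cusp $0$), while the factor $3^{\ell-1}$ out front is inherited directly from $R_Q$.

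With these coefficient bounds in hand, I would carry out the remaining two summations. The radial integrals $\int_{\sqrt3/2}^\infty(\cdots)v^{k-2}\,dv$ reduce, after expanding the power $v^{k-2}=v^{\ell/2-2}$ and collecting terms, to sums of the shape $\sum_n n^{j}e^{-c n}$, which Lemma \ref{lem:expsum} evaluates explicitly; this is precisely where the factorials $(\ell-m-2)!$ and $(\ell/2-2-m)!^{-1}$, the powers $(N/\pi\delta_N)^{\ell-m-1}$, and the index $m$ running from $0$ to $\ell/2-2$ all originate. The outer summation over the cosets $\gamma_j$, organized by the width of the corresponding cusp and counted via Lemma \ref{lem:cuspsGammaN}, collapses to the divisor sum $\sigma_{-1}(N)$ together with the outstanding power of $N$. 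Combining this bound for $\|f\|^2$ with the reduction of the first paragraph yields \eqref{eqn:afQbndgen}. Part~(2) is then the specialization $\ell=4$, $k=2$: the sum over $m$ degenerates to the single term $m=0$, giving the factor $(27\pi\Delta+16N^3)^{1/2}$, and one bounds $\sigma_{-1}(N)^{1/2}\le\mathcal G_{\alpha'}^{1/2}N^{\alpha'/2}$ by Lemma \ref{lem:sigma-1bnd} and inserts the numerical values of $\mathcal C_\alpha$ and $\mathcal G_{\alpha'}$ from Lemmas \ref{lem:sigma0bnd} and \ref{lem:sigma-1bnd} for the three choices of $\alpha$, consolidating the constants.

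I expect the main obstacle to be the explicit cusp analysis underlying the second paragraph: one must track, uniformly over all cusps of $\Gamma_0(N)$, the width of each cusp, the scaling factor (of size $\Delta^{-1/2}$) produced by the modular transformation of $\Theta_Q$, and the precise transformed lattice whose representation numbers control the Fourier coefficients of $f|_k\gamma_j$, and then bound all of these by $R_Q$ via Lemma \ref{lem:Blomer} and sum the width-dependent contributions so as to produce $\sigma_{-1}(N)$ and the correct power of $N$ --- all while keeping every constant fully explicit, which is what makes the bookkeeping delicate rather than merely the individual estimates.
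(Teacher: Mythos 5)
Your proposal follows essentially the same route as the paper's proof: reduce to bounding $\|f\|$ via Lemma \ref{lem:cf(n)<norm}, use the Siegel--Weil average \eqref{eqn:SiegelWeil} to write $f_{\bm{a}\cdot\bm{d}^2}$ as a weighted combination of theta differences, unfold the Petersson norm over cosets, bound the cuspidal Fourier coefficients by representation numbers via Lemma \ref{lem:Blomer} (the paper outsources the cusp-transformation bookkeeping you flag as the ``main obstacle'' to a cited lemma of Waibel), evaluate the radial integrals with Lemma \ref{lem:expsum}, and collapse the cusp sum to $\sigma_{-1}(N)$ via Lemma \ref{lem:cuspsGammaN}, then specialize to $\ell=4$. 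The one deviation is that you unfold over $\Gamma_0(N)$, whereas the paper unfolds over the principal congruence subgroup $\Gamma(N)$, which is exactly what makes Lemma \ref{lem:cuspsGammaN} (a count of $\Gamma(N)$-cusps, all of width $N$) applicable as you intend; over $\Gamma_0(N)$ the cusps have varying widths and that lemma does not apply, so your sketch should be carried out on $\Gamma(N)$ as in the paper.
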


\begin{proof}
(1)  We consider $\bm{a}$ to be fixed and write $f_{\bm{d}}$ for $f_{\bm{a}\cdot \bm{d}^2}$ and $\Theta_{\bm{d}}$ for $\Theta_{Q_{\bm{a}\cdot \bm{d}^2}}$. Using Lemma \ref{lem:cf(n)<norm}, it remains to bound $\|f_{\bm{d}}\|$.  By \eqref{eqn:SiegelWeil}, we may write
\[
f_{\bm{d}}= E_{\bm{d}}-\Theta_{\bm{d}}=\frac{1}{\sum_{Q\in \GG(Q_{\bm{a}\cdot \bm{d}^2})}w_Q^{-1}}\sum_{Q\in \GG(Q_{\bm{a}\cdot \bm{d}^2})} \frac{\Theta_Q-\Theta_{\bm{d}}}{w_Q}. 
\]
We set $f_{Q,\bm{d}}:=\Theta_{\bm{d}}-\Theta_Q$ and use an argument of Blomer \cite{Blomer} to bound $\|f_{Q,\bm{d}}\|$ independent of $Q$.  Specifically, we need an explicit version of \cite[Lemma 4.2]{Blomer}. For ease of notation, set
\begin{align*}
\mu_0(N)&:=\left[\SL_2(\Z):\Gamma_0(N)\right],& \mu(N)&:=\left[\SL_2(\Z):\Gamma(N)\right],\\
\mathcal{F}_0(N)&:=\Gamma_0(N)\backslash\H& \mathcal{F}(N)&:=\Gamma(N)\backslash\H,& \mathcal{F}:=\SL_2(\Z)\backslash\H.
\end{align*}
Letting $\{\gamma_1,\dots,\gamma_{\mu(N)}\}$ denote a set of representatives of $\SL_2(\Z)/\Gamma(N)$, since $f_{Q,\bm{d}}\in S_{\ell/2}(\Gamma(N))$, 
\begin{align}
\nonumber \|f_{Q,\bm{d}}\|^2 &
%= \frac{1}{\mu_0(N)}\int_{\mathcal{F}_0(N)}|f_{Q,\bm{d}}(\tau)|^2 v^{\frac{\ell}{2}} \frac{du dv}{v^2}
= \frac{1}{\mu(N)}\int_{\mathcal{F}(N)}|f_{Q,\bm{d}}(\tau)|^2 v^{\frac{\ell}{2}} \frac{du dv}{v^2}
%\nonumber &
= \frac{1}{\mu(N)}\sum_{j=1}^{\mu(N)} \int_{\gamma_j\mathcal{F}}|f_{Q,\bm{d}}(\tau)|^2 v^{\frac{\ell}{2}} \frac{du dv}{v^2}\\
&= \frac{1}{\mu(N)}\sum_{j=1}^{\mu(N)} \int_{\mathcal{F}}|f_{Q,\bm{d}}(\gamma_j\tau)|^2 \im(\gamma_j\tau)^{\frac{\ell}{2}} \frac{du dv}{v^2}\label{eqn:innerprod}= \frac{1}{\mu(N)}\sum_{j=1}^{\mu(N)} \int_{\mathcal{F}}\left|f_{Q,\bm{d}}\big|_{\frac{\ell}{2}}\gamma_j(\tau)\right|^2 v^{\frac{\ell}{2}} \frac{du dv}{v^2}.
\end{align}
The cusp width at each cusp is precisely $N$, so we may write the cusps as $\rho_{1},\dots, \rho_{\mu(N)/N}$. For the cusp $\rho_j=\gamma_j(i\infty)$ and $T:=\left(\begin{smallmatrix}1&1\\ 0&1\end{smallmatrix}\right)$, the set
\[
\left\{\gamma_j T^r: 0\leq N-1\right\}
\]
gives a set of representatives of the cosets in $\SL_2(\Z)/\Gamma(N)$ which send $i\infty$ to $\rho_j$. We may thus rewrite the right-hand side of \eqref{eqn:innerprod} as 
\[
\frac{1}{\mu(N)}\sum_{j=1}^{\mu(N)/N} \sum_{r=1}^N \int_{\mathcal{F}}\left|f_{Q,\bm{d}}\big|_{\frac{\ell}{2}}\gamma_jT^r(\tau)\right|^2 v^{\frac{\ell}{2}} \frac{du dv}{v^2}=\frac{1}{\mu(N)}\sum_{j=1}^{\mu(N)/N} \int_{\bigcup_{r=1}^N T^r\mathcal{F}}\left|f_{Q,\bm{d}}\big|_{\frac{\ell}{2}}\gamma_j(\tau)\right|^2 v^{\frac{\ell}{2}} \frac{du dv}{v^2}.
\]
Denoting the cusps with $\gcd(c,N)=\delta$ by $\rho_{\delta,j}=\gamma_{\delta,j}(i\infty)$, Lemma \ref{lem:cuspsGammaN} implies that we have
\[
\frac{1}{\mu(N)}\sum_{\delta\mid N} \sum_{j=1}^{\varphi\left(\frac{N}{\delta}\right)\varphi(\delta)\frac{N}{\delta}} \int_{\bigcup_{r=1}^N T^r\mathcal{F}}\left|f_{Q,\bm{d}}\big|_{\frac{\ell}{2}}\gamma_{\delta,j}(\tau)\right|^2 v^{\frac{\ell}{2}} \frac{du dv}{v^2}.
\]
Letting (with $q:=e^{2\pi i \tau}$)
\[
f_{\delta,j}:=f_{Q,\bm{d}}\big|_{\frac{\ell}{2}}\gamma_{\delta,j}=\sum_{n=1}^{\infty} a_{\delta,j}(n) q^{\frac{n}{N}}
\]
 and noting that 
\[
\bigcup_{r=1}^N T^r\mathcal{F}\subseteq \left\{\tau: -\frac{1}{2}\leq u\leq N-\frac{1}{2}, v\geq \frac{1}{2}\right\},
\]
we then bound the integral by 
\begin{multline}
 \int_{\bigcup_{r=1}^N T^r\mathcal{F}}\left|f_{\delta_j}(\tau)\right|^2 v^{\frac{\ell}{2}} \frac{du dv}{v^2}\leq \int_{\frac{1}{2}}^{\infty} \int_{-\frac{1}{2}}^{N-\frac{1}{2}}\left|\sum_{n=1}^{\infty} a_{\delta,j}(n) q^{\frac{n}{N}}\right|^2 v^{\frac{\ell}{2}} \frac{du dv}{v^2}\\
=\sum_{n=1}^{\infty}\sum_{m=1}^{\infty} a_{\delta,j}(n)\overline{a_{\delta,j}(m)} \int_{\frac{1}{2}}^{\infty} e^{-\frac{2\pi (n+m)v}{N}} v^{\frac{\ell}{2}-2} dv  \int_{-\frac{1}{2}}^{N-\frac{1}{2}}e^{2\pi i \frac{(n-m)}{N}u}  du\\
=N\sum_{n=1}^{\infty} |a_{\delta,j}(n)|^2 \int_{\frac{1}{2}}^{\infty} e^{-\frac{4\pi nv}{N}} v^{\frac{\ell}{2}-2} dv\label{eqn:innerexpand}=N\sum_{n=1}^{\infty} |a_{\delta,j}(n)|^2\left(\frac{N}{4\pi n}\right)^{\frac{\ell}{2}-1}\Gamma\left(\frac{\ell}{2}-1,\frac{2\pi n}{N}\right),
\end{multline}
where $\Gamma(s,x):=\int_{x}^{\infty} t^{s-1}e^{-t} dt$ is the \begin{it}incomplete gamma function\end{it}. We use \cite[Lemma 12]{Waibel} to bound
\[
a_{\delta,j}(n)=b_{Q_{\bm{a}},\delta,j}(n)-b_{Q,\delta,j}(n),
\]
where $b_{Q,\delta,j}(n)$ is the coefficient of the theta function $\Theta_Q$ at the corresponding cusp. Trivially bounding $\det(D)\leq 2^{\ell} \Delta_Q$ for the diagonal form defined before \cite[Lemma 11]{Waibel}, \cite[Lemma 12]{Waibel} implies that 
\[
\left|b_{Q,\delta,j}(n)\right|\leq 2^{\ell}r_S(n),
\]
where $S$ is an integral $\ell$-ary quadratic form of discriminant $\Delta_Q$ and level $\leq N_Q$.  Using  Lemma \ref{lem:Blomer}, we have 
\[
\left|a_{\delta,j}(n)\right|^2\leq 4^{\ell+1}\left(\frac{\left(3\sqrt{n}\right)^{\ell}}{\sqrt{N^{\ell}/\Delta_Q}} + \ell\left(3\sqrt{n}\right)^{\ell-1}\right)^2\leq 2^{2\ell+3}3^{2\ell-2}n^{\ell-1}\left(\frac{9\Delta_Q}{N^{\ell}}n+ \ell^2\right).
\]
Here we have used the inequality $(x+y)^2\leq 2x^2+2y^2$ in the last step. Noting that $\ell\geq 4$ is even, we have (for example, see \cite[8.4.8]{NIST})
\begin{align*}
\left(\frac{N}{2\pi n}\right)^{\frac{\ell}{2}-2}\Gamma\left(\frac{\ell}{2}-1,\frac{2\pi n}{N}\right) &= \frac{\left(\frac{\ell}{2}-2\right)!}{e^{\frac{2\pi n}{N}}}\sum_{m=0}^{\frac{\ell}{2}-2}\frac{\left(\frac{2\pi n}{N}\right)^{m+2-\frac{\ell}{2}}}{m!}=\frac{\left(\frac{\ell}{2}-2\right)!}{e^{\frac{2\pi n}{N}}}\sum_{m=0}^{\frac{\ell}{2}-2}\frac{\left(\frac{2\pi n}{N}\right)^{-m}}{\left(\frac{\ell}{2}-2-m\right)!}.%\\
%&\leq \left(\frac{\ell}{2}-2\right)!e^{-\frac{2\pi n}{N}}\max\left(\left(\frac{2\pi n}{N}\right)^{2-\frac{\ell}{2}},1\right) \sum_{m=0}^{\frac{\ell}{2}-2}\frac{1}{m!}.
\end{align*}
%Setting $K(x);=\frac{2^{-x}}{\pi}\Gamma(x+1)\sum_{0\leq m\leq x} \frac{1}{m!}$ (note that the sum may be bounded between $1$ and $e$ for $x\geq 0$ and the finite sum may be explicitly computed for fixed $x$) and 
Plugging back into \eqref{eqn:innerexpand} yields
\[
\|f_{Q,\bm{d}}\|^2\leq \frac{3^{2\ell-2}2^{\frac{3\ell}{2}+3}\left(\frac{\ell}{2}-2\right)!}{\pi} \frac{N^2}{\mu(N)}\sum_{\delta\mid N}\sum_{j=1}^{\varphi\left(\frac{N}{\delta}\right)\varphi(\delta)\frac{N}{\delta}} \sum_{m=0}^{\frac{\ell}{2}-2} \frac{\left(\frac{2\pi}{N}\right)^{-m}}{\left(\frac{\ell}{2}-2-m\right)!}\sum_{n=1}^{\infty} \frac{n^{\ell-2-m}}{e^{\frac{2\pi n}{N}}}\left(\frac{9\Delta_Q}{N^{\ell}}n + \ell^2\right).
\]
Noting that the sum over $j$ is now independent of $j$ and plugging in Lemma \ref{lem:expsum} to bound the inner sum, we obtain 
\begin{multline}\label{eqn:fQdNormbnd2}
\|f_{Q,\bm{d}}\|^2\leq \frac{3^{2\ell-2}2^{\frac{3\ell}{2}+4}\left(\frac{\ell}{2}-2\right)!}{2^{\frac{\ell}{2}-4}\pi} \frac{N^2}{\mu(N)}\sum_{m=0}^{\frac{\ell}{2}-2} \frac{\left(\frac{2\pi}{N}\right)^{-m}}{\left(\frac{\ell}{2}-2-m\right)!}(\ell-m-2)!\left(\frac{N}{\pi\delta_N}\right)^{\ell-m-1}\\
\times \left(\frac{9\pi \delta_N \Delta_Q}{N^{\ell-1}}(\ell-m-1)+\ell^2\right)\sum_{\delta\mid N} \frac{N}{\delta}\varphi\left(\frac{N}{\delta}\right)\varphi(\delta).
\end{multline}
Using the formula $\varphi(M)=M\prod_{p\mid M}\left(1-\frac{1}{p}\right)$, the sum over $\delta$ may be rewritten as 
\begin{align*}
\sum_{\delta\mid N}\frac{N}{\delta}\varphi(\delta)\varphi\left(\frac{N}{\delta}\right)&=N^2 \prod_{p\mid N}\left(1-\frac{1}{p}\right) \sum_{\delta\mid N}\frac{1}{\delta} \prod_{p\mid \gcd\left(\delta,\frac{N}{\delta}\right)}\left(1-\frac{1}{p}\right)\\
&=N\varphi(N)\sum_{\delta\mid N}\frac{1}{\delta} \prod_{p\mid \gcd\left(\delta,\frac{N}{\delta}\right)}\left(1-\frac{1}{p}\right)\leq N\varphi(N)\sigma_{-1}(N).
\end{align*}
Thus \eqref{eqn:fQdNormbnd2} becomes
\begin{multline*}
\|f_{Q,\bm{d}}\|^2\leq \frac{3^{2\ell-2}2^{\frac{3\ell}{2}+4}\left(\frac{\ell}{2}-2\right)!}{\pi} \frac{N^3\varphi(N)\sigma_{-1}(N)}{\mu(N)}\sum_{m=0}^{\frac{\ell}{2}-2} \frac{\left(\frac{2\pi}{N}\right)^{-m}}{\left(\frac{\ell}{2}-2-m\right)!}(\ell-m-2)!\left(\frac{N}{\pi\delta_N}\right)^{\ell-m-1}\\
\times \left(\frac{9\pi \delta_N \Delta_Q}{N^{\ell-1}}(\ell-m-1)+\ell^2\right).
\end{multline*}
Plugging in \eqref{eqn:Gammaindex}, we obtain 
\begin{multline}\label{eqn:fQdbndoverall}
\|f_{Q,\bm{d}}\|^2\leq \frac{3^{2\ell-2}2^{\frac{3\ell}{2}+4}\left(\frac{\ell}{2}-2\right)!}{\pi} \frac{N\sigma_{-1}(N)}{\prod_{p\mid N} \left(1+\frac{1}{p}\right)}\sum_{m=0}^{\frac{\ell}{2}-2} \frac{\left(\frac{2\pi}{N}\right)^{-m}}{\left(\frac{\ell}{2}-2-m\right)!}(\ell-m-2)!\left(\frac{N}{\pi\delta_N}\right)^{\ell-m-1}\\
\times \left(\frac{9\pi \delta_N \Delta_Q}{N^{\ell-1}}(\ell-m-1)+\ell^2\right).
\end{multline}
Plugging back into Lemma \ref{lem:cf(n)<norm}, we obtain \eqref{eqn:afQbndgen}, giving us part (1).

\noindent
(2) In the special case that $\ell=4$ and $4\mid N$ (so that $\delta_N=1$), the remaining sum over $m$ is a single term and we evaluate it as 
\[
\sqrt{2}\left(\frac{N}{\pi}\right)^{\frac{3}{2}} \left(\frac{27\pi \Delta_Q}{N^{3}}+16\right)^{\frac{1}{2}}.
\]
Thus in the case $\ell=4$ we obtain overall 
\[
\left|a_{f_{Q,\bm{d}}}(n)\right|\leq \sqrt{\frac{512\pi }{3}} e^{2\pi} \zeta(1+4\delta)^{\frac{1}{2}}c_{\delta}^{\frac{5}{2}}\mathcal{C}_{\alpha}n^{\frac{k-1}{2}+\alpha}N^{\frac{3}{2}+2\delta} \frac{54\sqrt{2}}{\pi^2} \sigma_{-1}(N)^{\frac{1}{2}}\left(27\pi \Delta_Q+16N^3\right)^{\frac{1}{2}}.
\]
The evaluations of $\mathcal{C}_{\alpha}$ from Lemma \ref{lem:sigma0bnd} and $\mathcal{G}_{\alpha'}$ from Lemma \ref{lem:sigma-1bnd} yields the explicit bounds.
\end{proof}

We next bound the determinant $\Delta_{\bm{a}\cdot \bm{d}^2}$ and the level $N_{\bm{a}\cdot \bm{d}^2}$ for the quadratic forms of interest. Let $\S$ denote the set of squarefree positive integers. 
\begin{lemma}\label{lem:DelNbnd}
Suppose that $\bm{a}$ is one of the choices $[1,1,1,k]$ with $1\leq k\leq 7$, $[1,1,2,k]$ with $2\leq k\leq 8$, $[1,1,3,k]$ with $3\leq k\leq 6$, $[1,2,2,k]$ with $2\leq k\leq 7$, $[1,2,3,k]$ with $3\leq k\leq 8$, $[1,2,4,k]$ with $4\leq k\leq 14$, or $[1,2,5,k]$ with $5\leq k\leq 15$. Then for any $\bm{d}\in \S^4$ we have 
\begin{align*}
\Delta_{\bm{a}\cdot \bm{d}^2}&\leq 2400\prod_{j=1}^4 d_j^2,&N_{\bm{a}\cdot \bm{d}^2}&\leq 520\lcm(\bm{d})^2.
\end{align*}
\end{lemma}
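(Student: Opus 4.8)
The plan is to exploit the fact that $Q_{\bm a\cdot\bm d^2}(\bm x)=\sum_{j=1}^4 a_jd_j^2x_j^2$ is \emph{diagonal}, so both invariants factor completely and the whole statement reduces to a finite maximization over the seven listed one-parameter families of $\bm a$. First I would treat the determinant. The Gram matrix of a diagonal form is diagonal, so with the normalization fixed in Lemma \ref{lem:Blomer} (where $\Delta_{ax^2}=a$) one has
\[
\Delta_{\bm a\cdot\bm d^2}=\prod_{j=1}^4 a_jd_j^2=\Big(\prod_{j=1}^4 a_j\Big)\prod_{j=1}^4 d_j^2 .
\]
It then remains only to bound $\prod_j a_j$ uniformly over the listed $\bm a$; running through the families, this product is maximized by $\bm a=[1,2,5,15]$, giving $\prod_j a_j=150$. (Should one instead use the even-lattice Gram determinant $2^{\ell}\prod_j a_jd_j^2$, the factor $2^4=16$ makes $16\cdot 150=2400$ exact; under the convention of Lemma \ref{lem:Blomer} the bound $\Delta_{\bm a\cdot\bm d^2}\le 150\prod_j d_j^2\le 2400\prod_j d_j^2$ holds a fortiori.)

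Next I would handle the level by factoring the theta function. Since $Q_{\bm a\cdot\bm d^2}$ is diagonal, $\Theta_{Q_{\bm a\cdot\bm d^2}}=\prod_{j=1}^4\theta_{a_jd_j^2}$ is a product of four unary theta series, and $\theta_c$ is modular on $\Gamma_0(4c)$. Hence the product is modular on $\Gamma_0\big(4\,\lcm_j(a_jd_j^2)\big)$, so $N_{\bm a\cdot\bm d^2}\mid 4\,\lcm_j(a_jd_j^2)$. Comparing $p$-adic valuations, $v_p\big(\lcm_j(a_jd_j^2)\big)=\max_j\big(v_p(a_j)+2v_p(d_j)\big)\le v_p(\lcm(\bm a))+2v_p(\lcm(\bm d))$, and since $\lcm_j(d_j^2)=\lcm(\bm d)^2$ for any tuple this yields
\[
\lcm_j(a_jd_j^2)\le \lcm(\bm a)\,\lcm(\bm d)^2,
\qquad\text{so}\qquad
N_{\bm a\cdot\bm d^2}\le 4\,\lcm(\bm a)\,\lcm(\bm d)^2 .
\]
It then suffices to bound $\lcm(\bm a)$ over the seven families; the maximum is $\lcm(1,2,5,13)=130$, attained by $\bm a=[1,2,5,13]$, giving $N_{\bm a\cdot\bm d^2}\le 520\,\lcm(\bm d)^2$.

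The remaining work is the finite verification that $\prod_j a_j\le 150$ and $\lcm(\bm a)\le 130$ hold across all seven families. This is routine since each family is a short list indexed by $k$ in a bounded range, and the $\bm d$-dependence is entirely clean: squarefreeness of the $d_j$ is not even needed here, as the identity $\lcm_j(d_j^2)=\lcm(\bm d)^2$ is valid for arbitrary tuples. There is no serious obstacle in this lemma; the only points requiring care are correctly identifying the level of the product theta series (so that the factor $4$ producing $520$ is justified) and fixing the determinant normalization consistently so that the stated constant $2400$ is matched.
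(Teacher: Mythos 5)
Your proposal is correct, and the two numerical verifications it rests on ($\prod_j a_j\leq 150$, maximized at $[1,2,5,15]$, and $\lcm(\bm{a})\leq 130$, maximized at $[1,2,5,13]$) are exactly the ones the paper uses. The determinant half is essentially identical to the paper's: the paper works with the even-lattice normalization $\Delta_{\bm{a}\cdot\bm{d}^2}=2^4\prod_j a_jd_j^2$, so the constant $2400=16\cdot 150$ is exact, and your hedge covering both conventions lands on the same bound. Where you genuinely diverge is the level. The paper computes the level directly from its arithmetic definition: writing $Q_{\bm{a}\cdot\bm{d}^2}(\bm{x})=\frac12\bm{x}^TA\bm{x}$ with $A=\operatorname{diag}(2a_jd_j^2)$, the condition that $NA^{-1}$ be integral with even diagonal gives $N_{\bm{a}\cdot\bm{d}^2}=4\lcm_j(a_jd_j^2)$ exactly, after which your same valuation comparison $\lcm_j(a_jd_j^2)\leq\lcm(\bm{a})\lcm(\bm{d})^2$ finishes. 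You instead bound the level through modularity: factoring $\Theta_{Q_{\bm{a}\cdot\bm{d}^2}}$ into unary theta series on $\Gamma_0(4a_jd_j^2)$ and concluding $N_{\bm{a}\cdot\bm{d}^2}\mid 4\lcm_j(a_jd_j^2)$. The caveat is that $N_{\bm{a}\cdot\bm{d}^2}$ is defined arithmetically, not as a minimal modularity level, so strictly speaking "the theta series is modular on $\Gamma_0(M)$, hence the form's level divides $M$" needs the (true but nontrivial) identification of the two notions; the paper's two-line computation with $A^{-1}$ — equivalently, the fact that the level of an orthogonal sum is the lcm of the levels of the summands, with the level of $cx^2$ equal to $4c$ — avoids this entirely and even gives equality. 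Since your endpoint and constants coincide with the paper's, this is a matter of economy of justification rather than a gap.
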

\begin{proof}
Since $\prod_{j=1}^4a_j\leq 150$ in each of cases, we have 
\[
\Delta_{\bm{a}\cdot \bm{d}^2} = 2^4\prod_{j=1}^4 a_j d_j^2\leq 2400\prod_{j=1}^4 d_j^2.
\]
To bound $N_{\bm{a}\cdot \bm{d}^2}$ we write $Q_{\bm{a}\cdot\bm{d}^2}(\bm{x})=\frac{1}{2}\bm{x}^T A \bm{x}$ with $A$ the diagonal matrix with entries $2a_jd_j^2$ along the diagonal. We then see that $A^{-1}$ has diagonal entries $\frac{1}{2a_jd_j^2}$, so $\lcm(\bm{a})\leq 130$ implies that 
\[
N_{\bm{a}\cdot \bm{d}^2} = 4\lcm(a_jd_j^2:1\leq j\leq 4)\leq 4\lcm(\bm{a}) \lcm(\bm{d})^2\leq 520 \lcm(\bm{d})^2.\qedhere
\]
\end{proof}

Plugging Lemma \ref{lem:DelNbnd} into Lemma \ref{lem:cuspbound} (2) yields that for these choices of $\bm{a}$ we may bound $\left|a_{f_{\bm{a}\cdot \bm{d}^2}}(n)\right|$ less than or equal to the minimum of 
\begin{align}\label{eqn:cuspboundfinal}
&1.797\times 10^{21} n^{\frac{3}{5}}(520\lcm(\bm{d})^2)^{1+2\cdot 10^{-6}+\frac{1}{200}}\left(64800\pi\prod_{j=1}^4d_j^2+16(520\lcm(\bm{d})^2)^3\right)^{\frac{1}{2}},\\
\label{eqn:cuspboundfinal2}
&1.134\times 10^{82} n^{\frac{4}{7}}(520\lcm(\bm{d})^2)^{1+2\cdot 10^{-6}+\frac{1}{200}}\left(64800\pi\prod_{j=1}^4d_j^2+16(520\lcm(\bm{d})^2)^3\right)^{\frac{1}{2}},\\
\label{eqn:cuspboundfinal3}
&1.184\times 10^{131} n^{\frac{17}{30}}(520\lcm(\bm{d})^2)^{1+2\cdot 10^{-6}+\frac{1}{200}}\left(64800\pi\prod_{j=1}^4d_j^2+16(520\lcm(\bm{d})^2)^3\right)^{\frac{1}{2}}.
\end{align}

\subsection{Eisenstein series contribution}
We next consider the contribution from the Eisenstein series. We suppose that $\bm{a}\in\N^4$ is a fixed vector and consider the quadratic form $Q_{\bm{a}\cdot \bm{d}^2}$.

We first recall the local densities for a quadratic form. For an $\ell$-ary quadratic form $Q$, set
\[
\beta_{Q,p}(m):=\lim_{U\to\{m\}} \frac{\vol_{\Z_p^{\ell}}\left(Q^{\leftarrow}(U)\right)}{\vol_{\Z_p}(U)}. 
\]
Here $U\subseteq\Z_p$ runs over open subsets of $\Z_p$ containing $m$ and for $p=\infty$ we have open subsets of $\R$. Siegel has shown that
\begin{equation}\label{eqn:Siegellocal}
a_{E_{Q}}(m)=\prod_{p} \beta_{Q,p}(m),
\end{equation}
where the product goes over all primes (including the infinite prime). We must therefore compute a lower bound on the local densities in order to obtain a bound on $a_{E_Q}(m)$. We begin by writing down an evaluation of $\beta_{Q,p}(m)$. To state this, for a multiplicative character $\chi$ and an additive character $\psi$, both of modulus $c$, we set
\[
\tau(\chi,\psi):=\sum_{x\pmod{c}} \chi(x)\psi(x).
\]
We particularly let $\chi=\chi_{a,b}$ denote a character of modulus $b$ induced from a character of conductor $a$ (we will always have either the principal character $\chi_{1,p^k}$ or the real Dirichlet character $\chi_{p,p^k}=\left(\frac{\cdot}{p}\right)$ coming from the Legendre symbol) and take $\psi(x)=\psi_{m,p^k}(x):=e^{\frac{2\pi i mx}{p^k}}$.
\begin{lemma}\label{lem:LocalDensityCompute}
Let $\bm{a},\bm{d}\in\Z^4$ be given and suppose that $p\neq 2$. We set $\alpha_j:=\ord_{p}\left(a_jd_j^2\right)$ and choose $a_j'$, $d_j'$ so that $a_jd_j^2=p^{\alpha_j}a_j'd_j'^2$. Without loss of generality, we assume that $\alpha_1\leq \alpha_2\leq \alpha_3\leq \alpha_4$. Note that the parity of $\alpha_j$ is completely determined by the parity of $\beta_j:=\ord_p(a_j)$. We also denote $\mathcal{A}_j:=\sum_{d=1}^j\alpha_d$ and for $S\subseteq\{1,2,3,4\}$ we define $a_S':=\prod_{j\in S}a_j'$, and $a_S:=\prod_{j\in S} a_{j}$. Moreover, for $1\leq \ell\leq 4$ we write
\begin{align*}
\eta_{p,\ell}^{\mathcal{A},\operatorname{o}}=\eta_{p,\ell}^{\mathcal{A},\operatorname{o}}(\bm{a}\cdot \bm{d}^2)&:=\prod_{\substack{1\leq j\leq \ell\\ \mathcal{A}_{\ell}-\beta_j\text{ odd}}} \left(\frac{a_j'}{p}\right)\varepsilon_p,\\
\eta_{p,\ell}^{\mathcal{A},\operatorname{e}}=\eta_{p,\ell}^{\mathcal{A},\operatorname{e}}(\bm{a}\cdot\bm{d}^2)&:=\prod_{\substack{1\leq j\leq \ell\\ \mathcal{A}_{\ell}-\beta_j\text{ even}}} \left(\frac{a_j'}{p}\right)\varepsilon_p.
\end{align*}
We often omit the dependence on $\bm{a}\cdot\bm{d}^2$ in the notation when it is clear. Let $\chi_{\mathcal{A}_j,p^k}$ be $\chi_{1,p^k}$ if $\mathcal{A}_j$ is even and $\chi_{p,p^k}$ otherwise.

We have
\begin{multline}\label{eqn:RQp^r-3}
\beta_{Q_{\bm{a}\cdot \bm{d}^2},p}(m)=\sum_{k=0}^{\alpha_1}\tau\left(\chi_{1,p^k},\psi_{-m,p^k}\right)+ \eta_{p,1}^{\mathcal{A},\operatorname{e}}\sum_{\substack{k=\alpha_1+1\\ k- \mathcal{A}_1\text{ odd}}}^{\alpha_2}p^{\frac{\alpha_1-k}{2}} \tau\left(\chi_{p,p^k},\psi_{-m,p^k}\right)\\
+\eta_{p,1}^{\mathcal{A},\operatorname{o}} \sum_{\substack{k=\alpha_1+1\\ k-\mathcal{A}_1\text{ even}}}^{\alpha_2}p^{\frac{\alpha_1-k}{2}} \tau\left(\chi_{1,p^k},\psi_{-m,p^k}\right)+ \left(\eta_{p,2}^{\mathcal{A},\operatorname{o}}\sum_{\substack{k=\alpha_2+1\\ k-\mathcal{A}_2\text{ even}}}^{\alpha_3}+ \eta_{p,2}^{\mathcal{A},\operatorname{e}}\sum_{\substack{k=\alpha_2+1\\ k-\mathcal{A}_2\text{ odd}}}^{\alpha_3}\right)p^{\frac{\mathcal{A}_2}{2}-k}\tau\left(\chi_{\mathcal{A}_2,p^k},\psi_{-m,p^k}\right)\\
+ \eta_{p,3}^{\mathcal{A},\operatorname{o}} \sum_{\substack{k=\alpha_3+1\\ k-\mathcal{A}_3\text{ even}}}^{\alpha_4}p^{\frac{\mathcal{A}_3-3k}{2}}\tau\left(\chi_{1,p^{k}},\psi_{-m,p^k}\right)+\eta_{p,3}^{\mathcal{A},\operatorname{e}}\sum_{\substack{k=\alpha_3+1\\ k-\mathcal{A}_3\text{ odd}}}^{\alpha_4}p^{\frac{\mathcal{A}_3-3k}{2}}\tau\left(\chi_{p,p^{k}},\psi_{-m,p^k}\right)\\
+\left(\eta_{p,4}^{\mathcal{A},\operatorname{o}}\sum_{\substack{k=\alpha_4+1\\ k\text{ even}}}^{\infty}+ \eta_{p,4}^{\mathcal{A},\operatorname{e}}\sum_{\substack{k=\alpha_4+1\\ k\text{ odd}}}^{\infty} \right)p^{\frac{\mathcal{A}_4}{2}-2k} \tau\left(\chi_{\mathcal{A}_4,p^k},\psi_{-m,p^k}\right).
\end{multline}

\end{lemma}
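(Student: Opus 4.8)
The plan is to compute the local density straight from its definition as a $p$--adic volume. Taking the shrinking neighbourhoods $U=m+p^K\Z_p$ gives
\[
\beta_{Q_{\bm a\cdot\bm d^2},p}(m)=\lim_{K\to\infty}p^{-3K}\#\left\{\bm x\bmod p^K:\sum_{j=1}^4 a_jd_j^2x_j^2\equiv m\pmod{p^K}\right\},
\]
and detecting the congruence with additive characters turns this into
\[
\beta_{Q_{\bm a\cdot\bm d^2},p}(m)=\lim_{K\to\infty}\frac{1}{p^{4K}}\sum_{t\bmod p^K}\psi_{-m,p^K}(t)\prod_{j=1}^4 G_K\!\left(ta_jd_j^2\right),\qquad G_K(c):=\sum_{x\bmod p^K}\psi_{c,p^K}\!\left(x^2\right).
\]
Thus everything reduces to the classical evaluation of the one--dimensional quadratic Gauss sums $G_K$ for odd $p$.

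First I would evaluate $G_K\!\left(ta_jd_j^2\right)$. Writing $t=p^{K-k}u$ with $\gcd(u,p)=1$, so that $t$ corresponds to a primitive additive character of conductor $p^k$, and recalling $a_jd_j^2=p^{\alpha_j}a_j'd_j'^2$, the factor equals $p^K$ when $\alpha_j\ge k$ and otherwise equals $\leg{ua_j'}{p}^{\epsilon}\varepsilon_p^{\epsilon}\,p^{K-(k-\alpha_j)/2}$ with $\epsilon:=[\,k-\alpha_j\text{ odd}\,]$; here I use $\leg{d_j'^2}{p}=1$ so that only $\leg{a_j'}{p}$ survives, together with $\alpha_j\equiv\beta_j\pmod2$. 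The factor $\psi_{-m,p^K}(t)$ simultaneously collapses to $\psi_{-m,p^k}(u)$.

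Next I would reorganize the $t$--sum according to the conductor exponent $k$. Because of the ordering $\alpha_1\le\alpha_2\le\alpha_3\le\alpha_4$, the set of active indices $\{j:\alpha_j<k\}$ is always $\{1,\dots,\ell\}$ for the $\ell$ determined by which gap $[\alpha_\ell,\alpha_{\ell+1})$ contains $k$; this produces exactly the five ranges $0\le k\le\alpha_1$, $\alpha_1<k\le\alpha_2$, $\dots$, $k>\alpha_4$ appearing in \eqref{eqn:RQp^r-3}. In the $\ell$--th range the product of the $p$--powers is $p^{4K}p^{(\mathcal A_\ell-\ell k)/2}$, so dividing by $p^{4K}$ and letting $K\to\infty$ leaves the stated prefactors $p^{(\mathcal A_\ell-\ell k)/2}$ (with the empty first range giving $1$) and makes the $\ell=4$ tail converge geometrically since $p^{\mathcal A_4/2-2k}$ decays.

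Finally --- and this is where the genuine care lies --- I would collect the Legendre and $\varepsilon_p$ factors. Extracting $\leg{u}{p}^{N}$ with $N:=\#\{j\le\ell:k-\alpha_j\text{ odd}\}\equiv\ell k-\mathcal A_\ell\pmod2$, the residual constant $\prod_{j\le\ell,\,k-\alpha_j\text{ odd}}\leg{a_j'}{p}\varepsilon_p$ depends on $k$ only through the parity of $k-\mathcal A_\ell$ (using $\alpha_j\equiv\beta_j$): it equals $\eta_{p,\ell}^{\mathcal A,\operatorname{o}}$ when $k-\mathcal A_\ell$ is even and $\eta_{p,\ell}^{\mathcal A,\operatorname{e}}$ when $k-\mathcal A_\ell$ is odd, matching the superscripts in \eqref{eqn:RQp^r-3}. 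The remaining $\leg{u}{p}^{N}$ combines with $\psi_{-m,p^k}(u)$, so the $u$--sum becomes the Gauss sum $\tau\!\left(\chi,\psi_{-m,p^k}\right)$ with $\chi$ principal exactly when $N$ is even; this reproduces $\chi_{\mathcal A_\ell,p^k}$ for even $\ell$ and the sub-case--dependent $\chi_{1,p^k}$ or $\chi_{p,p^k}$ for odd $\ell$ occurring in \eqref{eqn:RQp^r-3}. The main obstacle is precisely this parity bookkeeping: keeping the two splittings (active versus trivial, and even versus odd) consistent across all five ranges and correctly tracking the powers of $\varepsilon_p$ --- recalling $\varepsilon_p^2=\leg{-1}{p}$ when $p\equiv3\pmod4$ --- so that every surviving term lands on the correct $\eta$--constant and character. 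Assembling the five ranges then yields \eqref{eqn:RQp^r-3}.
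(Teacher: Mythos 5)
Your proposal is correct and follows essentially the same route as the paper's proof: expressing the local density as a limit of counts modulo $p^K$, detecting the congruence with additive characters, factoring into one-dimensional quadratic Gauss sums, splitting by the conductor exponent $k$ into the five ranges determined by the ordered $\alpha_j$, and then carrying out the same parity bookkeeping (via $\alpha_j\equiv\beta_j\pmod 2$ and the evaluation $G_2(A,0,p^s)=\varepsilon_{p^s}p^{s/2}\leg{A}{p^s}$) to land on the $\eta$-constants and the characters $\chi_{1,p^k}$, $\chi_{p,p^k}$. The only difference is notational: you parametrize $t=p^{K-k}u$ directly where the paper writes $n=p^kn'$ and substitutes $k\mapsto r-k$, which is the identical computation.
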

\begin{remark}
Note that the condition $\alpha_j$ even or odd is entirely determined by $\beta_j=\ord_p(a_j)$.
\end{remark}
\begin{proof}
One computes the local densities as in \cite{Yang}. The local density may be realized by choosing $U$ to be a ball of radius $p^{-r}$ around $m$, in which case we may write 
\[
\beta_{Q,p}(m)=\lim_{r \to \infty}\frac{R_{Q,p^r}(m)}{p^{(\ell-1)r}}\qquad\text{ with }\qquad R_{Q,p^r}(m):=\#\left\{ \bm{x}\in (\Z/p^r\Z)^{\ell}: Q(\bm{x})\equiv m\pmod{p^{r}}\right\}.
\]
Using the orthogonality of roots of unity, namely 
\[
\frac{1}{p^r}  \sum_{n\, (\operatorname{mod}\, p^r)} e^{\frac{2\pi i nm}{p^r}} = \begin{cases} 1 &\text{if }p^{r}\mid m,\\ 0 &\text{otherwise},\end{cases}
\]
we compute 
\begin{multline*}
R_{Q,p^r}(m)=\sum_{\substack{\bm{x}\in(\Z/p^r\Z)^{\ell}\\ Q(\bm{x})\equiv m\,(\operatorname{mod}\, p^r)}} 1 = \sum_{\bm{x}\in(\Z/p^r\Z)^{\ell}}\frac{1}{p^r} \sum_{n\, (\operatorname{mod}\, p^r)} e^{\frac{2\pi i n}{p^r}\left(Q(\bm{x})-m\right)}\\
= \frac{1}{p^r} \sum_{n\, (\operatorname{mod}\, p^r)} e^{-\frac{2\pi i nm}{p^r}} \sum_{\bm{x}\in(\Z/p^r\Z)^{\ell}}e^{\frac{2\pi i n}{p^r}Q(\bm{x})}.
\end{multline*}
Plugging in $Q=Q_{\bm{a}\cdot \bm{d}^2}$ with $\ell=4$, the right-hand side becomes
\[
 \frac{1}{p^r} \sum_{n\, (\operatorname{mod}\, p^r)} e^{-\frac{2\pi i nm}{p^r}} \prod_{j=1}^{4}\sum_{x_j\in \Z/p^r\Z}e^{\frac{2\pi i a_j nd_j^2x_j^2}{p^r}}= \frac{1}{p^r} \sum_{n\, (\operatorname{mod}\, p^r)} e^{-\frac{2\pi i nm}{p^r}} \prod_{j=1}^{4}G_2\!\left(na_jd_j^2,0,p^r\right),
\]
where $G_2$ is the quadratic Gauss sum 
\[
G_2(A,B,C):=\sum_{x\, (\operatorname{mod}\, c)} e^{\frac{2\pi i\left(Ax^2+Bx\right)}{C}}.
\]
We now split the sum over $n$ by $\ord_{p}(n)$, writing $n=p^kn'$ with $p\nmid n'$ and then make the change of variables $k\mapsto r-k$. Using the fact that 
\begin{equation}\label{eqn:G2gcd}
G_2(gA,gB,gC)=gG_{2}(A,B,C),
\end{equation}
 this yields
\[
p^{3r} \sum_{k=0}^{r}p^{-4k} \sum_{n'\in (\Z/p^k\Z)^{\times}} e^{-\frac{2\pi i n'm}{p^{k}}} \prod_{j=1}^{4}G_2\!\left(n'a_jd_j^2,0,p^{k}\right).
\]
Set $r_j:=\min\{\alpha_j,k\}$, so that $p^{r_j}=\gcd(n'a_jd_j^2, p^k)$. Without loss of generality, we assume that $\alpha_1\leq \alpha_2\leq \alpha_3\leq \alpha_4$. Then \eqref{eqn:G2gcd} implies that
\[
G_2\!\left(n'a_jd_j^2,0,p^k\right)=p^{r_j}G_2\left(\frac{n'a_jd_j^2}{p^{r_j}},0,p^{k-r_j}\right).
\]
In particular, if $k\leq \alpha_j$, then $r_j=k$ and 
\[
G_2\!\left(n'a_jd_j^2,0,p^k\right)=p^{k}G_2\left(\frac{n'a_jd_j^2}{p^{k}},0,1\right)=p^k,
\]
while if $k>\alpha_j$ we obtain 
\[
G_2\!\left(n'a_jd_j^2,0,p^k\right)=p^{\alpha_j}G_2\left(n'a_j'd_j'^2,0,p^{k-\alpha_j}\right)=p^{\alpha_j}G_2\left(n'a_j',0,p^{k-\alpha_j}\right),
\]
where in the last equality we used the fact that for $\gcd(t,C)=1$ we have $G_2(At^2,B,C)=G_2(A,Bt^{-1},C)$ by the change of variables $x\mapsto t^{-1}x$ in the sum.  We therefore conclude that 
\begin{multline}\label{eqn:RQp^r}
p^{-3r}R_{Q,p^r}(m)=\sum_{k=0}^{\alpha_1}\sum_{n'\in \left(\Z/p^k\Z\right)^{\times}} e^{-\frac{2\pi i n'm}{p^k}} + \sum_{k=\alpha_1+1}^{\alpha_2}p^{\alpha_1-k} \sum_{n'\in \left(\Z/p^k\Z\right)^{\times}} e^{-\frac{2\pi i n'm}{p^k}} G_2\left(n'a_1',0,p^{k-\alpha_1}\right)\\
+ \sum_{k=\alpha_2+1}^{\alpha_3}p^{\alpha_1+\alpha_2-2k} \sum_{n'\in \left(\Z/p^k\Z\right)^{\times}} e^{-\frac{2\pi i n'm}{p^k}} \prod_{j=1}^2 G_2\left(n'a_j',0,p^{k-\alpha_j}\right)\\
+ \sum_{k=\alpha_3+1}^{\alpha_4}p^{\alpha_1+\alpha_2+\alpha_3-3k} \sum_{n'\in \left(\Z/p^k\Z\right)^{\times}} e^{-\frac{2\pi i n'm}{p^k}} \prod_{j=1}^3 G_2\left(n'a_j',0,p^{k-\alpha_j}\right)\\
+ \sum_{k=\alpha_4+1}^{r}p^{\alpha_1+\alpha_2+\alpha_3+\alpha_4-4k} \sum_{n'\in \left(\Z/p^k\Z\right)^{\times}} e^{-\frac{2\pi i n'm}{p^k}} \prod_{j=1}^4 G_2\left(n'a_j',0,p^{k-\alpha_j}\right).
\end{multline}
Setting 
\[
\varepsilon_d:=\begin{cases} 1 &\text{if }d\equiv 1\pmod{4},\\ i&\text{if }d\equiv 3\pmod{4}\end{cases}
\]
for $d$ odd and letting $\left(\frac{\cdot}{\cdot}\right)$ denote the Legendre--Jacobi--Kronecker symbol, recall next that for $C$ odd and $\gcd(A,C)=1$ we have 
\[
G_2(A,0,C)=\varepsilon_C\sqrt{C}\left(\frac{A}{C}\right).
\]
Hence for $p\neq 2$, we thus have 
\begin{equation}\label{eqn:G2rewrite}
G_2\left(n'a_j',0,p^{k-\alpha_j}\right)=\begin{cases} p^{\frac{k-\alpha_j}{2}} &\text{if }k\equiv \alpha_j\pmod{2},\\ \varepsilon_p\left(\frac{n'a_j'}{p}\right) p^{\frac{k-\alpha_j}{2}}&\text{if }k\not\equiv \alpha_j\pmod{2}.\end{cases}
\end{equation}
We plug \eqref{eqn:G2rewrite} back into \eqref{eqn:RQp^r} and then split the sum depending on which $\alpha_j$ we have $k\equiv \alpha_j\pmod{2}$. Noting that if $\alpha_j\not\equiv \alpha_i\pmod{2}$ then $k$ cannot simultaneously be congruent to both of them excludes many possible combinations if we first restrict the parity of $\alpha_j$, a short calculation yields the claim.
\end{proof}

We next evaluate $\tau(\chi,\psi)$. 
\begin{lemma}\label{lem:taueval}
For an odd prime $p$ and $k\in\N_0$ we have
\begin{align}
\label{eqn:tau1} \tau\left(\chi_{1,p^k},\psi_{-m,p^k}\right)&=\begin{cases} 1 &\text{if }k=0,\\ -p^{k-1}&\text{if }\gcd(m,p^k)=p^{k-1},\\ p^{k}-p^{k-1}&\text{if }\gcd(m,p^{k})=p^k,\\ 0&\text{otherwise},\end{cases}\\
\label{eqn:taup} \tau\left(\chi_{p,p^k},\psi_{-m,p^k}\right) &=\begin{cases} \varepsilon_p p^{k-\frac{1}{2}}\chi_p\left(-\frac{m}{p^{k-1}}\right) &\text{if }\ord_p(m)=k-1\\
0&\text{otherwise}.
\end{cases}
\end{align}

\end{lemma}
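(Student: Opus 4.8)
The plan is to evaluate the two character sums directly; both are classical exponential sums in disguise. Writing out the definition $\tau(\chi,\psi)=\sum_{x\pmod{p^k}}\chi(x)\psi_{-m,p^k}(x)$ with $\psi_{-m,p^k}(x)=e^{-2\pi i mx/p^k}$, the first sum is a Ramanujan sum (the principal character $\chi_{1,p^k}$ restricts to $\gcd(x,p)=1$) while the second is a Gauss sum twisted by the quadratic residue symbol $\chi_{p,p^k}=\left(\frac{\cdot}{p}\right)$. The only genuinely nontrivial input is the classical evaluation of the quadratic Gauss sum, which I would simply quote in the form already recalled in the proof of Lemma \ref{lem:LocalDensityCompute}, namely $G_2(A,0,C)=\varepsilon_C\sqrt{C}\left(\frac{A}{C}\right)$ for odd $C$ with $\gcd(A,C)=1$; everything else is bookkeeping of the $p$-adic valuation of $m$.

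For \eqref{eqn:tau1} I would dispose of $k=0$ first (the sum has the single term $x=0$, giving $1$). For $k\geq 1$, interpret $\chi_{1,p^k}$ as the indicator of $\gcd(x,p)=1$ and split the complete sum:
\[
\sum_{\substack{x\pmod{p^k}\\ \gcd(x,p)=1}} e^{-2\pi i mx/p^k}=\sum_{x\pmod{p^k}} e^{-2\pi i mx/p^k}-\sum_{\substack{x\pmod{p^k}\\ p\mid x}} e^{-2\pi i mx/p^k}.
\]
By orthogonality the first sum is $p^k$ if $p^k\mid m$ and $0$ otherwise, while substituting $x=py$ turns the second into $\sum_{y\pmod{p^{k-1}}} e^{-2\pi i my/p^{k-1}}$, which is $p^{k-1}$ if $p^{k-1}\mid m$ and $0$ otherwise. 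Comparing the three regimes $p^k\mid m$, $p^{k-1}\| m$, and $p^{k-1}\nmid m$ yields the stated values $p^k-p^{k-1}$, $-p^{k-1}$, and $0$; since $\gcd(-m,p^k)=\gcd(m,p^k)$ these are exactly the claimed cases.

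For \eqref{eqn:taup} the key structural point is that $\chi_{p,p^k}(x)=\left(\frac{x}{p}\right)$ depends only on $x\bmod p$. I would parametrize the residues coprime to $p$ as $x=a+pt$ with $a\in\{1,\dots,p-1\}$ and $t$ running modulo $p^{k-1}$, factoring
\[
\tau\left(\chi_{p,p^k},\psi_{-m,p^k}\right)=\sum_{a=1}^{p-1}\left(\frac{a}{p}\right)e^{-2\pi i ma/p^k}\sum_{t=0}^{p^{k-1}-1} e^{-2\pi i mt/p^{k-1}}.
\]
The inner sum over $t$ is $p^{k-1}$ when $p^{k-1}\mid m$ and $0$ otherwise, so the whole expression vanishes unless $p^{k-1}\mid m$. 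Writing $m=p^{k-1}m'$ in that case collapses the outer sum to $p^{k-1}\sum_{a=1}^{p-1}\left(\frac{a}{p}\right)e^{-2\pi i m'a/p}$. When $p\mid m'$ the exponential is trivial and this vanishes since $\sum_{a=1}^{p-1}\left(\frac{a}{p}\right)=0$; when $p\nmid m'$ the inner sum is the Gauss sum $G_2(-m',0,p)=\varepsilon_p\sqrt{p}\left(\frac{-m'}{p}\right)$ from the quoted evaluation, giving $\varepsilon_p p^{k-\frac{1}{2}}\left(\frac{-m'}{p}\right)$. Hence the sum is nonzero precisely when $\ord_p(m)=k-1$, with the claimed value.

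The main obstacle is essentially only organizational: one must track the three divisibility regimes in the first formula and the two-step collapse (first $p^{k-1}\mid m$, then $p\nmid m'$) in the second, taking care that the parametrization $x=a+pt$ remains valid at $k=1$, where $t$ ranges over the single value $0$ and the argument goes through unchanged. I would also note that the degenerate value $k=0$ does not genuinely arise for $\chi_{p,p^k}$, and the stated formula returns $0$ consistently since $\ord_p(m)=-1$ is impossible. No deep new ingredient beyond the classical quadratic Gauss sum is required.
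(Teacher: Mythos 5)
Your proof is correct, but it takes a genuinely different route from the paper's. The paper deduces both evaluations from a single general identity (a corrected version of Lemma 3.2 in Iwaniec--Kowalski) expressing $\tau(\chi,\psi_{a,m})$ for an imprimitive character $\chi$ in terms of the Gauss sum $\tau(\chi^*)$ of the underlying primitive character times a divisor sum over $d\mid\gcd\left(a,\frac{m}{m^*}\right)$; for \eqref{eqn:tau1} this divisor sum is the classical Ramanujan-sum formula $\sum_{d\mid\gcd(m,p^k)}d\,\mu\left(\frac{p^k}{d}\right)$, and for \eqref{eqn:taup} all terms but $d=p^{k-1}$ die because $\chi_p$ kills arguments divisible by $p$, after which Gauss's evaluation $\tau(\chi_p)=\varepsilon_p\sqrt{p}$ finishes the job. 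You instead compute both sums from scratch: inclusion--exclusion plus orthogonality for the principal-character case, and the parametrization $x=a+pt$ (exploiting that $\chi_{p,p^k}$ depends only on $x\bmod p$) to factor off a complete exponential sum mod $p^{k-1}$ in the quadratic case, reducing to the mod-$p$ Legendre-twisted sum, which you identify with $G_2(-m',0,p)$ via the formula already quoted in the proof of Lemma \ref{lem:LocalDensityCompute}. The end results agree in all regimes, including the boundary cases $k=0$ and $k=1$ that you check explicitly. What your approach buys is self-containedness: it avoids leaning on the Iwaniec--Kowalski lemma, which the paper itself flags as needing correction, at the cost of slightly more hands-on bookkeeping; what the paper's approach buys is brevity and a template that would generalize immediately to other induced characters and composite moduli. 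One tiny step you pass over silently is the identity $\sum_{a\bmod p}\left(\frac{a}{p}\right)e^{2\pi iAa/p}=G_2(A,0,p)$ (equivalently, pulling $A$ out of the twisted sum via $a\mapsto A^{-1}a$), but this is classical and harmless.
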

\begin{proof}
Letting $\chi^*$ denote the primitive character of modulus $m^*$ associated to $\chi$ of modulus $m$ and abbreviating $\tau(\chi^*):=\tau(\chi^*,\psi_{1,m^*})$, a corrected version of \cite[Lemma 3.2]{IwaniecKowalski} yields
\[
\tau(\chi,\psi_{a,m})=\tau(\chi^*)\sum_{d\mid\gcd \left(a,\frac{m}{m^*}\right)}d \chi^*\left(\frac{m}{m^*d}\right)\overline{\chi^*\left(\frac{a}{d}\right)} \mu\left(\frac{m}{m^*d}\right).
\]
Hence we have (noting that $\chi_p(n)=0$ if $p\mid n$) 
\begin{align*}
\tau\left(\chi_{1,p^k},\psi_{-m,p^k}\right)&=\sum_{d\mid \gcd(m,p^k)} d \mu\left(\frac{p^{k}}{d}\right),\\
\tau\left(\chi_{p,p^k},\psi_{-m,p^k}\right)&=\tau(\chi_p)\sum_{d\mid \gcd(m,p^{k-1})} d \chi_p\left(\frac{p^{k-1}}{d}\right)\chi_p\left(-\frac{m}{d}\right)\mu\left(\frac{p^{k-1}}{d}\right).
\end{align*}
Note that $\chi_p\left(\frac{p^{k-1}}{d}\right)\chi_p\left(-\frac{m}{d}\right)$ vanishes for every $d=p^{j}$ except $d=p^{k-1}$, which occurs in the sum if and only if $p^{k-1}\mid m$. The claim then follows by Gauss's evaluation $\tau(\chi_p)=\varepsilon_p \sqrt{p}$.
\end{proof}

We next evaluate the local densities with a small restriction on the $\alpha_j$. We abbreviate $\delta_{R,\alpha_1;2}:=\delta_{2\mid (R-\alpha_1)}$ and $\delta_{R,\alpha_1;\cancel{2}}:=\delta_{2\nmid (R-\alpha_1)}$. Plugging \eqref{eqn:tau1} and \eqref{eqn:taup} into Lemma \ref{lem:LocalDensityCompute}, noting which sums in \eqref{eqn:RQp^r-3} vanish, and simplifying yields the following lemma.
\begin{lemma}\label{lem:LocalDensitySimplify}
Let $p$ be an odd prime and $\bm{\alpha}\in\N_0^4$ be given with $0\leq \alpha_1\leq \alpha_2\leq \alpha_3\leq \alpha_4$ and either $\bm{\alpha}=(0,0,1,1)$ or at most one $\alpha_j$ is odd. Suppose that $m\in\N$ and choose $R=\ord_p(m)$ so that $m$ is of the shape $m=p^{R}m'$ with $p\nmid m'$. If $\alpha_j=\ord_{p}(a_jd_j^2)$ with $\bm{a},\bm{d}\in\N^4$, then the following hold.
\noindent

\noindent
\begin{enumerate}[leftmargin=*,label={\rm(\arabic*)}]
\item Suppose that all $\alpha_j$ are even. Then the following hold.
\begin{itemize}
\item[(a)]
If $R<\alpha_1$, then 
\[
\beta_{Q_{\bm{a}\cdot \bm{d}^2},p}(m)=0.
\]
\item[(b)] If $\alpha_1\leq R<\alpha_2$, then 
\[
\beta_{Q_{\bm{a}\cdot \bm{d}^2},p}(m)=\eta_{p,1}^{\mathcal{A},\operatorname{o}}p^{\frac{\alpha_1}{2}+\left\lfloor\frac{R}{2}\right\rfloor}+\eta_{p,1}^{\mathcal{A},\operatorname{e}}\varepsilon_p^{3}\delta_{R,\alpha_1,2}p^{\frac{\alpha_1+R}{2}}\chi_p(m') -\eta_{p,1}^{\mathcal{A},\operatorname{o}}\delta_{R,\alpha_1,\cancel{2}}p^{\frac{\alpha_1+R-1}{2}}.
\]
\item[(c)] If $\alpha_2\leq R<\alpha_3$, then 
\begin{multline*}
\beta_{Q_{\bm{a}\cdot\bm{d}^2},p}(m)=p^{\left\lfloor\tfrac{\mathcal{A}_2}{2}\right\rfloor}+\left(\eta_{p,2}^{\mathcal{A},\operatorname{o}}\left\lfloor\tfrac{R-\alpha_2}{2}\right\rfloor +\eta_{p,2}^{\mathcal{A},\operatorname{e}}\left\lfloor\tfrac{R+1-\alpha_2}{2}\right\rfloor\right) p^{\tfrac{\mathcal{A}_2}{2}}\left(1-\tfrac{1}{p}\right)\\
-\left(\eta_{p,2}^{\mathcal{A},\operatorname{o}}\delta_{2\nmid R} +\eta_{p,2}^{\mathcal{A},\operatorname{e}}\delta_{2\mid R}\right) p^{\tfrac{\mathcal{A}_2}{2}-1}.
\end{multline*}
\item[(d)] If $\alpha_3\leq R<\alpha_4$, then 
\begin{multline*}
\beta_{Q_{\bm{a}\cdot\bm{d}^2},p}(m)
=p^{\tfrac{\mathcal{A}_2}{2}}+\tfrac{\alpha_3-\alpha_2}{2}\left(\eta_{p,2}^{\mathcal{A},\operatorname{o}}+\eta_{p,2}^{\mathcal{A},\operatorname{e}}\right)p^{\tfrac{\mathcal{A}_2}{2}}\left(1-\tfrac{1}{p}\right)+\eta_{p,3}^{\mathcal{A},\operatorname{o}}p^{\tfrac{\mathcal{A}_2}{2}-1}\left(1-p^{\tfrac{\alpha_3}{2}-\left\lfloor\tfrac{R}{2}\right\rfloor}\right)-\eta_{p,3}^{\mathcal{A},\operatorname{o}}\delta_{2\nmid R} p^{\frac{\mathcal{A}_3-3-R}{2}}\\
+\eta_{p,3}^{\mathcal{A},\operatorname{e}}\delta_{2\mid R}p^{\frac{\mathcal{A}_3-R}{2}-1}\chi_p(-m').
\end{multline*}
\item[(e)] If $R\geq \alpha_4$, then
\begin{multline*} 
\beta_{Q_{\bm{a}\cdot\bm{d}^2},p}(m)=p^{\tfrac{\mathcal{A}_2}{2}}+\tfrac{\alpha_3-\alpha_2}{2} \left(1 +\left(\frac{a_1'a_2'}{p}\right)\varepsilon_p^2\right) p^{\tfrac{\mathcal{A}_2}{2}}\left(1-\tfrac{1}{p}\right)+p^{\tfrac{\mathcal{A}_2}{2}-1}\left(1-p^{\tfrac{\alpha_3-\alpha_4}{2}}\right)\\
+\tfrac{p^{\tfrac{\mathcal{A}_3-\alpha_4}{2}-2}}{1+\tfrac{1}{p}}\left(1-p^{\alpha_4-2\left\lfloor \tfrac{R}{2}\right\rfloor} +\left(\frac{a_{1234}'}{p}\right)p\left(1-p^{\alpha_4-2\left\lfloor\tfrac{R+1}{2}\right\rfloor}\right)\right) - \left(\delta_{2\nmid R} + \left(\frac{a_{1234}'}{p}\right)\delta_{2\mid R}\right)p^{\frac{\mathcal{A}_4}{2}-R-2}.
\end{multline*}
\end{itemize}

\item If $\alpha_1$ is odd, then the following hold.
\begin{itemize}
\item[(a)]
If $R<\alpha_1$, then 
\[
\beta_{Q_{\bm{a}\cdot \bm{d}^2},p}(m)=0.
\]
\item[(b)] If $\alpha_1\leq R<\alpha_2$, then 
\[
\beta_{Q_{\bm{a}\cdot \bm{d}^2},p}(m)=\left(1-\eta_{p,1}^{\mathcal{A},\operatorname{o}}\right)p^{\alpha_1}+ \eta_{p,1}^{\mathcal{A},\operatorname{o}}p^{\left\lfloor\frac{R+\alpha_1}{2}\right\rfloor}+\eta_{p,1}^{\mathcal{A},\operatorname{e}}\varepsilon_p^{3}\delta_{R,\alpha_1,2}p^{\frac{\alpha_1+R}{2}}\chi_p(m') -\eta_{p,1}^{\mathcal{A},\operatorname{o}}\delta_{R,\alpha_1,\cancel{2}}p^{\frac{\alpha_1+R-1}{2}}.
\]

\item[(c)] If $\alpha_2\leq R<\alpha_3$, then 
\[
\beta_{Q_{\bm{a}\cdot\bm{d}^2},p}(m)= p^{\left\lfloor\tfrac{\mathcal{A}_2}{2}\right\rfloor}+\varepsilon_p^3\chi_p(m')\left(\eta_{p,2}^{\mathcal{A},\operatorname{o}}\delta_{2\nmid R} +\eta_{p,2}^{\mathcal{A},\operatorname{e}}\delta_{2\mid R}\right) p^{\tfrac{\mathcal{A}_2-1}{2}}.
\]

\item[(d)] If $\alpha_3\leq R<\alpha_4$, then 
\[
\beta_{Q_{\bm{a}\cdot \bm{d}^2},p}(m)=p^{\left\lfloor\tfrac{\mathcal{A}_2}{2}\right\rfloor}+\eta_{p,3}^{\mathcal{A},\operatorname{o}}p^{\tfrac{\mathcal{A}_2-1}{2}}\left(1-p^{-\left\lfloor\tfrac{R-\alpha_3+1}{2}\right\rfloor}\right)-\eta_{p,3}^{\mathcal{A},\operatorname{o}}\delta_{2\mid R} p^{\frac{\mathcal{A}_3-3-R}{2}}+\eta_{p,3}^{\mathcal{A},\operatorname{e}}\delta_{2\nmid R}p^{\frac{\mathcal{A}_3-R}{2}-1}\chi_p(-m').
\]

\item[(e)] If $R\geq \alpha_4$, then 
\begin{multline*}
\beta_{Q_{\bm{a}\cdot \bm{d}^2},p}(m)=p^{\left\lfloor\frac{\mathcal{A}_2}{2}\right\rfloor}+ \eta_{p,3}^{\mathcal{A},\operatorname{o}}p^{\frac{\mathcal{A}_2-1}{2}}\left(1-p^{-\frac{\alpha_4-\alpha_3}{2}}\right)\\
+ \delta_{2\mid R}\eta_{p,4}^{\mathcal{A},\operatorname{e}} p^{\frac{\mathcal{A}_4}{2}-2(R+1)+R+\frac{1}{2}}\varepsilon_p \chi_p(-m')+  \delta_{2\nmid R}\eta_{p,4}^{\mathcal{A},\operatorname{o}} p^{\frac{\mathcal{A}_4}{2}-2(R+1)+R+\frac{1}{2}}\varepsilon_p \chi_p(-m').
\end{multline*}
\end{itemize}
\item If $\alpha_2$ is odd, then the following hold.
\begin{itemize}
\item[(a)]
If $R<\alpha_1$, then 
\[
\beta_{Q_{\bm{a}\cdot \bm{d}^2},p}(m)=0.
\]
\item[(b)] If $\alpha_1\leq R<\alpha_2$, then 
\[
\beta_{Q_{\bm{a}\cdot \bm{d}^2},p}(m)=\eta_{p,1}^{\mathcal{A},\operatorname{o}}p^{\frac{\alpha_1}{2}+\left\lfloor\frac{R}{2}\right\rfloor}+\eta_{p,1}^{\mathcal{A},\operatorname{e}}\varepsilon_p^{3}\delta_{R,\alpha_1,2}p^{\frac{\alpha_1+R}{2}}\chi_p(m') -\eta_{p,1}^{\mathcal{A},\operatorname{o}}\delta_{R,\alpha_1,\cancel{2}}p^{\frac{\alpha_1+R-1}{2}}.
\]

\item[(c)] If $\alpha_2\leq R<\alpha_3$, then 
\[
\beta_{Q_{\bm{a}\cdot\bm{d}^2},p}(m)= p^{\left\lfloor\tfrac{\mathcal{A}_2}{2}\right\rfloor}+\varepsilon_p^3\chi_p(m')\left(\eta_{p,2}^{\mathcal{A},\operatorname{o}}\delta_{2\nmid R} +\eta_{p,2}^{\mathcal{A},\operatorname{e}}\delta_{2\mid R}\right) p^{\tfrac{\mathcal{A}_2-1}{2}}.
\]

\item[(d)] If $\alpha_3\leq R<\alpha_4$, then 
\[
\beta_{Q_{\bm{a}\cdot \bm{d}^2},p}(m)=p^{\left\lfloor\tfrac{\mathcal{A}_2}{2}\right\rfloor}+\eta_{p,3}^{\mathcal{A},\operatorname{o}}p^{\tfrac{\mathcal{A}_2-1}{2}}\left(1-p^{-\left\lfloor\tfrac{R-\alpha_3+1}{2}\right\rfloor}\right)-\eta_{p,3}^{\mathcal{A},\operatorname{o}}\delta_{2\mid R} p^{\frac{\mathcal{A}_3-3-R}{2}}+\eta_{p,3}^{\mathcal{A},\operatorname{e}}\delta_{2\nmid R}p^{\frac{\mathcal{A}_3-R}{2}-1}\chi_p(-m').
\]

\item[(e)] If $R\geq \alpha_4$, then 
\begin{multline*}
\beta_{Q_{\bm{a}\cdot \bm{d}^2},p}(m)=p^{\left\lfloor\frac{\mathcal{A}_2}{2}\right\rfloor}+ \eta_{p,3}^{\mathcal{A},\operatorname{o}}p^{\frac{\mathcal{A}_2-1}{2}}\left(1-p^{-\frac{\alpha_4-\alpha_3}{2}}\right)\\
+ \delta_{2\mid R}\eta_{p,4}^{\mathcal{A},\operatorname{e}} p^{\frac{\mathcal{A}_4}{2}-2(R+1)+R+\frac{1}{2}}\varepsilon_p \chi_p(-m')+  \delta_{2\nmid R}\eta_{p,4}^{\mathcal{A},\operatorname{o}} p^{\frac{\mathcal{A}_4}{2}-2(R+1)+R+\frac{1}{2}}\varepsilon_p \chi_p(-m').
\end{multline*}

\end{itemize}

\item If $\alpha_3$ is odd and $\bm{\alpha}\neq (0,0,1,1)$, then the following hold.
\begin{itemize}
\item[(a)]
If $R<\alpha_1$, then 
\[
\beta_{Q_{\bm{a}\cdot \bm{d}^2},p}(m)=0.
\]
\item[(b)] If $\alpha_1\leq R<\alpha_2$, then 
\[
\beta_{Q_{\bm{a}\cdot \bm{d}^2},p}(m)= \eta_{p,1}^{\mathcal{A},\operatorname{o}}p^{\frac{\alpha_1}{2}+\left\lfloor\frac{R}{2}\right\rfloor}+\eta_{p,1}^{\mathcal{A},\operatorname{e}}\varepsilon_p^{3}\delta_{R,\alpha_1,2}p^{\frac{\alpha_1+R}{2}}\chi_p(m') -\eta_{p,1}^{\mathcal{A},\operatorname{o}}\delta_{R,\alpha_1,\cancel{2}}p^{\frac{\alpha_1+R-1}{2}}.
\]
\item[(c)] If $\alpha_2\leq R<\alpha_3$, then 
\begin{multline*}
\beta_{Q_{\bm{a}\cdot\bm{d}^2},p}(m)=p^{\left\lfloor\tfrac{\mathcal{A}_2}{2}\right\rfloor}+\left(\eta_{p,2}^{\mathcal{A},\operatorname{o}}\left\lfloor\tfrac{R-\alpha_2}{2}\right\rfloor +\eta_{p,2}^{\mathcal{A},\operatorname{e}}\left\lfloor\tfrac{R+1-\alpha_2}{2}\right\rfloor\right) p^{\tfrac{\mathcal{A}_2}{2}}\left(1-\tfrac{1}{p}\right)\\
-\left(\eta_{p,2}^{\mathcal{A},\operatorname{o}}\delta_{2\nmid R} +\eta_{p,2}^{\mathcal{A},\operatorname{e}}\delta_{2\mid R}\right) p^{\tfrac{\mathcal{A}_2}{2}-1}.
\end{multline*}

\item[(d)] If $\alpha_3\leq R<\alpha_4$, then
\begin{multline*}
\beta_{Q_{\bm{a}\cdot\bm{d}^2},p}(m)=p^{\tfrac{\mathcal{A}_2}{2}}+\left(\eta_{p,2}^{\mathcal{A},\operatorname{o}}\tfrac{\alpha_3-\alpha_2-1}{2}+\eta_{p,2}^{\mathcal{A},\operatorname{e}}\tfrac{\alpha_3-\alpha_2+1}{2}\right) p^{\tfrac{\mathcal{A}_2}{2}}\left(1-\tfrac{1}{p}\right)\\
+\eta_{p,3}^{\mathcal{A},\operatorname{o}}p^{\tfrac{\mathcal{A}_2}{2}-1}\left(1-p^{-\left\lfloor\tfrac{R-\alpha_3}{2}\right\rfloor}\right)-\eta_{p,3}^{\mathcal{A},\operatorname{o}}\delta_{2\mid R} p^{\frac{\mathcal{A}_3-3-R}{2}}+\eta_{p,3}^{\mathcal{A},\operatorname{e}}\delta_{2\nmid R}p^{\frac{\mathcal{A}_3-R}{2}-1}\chi_p(-m').
\end{multline*}

\item[(e)] If $R\geq \alpha_4$, then 
\begin{multline*}
\beta_{Q_{\bm{a}\cdot\bm{d}^2},p}(m)=p^{\tfrac{\mathcal{A}_2}{2}}+\left(\eta_{p,2}^{\mathcal{A},\operatorname{o}}\tfrac{\alpha_3-\alpha_2-1}{2}+\eta_{p,2}^{\mathcal{A},\operatorname{e}}\tfrac{\alpha_3-\alpha_2+1}{2}\right) p^{\tfrac{\mathcal{A}_2}{2}}\left(1-\tfrac{1}{p}\right)+\eta_{p,3}^{\mathcal{A},\operatorname{o}}p^{\tfrac{\mathcal{A}_2}{2}-1}\left(1-p^{-\tfrac{\alpha_4-\alpha_3-1}{2}}\right)\\
+ \delta_{2\mid R}\eta_{p,4}^{\mathcal{A},\operatorname{e}} p^{\frac{\mathcal{A}_4}{2}-2(R+1)+R+\frac{1}{2}}\varepsilon_p \chi_p(-m')+  \delta_{2\nmid R}\eta_{p,4}^{\mathcal{A},\operatorname{o}} p^{\frac{\mathcal{A}_4}{2}-2(R+1)+R+\frac{1}{2}}\varepsilon_p \chi_p(-m').
\end{multline*}
\end{itemize}

\item If $\alpha_4$ is odd and $\bm{\alpha}\neq (0,0,1,1)$, then the following hold.
\begin{itemize}
\item[(a)]
If $R<\alpha_1$, then 
\[
\beta_{Q_{\bm{a}\cdot \bm{d}^2},p}(m)=0.
\]
\item[(b)] If $\alpha_1\leq R<\alpha_2$, then 
\[
\beta_{Q_{\bm{a}\cdot \bm{d}^2},p}(m)= \eta_{p,1}^{\mathcal{A},\operatorname{o}}p^{\frac{\alpha_1}{2}+\left\lfloor\frac{R}{2}\right\rfloor}+\eta_{p,1}^{\mathcal{A},\operatorname{e}}\varepsilon_p^{3}\delta_{R,\alpha_1,2}p^{\frac{\alpha_1+R}{2}}\chi_p(m') -\eta_{p,1}^{\mathcal{A},\operatorname{o}}\delta_{R,\alpha_1,\cancel{2}}p^{\frac{\alpha_1+R-1}{2}}.
\]

\item[(c)] If $\alpha_2\leq R<\alpha_3$, then 
\begin{multline*}
\beta_{Q_{\bm{a}\cdot\bm{d}^2},p}(m)=p^{\left\lfloor\tfrac{\mathcal{A}_2}{2}\right\rfloor}+\left(\eta_{p,2}^{\mathcal{A},\operatorname{o}}\left\lfloor\tfrac{R-\alpha_2}{2}\right\rfloor +\eta_{p,2}^{\mathcal{A},\operatorname{e}}\left\lfloor\tfrac{R+1-\alpha_2}{2}\right\rfloor\right) p^{\tfrac{\mathcal{A}_2}{2}}\left(1-\tfrac{1}{p}\right)\\
-\left(\eta_{p,2}^{\mathcal{A},\operatorname{o}}\delta_{2\nmid R} +\eta_{p,2}^{\mathcal{A},\operatorname{e}}\delta_{2\mid R}\right) p^{\tfrac{\mathcal{A}_2}{2}-1}.
\end{multline*}

\item[(d)] If $\alpha_3\leq R<\alpha_4$, then 
\begin{multline*}
\beta_{Q_{\bm{a}\cdot\bm{d}^2},p}(m)=p^{\tfrac{\mathcal{A}_2}{2}}+\tfrac{\alpha_3-\alpha_2}{2}\left(\eta_{p,2}^{\mathcal{A},\operatorname{o}}+\eta_{p,2}^{\mathcal{A},\operatorname{e}}\right)p^{\tfrac{\mathcal{A}_2}{2}}\left(1-\tfrac{1}{p}\right)+\eta_{p,3}^{\mathcal{A},\operatorname{o}}p^{\tfrac{\mathcal{A}_2}{2}-1}\left(1-p^{\tfrac{\alpha_3}{2}-\left\lfloor\tfrac{R}{2}\right\rfloor}\right)-\eta_{p,3}^{\mathcal{A},\operatorname{o}}\delta_{2\nmid R} p^{\frac{\mathcal{A}_3-3-R}{2}}\\
+\eta_{p,3}^{\mathcal{A},\operatorname{e}}\delta_{2\mid R}p^{\frac{\mathcal{A}_3-R}{2}-1}\chi_p(-m').
\end{multline*}

\item[(e)] If $R\geq \alpha_4$, then 
\begin{multline*}
\beta_{Q_{\bm{a}\cdot\bm{d}^2},p}(m)=p^{\tfrac{\mathcal{A}_2}{2}}+\tfrac{\alpha_3-\alpha_2}{2}\left(\eta_{p,2}^{\mathcal{A},\operatorname{o}}+\eta_{p,2}^{\mathcal{A},\operatorname{e}}\right)p^{\tfrac{\mathcal{A}_2}{2}}\left(1-\tfrac{1}{p}\right)+\eta_{p,3}^{\mathcal{A},\operatorname{o}}p^{\tfrac{\mathcal{A}_2}{2}-1}\left(1-p^{\tfrac{\alpha_3}{2}-\left\lfloor\tfrac{\alpha_4}{2}\right\rfloor}\right)\\
+ \delta_{2\mid R}\eta_{p,4}^{\mathcal{A},\operatorname{e}} p^{\frac{\mathcal{A}_4}{2}-2(R+1)+R+\frac{1}{2}}\varepsilon_p \chi_p(-m')+  \delta_{2\nmid R}\eta_{p,4}^{\mathcal{A},\operatorname{o}} p^{\frac{\mathcal{A}_4}{2}-2(R+1)+R+\frac{1}{2}}\varepsilon_p \chi_p(-m').
\end{multline*}
\end{itemize}

\item If $\bm{\alpha}=(0,0,1,1)$, then the following hold.
\begin{itemize}
\item[(a)] If $R=0$, then 
\[
\beta_{Q_{\bm{a}\cdot\bm{d}^2},p}(m)=1-\eta_{p,2}^{\mathcal{A},\operatorname{e}}p^{-1}.
\]

\item[(b)] If $R\geq 1$, then 
\begin{multline*}
\beta_{Q_{\bm{a}\cdot\bm{d}^2},p}(m)=1+\eta_{p,2}^{\mathcal{A},\operatorname{e}}p^{-1}(p-1)+\eta_{p,4}^{\mathcal{A},\operatorname{o}}(p-1)p^{-2}\left(\frac{1-p^{-2\left\lfloor\frac{R}{2}\right\rfloor}}{1-p^{-2}}\right) + \eta_{p,4}^{\mathcal{A},\operatorname{e}}(p-1)p^{-3}\left(\frac{1-p^{-2\left\lfloor\frac{R-1}{2}\right\rfloor}}{1-p^{-2}}\right)\\
-\left(\delta_{2\mid R}\eta_{p,4}^{\mathcal{A},\operatorname{e}}+  \delta_{2\nmid R}\eta_{p,4}^{\mathcal{A},\operatorname{o}}\right) p^{-R-1}.
\end{multline*}
\end{itemize}

\end{enumerate}
\end{lemma}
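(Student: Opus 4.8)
The proof is a direct substitution of the Gauss-sum evaluations of Lemma \ref{lem:taueval} into the formula \eqref{eqn:RQp^r-3} of Lemma \ref{lem:LocalDensityCompute}, after which almost every term collapses. Writing $R=\ord_p(m)$, the evaluation \eqref{eqn:tau1} shows that $\tau(\chi_{1,p^k},\psi_{-m,p^k})$ equals $1$ at $k=0$, the full Euler factor $p^k-p^{k-1}$ for $1\le k\le R$, the single value $-p^{R}$ at $k=R+1$, and $0$ for $k>R+1$; meanwhile \eqref{eqn:taup} shows $\tau(\chi_{p,p^k},\psi_{-m,p^k})$ is supported only on $k=R+1$. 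Hence, inside each of the five blocks of \eqref{eqn:RQp^r-3} (indexed by the ranges $[0,\alpha_1]$, $(\alpha_1,\alpha_2]$, $(\alpha_2,\alpha_3]$, $(\alpha_3,\alpha_4]$, and $(\alpha_4,\infty)$) only those $k\le R+1$ survive, and the position of $R$ among the $\alpha_j$ determines which block carries the boundary index $k=R+1$. In particular, when $R<\alpha_1$ every surviving index lies in the first block, where the contributions $1$, the $p^k-p^{k-1}$ for $1\le k\le R$, and the boundary value $-p^{R}$ telescope to $0$; this yields subcase (a) uniformly in all six cases.

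For the other subcases I would evaluate the active blocks separately. In the first two blocks the weight $p^{(\alpha_1-k)/2}$ multiplied by $p^k-p^{k-1}$ gives $p^{(\alpha_1+k)/2}(1-p^{-1})$, whose sum over the surviving step-$2$ progression of $k$ telescopes to a single power such as $p^{\alpha_1/2+\lfloor R/2\rfloor}$. In the third block the weight $p^{\mathcal{A}_2/2-k}$ exactly cancels the Euler factor, since $p^{\mathcal{A}_2/2-k}(p^k-p^{k-1})=p^{\mathcal{A}_2/2}(1-p^{-1})$ is independent of $k$, so the sum merely counts the surviving $k$ and produces the coefficients $\lfloor(R-\alpha_2)/2\rfloor$ and $\lfloor(R+1-\alpha_2)/2\rfloor$; in the fourth and fifth blocks the weights $p^{(\mathcal{A}_3-3k)/2}$ and $p^{\mathcal{A}_4/2-2k}$ turn the summand into a decreasing geometric series in $p^{-1}$, which closes up into the factors $1-p^{-\cdots}$ and the $\tfrac{1}{1+p^{-1}}$ appearing in the profiles (e). In every block only every second value of $k$ contributes a full Euler factor, because $k\equiv\mathcal{A}_j\pmod 2$ selects the principal character $\chi_{1,p^k}$ (nonzero for $k\le R$), while the opposite parity forces the quadratic character $\chi_{p,p^k}$ that \eqref{eqn:taup} annihilates unless $k=R+1$. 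The single boundary index then supplies the corrections: a principal boundary gives the $-p^{\cdots}$ terms from $\tau(\chi_{1,p^{R+1}},\cdot)=-p^{R}$, whereas a quadratic boundary gives the terms carrying $\varepsilon_p\chi_p(\pm m')$ from $\tau(\chi_{p,p^{R+1}},\cdot)=\varepsilon_p p^{R+\frac12}\chi_p(-m')$. The hypothesis that at most one $\alpha_j$ is odd is precisely what keeps this clean: it lets the parity of $k$ unambiguously assign the principal or quadratic character in each block, so the split into $\eta_{p,\ell}^{\mathcal{A},\operatorname{o}}$ and $\eta_{p,\ell}^{\mathcal{A},\operatorname{e}}$ is well defined and the number of subcases stays bounded.

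The degenerate profile $\bm{\alpha}=(0,0,1,1)$ must be treated separately as case (6), because there the blocks $(\alpha_1,\alpha_2]$ and $(\alpha_3,\alpha_4]$ are empty and the first and last blocks merge; one substitutes $\alpha_1=\alpha_2=0$ and $\alpha_3=\alpha_4=1$ directly into \eqref{eqn:RQp^r-3} and reads off the two regimes $R=0$ and $R\ge 1$. I expect the main obstacle to be organizational rather than conceptual: for each of the six parity profiles and each of the five position regimes for $R$ one must track simultaneously which parity of $k$ triggers which character, the resulting telescoping or geometric sum, and the correct powers of $\varepsilon_p$ (recalling $\varepsilon_p^2=\left(\frac{-1}{p}\right)$) and of $\chi_p$ on the boundary term. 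A clean way to organize the bookkeeping is to record for each block the pair (character on the full indices, character at $k=R+1$) as a function of $R\bmod 2$ and of the single odd $\alpha_j$, and then read the stated closed forms off the resulting short table after collecting terms.
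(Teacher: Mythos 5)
Your proposal is correct and follows the paper's own route exactly: the paper's proof consists precisely of substituting the evaluations \eqref{eqn:tau1} and \eqref{eqn:taup} of Lemma \ref{lem:taueval} into \eqref{eqn:RQp^r-3} of Lemma \ref{lem:LocalDensityCompute}, noting which sums vanish according to the position of $R$ among the $\alpha_j$, and simplifying the surviving blocks (telescoping, constant, and geometric sums, plus the boundary contribution at $k=R+1$) case by case, with $\bm{\alpha}=(0,0,1,1)$ handled separately.

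One imprecision is worth correcting. Your blanket principle that in every block the parity of $k$ decides between the principal and the quadratic character holds only for the second and fourth blocks of \eqref{eqn:RQp^r-3}, which carry an odd number (one, resp.\ three) of Gauss-sum factors. In the third and fifth blocks the number of factors is even, so the character of the $n'$-sum is $\chi_{\mathcal{A}_2,p^k}$ resp.\ $\chi_{\mathcal{A}_4,p^k}$: it is fixed by the parity of $\mathcal{A}_2$ resp.\ $\mathcal{A}_4$, the same for both parities of $k$, and the parity of $k$ there only selects the prefactor $\eta_{p,\ell}^{\mathcal{A},\operatorname{o}}$ versus $\eta_{p,\ell}^{\mathcal{A},\operatorname{e}}$. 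Your concrete treatment of the third block (keeping both coefficients $\left\lfloor\tfrac{R-\alpha_2}{2}\right\rfloor$ and $\left\lfloor\tfrac{R+1-\alpha_2}{2}\right\rfloor$, i.e.\ Euler factors from both parity classes when $\mathcal{A}_2$ is even) is consistent with the stated formulas, so the slip lies in your summary sentence rather than in the computation; but applied literally it would halve the counts in subcase (c) and distort the geometric factors in subcase (e) of cases (1), (4), and (5).
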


For ease of notation, we write $p^{\nu}\| \bm{d}$ if $p^{\nu}\| \prod_{j=1}^4d_j$. For $\bm{d}\in\S^4$ with $p^{\nu}\|\bm{d}$ we define
\[
\omega_{\nu}(p)=\omega_{\nu,\bm{a}\cdot\bm{d}^2}(p):= \frac{\beta_{Q_{\bm{a}\cdot \bm{d}^2},p}(m)}{\beta_{Q_{\bm{a}},p}(m)},\qquad\qquad  \omega(\bm{d},m):=\prod_{p^{\nu}\| \bm{d}} \omega_{\nu}(p).
\]

\begin{lemma}\label{lem:LocalDensityBounds}
\noindent

\noindent
\begin{enumerate}[leftmargin=*,label={\rm(\arabic*)}]
\item 
Setting $X:=a_{E_{Q_{\bm{a}}}}(n)$, for any $\bm{d}\in\S^4$ with $\gcd(d_j,30)=1$, we have 
\[
a_{E_{Q_{\bm{a}\cdot \bm{d}^2}}}(m) =\frac{X}{d_1d_2d_3d_4} \omega(\bm{d},m).
\]  

\item Suppose that $p$ is an odd prime and $p\nmid \prod_{j=1}^4a_j$. We have $\omega_0(p) = 1$ and for $1 \leq \nu \leq 4$ and $m = p^R m'$ with $p\nmid m'$ we may evaluate $\omega_{\nu}(p)$ as follows. 
\noindent

\noindent
\begin{enumerate}
\item[(a)]  For $R = 0$, we have
\begin{align*}
\omega_\nu(p) = \begin{cases}
\frac{1 + \eta_{p,3}^{\mathcal{A},\operatorname{e}}(\bm{a}\cdot \bm{d}^2)\varepsilon_p\chi_p(-m')p^{-1}}{1 - \eta_{p,4}^{\mathcal{A},\operatorname{e}}(\bm{a})p^{-2}} & \text{if } \nu = 1,\vspace{.1cm}\\
\frac{1 - \eta_{p,2}^{\mathcal{A},\operatorname{e}}(\bm{a}\cdot\bm{d}^2) p^{-1}}{1 - \eta_{p,4}^{\mathcal{A},\operatorname{e}}(\bm{a})p^{-2}} & \text{if } \nu = 2,\vspace{.1cm}\\
\frac{1 + \eta_{p,1}^{\mathcal{A},\operatorname{e}}(\bm{a}\cdot\bm{d}^2)\varepsilon_p\chi_p(-m')}{1 - \eta_{p,4}^{\mathcal{A},\operatorname{e}}(\bm{a})p^{-2}} & \text{if } \nu = 3,\vspace{.1cm}\\
\hspace{1cm} 0 & \text{if } \nu = 4.
\end{cases}
\end{align*}
\item[(b)]
For $R = 1$, we have
 \begin{align*}
\omega_\nu(p) = \begin{cases}
\frac{1 - p^{-2}}{1 + \eta_{p,4}^{\mathcal{A},\operatorname{e}}(\bm{a})\left(p^{-1} - p^{-2}\right)  - p^{-3}} & \text{if } \nu = 1 \vspace{.1cm}\\
\frac{1 +\eta_{p,2}^{\mathcal{A},\operatorname{e}}(\bm{a}\cdot\bm{d}^2)\left(1-p^{-1}\right)  -  p^{-1}}{1 + \eta_{p,4}^{\mathcal{A},\operatorname{e}}(\bm{a})\left(p^{-1} - p^{-2}\right)  - p^{-3}} & \text{if } \nu = 2 \vspace{.1cm}\\
\hspace{1cm} 0 & \text{if } \nu = 3 \vspace{.1cm}\\
\hspace{1cm} 0 & \text{if } \nu = 4.
\end{cases}
\end{align*} 

\item[(c)] 
For $R \geq 2$ and $R$ even, we have
\begin{align*}
\omega_\nu(p) = \begin{cases}
\frac{1 + p^{-1}(1- p^{-1})+\frac{p^{-2}}{p+1}(1- p^{2-R})\left( 1+ 
 \eta_{p,4}^{\mathcal{A},\operatorname{e}}\left(\bm{a}\cdot \bm{d}^2\right)p \right) -  \eta_{p,4}^{\mathcal{A},\operatorname{e}}\left(\bm{a}\cdot \bm{d}^2\right)p^{-1-R}}{1 + \frac{p^{-1}}{p+1}(1- p^{-R})\left( 1+ \eta_{p,4}^{\mathcal{A},\operatorname{e}}(\bm{a})p \right) - \eta_{p,4}^{\mathcal{A},\operatorname{e}}(\bm{a})p^{-2-R}} & \text{if } \nu = 1,\vspace{.1cm}\\ 
\frac{1 + \left(1 + \eta_{p,2}^{\mathcal{A},\operatorname{e}}\left(\bm{a}\cdot \bm{d}^2\right)\right)(1 - p^{-1})+\frac{p^{-1}}{p+1}(1- p^{2-R})\left( 1+ \eta_{p,4}^{\mathcal{A},\operatorname{e}}\left(\bm{a}\cdot \bm{d}^2\right)p \right) - \eta_{p,4}^{\mathcal{A},\operatorname{e}}\left(\bm{a}\cdot \bm{d}^2\right)p^{-R}}{1 + \frac{p^{-1}}{p+1}(1- p^{-R})\left( 1+ \eta_{p,4}^{\mathcal{A},\operatorname{e}}(\bm{a})p \right) - \eta_{p,4}^{\mathcal{A},\operatorname{e}}(\bm{a})p^{-2-R}} & \text{if } \nu = 2, \vspace{.1cm}\\
\frac{p + \frac{1}{p+1}(1- p^{2-R})\left( 1+ \eta_{p,4}^{\mathcal{A},\operatorname{e}}\left(\bm{a}\cdot \bm{d}^2\right)p \right) - \eta_{p,4}^{\mathcal{A},\operatorname{e}}\left(\bm{a}\cdot \bm{d}^2\right)p^{1-R}}{1 + \frac{p^{-1}}{p+1}(1- p^{-R})\left( 1+ \eta_{p,4}^{\mathcal{A},\operatorname{e}}(\bm{a})p \right) - \eta_{p,4}^{\mathcal{A},\operatorname{e}}(\bm{a})p^{-2-R}} & \text{if } \nu = 3, \vspace{.1cm}\\
\frac{p^2 + \frac{p}{p+1}(1- p^{2-R})\left( 1+  \eta_{p,4}^{\mathcal{A},\operatorname{e}}\left(\bm{a}\cdot \bm{d}^2\right)p \right) - \eta_{p,4}^{\mathcal{A},\operatorname{e}}\left(\bm{a}\cdot \bm{d}^2\right)p^{2-R}}{1 + \frac{p^{-1}}{p+1}(1- p^{-R})\left( 1+ \eta_{p,4}^{\mathcal{A},\operatorname{e}}(\bm{a})p \right) - \eta_{p,4}^{\mathcal{A},\operatorname{e}}(\bm{a})p^{-2-R}} & \text{if } \nu = 4.
\end{cases}
\end{align*}

\item[(d)] For $R \geq 3$ and $R$ odd, we have
\begin{align*}
\omega_\nu(p) = \begin{cases}
\frac{1 + p^{-1}(1- p^{-1})+\frac{p^{-2}}{p+1} \left( 1 - p^{3-R} + \eta_{p,4}^{\mathcal{A},\operatorname{e}}\left(\bm{a}\cdot \bm{d}^2\right)p(1- p^{1-R}) \right) \ - \ p^{-1-R}}{1 + \frac{p^{-1}}{p+1} \left( 1 - p^{1-R} + \eta_{p,4}^{\mathcal{A},\operatorname{e}}(\bm{a})p(1- p^{-1-R}) \right) \ - \ p^{-2-R}} & \text{if } \nu = 1, \vspace{.1cm}\\ 
\frac{1 + \left(1 + \eta_{p,2}^{\mathcal{A},\operatorname{e}}\left(\bm{a}\cdot\bm{d}^2\right) \right)(1 - p^{-1})+\frac{p^{-1}}{p+1} \left( 1 - p^{3-R} + \eta_{p,4}^{\mathcal{A},\operatorname{e}}\left(\bm{a}\cdot\bm{d}^2\right)p(1- p^{1-R}) \right) \ - \ p^{-R}}{1 + \frac{p^{-1}}{p+1} \left( 1 - p^{1-R} +\eta_{p,4}^{\mathcal{A},\operatorname{e}}(\bm{a})p (1- p^{-1-R}) \right) \ - \ p^{-2-R}} & \text{if } \nu = 2, \vspace{.1cm}\\
\frac{p + \frac{1}{p+1} \left( 1 - p^{3-R} + \eta_{p,4}^{\mathcal{A},\operatorname{e}}\left(\bm{a}\cdot\bm{d}^2\right)p(1- p^{1-R}) \right) \ - \ p^{1-R}}{1 + \frac{p^{-1}}{p+1} \left( 1 - p^{1-R} + \eta_{p,4}^{\mathcal{A},\operatorname{e}}(\bm{a})p (1- p^{-1-R}) \right) \ - \ p^{-2-R}} & \text{if } \nu = 3, \vspace{.1cm}\\
\frac{p^2 + \frac{p}{p+1} \left( 1 - p^{3-R} + \eta_{p,4}^{\mathcal{A},\operatorname{e}}\left(\bm{a}\cdot\bm{d}^2\right)p(1- p^{1-R}) \right) \ - \ p^{2-R}}{1 + \frac{p^{-1}}{p+1} \left( 1 - p^{1-R} + \eta_{p,4}^{\mathcal{A},\operatorname{e}}(\bm{a})p (1- p^{-1-R}) \right) \ - \ p^{-2-R}} & \text{if } \nu = 4.
\end{cases}
\end{align*}
\end{enumerate}

\item
Suppose that $p\| \prod_{j=1}^4a_j$ and 
\[
\bm{\alpha}\in \{(0,0,0,3), (0,0,2,3),(0,2,2,3),(2,2,2,3), (0,0,1,2),(0,1,2,2),(1,2,2,2)\}.
\]
 For $1 \leq \nu \leq 4$ and $m = p^R m'$ with $p\nmid m'$ we may evaluate $\omega_{\nu}(p)$ as follows. 
\noindent

\noindent
\begin{enumerate}
\item[(a)]  For $R = 0$, we have
\begin{align*}
\omega_\nu(p) = \begin{cases}
1 & \text{if } \nu = 1\text{ and }\bm{\alpha}=(0,0,0,3) \vspace{.1cm}\\
\frac{1-\eta_{p,2}^{\mathcal{A},\operatorname{e}}(\bm{a}\cdot \bm{d}^2)p^{-1} }{1+\eta_{p,3}^{\mathcal{A},\operatorname{e}}(\bm{a}) \chi_p(-m')p^{-1}}&\text{if } \nu = 1\text{ and }\bm{\alpha}=(0,0,1,2), \vspace{.1cm}\\
\frac{1-\eta_{p,2}^{\mathcal{A},\operatorname{e}}(\bm{a}\cdot \bm{d}^2)p^{-1} }{1+\eta_{p,3}^{\mathcal{A},\operatorname{e}}(\bm{a}) \chi_p(-m')p^{-1}}&  \text{if } \nu = 2 \text{ and }\bm{\alpha}=(0,0,2,3),\vspace{.1cm}\\
\frac{1+\eta_{p,1}^{\mathcal{A},\operatorname{e}}(\bm{a}\cdot \bm{d}^2)\varepsilon_p^3\chi_p(m')}{1+\eta_{p,3}^{\mathcal{A},\operatorname{e}}(\bm{a}) \chi_p(-m')p^{-1}}&  \text{if } \nu = 2 \text{ and }\bm{\alpha}=(0,1,2,2),\vspace{.1cm}\\
\frac{1+\eta_{p,1}^{\mathcal{A},\operatorname{e}}(\bm{a}\cdot \bm{d}^2)\varepsilon_p^3\chi_p(m')}{1+\eta_{p,3}^{\mathcal{A},\operatorname{e}}(\bm{a}) \chi_p(-m')p^{-1}}&  \text{if } \nu = 3 \text{ and }\bm{\alpha}=(0,2,2,3),\vspace{.1cm}\\
0&  \text{if } \nu = 3 \text{ and }\bm{\alpha}=(1,2,2,2),\vspace{.1cm}\\
0&\text{if }\nu=4.
\end{cases}
\end{align*}
\item[(b)]
For $R = 1$, we have
 \begin{align*}
\omega_\nu(p) = \begin{cases}
\frac{1-p^{-2}}{1+\eta_{p,4}^{\mathcal{A},\operatorname{o}}(\bm{a})\varepsilon_p\chi_{p}(-m')p^{-2}}&\text{if }\nu=1\text{ and }\bm{\alpha}=(0,0,0,3),\vspace{.1cm}\\
\frac{1+\eta_{p,2}^{\mathcal{A},\operatorname{e}}(\bm{a}\cdot\bm{d}^2)\left(1-p^{-1}\right)}{1+\eta_{p,4}^{\mathcal{A},\operatorname{o}}(\bm{a})\varepsilon_p\chi_{p}(-m')p^{-2}}&\text{if }\nu=1\text{ and }\bm{\alpha}=(0,0,1,2),\vspace{.1cm}\\
\frac{1+\eta_{p,2}^{\mathcal{A},\operatorname{e}}(\bm{a}\cdot \bm{d}^2)\left(1-p^{-1}\right)-p^{-1}}{1+\eta_{p,4}^{\mathcal{A},\operatorname{o}}(\bm{a})\varepsilon_p\chi_{p}(-m')p^{-2}}&\text{if }\nu=2\text{ and }\bm{\alpha}=(0,0,2,3),\vspace{.1cm}\\
\frac{1+\varepsilon_p^3\chi_p(m')\eta_{p,2}^{\mathcal{A},\operatorname{o}}(\bm{a}\cdot\bm{d}^2)}{1+\eta_{p,4}^{\mathcal{A},\operatorname{o}}(\bm{a})\varepsilon_p\chi_{p}(-m')p^{-2}}&\text{if }\nu=2\text{ and }\bm{\alpha}=(0,1,2,2),\vspace{.1cm}\\
0&\text{if }\nu=3\text{ and }\bm{\alpha}=(0,2,2,3),\vspace{.1cm}\\
\frac{1+\eta_{p,1}^{\mathcal{A},\operatorname{e}}(\bm{a}\cdot\bm{d}^2)\eta_p^3 \chi_{p}(m')p}{1+\eta_{p,4}^{\mathcal{A},\operatorname{o}}(\bm{a})\varepsilon_p\chi_{p}(-m')p^{-2}}&\text{if }\nu=3\text{ and }\bm{\alpha}=(1,2,2,2),\vspace{.1cm}\\
0&\text{if }\nu=4.
\end{cases}
\end{align*} 
\item[(c)] For $R=2$ and $\alpha_4=3$, we have 
\[
\omega_\nu(p) = \begin{cases}
\frac{1+\eta_{p,3}^{\mathcal{A},\operatorname{o}}(\bm{a}\cdot \bm{d}^2) p^{-1}\left(1-p^{-1}\right) +\eta_{p,3}^{\mathcal{A},\operatorname{e}}\left(\bm{a}\cdot\bm{d}^2\right)\chi_p(-m')p^{-2} }{1+\eta_{p,4}^{\mathcal{A},\operatorname{e}}(\bm{a})\varepsilon_p \chi_p(-m')p^{-3 }}&\text{if }\nu=1 \text{ (i.e., $\bm{\alpha}=(0,0,0,3)$)},\\
\frac{1+\left(\eta_{p,2}^{\mathcal{A},\operatorname{o}}\left(\bm{a}\cdot\bm{d}^2\right)+\eta_{p,2}^{\mathcal{A},\operatorname{e}}\left(\bm{a}\cdot\bm{d}^2\right)\right)\left(1-p^{-1}\right)+\eta_{p,3}^{\mathcal{A},\operatorname{e}}\left(\bm{a}\cdot\bm{d}^2\right)\chi_p(-m')p^{-1} }{1+\eta_{p,4}^{\mathcal{A},\operatorname{e}}(\bm{a})\varepsilon_p \chi_p(-m')p^{-3 }}&\text{if }\nu=2 \text{ (i.e., $\bm{\alpha}=(0,0,2,3)$)},\\
\frac{p+\eta_{p,3}^{\mathcal{A},\operatorname{e}}\left(\bm{a}\cdot\bm{d}^2\right)\chi_p(-m') }{1+\eta_{p,4}^{\mathcal{A},\operatorname{e}}(\bm{a})\varepsilon_p \chi_p(-m')p^{-3 }}&\text{if }\nu=3 \text{ (i.e., $\bm{\alpha}=(0,2,2,3)$)},\\
\frac{p^2+\eta_{p,3}^{\mathcal{A},\operatorname{e}}\left(\bm{a}\cdot\bm{d}^2\right)\chi_p(-m')p }{1+\eta_{p,4}^{\mathcal{A},\operatorname{e}}(\bm{a})\varepsilon_p \chi_p(-m')p^{-3 }}&\text{if }\nu=4 \text{ (i.e., $\bm{\alpha}=(2,2,2,3)$)}.
\end{cases}
\]
\item[(d)] 
For $R \geq 2$ and $R\geq \alpha_4$ even, we have
\begin{align*}
\omega_\nu(p) = \begin{cases}
\frac{1+\eta_{p,3}^{\mathcal{A},\operatorname{o}}(\bm{a}\cdot \bm{d}^2)p^{-1}\left(1-p^{-1}\right)+\eta_{p,4}^{\mathcal{A},\operatorname{e}}p^{-R}}{1+ \eta_{p,4}^{\mathcal{A},\operatorname{e}}(\bm{a}) p^{-R-1}\varepsilon_p\chi_{p}(-m')}& \text{if } \nu = 1\text{ and }\bm{\alpha}=(0,0,0,3), \vspace{.1cm}\\ 
\frac{1+\eta_{p,2}^{\mathcal{A},\operatorname{e}}\left(\bm{a}\cdot\bm{d}^2\right)\left(1-p^{-1}\right)+\eta_{p,4}^{\mathcal{A},\operatorname{e}}\left(\bm{a}\cdot\bm{d}^2\right)\varepsilon_p\chi_{p}(-m') p^{-R}}{1+ \eta_{p,4}^{\mathcal{A},\operatorname{e}}(\bm{a}) p^{-R-1}\varepsilon_p\chi_{p}(-m')}& \text{if } \nu = 1\text{ and }\bm{\alpha}=(0,0,1,2), \vspace{.1cm}\\ 
\frac{1+\left(\eta_{p,2}^{\mathcal{A},\operatorname{o}}\left(\bm{a}\cdot\bm{d}^2\right)+\eta_{p,2}^{\mathcal{A},\operatorname{e}}\left(\bm{a}\cdot\bm{d}^2\right)\right)\left(1-\frac{1}{p}\right) +\eta_{p,4}^{\mathcal{A},\operatorname{e}}p^{1-R}}{1+ \eta_{p,4}^{\mathcal{A},\operatorname{e}}(\bm{a}) p^{-R-1}\varepsilon_p\chi_{p}(-m')}
 & \text{if } \nu = 2\text{ and }\bm{\alpha}=(0,0,2,3), \vspace{.1cm}\\
\frac{1+\eta_{p,4}^{\mathcal{A},\operatorname{e}}\left(\bm{a}\cdot\bm{d}^2\right) \varepsilon_p\chi_p(-m')p^{1-R}}{1+ \eta_{p,4}^{\mathcal{A},\operatorname{e}}(\bm{a}) p^{-R-1}\varepsilon_p\chi_{p}(-m')}
 & \text{if } \nu = 2\text{ and }\bm{\alpha}=(0,1,2,2), \vspace{.1cm}\\
\frac{p+\eta_{p,4}^{\mathcal{A},\operatorname{e}}\left(\bm{a}\cdot\bm{d}^2\right)\varepsilon_p\chi_p(-m')p^{2-R}}{1+ \eta_{p,4}^{\mathcal{A},\operatorname{e}}(\bm{a}) p^{-R-1}\varepsilon_p\chi_{p}(-m')}
 & \text{if } \nu = 3\text{ and }\bm{\alpha}=(0,2,2,3), \vspace{.1cm}\\
\frac{p+\eta_{p,4}^{\mathcal{A},\operatorname{e}}\left(\bm{a}\cdot\bm{d}^2\right)\varepsilon_p\chi_{p}(-m')p^{2-R}}{1+ \eta_{p,4}^{\mathcal{A},\operatorname{e}}(\bm{a}) p^{-R-1}\varepsilon_p\chi_{p}(-m')}
 & \text{if } \nu = 3\text{ and }\bm{\alpha}=(1,2,2,2), \vspace{.1cm}\\
\frac{p^2+\eta_{p,4}^{\mathcal{A},\operatorname{e}}\left(\bm{a}\cdot\bm{d}^2\right)\varepsilon_p\chi_{p}(-m')p^{3-R}}{1+ \eta_{p,4}^{\mathcal{A},\operatorname{e}}(\bm{a}) p^{-R-1}\varepsilon_p\chi_{p}(-m')}
& \text{if } \nu = 4.
\end{cases}
\end{align*}

\item[(e)] For $R \geq 3$ and $R$ odd, we have
\begin{align*}
\omega_\nu(p) = \begin{cases}
\frac{1+ \eta_{p,3}^{\mathcal{A},\operatorname{o}}\left(\bm{a}\cdot\bm{d}^2\right)p^{-1}\left(1-p^{-1}\right)+\eta_{p,4}^{\mathcal{A},\operatorname{o}}(\bm{a}\cdot\bm{d}^2) p^{-R}\varepsilon_p\chi_{p}(-m') }{1+ \eta_{p,4}^{\mathcal{A},\operatorname{o}}(\bm{a}) p^{-R-1}\varepsilon_p\chi_{p}(-m')}
& \text{if } \nu = 1 \text{ and }\bm{\alpha}=(0,0,0,3),\vspace{.1cm}\\ 
\frac{1+ \eta_{p,2}^{\mathcal{A},\operatorname{e}}\left(\bm{a}\cdot\bm{d}^2\right)\left(1-p^{-1}\right) +\eta_{p,4}^{\mathcal{A},\operatorname{o}}(\bm{a}\cdot\bm{d}^2) p^{-R}\varepsilon_p\chi_{p}(-m') }{1+ \eta_{p,4}^{\mathcal{A},\operatorname{o}}(\bm{a}) p^{-R-1}\varepsilon_p\chi_{p}(-m')}
& \text{if } \nu = 1 \text{ and }\bm{\alpha}=(0,0,1,2),\vspace{.1cm}\\ 
\frac{ 1+ \left(\eta_{p,2}^{\mathcal{A},\operatorname{o}}\left(\bm{a}\cdot\bm{d}^2\right)+\eta_{p,2}^{\mathcal{A},\operatorname{e}}\left(\bm{a}\cdot\bm{d}^2\right)\right)\left(1-p^{-1}\right) +\eta_{p,4}^{\mathcal{A},\operatorname{o}}(\bm{a}\cdot\bm{d}^2) p^{1-R}\varepsilon_p\chi_{p}(-m') }{1+ \eta_{p,4}^{\mathcal{A},\operatorname{o}}(\bm{a}) p^{-R-1}\varepsilon_p\chi_{p}(-m')}
& \text{if } \nu = 2\text{ and }\bm{\alpha}=(0,0,2,3), \vspace{.1cm}\\
\frac{1+\eta_{p,4}^{\mathcal{A},\operatorname{o}}(\bm{a}) p^{1-R}\varepsilon_p\chi_{p}(-m') }{1+ \eta_{p,4}^{\mathcal{A},\operatorname{o}}(\bm{a}) p^{-R-1}\varepsilon_p\chi_{p}(-m')}
& \text{if } \nu = 2\text{ and }\bm{\alpha}=(0,1,2,2), \vspace{.1cm}\\
\frac{p+  \eta_{p,4}^{\mathcal{A},\operatorname{o}}(\bm{a}) p^{2-R}\varepsilon_p\chi_{p}(-m') }{1+ \eta_{p,4}^{\mathcal{A},\operatorname{o}}(\bm{a}) p^{-R-1}\varepsilon_p\chi_{p}(-m')}
& \text{if } \nu = 3\text{ and }\bm{\alpha}=(0,2,2,3), \vspace{.1cm}\\
\frac{ p+  \eta_{p,4}^{\mathcal{A},\operatorname{o}}(\bm{a}) p^{2-R}\varepsilon_p\chi_{p}(-m')}{1+ \eta_{p,4}^{\mathcal{A},\operatorname{o}}(\bm{a}) p^{-R-1}\varepsilon_p\chi_{p}(-m')}
& \text{if } \nu = 3\text{ and }\bm{\alpha}=(1,2,2,2), \vspace{.1cm}\\
\frac{p^2+  \eta_{p,4}^{\mathcal{A},\operatorname{o}}(\bm{a}) p^{3-R}\varepsilon_p\chi_{p}(-m')}{1+ \eta_{p,4}^{\mathcal{A},\operatorname{o}}(\bm{a}) p^{-R-1}\varepsilon_p\chi_{p}(-m')}
 & \text{if } \nu = 4.
\end{cases}
\end{align*}
\end{enumerate}

\end{enumerate}
\end{lemma}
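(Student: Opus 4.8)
The plan is to deduce everything from Siegel's product formula \eqref{eqn:Siegellocal}, which writes the Eisenstein coefficient as $a_{E_Q}(m)=\prod_{p\leq\infty}\beta_{Q,p}(m)$ over all places. For part~(1) I would apply this to both $Q_{\bm a\cdot\bm d^2}$ and $Q_{\bm a}$ at the same argument and form the quotient place by place. At the archimedean place the real density of a positive definite quaternary form representing $m$ depends on the Gram determinant only through a factor $(\det)^{-1/2}$, the remaining factor depending on $m$ and $\ell=4$ alone; since $\det A_{\bm a\cdot\bm d^2}=\big(\prod_j d_j^2\big)\det A_{\bm a}$, the infinite place contributes exactly $1/(d_1d_2d_3d_4)$. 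At a finite prime $p\nmid\prod_j d_j$ the $d_j^2$ are square $p$-adic units, so scaling $x_j\mapsto x_j/d_j$ is a $\Z_p$-equivalence of $Q_{\bm a\cdot\bm d^2}$ with $Q_{\bm a}$; hence $\beta_{Q_{\bm a\cdot\bm d^2},p}(m)=\beta_{Q_{\bm a},p}(m)$ and the ratio is $1=\omega_0(p)$. The remaining primes are precisely those with $p^{\nu}\|\bm d$, $\nu\geq1$, whose contribution is by definition $\omega_{\nu}(p)$, so the finite part of the quotient equals $\omega(\bm d,m)$. Multiplying the archimedean and finite contributions gives part~(1). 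The hypothesis $\gcd(d_j,30)=1$ enters only to guarantee that every prime dividing $\bm d$ is odd (indeed $\geq7$), so that the explicit odd-prime formula of Lemma~\ref{lem:LocalDensitySimplify} is applicable in the next step.

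For parts~(2) and~(3) the task reduces to evaluating $\omega_{\nu}(p)=\beta_{Q_{\bm a\cdot\bm d^2},p}(m)/\beta_{Q_{\bm a},p}(m)$ in closed form from Lemma~\ref{lem:LocalDensitySimplify}. The first step is to translate $\nu$ into the ordered exponent pattern $\bm\alpha=(\alpha_1\leq\cdots\leq\alpha_4)$ with $\alpha_j=\ord_p(a_jd_j^2)=\ord_p(a_j)+2\ord_p(d_j)$. Since $\bm d$ is squarefree, $\ord_p(d_j)\in\{0,1\}$, and the number of $j$ with $\ord_p(d_j)=1$ is exactly $\nu$. When $p\nmid\prod_j a_j$ (part~(2)) all $\ord_p(a_j)=0$, so $\bm\alpha$ consists of $4-\nu$ zeros followed by $\nu$ twos; all entries are even and one uses Lemma~\ref{lem:LocalDensitySimplify}(1), with the denominator coming from the pattern $\bm\alpha(\bm a)=(0,0,0,0)$. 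When $p\|\prod_j a_j$ (part~(3)) exactly one $\beta_{j_0}:=\ord_p(a_{j_0})=1$; according to whether $p\mid d_{j_0}$ or $p$ divides one of the other $d_j$, one lands in the listed patterns $(0,0,0,3),(0,0,1,2),(0,0,2,3),\dots$, while the denominator always corresponds to $\bm\alpha(\bm a)=(0,0,0,1)$, handled by Lemma~\ref{lem:LocalDensitySimplify}(5).

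The second step is, for each such $\bm\alpha$, to select the sub-case (a)--(e) of Lemma~\ref{lem:LocalDensitySimplify} determined by the position of $R=\ord_p(m)$ relative to the $\alpha_j$, to do the same for the denominator, and to simplify the quotient. This is elementary but bookkeeping-heavy: one must correctly carry the indicators $\delta_{R,\alpha_1;2}$ and $\delta_{R,\alpha_1;\cancel 2}$, the floors $\lfloor R/2\rfloor$, and above all the symbols $\eta_{p,\ell}^{\mathcal A,\operatorname{e}}$ and $\eta_{p,\ell}^{\mathcal A,\operatorname{o}}$, which partition the indices by the parity of $\mathcal A_\ell-\beta_j$. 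The key simplification is that multiplying the $j$th variable by $d_j^2$ shifts $\alpha_j$ by an even amount, so it changes neither the parity of $\alpha_j$ nor the Legendre symbol $(a_j'/p)$ (because $d_j'^2$ is a square mod $p$). Consequently the $\eta$-symbols attached to $\bm a\cdot\bm d^2$ and to $\bm a$ agree whenever the underlying index sets coincide, and all nontrivial cancellation between numerator and denominator is governed solely by how $\nu$ moves the cut-offs $\alpha_j$.

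The main obstacle I anticipate is organizing this case analysis so that the $\eta$-dependence stays transparent. Because the even/odd partition of $\{1,2,3,4\}$ used in $\eta_{p,\ell}^{\mathcal A,\operatorname{e}/\operatorname{o}}$ depends on $\mathcal A_\ell$, which differs between $\bm a$ and $\bm a\cdot\bm d^2$, one must check in each of the many combinations of $\nu$ and $R\bmod2$ that the symbols occurring in Lemma~\ref{lem:LocalDensitySimplify} collapse to exactly those written in the statement. The cleanest route is to fix the parity of $R$ first and then run through $\nu=1,2,3,4$ (equivalently the listed $\bm\alpha$ patterns) in turn, substituting the relevant sub-case for numerator and denominator and simplifying. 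The values $\omega_{\nu}(p)=0$ arise precisely when $R<\alpha_1$ for the shifted form (so that $\beta_{Q_{\bm a\cdot\bm d^2},p}(m)=0$ while the denominator is nonzero), and these boundary cases demand the most care with the $\delta$-indicators and with verifying that the denominator does not also vanish.
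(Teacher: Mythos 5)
Your proposal is correct and follows essentially the same route as the paper: part (1) via Siegel's product formula \eqref{eqn:Siegellocal}, with the archimedean ratio $1/(d_1d_2d_3d_4)$ (the paper obtains this by the change of variables $x_j\mapsto x_j/d_j$ in the volume integral \eqref{eqn:localinfty}, equivalent to your determinant-scaling argument) and trivial ratios at finite primes coprime to $\bm{d}$; parts (2)--(3) by substituting the sorted exponent patterns you describe into Lemma \ref{lem:LocalDensitySimplify} (1), respectively (2)--(5), which is exactly what the paper does. One small inaccuracy in your closing remark: the zeros of $\omega_\nu(p)$ do \emph{not} arise only from $R<\alpha_1$; for instance in (2)(b) with $\nu=3$ one has $\bm\alpha=(0,2,2,2)$ and $R=1\geq\alpha_1=0$, and the numerator vanishes by cancellation of the first and third terms of Lemma \ref{lem:LocalDensitySimplify} (1)(b) (there $\delta_{2\nmid(R-\alpha_1)}=1$, so the terms $\eta_{p,1}^{\mathcal{A},\operatorname{o}}p^{\lfloor R/2\rfloor}$ and $-\eta_{p,1}^{\mathcal{A},\operatorname{o}}p^{\frac{R-1}{2}}$ cancel), not because case (a) applies --- this is harmless, since faithful substitution still produces the stated zero.
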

\begin{proof}
(1) The statement is trivial up the evaluation of the local density at infinity.  To evaluate this, we compute the volume of $\left\{\bm{x}\in\R^4: \sum_{j=1}^4 a_jd_j^2x_j^2\leq m\right\}$. This is 
\begin{equation}\label{eqn:localinfty}
\vol\left(B_{Q_{\bm{d}},m}\right):=\int_{-\sqrt{\frac{m}{a_1d_1^2}}}^{\sqrt{\frac{m}{a_1d_1^2}}}\int_{-\sqrt{\frac{m-a_1d_1^2x_1^2}{a_2d_2^2}}}^{\sqrt{\frac{m-a_1d_1^2x_1^2}{a_2d_2^2}}}\int_{-\sqrt{\frac{m-\sum_{j=1}^2a_jd_j^2x_j^2}{a_3d_3^2}}}^{\sqrt{\frac{m-\sum_{j=1}^2a_jd_j^2x_j^2}{a_3d_3^2}}}\int_{-\sqrt{\frac{m-\sum_{j=1}^3a_jd_j^2x_j^2}{a_4d_4^2}}}^{\sqrt{\frac{m-\sum_{j=1}^3a_jd_j^2x_j^2}{a_4d_4^2}}} dx_4dx_3dx_2dx_1.
\end{equation}
We then make the change of variables $x_j\mapsto \tfrac{x_j}{d_j}$ to yield
\[
\frac{1}{\prod_{j=1}^4d_j} \int_{-\sqrt{\frac{m}{a_1}}}^{\sqrt{\frac{m}{a_1}}}\int_{-\sqrt{\frac{m-a_1 x_1^2}{a_2}}}^{\sqrt{\frac{m-a_1 x_1^2}{a_2}}}\int_{-\sqrt{\frac{m-\sum_{j=1}^2a_jx_j^2}{a_3}}}^{\sqrt{\frac{m-\sum_{j=1}^2a_jx_j^2}{a_3}}}\int_{-\sqrt{\frac{m-\sum_{j=1}^3a_jx_j^2}{a_4}}}^{\sqrt{\frac{m-\sum_{j=1}^3a_jx_j^2}{a_4}}} dx_4dx_3dx_2dx_1=\frac{\vol\left(B_{Q_{\bm{1}},m}\right)}{d_1d_2d_3d_4}.
\]
Thus 
\[
\beta_{Q_{\bm{d}},\infty}(m)=\frac{\beta_{Q_{\bm{1}},\infty}(m)}{d_1d_2d_3d_4}.
\]
We next plug everything into \eqref{eqn:Siegellocal} to compare $a_{E_{Q_{\bm{d}}}}(m)$ and $a_{E_{Q_{\bm{1}}}}(m)$. Noting that $\beta_{Q_{\bm{d}},p}(m)=\beta_{Q_{\bm{1}},p}(m)$ for $p\nmid \prod_{j=1}^4 d_j$ (for finite primes), we evaluate
\[
\omega(\bm{d},m):=\frac{a_{E_{Q_{\bm{a}\cdot \bm{d}^2}}}(m)}{a_{E_{Q_{\bm{a}}}}(m)}=\prod_p \frac{\beta_{Q_{\bm{a}\cdot \bm{d}^2},p}(m)}{\beta_{Q_{\bm{a}},p}(m)}=\frac{1}{d_1d_2d_3d_4} \prod_{\substack{p^j\|d_1d_2d_3d_4\\ j\geq 1}}\frac{\beta_{Q_{\bm{a}\cdot \bm{d}^2},p}(m)}{\beta_{Q_{\bm{a}},p}(m)}.
\]
Rearranging and noting that $a_{E_{Q_{\bm{1}}}}(m)=r_{Q_{\bm{a}},1}(m)$ because $Q_{\bm{a}}$ has class number one, we obtain the claim.
\vspace{.05in}

\noindent
(2) These follows directly by plugging in Lemma \ref{lem:LocalDensitySimplify} (1).
\vspace{.05in}

\noindent
(3)  These follows directly by plugging in Lemma \ref{lem:LocalDensitySimplify} (2)--(5).

\end{proof}

Set 
\[
\mathcal{S}_{k}:=\{1\leq j\leq k: a_j'\equiv 1\pmod{4}\}
\]
For the prime $p=2$, we use the following lemma.
\begin{lemma}\label{lem:localdensityp=2}
\noindent

\noindent
\begin{enumerate}[leftmargin=*,label={\rm(\arabic*)}]
\item  Suppose that $0\leq \alpha_1\leq 1$, $\alpha_1\leq \alpha_2\leq 2$, $\alpha_2\leq \alpha_3\leq 3$, and $\alpha_3\leq \alpha_4\leq 4$ and $\alpha_{j+1}-\alpha_j\leq 2$ for $1\leq j\leq 2$, while $\alpha_4-\alpha_3\leq 3$. Then 
\begin{multline*}
\beta_{Q_{\bm{a}},2}(m)=1+ \delta_{\alpha_1=1}(-1)^m + \delta_{\alpha_2=2}\delta_{\alpha_1=0}\times \begin{cases} 1&\text{if }a_1\equiv m\pmod{4}\text{ or }4\mid m,\\ -1&\text{if }a_1\equiv -m\pmod{4}\text{ or }m\equiv 2\pmod{4},\end{cases}\\
+\delta_{(\alpha_1,\alpha_2)\neq (0,1)} \delta_{\alpha_3=\alpha_2+2}\delta_{R\geq \alpha_2}\delta_{a_1'\not\equiv a_2'(\bmod{4})}\delta_{2\mid \mathcal{A}_2}  2^{\frac{\mathcal{A}_2}{2}}+\delta_{(\alpha_1,\alpha_2,\alpha_3)=(0,1,3)}\delta_{2\nmid \mathcal{A}_2}\left(\frac{(-1)^{\frac{a_1'+a_2'}{2}}8}{a_1'm}\right)\\
+2^{\frac{\mathcal{A}_2}{2}-1}\delta_{2\mid\mathcal{A}_2}\delta_{\alpha_4\geq \alpha_3+2}\left(  \delta_{R\geq \alpha_3+2} - \delta_{R=\alpha_3+1} (-1)^{\delta_{3\mid\#\mathcal{S}_3}}  + \delta_{R=\alpha_3} (-1)^{\#\mathcal{S}_3}\left(\frac{-4}{m'}\right)\right)\\
+2^{\frac{\mathcal{A}_2-3}{2}}\delta_{2\nmid\mathcal{A}_2}\delta_{\alpha_4=\alpha_3+3}\left(  \delta_{R\geq \alpha_4} - \delta_{R=\alpha_4-1} (-1)^{\delta_{3\mid\#\mathcal{S}_3}}  + \delta_{R=\alpha_4-2} (-1)^{\#\mathcal{S}_3} \left(\frac{-4}{m'}\right)\right)\\
+ \delta_{R\equiv \mathcal{A}_3(\bmod{2})} \delta_{\alpha_3-1\leq R\leq \alpha_4-3} 2^{\frac{\mathcal{A}_3-R}{2}+4}\prod_{j=1}^3\left(\frac{2}{a_j'}\right)\left(\frac{-4}{\frac{m'-1}{2}}\right)\\
-\delta_{2\mid\mathcal{A}_4} \sum_{k=\alpha_4+2}^{R} 2^{\frac{\mathcal{A}_4}{2}-k+1}\left(\frac{-4}{\#\mathcal{S}_4+1}\right)+ \delta_{2\mid\mathcal{A}_4} 2^{\frac{\mathcal{A}_4}{2}-R}\delta_{R\geq \alpha_4+1}\left(\frac{-4}{\#\mathcal{S}_4+1}\right) \\
- \delta_{2\mid\mathcal{A}_4} \delta_{R\geq \alpha_4}\delta_{2\nmid \#\mathcal{S}_4} 2^{\frac{\mathcal{A}_4}{2}-R-1} (-1)^{\frac{m'-\#\mathcal{S}_4}{2}}\\
-\delta_{R\geq \alpha_4-1} \delta_{2\nmid \mathcal{A}_4} 2^{\frac{\mathcal{A}_4-1}{2}-R-2} \left(\frac{8}{m'}\right) \prod_{j=1}^4\left(\frac{8}{a_j'}\right)\left( \left(\frac{-4}{\#\mathcal{S}_4+1}\right)+\left(\frac{-4}{m' \#\mathcal{S}_4}\right) \right).
\end{multline*}
\item
Suppose that $\alpha_1=0$,
\begin{multline*}
(\alpha_2,\alpha_3,\alpha_4)\in\{(0,0,0),(0,0,1), (0,0,2),(0,1,1),(0,1,2), (0,1,3),\\
(1,1,1),(1,1,2),(1,2,2),(1,2,3)\},
\end{multline*}
and $0\leq \ord_2(m)=:R\leq 2$. Then 
\[
\beta_{Q_{\bm{a}},2}(m)\geq \frac{1}{2}.
\]
\item
Suppose that $\alpha_1=0$,
\begin{multline*}
(\alpha_2,\alpha_3,\alpha_4)\in\{(0,0,0),(0,0,1), (0,0,2),(0,1,1),(0,1,2), (0,1,3),\\
(1,1,1),(1,1,2),(1,2,2),(1,2,3)\},
\end{multline*}
and $0\leq \ord_2(m)=:R\leq 2$. Then 
\[
\beta_{Q_{\bm{a}},2}(m)\leq \frac{3}{2}.
\]

\end{enumerate}
\end{lemma}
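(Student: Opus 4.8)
The plan is to carry out the $p=2$ analogue of the computation behind Lemmas \ref{lem:LocalDensityCompute}--\ref{lem:LocalDensitySimplify}. I would start from the expansion \eqref{eqn:RQp^r}, which was derived before the specialization to odd $p$ and is therefore valid verbatim at $p=2$ (with $\bm{d}=\bm{1}$, $\alpha_j=\ord_2(a_j)$ and $a_j=2^{\alpha_j}a_j'$ with $a_j'$ odd). This writes $\beta_{Q_{\bm{a}},2}(m)=\lim_{r\to\infty}2^{-3r}R_{Q_{\bm{a}},2^r}(m)$ as a sum over $0\le k\le r$ of terms $2^{(\text{shift})}\sum_{n'\in(\Z/2^k\Z)^\times}e^{-2\pi i n'm/2^k}\prod_{j}G_2(n'a_j',0,2^{k-\alpha_j})$, naturally split into the ranges $k\le\alpha_1$, $\alpha_1<k\le\alpha_2$, $\alpha_2<k\le\alpha_3$, $\alpha_3<k\le\alpha_4$, and $k>\alpha_4$. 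The hypotheses $\alpha_1\le1$, $\alpha_2\le2$, $\alpha_3\le3$, $\alpha_4\le4$ together with the gap bounds $\alpha_{j+1}-\alpha_j\le 2$ (and $\alpha_4-\alpha_3\le 3$) ensure that each of these ranges is short, so that only finitely many boundary corrections can survive in the final formula.

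The genuinely new ingredient, replacing the clean odd-prime evaluation \eqref{eqn:G2rewrite}, is the $2$-adic quadratic Gauss sum: for $a$ odd one has $G_2(a,0,2)=0$, while for $k\ge2$ one has $G_2(a,0,2^k)=2^{k/2}\left(1+i^{a}\right)$, so that $\lvert 1+i^a\rvert=\sqrt2$ and the phase is governed by $a\bmod4$. Substituting these and expanding the product $\prod_{j}\left(1+i^{n'a_j'}\right)$ turns each inner sum into a combination of incomplete Gauss sums $\sum_{n'}e^{-2\pi i n'm/2^k}\,i^{\,c n'}$ over odd $n'$, where the exponent $c$ is dictated by $\#\mathcal{S}_k$, the number of $a_j'\equiv1\pmod4$ among the first $k$ coordinates. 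These evaluate, through the standard reciprocity and Kronecker-symbol identities modulo powers of $2$, to precisely the factors $\left(\frac{2}{a_j'}\right)$, $\left(\frac{-4}{\,\cdot\,}\right)$, $\left(\frac{8}{a_j'}\right)$, $\left(\frac{8}{m'}\right)$ and the signs $(-1)^{\#\mathcal{S}_k}$, $(-1)^{(m'-1)/2}$ that appear in the statement; whether the relevant character is principal or quadratic is recorded by the parity of the partial sums $\mathcal{A}_j$, and the indicator $\delta_{a_1'\not\equiv a_2'(\bmod4)}$ records when two contributions reinforce rather than cancel.

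With the inner sums in hand I would collapse the geometric sums over $k$ in each of the five ranges, exactly as in the odd-prime evaluation carried out for Lemma \ref{lem:LocalDensitySimplify}, retaining the leftover top-of-range term (the $\delta_{R=\cdots}$ corrections) according to the parity of $R$ relative to $\mathcal{A}_2,\mathcal{A}_3,\mathcal{A}_4$; assembling the surviving pieces yields the displayed formula of part (1). I expect the main obstacle to be organizational rather than conceptual: at $p=2$ the value of each Gauss sum depends on residues modulo $8$ (not merely modulo $p$ as for odd primes), so each of $\left(\frac{2}{\,\cdot\,}\right),\left(\frac{-4}{\,\cdot\,}\right),\left(\frac{8}{\,\cdot\,}\right)$ and the $i$-power phases must be threaded correctly through a substantially larger web of sub-cases, and getting every sign and character to coalesce into a single closed form is the delicate part. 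The size and gap hypotheses on the $\alpha_j$ are exactly what keep the number of surviving terms finite and the bookkeeping tractable.

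Finally, for parts (2) and (3) I would specialize the formula of part (1) to the finite list of tuples $(\alpha_2,\alpha_3,\alpha_4)$ (all with $\alpha_1=0$) and to $R\in\{0,1,2\}$; one first checks that every listed configuration satisfies the hypotheses of part (1). For these small values nearly all $\delta$-indicators and geometric tails vanish, so $\beta_{Q_{\bm{a}},2}(m)$ reduces to a short sum of low-order terms of the form $1\pm2^{-1}\pm\cdots$ whose remaining summands are bounded by small negative powers of $2$. Bounding the worst-case choice of signs and characters, which depend only on $a_j'\bmod8$ and $m'\bmod8$, then yields $\tfrac12\le\beta_{Q_{\bm{a}},2}(m)\le\tfrac32$ in every case. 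Since the list is finite and each case is an elementary numerical check, this step is routine once part (1) is established.
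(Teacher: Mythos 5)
Your overall strategy is the same as the paper's: start from \eqref{eqn:RQp^r} (which is indeed valid verbatim at $p=2$), use $G_2(a,0,2)=0$ to kill the $k=\alpha_j+1$ terms, evaluate the remaining Gauss sums of modulus divisible by $4$, compute the short inner sums over $n'$ directly, treat the tail $k>\alpha_4$ separately, and deduce parts (2) and (3) by a finite case check. However, the central formula you propose to use is wrong: for $k\geq 2$ one does \emph{not} in general have $G_2(a,0,2^k)=2^{k/2}\left(1+i^a\right)$. The correct evaluation, which the paper uses, is $G_2(a,0,c)=(1+i)\varepsilon_a^{-1}\sqrt{c}\left(\frac{c}{a}\right)$ for $4\mid c$; since $(1+i)\varepsilon_a^{-1}=1+i^a$, your formula omits the Jacobi symbol $\left(\frac{2^k}{a}\right)=\left(\frac{2}{a}\right)^k$, which equals $-1$ whenever $k$ is odd and $a\equiv\pm3\pmod 8$. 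Concretely, $G_2(3,0,8)=4e^{3\pi i/4}=2\sqrt{2}\left(-1+i\right)$, whereas your formula gives $2\sqrt{2}\left(1-i\right)$: off by exactly the missing factor $\left(\frac{2}{3}\right)=-1$.

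This is not a cosmetic slip. In the ranges of summation the exponents $k-\alpha_j$ take both parities, and the odd-exponent cases are precisely where the symbols $\left(\frac{2^{k-\alpha_j}}{a_j'}\right)$ and $\left(\frac{2^{k-\alpha_j}}{n'}\right)$ in the paper's computation arise; these are the source of the factors $\left(\frac{2}{a_j'}\right)$, $\left(\frac{8}{a_j'}\right)$, $\left(\frac{8}{m'}\right)$ and of the terms carrying $\delta_{2\nmid\mathcal{A}_2}$ and $\delta_{2\nmid\mathcal{A}_4}$ in the statement of part (1). Your write-up is internally inconsistent on this point: you state a formula whose value depends only on $a\bmod 4$, yet you correctly observe that the evaluation must depend on residues modulo $8$; with your formula the inner sums over $n'$ lose exactly that mod-$8$ dependence whenever $k-\alpha_j$ is odd, so the derivation of part (1) would come out with wrong signs in infinitely many cases, and parts (2) and (3), being consequences of part (1), would no longer be established. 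The repair is simply to restore the Kronecker-symbol factor $\left(\frac{c}{a}\right)$ in the Gauss sum evaluation, after which your plan coincides with the paper's proof.
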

\begin{proof}

We assume that $0\leq \alpha_1\leq 1$, $\alpha_1\leq \alpha_2\leq 2$, $\alpha_2\leq \alpha_3\leq 3$, and $\alpha_3\leq \alpha_4\leq 4$ and $\alpha_{j+1}-\alpha_j\leq 2$ for $1\leq j\leq 2$, while $\alpha_4-\alpha_3\leq 3$. Noting that for $a$ odd one has
\[
G_2(a,0,2)=0
\]
and $0\leq \alpha_1\leq 1$, then \eqref{eqn:RQp^r} becomes 
\begin{multline}\label{eqn:R2powgen}
2^{-3r}R_{Q,2^r}(m)= 1+ \delta_{\alpha_1=1}(-1)^m + \sum_{k=\alpha_1+2}^{\alpha_2}2^{\alpha_1-k}\sum_{n'\in(\Z/2^k\Z)^{\times}} e^{-\frac{2\pi i n'm}{2^k}}G_2\left(n'a_1',0,2^{k-\alpha_1}\right)\\
+ \sum_{k=\alpha_2+2}^{\alpha_3}2^{\mathcal{A}_2-2k}\sum_{n'\in(\Z/2^k\Z)^{\times}} e^{-\frac{2\pi i n'm}{2^k}}\prod_{j=1}^2G_2\left(n'a_j',0,2^{k-\alpha_j}\right)\\
+ \sum_{k=\alpha_3+2}^{\alpha_4}2^{\mathcal{A}_3-3k}\sum_{n'\in(\Z/2^k\Z)^{\times}} e^{-\frac{2\pi i n'm}{2^k}}\prod_{j=1}^3G_2\left(n'a_j',0,2^{k-\alpha_j}\right)\\
+ \sum_{k=\alpha_4+2}^{r} 2^{\mathcal{A}_4-4k}\sum_{n'\in(\Z/2^k\Z)^{\times}} e^{-\frac{2\pi i n'm}{2^k}} \prod_{j=1}^4G_2\left(n'a_j',0,2^{k-\alpha_j}\right)\\
=1+ \delta_{\alpha_1=1}(-1)^m +\frac{1}{4} \delta_{\alpha_2=\alpha_1+2} \sum_{n'\in(\Z/2^{\alpha_2}\Z)^{\times}} e^{-\frac{2\pi i n'm}{2^{\alpha_2}}}G_2\left(n'a_1',0,4\right)\\
+ \delta_{\alpha_3=\alpha_2+2} 2^{\mathcal{A}_2-2\alpha_3}\sum_{n'\in(\Z/2^{\alpha_3}\Z)^{\times}} e^{-\frac{2\pi i n'm}{2^{\alpha_3}}}\prod_{j=1}^2G_2\left(n'a_j',0,2^{\alpha_3-\alpha_j}\right)\\
+ \sum_{k=\alpha_3+2}^{\alpha_4}2^{\mathcal{A}_3-3k}\sum_{n'\in(\Z/2^k\Z)^{\times}} e^{-\frac{2\pi i n'm}{2^k}}\prod_{j=1}^3G_2\left(n'a_j',0,2^{k-\alpha_j}\right)\\
+ \sum_{k=\alpha_4+2}^{r} 2^{\mathcal{A}_4-4k}\sum_{n'\in(\Z/2^k\Z)^{\times}} e^{-\frac{2\pi i n'm}{2^k}} \prod_{j=1}^4G_2\left(n'a_j',0,2^{k-\alpha_j}\right).
\end{multline}
Note that due to our restrictions on $\alpha_j$, we have $\alpha_2=\alpha_1+2$ if and only if $\alpha_2=2$ and $\alpha_1=0$.  We next use the formula (valid for $4\mid c$)
\[
G_2(a,0,c)=(1+i)\varepsilon_a^{-1} \sqrt{c}\left(\frac{c}{a}\right)
\]
to simplify \eqref{eqn:R2powgen} as 
\begin{multline*}
1+ \delta_{\alpha_1=1}(-1)^m +\frac{1+i}{2} \delta_{\alpha_2=2}\delta_{\alpha_1=0} \sum_{n'\in(\Z/4\Z)^{\times}} e^{-\frac{2\pi i n'm}{4}}\varepsilon_{n'a_1'}^{-1}\\
+(1+i)^2 \delta_{\alpha_3=\alpha_2+2} 2^{\frac{\mathcal{A}_2}{2}-\alpha_3 }\left(\frac{2^{\alpha_2-\alpha_1}}{a_1'}\right)\sum_{n'\in(\Z/2^{\alpha_3}\Z)^{\times}} e^{-\frac{2\pi i n'm}{2^{\alpha_3}}}\left(\frac{2^{\alpha_2-\alpha_1}}{n'}\right) \prod_{j=1}^2\varepsilon_{n'a_j'}^{-1}\\
+(1+i)^3 \sum_{k=\alpha_3+2}^{\alpha_4}2^{\frac{\mathcal{A}_3-3k}{2}}\prod_{j=1}^3\left(\frac{2^{k-\alpha_j}}{a_j'}\right)\sum_{n'\in(\Z/2^k\Z)^{\times}} e^{-\frac{2\pi i n'm}{2^k}}\prod_{j=1}^3\varepsilon_{n'a_j'}^{-1}\left(\frac{2^{k-\alpha_j}}{n'}\right)\\
+(1+i)^4 \sum_{k=\alpha_4+2}^{r}2^{\frac{\mathcal{A}_4}{2}-2k}\prod_{j=1}^4\left(\frac{2^{k-\alpha_j}}{a_j'}\right)\sum_{n'\in(\Z/2^k\Z)^{\times}} e^{-\frac{2\pi i n'm}{2^k}}\prod_{j=1}^4\varepsilon_{n'a_j'}^{-1}\left(\frac{2^{k-\alpha_j}}{n'}\right).
\end{multline*}
We then simplify the last Jacobi symbol with 
\[
\left(\frac{2^{4k-\mathcal{A}_4}}{n'}\right) = \left(\frac{2^{4k+\mathcal{A}_4}}{n'}\right) = \left(\frac{2^{\mathcal{A}_4}}{n'}\right) = \left(\frac{2}{n'}\right)^{\mathcal{A}_4}=\left(\frac{8}{n'}\right)^{\mathcal{A}_4},
\]
while the Jacobi symbol in the second-to-last term is simplified as 
\[
\left(\frac{2^{3k-\mathcal{A}_3}}{n'}\right) = \left(\frac{2^{3k+\mathcal{A}_3}}{n'}\right) = \left(\frac{8}{n'}\right)^k \left(\frac{8}{n'}\right)^{\mathcal{A}_3}.
\]
This yields (plugging in $(1+i)^2=2i$, $(1+i)^3=2(1-i)$, and $(1+i)^4=-4$, 
\begin{multline}\label{eqn:local2expand}
1+ \delta_{\alpha_1=1}(-1)^m +\frac{1+i}{2} \delta_{\alpha_2=2}\delta_{\alpha_1=0} \sum_{n'\in(\Z/4\Z)^{\times}} e^{-\frac{2\pi i n'm}{4}}\varepsilon_{n'a_1'}^{-1}\\
+2i \delta_{\alpha_3=\alpha_2+2} 2^{\frac{\mathcal{A}_2}{2}-\alpha_3 }\left(\frac{8}{a_1'}\right)^{\mathcal{A}_2}\sum_{n'\in(\Z/2^{\alpha_3}\Z)^{\times}} e^{-\frac{2\pi i n'm}{2^{\alpha_3}}}\left(\frac{8}{n'}\right)^{\mathcal{A}_2} \prod_{j=1}^2\varepsilon_{n'a_j'}^{-1}\\
+2(i-1) \sum_{k=\alpha_3+2}^{\alpha_4}2^{\frac{\mathcal{A}_3-3k}{2}}\left(\frac{8}{a_j'}\right)^{k+\mathcal{A}_3}\sum_{n'\in(\Z/2^k\Z)^{\times}} e^{-\frac{2\pi i n'm}{2^k}}\prod_{j=1}^3\varepsilon_{n'a_j'}^{-1}\left(\frac{8}{n'}\right)^{k+\mathcal{A}_3}\\
-4 \sum_{k=\alpha_4+2}^{r}2^{\frac{\mathcal{A}_4}{2}-2k}\prod_{j=1}^4\left(\frac{8}{a_j'}\right)^{\mathcal{A}_4}\sum_{n'\in(\Z/2^k\Z)^{\times}} e^{-\frac{2\pi i n'm}{2^k}}\prod_{j=1}^4\varepsilon_{n'a_j'}^{-1}\left(\frac{8}{n'}\right)^{\mathcal{A}_4}.
\end{multline}
We next evaluate the inner sum on $n'$ in each case. Using our restrictions on the $\alpha_j$, the sum over $n'$ in the third, fourth, and fifth terms have length at most $8$ and may be directly computed on a case-by-case basis. The third term becomes (noting that $a_1'=a_1$ if $\delta_{\alpha_1=0}\neq 0$)
\[
\delta_{\alpha_2=2}\delta_{\alpha_1=0}\times \begin{cases} 1&\text{if }a_1\equiv m\pmod{4}\text{ or }4\mid m,\\ -1&\text{if }a_1\equiv -m\pmod{4}\text{ or }m\equiv 2\pmod{4}.\end{cases}
\]
The fourth term in \eqref{eqn:local2expand} simplifies as
\begin{equation}\label{eqn:fourthterm}
\delta_{(\alpha_1,\alpha_2)\neq (0,1)}\delta_{\alpha_3=\alpha_2+2}\delta_{R\geq \alpha_2}\delta_{a_1'\not\equiv a_2'(\bmod{4})}\delta_{2\mid \mathcal{A}_2}  2^{\frac{\mathcal{A}_2}{2}}+\delta_{(\alpha_1,\alpha_2,\alpha_3)=(0,1,3)}\delta_{2\nmid \mathcal{A}_2}\left(\frac{(-1)^{\frac{a_1'+a_2'}{2}}8}{a_1'm}\right).
\end{equation}
After a long simplification, the fifth term in \eqref{eqn:local2expand} becomes 
\begin{multline*}
2^{\frac{\mathcal{A}_2}{2}-1}\delta_{2\mid\mathcal{A}_2}\delta_{\alpha_4\geq \alpha_3+2}\left(  \delta_{R\geq \alpha_3+2} - \delta_{R=\alpha_3+1} (-1)^{\delta_{3\mid\#\mathcal{S}_3}}  + \delta_{R=\alpha_3} (-1)^{\#\mathcal{S}_3}\left(\frac{-4}{m'}\right)\right)\\
+2^{\frac{\mathcal{A}_2-3}{2}}\delta_{2\nmid\mathcal{A}_2}\delta_{\alpha_4=\alpha_3+3}\left(  \delta_{R\geq \alpha_4} - \delta_{R=\alpha_4-1} (-1)^{\delta_{3\mid\#\mathcal{S}_3}}  + \delta_{R=\alpha_4-2} (-1)^{\#\mathcal{S}_3} \left(\frac{-4}{m'}\right)\right)\\
+ \delta_{R\equiv \mathcal{A}_3(\bmod{2})} \delta_{\alpha_3-1\leq R\leq \alpha_4-3} 2^{\frac{\mathcal{A}_3-R}{2}+4}\prod_{j=1}^3\left(\frac{2}{a_j'}\right)\left(\frac{-4}{\frac{m'-1}{2}}\right).
\end{multline*}

It remains to compute the sixth term. This is 
\[
-4 \sum_{k=\alpha_4+2}^{r}2^{\frac{\mathcal{A}_4}{2}-2k}\prod_{j=1}^4\left(\frac{8}{a_j'}\right)^{\mathcal{A}_4}\sum_{n'\in(\Z/2^k\Z)^{\times}} e^{-\frac{2\pi i n'm}{2^k}}\prod_{j=1}^4\varepsilon_{n'a_j'}^{-1}\left(\frac{8}{n'}\right)^{\mathcal{A}_4}.
\]
If $\mathcal{A}_4$ is even, then the above equals (making the change of variables $n'\mapsto n_0+4n'$)
\[
-4 \sum_{k=\alpha_4+2}^{r}2^{\frac{\mathcal{A}_4}{2}-2k}\sum_{n_0\in\{\pm 1\}}\prod_{j=1}^4\varepsilon_{n_0a_j'}^{-1} e^{-\frac{2\pi i n_0m}{2^{k}}}\sum_{n'\in(\Z/2^{k-2}\Z)^{\times}} e^{-\frac{2\pi i n'm}{2^{k-2}}}.
\]
The inner sum vanishes unless $R\geq k-2$, giving
\begin{equation}\label{eqn:sixthterm}
- \sum_{k=\alpha_4+2}^{r}\delta_{R\geq k-2}2^{\frac{\mathcal{A}_4}{2}-k}\sum_{n_0\in\{\pm 1\}}\prod_{j=1}^4\varepsilon_{n_0a_j'}^{-1}e^{-\frac{2\pi i n_0m}{2^{k}}}.
\end{equation}

For $k\leq R$ we have $e^{-\frac{2\pi i n_0 m}{2^{k}}}=1$ and a short calculation (splitting depending on $\#\mathcal{S}_4$) shows that the inner sum in \eqref{eqn:sixthterm} evaluates as 
\[
2\left(\frac{-4}{\#\mathcal{S}_4+1}\right)
\]
Hence for $r$ sufficiently large \eqref{eqn:sixthterm} equals 
\begin{equation}\label{eqn:2powtosimplify4}
-\sum_{k=\alpha_4+2}^{R} 2^{\frac{\mathcal{A}_4}{2}-k+1}\left(\frac{-4}{\#\mathcal{S}_4+1}\right)-  \sum_{k=R+1}^{R+2}\delta_{k\geq \alpha_4+2}2^{\frac{\mathcal{A}_4}{2}-k}\sum_{n_0\in\{\pm 1\}}\prod_{j=1}^4\varepsilon_{n_0a_j'}^{-1}e^{-\frac{2\pi i n_0m}{2^{k}}}.
\end{equation}
For $k=R+1$ (assuming that $R\geq \alpha_4+1$) the exponential is $-1$ and simplifying we see that this term equals 
\[
2^{\frac{\mathcal{A}_4}{2}-R}\delta_{R\geq \alpha_4+1}\left(\frac{-4}{\#\mathcal{S}_4+1}\right).
\]
For $k=R+2$ (assuming that $R\geq \alpha_4$), we again split into cases depending on $\#\mathcal{S}_{4}$ and evaluate the inner sum as
\[
 (-1)^{\frac{m'-\#\mathcal{S}_4}{2}}\delta_{2\nmid \#\mathcal{S}_4} 2.
\]
We conclude that \eqref{eqn:2powtosimplify4} becomes 
\begin{multline}\label{eqn:2powsimple4}
-\sum_{k=\alpha_4+2}^{R} 2^{\frac{\mathcal{A}_4}{2}-k+1}\left(\frac{-4}{\#\mathcal{S}_4+1}\right)+ 2^{\frac{\mathcal{A}_4}{2}-R}\delta_{R\geq \alpha_4+1}\left(\frac{-4}{\#\mathcal{S}_4+1}\right) \\
- \delta_{R\geq \alpha_4}\delta_{2\nmid \#\mathcal{S}_4} 2^{\frac{\mathcal{A}_4}{2}-R-1} (-1)^{\frac{m'-\#\mathcal{S}_4}{2}}.
\end{multline}
Finally, for $\mathcal{A}_4$ odd, the sixth term in \eqref{eqn:local2expand} equals (making the change of variables $n'\mapsto n_0+8n'$ with $n_0\in \{\pm 1,\pm 3\}$)
\[
-4\prod_{j=1}^4\left(\frac{8}{a_j'}\right)\ \sum_{k=\alpha_4+2}^{r}2^{\frac{\mathcal{A}_4}{2}-2k}\sum_{n_0\in\{\pm 1,\pm 3\}}e^{-\frac{2\pi i n_0m}{2^k}}\prod_{j=1}^4\varepsilon_{n_0a_j'}^{-1}\left(\frac{8}{n_0}\right) \sum_{n'\in(\Z/2^{k-3}\Z)^{\times}} e^{-\frac{2\pi i n'm}{2^{k-3}}}.
\]
The inner sum vanishes unless $R\geq k-3$, yielding (for $r$ sufficiently large)
\begin{equation}\label{eqn:sixthterm2}
- \prod_{j=1}^4\left(\frac{8}{a_j'}\right) \sum_{k=\alpha_4+2}^{R+3} 2^{\frac{\mathcal{A}_4}{2}-k-1}\sum_{n_0\in\{\pm 1,\pm 3\}}e^{-\frac{2\pi i n_0m}{2^k}}\prod_{j=1}^4\varepsilon_{n_0a_j'}^{-1}\left(\frac{8}{n_0}\right).
\end{equation}
For $k\leq R+2$, the exponential only depends on $n_0$ modulo $4$, so the terms $n_0$ and $n_0+4$ (modulo $8$) cancel due to $\left(\frac{8}{n_0+4}\right)=-\left(\frac{8}{n_0}\right)$. Hence the sum is zero in these cases. 

Only the $k=R+3$ term remains. Therefore \eqref{eqn:sixthterm2} equals
\begin{equation}\label{eqn:sixthtermk=R+3}
- \prod_{j=1}^4\left(\frac{8}{a_j'}\right)\delta_{R\geq \alpha_4-1}  2^{\frac{\mathcal{A}_4}{2}-R-4}\sum_{n_0\in\{\pm 1,\pm 3\}}e^{-\frac{2\pi i n_0m'}{8}}\prod_{j=1}^4\varepsilon_{n_0a_j'}^{-1}\left(\frac{8}{n_0}\right).
\end{equation}
Splitting depending on $\#\mathcal{S}_4$ and evaluating the finite sum over $n_0$ yields the claim.

Parts (2)--(3) directly follow by splitting into different cases depending on $R$, evaluating part (1) on a case-by-case basis, and bounding all characters trivially.
\end{proof}

 We define another function
\begin{equation*}
\Omega(p):= \sum_{\substack{\bm{d}\in \S^4\\ \prod_{j=1}^4d_j=p}} \omega_{1,\bm{a}\cdot\bm{d}^2}(p) - \sum_{\substack{\bm{d}\in\S^4\\ \prod_{j=1}^4d_j=p^2}}\frac{ \omega_{2,\bm{a}\cdot\bm{d}^2}(p)}{p} + \sum_{\substack{\bm{d}\in\S^4\\ \prod_{j=1}^4 d_j=p^3}} \frac{ \omega_{3,\bm{a}\cdot\bm{d}^2}(p)}{p^2} - \frac{\omega_{4,\bm{a}\cdot\bm{p}^2}(p)}{p^3}
\end{equation*}
and $$\Omega(\ell) := \prod_{p\mid \ell} \Omega(p).$$
In the sieve theory calculation, we will obtain the main term which contain the following term : 
\[
W(z_0) = \prod_{p < z_0} \left(1- \frac{\Omega(p)}{p}\right).
\]
Hence we finally need the lower bound of $W(z_0)$ i.e, upper bound of $\Omega(p)$. In order to obtain such a bound, we next prove bounds on $\omega_{\nu}(p)$.
\begin{lemma}\label{lem:LambdapUpper}
Let $\bm{a}\in\N^4$ and a prime $p\neq 2$ be given.
\noindent

\noindent
\begin{enumerate} [leftmargin=*,label={\rm(\arabic*)}]
\item Suppose that $p\nmid \prod_{j=1}^r a_j$. 
\begin{enumerate} [leftmargin=*]
\item
For $R=0$ we have 
\[
\Omega(p)\leq   4+\frac{-2p^{-1}+18p^{-2}}{ 1-p^{-2}}.
\]
In particular, for $p\geq 11$ we have $\Omega(p)\leq 4$, $\Omega(7)\leq \frac{33}{8}$, and $\Omega(5)\leq \frac{9}{2}$. 

\item
For $R=1$ we have 
\[
\Omega(p)\leq  4+4\frac{p^{-1}-p^{-2}}{1+p^{-2}}\leq \frac{24}{5}.
\]

\item
For $R\geq 2$ with $R$ even, we have 
\[
\Omega(p)\leq\begin{cases} 4.93 &\text{if }p\geq 7,\\ \frac{1265}{273}&\text{if }p=5.\end{cases}
\]

\item
For $R\geq 3$ with $R$ odd, we have 
\[
\Omega(p)\leq \begin{cases} 4.93 &\text{if }p\geq 7,\\ \frac{301}{65}&\text{if }p=5.\end{cases}
\]
\end{enumerate}
\item Suppose that $p\| \prod_{j=1}^{4}a_j$. 
\noindent

\noindent
\begin{enumerate} [leftmargin=*]
\item For $R=0$ we have 
\[
\Omega(p)\leq 4+3p^{-1}+\frac{12p^{-2}}{1-p^{-1}}.
\]
In particular, for $p\geq 7$ we have $\Omega(p)<5$. 
\item For $R=1$ we have 
\[
\Omega(p)\leq 7-2p^{-1}+\frac{7p^{-2}+2p^{-3}}{1-p^{-2}}.
\]
In particular, for $p> 7$ we have $\Omega(p)<7$ and $\Omega(7)\leq 6.87$.  

\item For $R=2$ we have.
\[
\Omega(p)\leq 7-5p^{-1}+14p^{-2} +\frac{7p^{-3}-5p^{-4}+14p^{-5}}{1-p^{-3}}.
\]
In particular, for $p\geq 7$ we have $\Omega(p)\leq 7-5p^{-1}+14p^{-2}+7p^{-3}$. Moreover, for $p>7$ we have $\Omega(p)<7$ and $\Omega(7)<6.6$. 

\item For $R\geq 3$ with $R$ odd we have 
\[
\Omega(p)\leq 7-5p^{-1}-p^{-2}+\frac{7p^{-R}+7p^{-R-1}-5p^{-R-2}-p^{-R-3}}{1-p^{-R-1}}.
\]
For $p\geq 7$ we have $\Omega(p)\leq 7-4p^{-1}$.

\item For $R\geq 4$ with $R$ even we have 
\[
\Omega(p)\leq 7-5p^{-1}-p^{-2}+\frac{15p^{-4}+7p^{-5}}{1-p^{-5}}.
\]
For $p\geq 7$ we have $\Omega(p)\leq 7-5p^{-1}$.
\end{enumerate}

\end{enumerate}
\end{lemma}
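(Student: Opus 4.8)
The plan is to substitute the explicit evaluations of $\omega_\nu(p)$ from Lemma \ref{lem:LocalDensityBounds} into the definition of $\Omega(p)$ and then bound the result by optimizing over the possible values of the character-valued quantities that appear. The first observation is purely combinatorial: since each $\bm d\in\S^4$ is squarefree, the condition $\prod_{j=1}^4 d_j = p^\nu$ forces exactly $\nu$ of the four coordinates to equal $p$ and the remaining $4-\nu$ to equal $1$. Hence the $\nu$-th sum in $\Omega(p)$ runs over the $\binom4\nu$ ways of choosing which coordinates to scale, and
\[
\Omega(p) = \sum_{\nu=1}^4 \frac{(-1)^{\nu-1}}{p^{\nu-1}} \sum_{\substack{\bm d\in\S^4\\ \prod_{j=1}^4 d_j = p^\nu}} \omega_{\nu,\bm a\cdot\bm d^2}(p).
\]
When $p\nmid\prod_j a_j$ one has $\alpha_j=\ord_p(a_jd_j^2)=2\ord_p(d_j)$, so scaling $\nu$ coordinates produces the sorted exponent vector $(0,\dots,0,2,\dots,2)$ with $\nu$ twos; these are exactly the all-even configurations handled in Lemma \ref{lem:LocalDensityBounds}(2), which supplies $\omega_\nu(p)$ as an explicit rational function of $p$ whose numerator and denominator are perturbed by the characters $\eta_{p,\ell}^{\mathcal A,\mathrm e/\mathrm o}$, $\varepsilon_p$, and $\chi_p(-m')$.

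First I would treat the case $p\nmid\prod_j a_j$. The key structural point is that every denominator appearing in Lemma \ref{lem:LocalDensityBounds}(2) depends only on $\bm a$ (through $\eta_{p,4}^{\mathcal A,\mathrm e}(\bm a)$), so it is common to all $\binom4\nu$ summands for a given $\nu$ and can be factored out. After factoring, each numerator is a small integer plus a quantity of modulus at most $1$, and I would bound $\Omega(p)$ from above by choosing, for each shared $\bm a$-character and each $\bm d$-dependent character, the sign that maximizes the signed total: upper-bounding the positively weighted sums $\nu=1,3$ and lower-bounding the negatively weighted sums $\nu=2,4$. Collecting terms over a common denominator $1-p^{-2}$ (respectively $1+p^{-2}$ for $R=1$) yields the stated rational functions, for instance $4 + \tfrac{-2p^{-1}+18p^{-2}}{1-p^{-2}}$ for $R=0$. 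The remaining numerical claims (such as $\Omega(7)\le \tfrac{33}8$, $\Omega(5)\le\tfrac92$, and $\Omega(p)\le 4$ for $p\ge 11$) then follow by evaluating the rational bound at $p=5,7$ and checking monotonicity in $p$ for the tail.

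For the case $p\|\prod_j a_j$ the same scheme applies, but the exponent bookkeeping is finer. Writing $j_0$ for the unique index with $p\|a_{j_0}$, scaling a set of $\nu$ coordinates yields $\alpha_{j_0}\in\{1,3\}$ according to whether $j_0$ is scaled, so the sorted vector $\bm\alpha$ is one of the seven patterns $(0,0,0,3),(0,0,1,2),(0,0,2,3),(0,1,2,2),(0,2,2,3),(1,2,2,2),(2,2,2,3)$ appearing in Lemma \ref{lem:LocalDensityBounds}(3). I would split each $\nu$-sum according to whether the scaled set contains $j_0$, giving the refinements $4=1+3$, $6=3+3$, $4=3+1$, $1=1$ of the binomial counts, insert the corresponding evaluation of $\omega_\nu(p)$, and again extremize over the characters to reach the claimed bounds, reducing at the end to evaluations at $p=7$ together with a monotone tail for $p>7$.

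The main obstacle is the sign optimization in the presence of correlated characters: the denominators carry a single $\bm a$-character shared across all terms while the numerators carry $\bm d$-dependent characters, so one cannot simply bound each $\omega_\nu(p)$ in isolation without losing the cancellation that produces the sharp leading constants ($4$ in part (1) and $7$ in part (2)). Keeping track of which combinations of $\eta_{p,\ell}^{\mathcal A,\mathrm e/\mathrm o}$, $\varepsilon_p$, and $\chi_p(-m')$ can occur simultaneously, and verifying that the worst admissible choice of signs still yields exactly the claimed rational function rather than a weaker one, is the delicate part; once the bound is reduced to an explicit function of $p$, the finitely many small-prime evaluations and the large-$p$ monotonicity are routine.
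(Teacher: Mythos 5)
Your proposal is correct and follows essentially the same route as the paper: the paper's proof likewise substitutes the evaluations from Lemma \ref{lem:LocalDensityBounds} (2) for part (1) and (3) for part (2), exploits the fact that the denominator depends only on $\bm{a}$ and $R$ (hence is shared across all $\bm{d}$ and all $\nu$) to combine the numerators over that common denominator, and then extremizes over the admissible, correlated character values --- including the separate small-prime evaluations at $p=5$ and $p=7$ --- exactly as you describe. Your flagged ``delicate part'' (joint rather than termwise sign optimization, e.g.\ the exact proportionality of the $\nu=3$ and $\nu=4$ contributions and the shared $\eta_{p,4}^{\mathcal{A},\operatorname{e}}(\bm{a})$ in numerator and denominator) is precisely what the paper's combine-then-simplify step relies on to reach the stated constants.
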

\begin{proof}
(1) We use Lemma \ref{lem:LocalDensityBounds} (2) to compute the $\omega_{\nu}(p)$. Noting that for each choice of $R$ the denominator is independent of $\bm{d}$ (and hence also $\nu$), we may combine all of the terms in the numerators. After combining and simplifying, we obtain the bounds listed in the statement.

(2) We use Lemma \ref{lem:LocalDensityBounds} (3) to compute the $\omega_{\nu}(p)$ and again combine the numerators together and simplify trivially. A long but straightforward calculation yields the claim.

\end{proof}

\section{Representations of integers as sums of squares of nearly $P_r$-numbers}\label{sec:sieving}
In this section, we prove Theorem \ref{thm:PrSuniversal}. 
\subsection{Setup for sieving theory}
We apply sieving theory to remove the representations that have $p\mid d_j$ for $p\leq y$ with some $y$ depending on $n$. For $Q$ the quadratic form (or sum of squares) defined in \eqref{eqn:diagonalsum} with $\ell=4$, the set to be sieved is
\[
\mathscr{A}=\mathscr{A}_m:=\{x\in \Z^{4}:Q(x)=m\}.
\]
For $\bm{d}\in\S^{4}$ with $\gcd(d_j,30)=1$, we define 
\[
\mathscr{A}_{\bm{d}}:=\{  x\in \mathscr{A}:d_j\mid x_j\}.
\]
One has 
\[
r_{Q_{\bm{a}\cdot\bm{d}^2}}(n)=\# \mathscr{A}_{\bm{d}}.
\]
Defining 
\[
R(\bm{d},m)=R_{\bm{a}}(\bm{d},m):=r_{Q_{\bm{a}\cdot \bm{d}^2}}(m) - a_{E_{Q_{\bm{a}\cdot \bm{d}^2}}}(m)
\]
to be the coefficient of the cuspidal part of the theta function, we have the following proposition, which is a direct corollary of Lemma \ref{lem:LocalDensityBounds}
\begin{proposition}\label{prop:BrudernFouvry}
For $\bm{d}\in\N^4$ with $\gcd(d_j,30)=1$ we have 
\[
\#\mathscr{A}_{\bm{d}} = \frac{X}{d_1d_2d_3d_4}\omega\left(\bm{d},m\right) + R(\bm{d},m).
\]

%\[
%\widetilde{\mu}(\bm{d}):=\begin{cases} \prod_{j=1}^{4}\mu(d_j)&\text{if }\gcd(d_j,6)=1,\\ 0 &\text{otherwise}\end{cases}
%\]
%and 
%where $R(\bm{d},m)$ satisfies 
%\[
%\sum_{\max(d_j)\leq D} \widetilde{\mu}(\bm{d})^2  R(\bm{d},m)\leq 
%\]

\end{proposition}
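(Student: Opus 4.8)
The plan is to treat this proposition as a purely formal assembly of facts already established, the substantive content being Lemma~\ref{lem:LocalDensityBounds}(1). There are three ingredients to line up: a change-of-variables bijection identifying $\#\mathscr{A}_{\bm{d}}$ with a representation number of the scaled form, the Eisenstein--cuspidal splitting of the theta function, and the explicit evaluation of the Eisenstein coefficient.

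First I would prove the elementary identity $\#\mathscr{A}_{\bm{d}} = r_{Q_{\bm{a}\cdot \bm{d}^2}}(m)$. By definition an element $\bm{x}\in\mathscr{A}_{\bm{d}}$ is exactly a solution of $\sum_{j=1}^4 a_j x_j^2 = m$ subject to $d_j\mid x_j$ for each $j$. Writing $x_j = d_j y_j$ with $y_j\in\Z$ gives a bijection $\bm{x}\leftrightarrow\bm{y}$ between $\mathscr{A}_{\bm{d}}$ and the solution set of $\sum_{j=1}^4 a_j d_j^2 y_j^2 = m$; the cardinality of the latter is $r_{Q_{\bm{a}\cdot \bm{d}^2}}(m)$ by the definition of $Q_{\bm{a}\cdot \bm{d}^2}$.

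Next I would use the canonical decomposition $\Theta_{Q_{\bm{a}\cdot \bm{d}^2}} = E_{Q_{\bm{a}\cdot \bm{d}^2}} + f_{\bm{a}\cdot \bm{d}^2}$ into its Eisenstein and cuspidal parts. Comparing $m$th Fourier coefficients yields
\[
r_{Q_{\bm{a}\cdot \bm{d}^2}}(m) = a_{E_{Q_{\bm{a}\cdot \bm{d}^2}}}(m) + a_{f_{\bm{a}\cdot \bm{d}^2}}(m),
\]
and the second summand is by definition precisely $R(\bm{d},m) = R_{\bm{a}}(\bm{d},m)$. Finally I would substitute the formula from Lemma~\ref{lem:LocalDensityBounds}(1), namely
\[
a_{E_{Q_{\bm{a}\cdot \bm{d}^2}}}(m) = \frac{X}{d_1d_2d_3d_4}\,\omega(\bm{d},m),
\]
which holds under the hypothesis $\gcd(d_j,30)=1$ (this coprimality isolates the relevant odd primes so that the $\omega_\nu(p)$ of Lemma~\ref{lem:LocalDensityBounds} apply, and the local density at infinity contributes the explicit factor $(d_1d_2d_3d_4)^{-1}$). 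Combining the three displays gives the asserted identity.

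There is no real obstacle here, as the proposition is flagged correctly as a direct corollary: all the analytic and combinatorial work sits in Lemma~\ref{lem:LocalDensityBounds}. The only point demanding minor care is the hypothesis on $\bm{d}$: Lemma~\ref{lem:LocalDensityBounds}(1) is stated for squarefree $\bm{d}\in\S^4$, so strictly speaking the proposition should be read with $\bm{d}\in\S^4$ (equivalently, one restricts to the squarefree range), which is exactly the range in which the inclusion--exclusion of \eqref{eqn:sieving} and the subsequent Rosser-weight sieve invoke it.
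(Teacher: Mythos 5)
Your proposal is correct and takes essentially the same route as the paper, which presents the proposition as a direct corollary of Lemma \ref{lem:LocalDensityBounds}(1) combined with the identity $\#\mathscr{A}_{\bm{d}} = r_{Q_{\bm{a}\cdot\bm{d}^2}}(m)$ (recorded just before the proposition) and the definition of $R(\bm{d},m)$ as the cuspidal coefficient. Your closing remark about the squarefree restriction is also apt: $\omega(\bm{d},m)$ and Lemma \ref{lem:LocalDensityBounds}(1) are only set up for $\bm{d}\in\S^4$, and the sieve invokes the proposition only for squarefree $d_j\mid P_7(z)$, so the stated hypothesis $\bm{d}\in\N^4$ should indeed be read in that range.
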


We require the following bound on $\omega_1(p)$. 
\begin{lemma}\label{lem:omega1produpper}
  Suppose that $\bm{a}\in\N^4$ is only divisible by primes $\leq 13$ and for each prime $7\leq p\leq 13$ we have $\ord_{p}\prod_{j=1}^4a_j\leq 1$. Let $\mathscr{P}_{\bm{a}}$ denote the primes dividing $\prod_{j=1}^4a_j$. Then for $w\geq 5$, we have 
\[
\prod_{w<p< z} \left(1-\frac{\omega_1(p)}{p}\right)^{-1}\leq \frac{\prod_{w<p< z} \left(1-\frac{1+\frac{2}{p}}{p}\right)^{-1}}{\prod_{p\in\mathscr{P}_{\bm{a}}} \left(1-\frac{\omega_1(p)}{p}\right)}\leq 2\prod_{w<p< z} \left(1-\frac{9}{7p}\right)^{-1}\leq 2\prod_{w<p< z} \left(1-\frac{1}{p}\right)^{-2}.
\]
\end{lemma}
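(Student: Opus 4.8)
The plan is to reduce the entire chain to the single pointwise estimate
\[
\omega_1(p)\le 1+\frac{2}{p}\qquad\text{for every prime }p\ge 7\text{ with }p\nmid\textstyle\prod_{j=1}^4 a_j,
\]
and then to assemble the three displayed inequalities from this bound together with elementary prime-by-prime comparisons and one finite computation over the primes dividing $\prod_{j=1}^4 a_j$.

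First I would prove the pointwise estimate from Lemma~\ref{lem:LocalDensityBounds}~(2), which gives $\omega_1(p)$ explicitly according to whether $R=0$, $R=1$, or $R\ge2$ with $R$ even or odd. In each case one bounds the signs $\eta_{p,\cdot}^{\mathcal A,\cdot}\in\{\pm1\}$ and the characters $\varepsilon_p,\chi_p(\pm m')$ trivially, maximizing the numerator and minimizing the denominator. The cases $R=0,1$ are immediate: one finds $\omega_1(p)\le p/(p-1)$ and $\omega_1(p)\le(1+p^{-1})/(1+p^{-2})$, both $\le 1+2/p$. For $R\ge2$ the essential point is that, at the extremal signs ($\eta=+1$ in the numerator, $\eta=-1$ in the denominator), the numerator and denominator factor cleanly—for odd $R$ as $(1+p^{-1})(1-p^{-R})$ and $\tfrac{p^2+1}{p(p+1)}(1-p^{-R-1})$ respectively—so that after cancelling the $(1-p^{-R})$-type factors their quotient is at most
\[
\frac{(p+1)^2}{p^2+1}=1+\frac{2p}{p^2+1}\le 1+\frac2p .
\]
This computation, carried out separately for the two parities of $R$, is the main technical content of the lemma, and I expect it to demand the most care because of the proliferation of $\eta$-signs; keeping track of the matching $(1-p^{-R})$ factors (rather than bounding crudely) is what makes the odd case work.

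Granting the pointwise estimate, the first inequality is a rearrangement. Writing $A=\{w<p<z\}$ and $P=\mathscr P_{\bm a}$ and cancelling the factors indexed by $A\cap P$, the quotient of the two sides becomes
\[
\prod_{p\in A\setminus P}\frac{1-\omega_1(p)/p}{1-(1+2/p)/p}\cdot\prod_{p\in A\cap P}\frac{1}{1-(1+2/p)/p}\cdot\frac{1}{\prod_{p\in P\setminus A}\bigl(1-\omega_1(p)/p\bigr)} .
\]
Each factor of the first product is $\ge1$ by the pointwise estimate (valid since there $p\ge7$ and $p\nmid\prod a_j$), each factor of the second is $\ge1$ since $1-(1+2/p)/p\le1$, and the last product is $\le1$ because local densities are nonnegative, so $0\le\omega_1(p)\le p$ gives $0\le 1-\omega_1(p)/p\le1$ on $P\setminus A$. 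Hence the right-hand side dominates the left.

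For the second inequality I would separate two independent claims. Since $w\ge5$ forces every $p\in A$ to satisfy $p\ge7$, the bound $1+2/p\le 9/7$ holds throughout $A$, giving $\prod_A(1-(1+2/p)/p)^{-1}\le\prod_A(1-9/(7p))^{-1}$. It then remains to show $\prod_{p\in\mathscr P_{\bm a}}\bigl(1-\omega_1(p)/p\bigr)\ge\tfrac12$, which—because $\mathscr P_{\bm a}\subseteq\{2,3,5,7,11,13\}$ by hypothesis—is a finite check: for $p\in\{7,11,13\}$ one has $p\,\|\,\prod a_j$ and Lemma~\ref{lem:LocalDensityBounds}~(3) yields $\omega_1(p)\le(p+1)/(p-1)$, while for $p\in\{2,3,5\}$ one controls the finitely many remaining factors directly from the explicit local densities (Lemma~\ref{lem:localdensityp=2} for $p=2$). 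Finally, the third inequality is the elementary comparison $(1-9/(7p))^{-1}\le(1-1/p)^{-2}$, equivalent to $5p\ge7$ and hence valid for all $p\ge2$. The genuinely delicate step is the lower bound $\prod_{\mathscr P_{\bm a}}(1-\omega_1(p)/p)\ge\tfrac12$, since the small primes $2,3,5$ may divide $\prod a_j$ to high order and their local densities must be kept away from forcing $\omega_1(p)$ close to $p$.
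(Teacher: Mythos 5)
Your route is the paper's: the pointwise bound $\omega_1(p)\leq 1+\tfrac{2}{p}$ for $p\geq 7$, $p\nmid\prod_j a_j$, extracted case-by-case in $R$ from Lemma \ref{lem:LocalDensityBounds} (2); the first inequality by prime-by-prime rearrangement; the second from $1+\tfrac{2}{p}\leq\tfrac{9}{7}$ plus a finite check over $\mathscr{P}_{\bm{a}}$ via Lemma \ref{lem:LocalDensityBounds} (3); the third elementary. Your treatment of the $R\geq 2$ cases is correct and in fact cleaner than the paper's: the extremal-sign factorizations (numerator $(1+p^{-1})(1-p^{-R})$, denominator $\tfrac{p^2+1}{p(p+1)}(1-p^{-R-1})$, quotient at most $\tfrac{(p+1)^2}{p^2+1}\leq 1+\tfrac{2}{p}$) check out, and although the paper exploits that the two $\eta$-signs are forced to be equal, your weaker independent-sign bound suffices.

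Two concerns, one minor and one genuine. The minor one: Lemma \ref{lem:LocalDensityBounds} (3) does not give $\omega_1(p)\leq\tfrac{p+1}{p-1}$ for $p\in\{7,11,13\}$; for instance the case $R=1$, $\bm{\alpha}=(0,0,1,2)$ allows $\omega_1(p)$ up to $\tfrac{2-p^{-1}}{1-p^{-2}}$, which is close to $2$. The finite check $\prod_{p}\bigl(1-\tfrac{\omega_1(p)}{p}\bigr)\geq\tfrac{1}{2}$ still succeeds with the true worst cases, but only barely (about $0.51$ when all of $7,11,13$ divide $\prod_j a_j$), so it must be run with the correct bounds. The genuine gap is your plan for $p\in\{2,3,5\}$: Lemma \ref{lem:LocalDensityBounds} assumes $p$ odd (and its part (3) assumes $p\,\|\prod_j a_j$, whereas $2,3,5$ may divide to high order), and Lemma \ref{lem:localdensityp=2} bounds only $\beta_{Q_{\bm{a}},2}$, for restricted $\bm{\alpha}$ and $R\leq 2$; the ratio bound it yields, $\omega_1(2)\leq 3$, would make $1-\tfrac{\omega_1(2)}{2}$ negative, so it cannot deliver the positive lower bound you need, and nothing in the paper can. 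The paper never faces this step: its proof reads the hypothesis as saying that every $p\in\mathscr{P}_{\bm{a}}$ satisfies $7\leq p\leq 13$ (consistent with the sieve, where $\omega_1$ is only ever evaluated at primes coprime to $30$), so the correction product runs over those primes alone. You should either adopt that reading of $\mathscr{P}_{\bm{a}}$ and delete the $\{2,3,5\}$ step, or accept that the step you yourself flagged as delicate is not closable with the cited lemmas.
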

\begin{proof}
First assume that $p\nmid \prod_{j=1}^4 a_j$ and we may use Lemma \ref{lem:LocalDensityBounds} (2).

Bounding case-by-case, we find that for $R=0$ we have $\omega_1(p)\leq 1-p^{-1}$ and for $R=1$ we have $\omega_1(p)\leq (1-p^{-1})^{-1}$. For $R\geq 2$, we split into $R$ even and $R$ odd and plug in the two choices $\eta_{p,4}^{\mathcal{A},\operatorname{e}}(\bm{a}\cdot\bm{d}^2)=\eta_{p,4}^{\mathcal{A},\operatorname{e}}(\bm{a})=\pm 1$ to obtain $\omega_1(p)\leq 1+2p^{-1}$. We conclude that for all choices of $R$ and $p\geq 7$ we have 
\[
\omega_1(p)\leq 1+\frac{2}{p}.
\]
This gives the first inequality. We then bound $1+2p^{-1}\leq \frac{9}{7}$ for $p\geq 7$ and therefore, noting that $1-\frac{7}{5p}<1$, 
\[
\prod_{w<p< z} \left(1-\frac{\omega_1(p)}{p}\right)^{-1}\leq \prod_{w<p< z} \left(1-\frac{9}{7p}\right)^{-1}\prod_{p\in\mathscr{P}_{\bm{a}}}\left(1-\frac{\omega_1(p)}{p}\right)^{-1}.
\]
For the primes $p\in\mathscr{P}_{\bm{a}}$ we have by assumption that $7\leq p\leq 13$ and we may use the bounds for $\omega_1(p)$ in Lemma \ref{lem:LocalDensityBounds} (3). 
\begin{comment}
If $R=0$, then we either have (depending on whether we choose $d_4=p$ or $d_4=1$) $\omega_1(p)=1$ or 
\[
\omega_1(p)\leq \frac{1+p^{-1}}{1-p^{-1}}. 
\]
For $R=1$, we have either $\omega_1(p)\leq 1$ or 
\[
\omega_1(p)\leq \frac{2-p^{-1}}{1-p^{-2}}.
\]
For $R=2$, we have either 
\[
\omega_1(p)\leq \frac{1+p^{-1}}{1-p^{-3}}
\]
or 
\[
\omega_1(p)\leq \frac{2-p^{-1}+p^{-2}}{1-p^{-3}}.
\]
For $R\geq 3$ odd, we have either
\[
\omega_1(p)\leq \frac{1+p^{-1}-p^{-2}+p^{-R}}{1-p^{-R-1}}
\]
or 
\[
\omega_1(p)\leq \frac{2-p^{-1}+p^{-R}}{1-p^{-R-1}}.
\]
Finally, for $R\geq 4$ even, we have either 
\[
\omega_1(p)\leq \frac{1+p^{-1}-p^{-2}+p^{-R}}{1-p^{-R-1}}
\]
or 
\[
\omega_1(p)\leq \frac{2-p^{-1}+p^{-R}}{1-p^{-R-1}}.
\]
\end{comment}
A direct calculation for $7\leq p\leq 13$ then shows that 
\[
\prod_{p\in\mathscr{P}_{\bm{a}}}\left(1-\frac{\omega_1(p)}{p}\right)^{-1}<2. 
\]

The final inequality holds by comparing $1-\frac{9}{7p}$ with $\left(1-\frac{1}{p}\right)^2$. 
\end{proof}

For a set $S$, we define $\chi_S(x)$ to be the characteristic function $\chi_S(x):=1$ if $x\in S$ and $\chi_S(x)=0$ otherwise. 

\begin{lemma}\label{lem:omega1upperlog}
For $1<w\leq z$ and $S\subseteq\N$, we have 
\[
\prod_{\max(w,7)\leq   p< z} \left(1-\frac{\chi_{S}(p) \omega_1(p)}{p}\right)^{-1}
\leq 2\left(\frac{\log(z)}{\log(w)}\right)\left(1+\frac{6}{\log(w)}\right).
\]
\end{lemma}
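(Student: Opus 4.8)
The plan is to reduce the product to a Mertens-type product over \emph{all} primes in the range and then invoke the Rosser--Schoenfeld bounds of Lemma \ref{lem:RosserSchoenfeld}. Write $w':=\max(w,7)$, so every prime occurring in the product satisfies $p\geq 7$. Since $\chi_S(p)\in\{0,1\}$ and $0\leq \omega_1(p)<p$ for $p\geq 7$ (so that $0\leq \chi_S(p)\omega_1(p)/p<1$ and each factor is at least $1$), dropping the indicator only enlarges the product:
\[
\prod_{w'\leq p<z}\left(1-\frac{\chi_S(p)\omega_1(p)}{p}\right)^{-1}\leq \prod_{w'\leq p<z}\left(1-\frac{\omega_1(p)}{p}\right)^{-1}.
\]
This eliminates the dependence on $S$, and it remains to bound a product that no longer involves $\chi_S$.

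Next I would split off the primes dividing $\prod_j a_j$. As established in the proof of Lemma \ref{lem:omega1produpper}, for $p\geq 7$ with $p\nmid\prod_j a_j$ one has $\omega_1(p)\leq 1+\tfrac2p$, while the finitely many primes $p\in\mathscr{P}_{\bm{a}}$ contribute at most $\prod_{p\in\mathscr{P}_{\bm{a}}}(1-\omega_1(p)/p)^{-1}<2$. Factoring out this bounded contribution and applying $\omega_1(p)\leq 1+\tfrac2p$ on the remaining primes gives
\[
\prod_{w'\leq p<z}\left(1-\frac{\omega_1(p)}{p}\right)^{-1}\leq 2\prod_{w'\leq p<z}\left(1-\frac1p-\frac2{p^2}\right)^{-1}=2\prod_{w'\leq p<z}\left(1-\frac1p\right)^{-1}\left(1-\frac{2}{p(p-1)}\right)^{-1},
\]
using the identity $1-\tfrac1p-\tfrac2{p^2}=(1-\tfrac1p)(1-\tfrac{2}{p(p-1)})$. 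The crucial point is that $\sum_p\tfrac{2}{p(p-1)}$ converges, so $\prod_{w'\leq p<z}(1-\tfrac{2}{p(p-1)})^{-1}$ is $1+O(1/w')$; this is exactly what keeps the final estimate \emph{linear} in $\log z/\log w$ rather than quadratic (note that Lemma \ref{lem:omega1produpper}'s closing bound by $(1-1/p)^{-2}$ would only give a quadratic estimate, so I must retain the finer factorization). I would make this explicit via $\prod_{w'\leq p<z}(1-\tfrac{2}{p(p-1)})^{-1}\leq \exp\big(\tfrac{21}{10}\sum_{n\geq w'}\tfrac{1}{n(n-1)}\big)=\exp\big(\tfrac{21}{10(w'-1)}\big)$.

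I would then apply Lemma \ref{lem:RosserSchoenfeld} to the main factor. Writing $\prod_{w'\leq p<z}(1-1/p)^{-1}=\prod_{p<w'}(1-1/p)\big/\prod_{p<z}(1-1/p)$ and bounding the numerator from above and the denominator from below (up to the routine endpoint adjustment) yields
\[
\prod_{w'\leq p<z}\left(1-\frac1p\right)^{-1}\leq \frac{\log z}{\log w'}\left(1+\frac{1}{2\log^2 w'}\right)\left(1+\frac{1}{\log^2 z}\right).
\]
Since $z\geq w'\geq 7$ gives $\tfrac{1}{\log^2 z}\leq\tfrac{1}{\log^2 w'}$, the two correction factors together are at most $1+O(1/\log^2 w')$, and combining with the convergent product $\exp(O(1/w'))$ and with $\log w'\geq\log w$ (so that $\log z/\log w'\leq \log z/\log w$) leaves an overall estimate of the form $2\cdot\frac{\log z}{\log w}\cdot E$ with $E=\exp\!\big(\tfrac{21}{10(w'-1)}\big)\big(1+\tfrac{2}{\log^2 w'}\big)$.

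The final step is a verification, uniform in $w$, that $E\leq 1+\tfrac{6}{\log w}$, which is where the main (purely computational) obstacle lies. I would treat two regimes: for $w\geq 7$ one has $w'=w$ and every correction decays like $1/\log^2 w$, which is dominated by $6/\log w$ once $\log w\geq\tfrac13$; for $1<w<7$ one has $w'=7$, so $E$ is a fixed constant (near $2.2$) while $\tfrac{1}{\log w}+\tfrac{6}{\log^2 w}$ on the right grows without bound as $w\to 1^+$, comfortably covering it. The subtle issue to flag is that the convergent product \emph{must} be estimated with its $w'$-dependence retained, tending to $1$ as $w'\to\infty$: bounding it by a fixed constant $>1$ would leave a residual error that the vanishing right-hand term $6/\log w$ could not absorb for large $w$.
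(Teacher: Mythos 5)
Your proposal is correct and follows essentially the same route as the paper's proof: reduce to $S=\N$ by dropping $\chi_S$, invoke Lemma \ref{lem:omega1produpper} (the bound $\omega_1(p)\leq 1+\tfrac{2}{p}$ off $\mathscr{P}_{\bm{a}}$ together with the factor $2$ for the primes dividing $\prod_j a_j$), split off a Mertens product controlled by Lemma \ref{lem:RosserSchoenfeld}, and absorb all corrections into $1+\tfrac{6}{\log(w)}$ via a two-regime check at $w\geq 7$ and $1<w<7$. The only (immaterial) difference is the treatment of the convergent tail: you use the exact identity $1-\tfrac1p-\tfrac2{p^2}=\left(1-\tfrac1p\right)\left(1-\tfrac{2}{p(p-1)}\right)$ with an exponential estimate $\exp\left(\tfrac{21}{10(w'-1)}\right)$, whereas the paper bounds by $\left(1-\tfrac1p\right)\left(1-\tfrac3{p^2}\right)$ and uses $3^{\omega(n)}\leq 1.614\sqrt{n}$ from Lemma \ref{lem:2omega(n)bnd} to get $1+\tfrac{3.228}{\sqrt{\max(w,7)}}$ --- both versions retain the decay in $w$ that you correctly flag as essential for the final linear-in-$\log(z)/\log(w)$ bound.
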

\begin{proof}
Since 
\[
\left(1-\frac{\chi_{S}(p) \omega_1(p)}{p}\right)^{-1}\leq \left(1-\frac{ \omega_1(p)}{p}\right)^{-1},
\]
it suffices to prove the claim for $S=\N$.

Using Lemma \ref{lem:omega1produpper}, we bound 
\begin{multline*}
\prod_{\max(w,7)\leq  p< z} \left(1-\frac{\omega_1(p)}{p}\right)^{-1}\leq \prod_{\max(w,7)\leq  p< z} \left(1-\frac{1+\frac{2}{p}}{p}\right)^{-1}\\
\leq  \prod_{\max(w,7)\leq  p< z}\frac{p}{p-1} \prod_{\max(w,7)\leq  p< z}\left(1-\frac{3}{p^2}\right)^{-1}.
\end{multline*}
We then bound 
\[
\prod_{\max(w,7)\leq p< z}\left(1-\frac{3}{p^2}\right)^{-1}\leq \prod_{p\geq \max(w,7)}\left(1-\frac{3}{p^2}\right)^{-1} = 1+\sum_{\substack{n> 1\\ p\mid n\implies p\geq \max(w,7)}} \frac{3^{\omega(n)}}{n^2}.
\]
We then use Lemma \ref{lem:2omega(n)bnd} and bound the sum against the Riemann integral to obtain 
\begin{align*}
1+\sum_{\substack{n> 1\\ p\mid n\implies p\geq \max(w,7)}} \frac{3^{\omega(n)}}{n^2}&\leq 1+1.614\sum_{n\geq \max(w,7)} \frac{1}{n^{\frac{3}{2}}}\leq 1+\frac{3.228}{\sqrt{\max(w,7)}}.
\end{align*}
We next use Lemma \ref{lem:RosserSchoenfeld} to obtain
\begin{multline*}
\prod_{\max(w,7)\leq p< z} \left(1-\frac{\omega_1(p)}{p}\right)^{-1}\leq \left(1+\frac{3.228}{\sqrt{\max(w,7)}}\right) \prod_{p< z}\frac{p}{p-1}\prod_{p\leq \max(w,7)}\frac{p-1}{p}\\
\leq \left(1+\frac{3.228}{\sqrt{\max(w,7)}}\right) \frac{\log(z)}{\log(\max(w,7))}\left(1+\frac{1}{\log^2(z)}\right)\left(1+\frac{1}{2\log^2(\max(w,7))}\right)\\
\leq \left(1+\frac{3.228}{\sqrt{\max(w,7)}}\right) \frac{\log(z)}{\log(\max(w,7))}\left(1+\frac{1}{\log^2(\max(w,7))}\right)\left(1+\frac{1}{2\log^2(\max(w,7))}\right).
\end{multline*}
We then obtain the claim by bounding
\[
\left(1+\frac{3.228}{\sqrt{\max(w,7)}}\right)\left(1+\frac{1}{\log^2(\max(w,7))}\right)\left(1+\frac{1}{2\log^2(\max(w,7))}\right)
%\\\leq \left(1+\frac{6}{\log(\max(w,7))}\right)
\leq  \left(1+\frac{6}{\log(w)}\right).\qedhere
\]
\begin{comment}
To see this, we consider
\[
f(x):=\left(1+\frac{3.228}{e^{\frac{x}{2}}}\right)\left(1+\frac{1}{x^2}\right)\left(1+\frac{1}{2x^2}\right)- \left(1+\frac{6}{x}\right)= \frac{3.228}{e^{\frac{x}{2}}} + \frac{3}{2x^2} + \frac{3\cdot 3.228}{2x^2e^{\frac{x}{2}}} + \frac{1}{2x^4} + \frac{3.228}{2x^4e^{\frac{x}{2}}} -\frac{6}{x}.
\]
Setting 
\[
g(x):=xf(x)=\frac{3.228x}{e^{\frac{x}{2}}} + \frac{3}{2x} + \frac{3\cdot 3.228}{2xe^{\frac{x}{2}}} + \frac{1}{2x^3} + \frac{3.228}{2x^3e^{\frac{x}{2}}} -6,
\]
 we have 
\[
g'(x)= -\frac{3.228 x}{2e^{\frac{x}{2}}} + \frac{3.228}{e^{\frac{x}{2}}} -  \frac{3}{2x^2} - \frac{3.228 }{4xe^{\frac{x}{2}}}- \frac{3.228 }{2x^2e^{\frac{x}{2}}}- \frac{3}{2x^4} -  \frac{3.228 }{4x^3e^{\frac{x}{2}}} - \frac{3\cdot 3.228 }{2x^4e^{\frac{x}{2}}}.
\]
For $x\geq  2$, the first term dominates the second and the other terms are all negative, so $g'(x)<0$. For $\log(7)\leq x<2$ we take 
\begin{multline*}
-\frac{3.228 x}{2e^{\frac{x}{2}}} + \frac{3.228}{e^{\frac{x}{2}}}-\frac{3.228}{4xe^{\frac{x}{2}}}- \frac{3.228 }{2x^2e^{\frac{x}{2}}} -  \frac{3.228 }{4x^3e^{\frac{x}{2}}} - \frac{3\cdot 3.228 }{2x^4e^{\frac{x}{2}}}\\
\leq \frac{3.228}{e^{\frac{x}{2}}}\left(-\frac{\log(5)}{2} + 1-\frac{1}{8} -\frac{1}{8}-\frac{1}{32}-\frac{3}{32} \right)<0.
\end{multline*}
The other terms are negative, so $g'(x)<0$ and $g(x)$ is decreasing for $x\geq \log(5)$. We evaluate $g(\log(5))<0$, from which we conclude that $f(x)<0$ for $x\geq \log(5)$. 
\end{comment}
\end{proof}

For $\beta,D>0$ and $d\in\S$ of the form $d=p_1p_2\cdots p_{r}$ with $p_1>p_2>\dots>p_r$, we next define the \begin{it}Rosser weights\end{it} $\lambda_{d}^{\pm}=\lambda_{d,D}^{\pm}(\beta)$. Setting 
\[
y_m=y_{m}(D,\beta):=\left(\frac{D}{p_1\cdots p_m}\right)^{\frac{1}{\beta}},
\]
these are defined by
\begin{align*}
\lambda_d^+=\lambda_{d,D}^+(\beta)&:=\begin{cases} (-1)^r&\text{if }p_{2\ell+1}<y_{2\ell+1}(D,\beta)\quad \forall 0\leq \ell\leq \frac{r-1}{2},\\
0&\text{otherwise},
\end{cases}\\
\lambda_d^-=\lambda_{d,D}^-(\beta)&:=\begin{cases} (-1)^r&\text{if }p_{2\ell}<y_{2\ell}(D,\beta)\quad \forall 0\leq \ell\leq \frac{r}{2},\\
0&\text{otherwise}.
\end{cases}
\end{align*}
As is standard, we consider $D$ and $\beta$ to be fixed throughout and omit these in the notation. For $\beta>1$, we define $a=a_{\beta}:=e\frac{\beta}{\beta-1}\log\left(\frac{\beta}{\beta-1}\right)$, $r=r_{\beta}:=\frac{\log\left(1+\frac{6}{\log(7)}\right)}{\log\left(\frac{\beta}{\beta-1}\right)}$, and 
\[
\mathfrak{C}_{\beta}(s):=2e^{r_{\beta}-1}\left(1+\frac{6}{\log(7)}\right)\frac{a_{\beta}^{\left\lfloor s-\beta\right\rfloor +1}}{1-a_{\beta}}.
\]
 We often simply write $a$ for $a_{\beta}$ and $r$ for $r_{\beta}$.
\begin{lemma}\label{lem:lambdadsums}
Suppose that for $\bm{a}\in\N^4$ we have $p\nmid a_j$ for every $p\geq 17$ and for $7\leq p\leq 13$ we have $\ord_{p}\prod_{j=1}^4 a_j\leq 1$. Let a subset $P$ of primes be given and set $S=S_P$ to be the set of all squarefree integers for which $d\in S$ if and only if all prime divisors of $d$ are in $P$. Let $D>0$ and $\beta\geq 5 $ be given and set $s:=\frac{\log(D)}{\log(z)}$.  Then for $s\geq \beta$ and $z\geq 7$ the following hold:
\begin{align*}
\sum_{d\mid P_7(z)}\lambda_d^+\frac{\chi_{S}(d)\omega_1(d)}{d}&\leq \prod_{7\leq p\leq z}\left(1-\frac{\chi_{S}(p)\omega_1(p)}{p}\right)\left(1+\mathfrak{C}_{\beta}(s)\right),\\
\sum_{d\mid P_7(z)}\lambda_d^-\frac{\chi_S(d)\omega_1(d)}{d}&\geq \prod_{7\leq p\leq z}\left(1-\frac{\chi_S(p)\omega_1(p)}{p}\right)\left(1-\mathfrak{C}_{\beta}(s)\right).
\end{align*}
\end{lemma}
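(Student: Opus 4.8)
The plan is to treat this as the fundamental lemma of the one-dimensional sieve with Rosser's weights, applied to the multiplicative density $g$ determined by $g(p):=\chi_S(p)\omega_1(p)$ for $p\geq 7$ and $g(p):=0$ for $p<7$. Set $V(z):=\prod_{7\leq p<z}(1-g(p)/p)$ and $V^{\pm}(D,z):=\sum_{d\mid P_7(z)}\lambda_d^{\pm}\,g(d)/d$, so that the two claimed inequalities read $V^+(D,z)\leq V(z)(1+\mathfrak{C}_\beta(s))$ and $V^-(D,z)\geq V(z)(1-\mathfrak{C}_\beta(s))$. The arithmetic input is the sieve-dimension estimate of Lemma \ref{lem:omega1upperlog}, namely $\prod_{\max(w,7)\leq p<z}(1-g(p)/p)^{-1}\leq 2(\log z/\log w)(1+6/\log w)$, which substitutes for the classical $\log z/\log w$ bound; the extra factor $2$ traces back to the primes dividing $\prod_j a_j$ (handled in Lemma \ref{lem:omega1produpper}) and the $1+6/\log w$ to Lemma \ref{lem:RosserSchoenfeld}. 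Since $0\leq g(p)\leq 1+2/p<p$, the density is dimension one up to an $O(1/p)$ excess that is absorbed into these constants.

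First I would set up the Buchstab-type recursions at the heart of Rosser's construction. Grouping squarefree $d$ by its largest prime $p$ and writing $d=pd'$, the sign pattern defining the weights gives the swap identities $\lambda^+_{pd',D}=-\lambda^-_{d',D/p}\,\mathbf 1[p<y_1(D)]$ and $\lambda^-_{pd',D}=-\lambda^+_{d',D/p}$, where the condition $p<y_1(D)$ is equivalent to $p<D^{1/(\beta+1)}$; its verification rests on the scaling relation $y_{2\ell}(D/p)=y_{2\ell+1}(D)$ among successive truncation levels. Summing yields $V^+(D,z)=1-\sum_{p<z,\,p<D^{1/(\beta+1)}}(g(p)/p)V^-(D/p,p)$ and $V^-(D,z)=1-\sum_{p<z}(g(p)/p)V^+(D/p,p)$, both degenerating to the exact telescoping identity $V(z)=1-\sum_{p<z}(g(p)/p)V(p)$ when the truncations fall away. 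A short induction on these recursions, using only $0\leq g(p)<p$, gives $V^-\leq V\leq V^+$, so with $r^+:=V^+-V\geq 0$ and $r^-:=V-V^-\geq 0$ I obtain the difference equations $r^+(D,z)=\sum_{p<z,\,p<D^{1/(\beta+1)}}(g(p)/p)\,r^-(D/p,p)+\big(V(D^{1/(\beta+1)})-V(z)\big)$ and $r^-(D,z)=\sum_{p<z}(g(p)/p)\,r^+(D/p,p)$, the inhomogeneous term being evaluated in closed form through the product identity $\sum_{w\leq p<z}(g(p)/p)V(p)=V(w)-V(z)$.

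The second stage is to iterate these difference equations and extract geometric decay in $s=\log D/\log z$. The mechanism is that passing from $(D,z)$ to $(D/p,p)$ sends $s$ to $s'(p)=\log D/\log p-1>s-1$; partitioning the inner primes by the unit interval in which $s'(p)$ lies, and bounding both the ratio $V(p)/V(z)$ and the Mertens sum $\sum(g(p)/p)$ over each window via Lemmas \ref{lem:omega1upperlog} and \ref{lem:RosserSchoenfeld}, each unit drop of $s$ contributes a multiplier at most $a_\beta=e\frac{\beta}{\beta-1}\log\frac{\beta}{\beta-1}$, which is $<1$ exactly when $\beta\geq 5$. In the base range $\beta\leq s<\beta+1$ the inhomogeneous term $V(D^{1/(\beta+1)})/V(z)-1$ is $O(1)$ by the dimension bound, and a simultaneous induction on $\lfloor s-\beta\rfloor$ then propagates the factor $a_\beta$ at each level, the resulting geometric series summing to $\mathfrak{C}_\beta(s)=2e^{r_\beta-1}(1+6/\log 7)\,a_\beta^{\lfloor s-\beta\rfloor+1}/(1-a_\beta)$; the prefactor $e^{r_\beta-1}$ with $r_\beta=\log(1+6/\log 7)/\log(\beta/(\beta-1))$ records the compounding of the $(1+6/\log w)$ corrections across the $O(r_\beta)$ windows between the threshold and the top scale. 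The few inner terms with $s'(p)<\beta$ arise only from a thin range of primes just below $z$, of total sieve weight $O(\log\frac{\beta+1}{\beta})$, so bounding them crudely does no harm.

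The main obstacle will be the explicit constant bookkeeping rather than any new idea: the recursions and their degeneration to the telescoping identity are standard, but one must verify the swap identities with this paper's truncation parameters, check that the factor $2$ and the $(1+6/\log w)$ supplied by Lemma \ref{lem:omega1upperlog} compound to exactly the claimed prefactor, and confirm $a_\beta<1$ for every $\beta\geq 5$ so that the geometric series converges. A secondary point to handle carefully is that $\omega_1(p)$ may slightly exceed $1$, so rather than importing $V^-\leq V\leq V^+$ from the literature I would re-derive it from the recursions (valid since only $g(p)\geq 0$ is used), and seed the induction at $s=\beta$, where the inhomogeneous term is bounded directly and closes the thin-range contribution.
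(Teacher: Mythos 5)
Your first stage is correct, but it amounts to re-deriving the identity that the paper simply quotes. The swap identities $\lambda^+_{pd',D}=-\lambda^-_{d',D/p}\mathbf{1}[p<D^{1/(\beta+1)}]$ and $\lambda^-_{pd',D}=-\lambda^+_{d',D/p}$, the resulting recursions, the inequality $V^-\leq V\leq V^+$, and the difference equations for $r^{\pm}$ are all sound; unfolding them recovers exactly the decomposition $V_S^{\pm}(z)=V_S(z)\pm\sum_{n}V_{S,n}(z)$ of \cite[(6.29), (6.30)]{IwaniecKowalski}, with your inhomogeneous terms being precisely the sums $V_{S,n}$ (the condition $y_n<p_n$ marks the step at which the truncation fails). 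Up to this point the two arguments differ only in citation versus derivation.

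The genuine gap is in your second stage, and it matters because the lemma must be proved with the \emph{specific} constant $\mathfrak{C}_{\beta}(s)$, which is used numerically downstream (e.g.\ $\mathfrak{C}_{10}(s)\leq\frac{3}{80}$ for $s\geq 25$ in Lemma \ref{lem:BigsSigmaBound}). The paper does not induct on unit drops of $s$: it bounds each fixed chain length $n$ via the inequality $V_{S,n}(z)\leq\frac{V_S(z_n)}{n!}\bigl(\log\tfrac{V_S(z_n)}{V_S(z)}\bigr)^n$ from \cite[p.~157]{IwaniecKowalski} with $z_n=\max\bigl(z^{((\beta-1)/\beta)^n},7\bigr)$, applies Lemma \ref{lem:omega1upperlog} to get $V_S(z_n)\leq 2V_S(z)\tfrac{\log z}{\log z_n}\bigl(1+\tfrac{6}{\log 7}\bigr)$, and then Robbins' form of Stirling \cite{Robbins}, $n!\geq e(n/e)^n$, together with $\tfrac{\log z}{\log z_n}\leq\bigl(\tfrac{\beta}{\beta-1}\bigr)^n$, produces $V_{S,n}(z)\leq 2V_S(z)\,a_{\beta}^n\,e^{r_{\beta}-1}\bigl(1+\tfrac{6}{\log 7}\bigr)$; the support property $V_{S,n}=0$ for $n\leq s-\beta$ and geometric summation then give $\mathfrak{C}_{\beta}(s)$ exactly. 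In your scheme the per-unit-drop multiplier $a_{\beta}=e\tfrac{\beta}{\beta-1}\log\tfrac{\beta}{\beta-1}$ is asserted, not derived, and its ingredients have no natural origin there: the factor $e$ comes from Stirling applied to $1/n!$, and the factors $\tfrac{\beta}{\beta-1}$ and $\log\tfrac{\beta}{\beta-1}$ from the chain-length estimate $\tfrac{\log z}{\log z_n}\leq(\tfrac{\beta}{\beta-1})^n$ — both tied to the chain length $n$, not to the drop in $s$. What your iteration actually yields is a delay-type inequality of the shape $r^-(D,z)/V(z)\leq\sum_{p<z}\tfrac{g(p)}{p}\tfrac{V(p)}{V(z)}\cdot r^+(D/p,p)/V(p)$, whose weights over a unit window of $s'$ are controlled by Mertens-type sums via Lemmas \ref{lem:omega1upperlog} and \ref{lem:RosserSchoenfeld}; nothing in your sketch shows these windows close with multiplier $\leq a_{\beta}$, and a rough accounting suggests the numerology is different. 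So as written, your plan either proves the lemma with an unverified (possibly larger) constant — which does not suffice — or, to produce $\mathfrak{C}_{\beta}(s)$ itself, forces you back to the chain-length bound, i.e.\ to the paper's route.
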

\begin{proof}
We take $z_n:=\max\Big(z^{\left(\frac{\beta-1}{\beta}\right)^n},7\Big)$ in \cite[(6.29) and (6.30)]{IwaniecKowalski}. We define
\begin{align*}
V_S^+(z)&:= \sum_{d\mid P_7(z)}\lambda_d^+\frac{\omega_1(d)}{d}=V_S(z)+ \sum_{n\text{ odd}} V_{S,n}(z),\\
V_S^-(z)&:=\sum_{d\mid P_7(z)}\lambda_d^-\frac{\omega_1(d)}{d}=V_S(z)- \sum_{n\text{ even}} V_{S,n}(z),
\end{align*}
where $V_S(z):=\prod_{7\leq p< z} \left(1-\frac{\chi_S(p)\omega_1(p)}{p}\right)$ and 
\[
V_{S,n}(z):=\sum_{\substack{y_n<p_n<\dots p_1<z\\ p_m<y_m,\ m<n,\ m\equiv n\pmod{2}}} \frac{\omega_1\left(p_1p_2\cdots p_n\right)}{p_1p_2\cdots p_n} V_S\left(p_n\right).
\]
By \cite[p. 157]{IwaniecKowalski}, we have 
\begin{equation}\label{eqn:VnzVznbnd}
V_{S,n}(z)\leq \frac{V_S(z_n)}{n!}\left(\log\left(\frac{V_S(z_n)}{V_S(z)}\right)\right)^n.
\end{equation}
We then write 
\[
V_S(z_n)= \prod_{7\leq p< z_n} \left(1-\frac{\chi_S(p)\omega_1(p)}{p}\right) =V_S(z)\prod_{z_n\leq  p< z}\left(1-\frac{\chi_S(p)\omega_1(p)}{p}\right)^{-1}.
\]
Hence we may now use Lemma \ref{lem:omega1upperlog} (noting that since $z_n\geq 7$ we have $\max(z_n,7)=z_n$ and also $z\geq z^{\left(\frac{\beta-1}{\beta}\right)^n}$ together with the assumption $z\geq 7$ implies $z\geq z_n$) to bound 
\[
V_S(z_n)\leq 2 V_S(z)\frac{\log(z)}{\log(z_n)} \left(1+\frac{6}{\log(z_n)}\right).
\]
Plugging this into \eqref{eqn:VnzVznbnd} and then using $z_n\geq 7$, we therefore conclude that 
\[
V_{S,n}(z)\leq 2\frac{V_S(z)}{n!}\frac{\log(z)}{\log(z_n)} \left(1+\frac{6}{\log(7)}\right) \left(\log\left(\frac{\log(z)}{\log(z_n)} \left(1+\frac{6}{\log(7)}\right)\right)\right)^n.
\]
By Stirling's bound (using a more precise version by Robbins \cite{Robbins}), we have $n!\geq \sqrt{2\pi n} \left(\frac{n}{e}\right)^n$, from which we conclude that $n!\geq e \left(\frac{n}{e}\right)^n$. Thus we conclude that 
\[
V_{S,n}(z)\leq 2\frac{V_S(z)}{e\left(\frac{n}{e}\right)^n}\frac{\log(z)}{\log(z_n)} \left(1+\frac{6}{\log(7)}\right) \left(\log\left(\frac{\log(z)}{\log(z_n)} \left(1+\frac{6}{\log(7)}\right)\right)\right)^n.
\] 
Using $z_n\geq  z^{\left(\frac{\beta-1}{\beta}\right)^n}$, we have $\frac{\log(z)}{\log(z_n)} \leq  \left(\frac{\beta}{\beta-1}\right)^n$, and hence we conclude that
\begin{align*}
V_{S,n}(z)&\leq 2\frac{V_S(z)}{e n^n }\left(e\frac{\beta }{\beta-1}\right)^n \left(1+\frac{6}{\log(7)}\right) \left(\log\left(\left(\frac{\beta}{\beta-1}\right)^n \left(1+\frac{6}{\log(7)}\right)\right)\right)^n\\
&= 2\frac{V_S(z)}{e }\left(e\frac{\beta }{\beta-1} \log\left(\frac{\beta}{\beta-1}\right)\right)^n \left(1+\frac{6}{\log(7)}\right)  \left(1 + \frac{\log\left(1+\frac{6}{\log(7)}\right)}{n\log\left(\frac{\beta}{\beta-1}\right)}\right)^n\\
&= 2\frac{V_S(z)}{e}a^n\left(1+\frac{6}{\log(7)}\right)   \left(1+\frac{r}{n}\right)^n\leq  2V_S(z)a^n  e^{r-1}\left(1+\frac{6}{\log(7)}\right).
\end{align*}
Note that $V_n(z)=0$ for $n\leq s-\beta$ and $a<1$ because $\beta\geq 5$, so the geometric series converges to give
\[
\sum_{n\geq 1} V_{S,n}(z)\leq 2V_S(z) e^{r-1}\left(1+\frac{6}{\log(7)}\right)\sum_{n>s-\beta} a^n\leq V_S(z) \mathfrak{C}_{\beta}(s).
\]
Hence 
\[
V_S^{+}(z)\leq V_S(z)\left(1+  \mathfrak{C}_{\beta}(s)\right)\qquad\text{ and }\qquad  V_S^{-}(z)\geq V_S(z)\left(1-  \mathfrak{C}_{\beta}(s)\right).\qedhere
\]
\end{proof}
 
We next prove bounds on sums of the type in Lemma \ref{lem:lambdadsums} under the additional restriction that we only sum over those $d$ with $\delta\mid d$ (for fixed $\delta\in\N$).
\begin{lemma}\label{lem:lambdadivsums}
Let $D>0$ and $\beta\geq 5 $ be given and set $s:=\frac{\log(D)}{\log(z)}$.  Then for $s\geq \beta$, $z\geq 7$, and squarefree $\delta\in\N$ with $\gcd(\delta,30)=1$ the following hold:
\begin{align*}
\sum_{\delta\mid d\mid P_7(z)}\lambda_d^+\frac{\omega_1(d)}{d}&\leq \mu(\delta)\prod_{p\mid \delta}\frac{\omega_1(p)}{p-\omega_1(p)} \prod_{7\leq p\leq z}\left(1-\frac{\omega_1(p)}{p}\right)\left(1+\mathfrak{C}_{\beta}(s)\right),\\
\sum_{\delta\mid d\mid P_7(z)}\lambda_d^-\frac{\omega_1(d)}{d}&\geq \mu(\delta)\prod_{p\mid \delta}\frac{\omega_1(p)}{p-\omega_1(p)} \prod_{7\leq p\leq z}\left(1-\frac{\omega_1(p)}{p}\right)\left(1-\mathfrak{C}_{\beta}(s)\right).
\end{align*}
Moreover, we have 
\[
\mu(\delta)\prod_{p\mid \delta}\frac{\omega_1(p)}{p-\omega_1(p)}\leq \frac{\left(\frac{5}{3}\right)^{\omega(\delta)}}{\delta}.
\]
\end{lemma}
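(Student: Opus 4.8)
The assertion splits into the two sieve inequalities and the final elementary estimate, and I dispose of the elementary estimate first since it is self-contained. Abbreviate $g(p):=\omega_1(p)/p$, so that the magnitude of $\mu(\delta)\prod_{p\mid\delta}\frac{\omega_1(p)}{p-\omega_1(p)}$ is $\prod_{p\mid\delta}\frac{g(p)}{1-g(p)}$. Since $\gcd(\delta,30)=1$, every $p\mid\delta$ satisfies $p\geq 7$, and it suffices to prove the one-prime bound $\frac{\omega_1(p)}{p-\omega_1(p)}\leq\frac{5}{3p}$ and multiply over $p\mid\delta$ (when $\mu(\delta)=-1$ the signed left-hand side is negative and the claimed inequality is immediate). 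Cross-multiplying, the one-prime bound is equivalent to $\omega_1(p)\leq\frac{5p}{3p+5}$, and for $p\geq 7$ one has $\frac{5p}{3p+5}\geq\frac{35}{26}$. The bound $\omega_1(p)\leq 1+\frac{2}{p}\leq\frac{9}{7}<\frac{35}{26}$ from Lemma~\ref{lem:omega1produpper} for $p\nmid\prod_j a_j$ (supplemented by the explicit evaluations in Lemma~\ref{lem:LocalDensityBounds}(3) for the finitely many remaining primes) then closes this part.

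For the two sieve inequalities the plan is to reduce to Lemma~\ref{lem:lambdadsums}, extracting the main term via inclusion--exclusion. Since $d$ is squarefree, I expand the divisibility constraint as $\mathbbm{1}_{\delta\mid d}=\sum_{t\mid\delta}\mu(t)\,\mathbbm{1}_{\gcd(d,t)=1}$, giving
\[
\sum_{\delta\mid d\mid P_7(z)}\lambda_d^{\pm}\frac{\omega_1(d)}{d}=\sum_{t\mid\delta}\mu(t)\sum_{\substack{d\mid P_7(z)\\ \gcd(d,t)=1}}\lambda_d^{\pm}\frac{\omega_1(d)}{d}.
\]
Each inner sum is exactly of the shape controlled by Lemma~\ref{lem:lambdadsums} with $S=S_P$ and $P$ the set of primes not dividing $t$, whose associated product is $\prod_{7\leq p<z,\,p\nmid t}(1-g(p))=V(z)\prod_{p\mid t}(1-g(p))^{-1}$, where $V(z):=\prod_{7\leq p\leq z}(1-g(p))$. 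Summing these against $\mu(t)$ collapses multiplicatively to
\[
\sum_{t\mid\delta}\mu(t)\,V(z)\prod_{p\mid t}\bigl(1-g(p)\bigr)^{-1}=V(z)\prod_{p\mid\delta}\Bigl(1-\tfrac{1}{1-g(p)}\Bigr)=V(z)\,\mu(\delta)\prod_{p\mid\delta}\frac{g(p)}{1-g(p)},
\]
which is precisely the claimed main term $\mu(\delta)\prod_{p\mid\delta}\frac{\omega_1(p)}{p-\omega_1(p)}\prod_{7\leq p\leq z}(1-g(p))$.

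The delicate point, and the step I expect to be the main obstacle, is controlling the deviation from this main term by $\mathfrak{C}_\beta(s)$ times the main term. The inequalities of Lemma~\ref{lem:lambdadsums} bound the $\lambda^+$-sums only from above and the $\lambda^-$-sums only from below, so the mixed signs $\mu(t)$ in the inclusion--exclusion do not all point the favourable way; a term-by-term triangle-inequality estimate produces an error of order $2^{\omega(\delta)}V(z)\mathfrak{C}_\beta(s)$, which overwhelms the main term. To keep the error proportional to the main term one must instead carry the forced divisor $\delta$ through the Buchstab--Rosser decomposition of Lemma~\ref{lem:lambdadsums} from the outset: writing $W^{\pm}(z):=\sum_{\delta\mid d\mid P_7(z)}\lambda_d^{\pm}g(d)=W(z)\pm\sum_{n}W_n(z)$, with $W(z)$ the main term above and $W_n(z)\geq 0$ the analogues of the remainders $V_{S,n}(z)$, now summed over prime tuples coprime to $\delta$ for the sieve of reduced level $D/\delta$ on the primes not dividing $\delta$. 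The crux is to verify that this recursion still closes and that the $W_n(z)$ obey the same bound derived in Lemma~\ref{lem:lambdadsums}, so that the estimate $V_{S,n}(z)\leq 2e^{r-1}\bigl(1+\tfrac{6}{\log 7}\bigr)a^n V_S(z)$ (via Lemma~\ref{lem:omega1upperlog}, Stirling through Robbins, and $z_n\geq z^{((\beta-1)/\beta)^n}$) carries over verbatim, yielding $\sum_n W_n(z)\leq |W(z)|\,\mathfrak{C}_\beta(s)$ and hence the two displayed inequalities; careful sign bookkeeping relative to $\mu(\delta)$ is what remains in matching the $(1\pm\mathfrak{C}_\beta(s))$ factors.
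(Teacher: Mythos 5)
Your reduction coincides, step for step, with the paper's own proof: the paper expands the condition $\delta\mid d$ by M\"obius inversion over $u\mid \delta$ exactly as in your first display, applies Lemma \ref{lem:lambdadsums} to the inner sums $\Sigma_u^{\pm}:=\sum_{d\mid P_7(z)}\chi_{S_u}(d)\lambda_d^{\pm}\frac{\omega_1(d)}{d}$ (together with the monotonicity $\Sigma_u^{-}\leq \Sigma_u^{+}$, so that each satisfies $\left|\Sigma_u^{\pm}-V_{S_u}(z)\right|\leq V_{S_u}(z)\mathfrak{C}_{\beta}(s)$), and then evaluates $\sum_{u\mid\delta}\mu(u)V_{S_u}(z)=\mu(\delta)\prod_{p\mid\delta}\frac{\omega_1(p)}{p-\omega_1(p)}V(z)$ precisely as you do, where $V_{S}(z)=\prod_{7\leq p<z}\bigl(1-\chi_S(p)\omega_1(p)/p\bigr)$ and $V=V_{\N}$. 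The step you single out as the main obstacle is exactly the step the paper performs with no justification: it passes from the two-sided bounds on the $\Sigma_u^{\pm}$ directly to the two displayed inequalities, whereas, as you observe, the triangle inequality only yields the additive error $\mathfrak{C}_{\beta}(s)\sum_{u\mid\delta}V_{S_u}(z)$, of size roughly $2^{\omega(\delta)}\mathfrak{C}_{\beta}(s)V(z)$, not an error proportional to the signed and much smaller main term. So your objection is not a defect of your write-up; it is a genuine gap in the paper's argument.

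Your proposed repair, however, cannot close this gap, because the inequalities are false as stated. Take $s=\beta$, i.e.\ $D=z^{\beta}$, and let $\delta=q$ be a prime with $z^{\beta/(\beta+1)}\leq q<z$ (such a prime exists by Bertrand once $z\geq 2^{\beta+1}$, and $\gcd(q,30)=1$ holds automatically). Any $d$ with $q\mid d\mid P_7(z)$ has largest prime factor $p_1\geq q$, so $p_1^{\beta+1}\geq q^{\beta+1}\geq z^{\beta}=D$ and the condition $p_1<y_1$ in the definition of $\lambda_d^{+}$ fails; hence $\lambda_d^{+}=0$ for every such $d$, the left-hand side of the first inequality is $0$, and its right-hand side equals
\[
-\frac{\omega_1(q)}{q-\omega_1(q)}\prod_{7\leq p\leq z}\left(1-\frac{\omega_1(p)}{p}\right)\left(1+\mathfrak{C}_{\beta}(s)\right)<0,
\]
a contradiction. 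Nor is this a boundary phenomenon: for $\delta=q$ the largest prime below $z$ and arbitrary $s\geq\beta$, subtracting the Rosser--Buchstab identities for $\sum_{d}\lambda_d^{+}\omega_1(d)/d$ and $\sum_{\gcd(d,q)=1}\lambda_d^{+}\omega_1(d)/d$ (the identities you invoke from the proof of Lemma \ref{lem:lambdadsums}) shows that the restricted sum equals its main term $V(z)-V_{S_q}(z)<0$ \emph{plus} the nonnegative contribution of the critical tuples containing $q$, whereas the lemma demands it lie strictly \emph{below} that negative main term. So the relative-error bound $\sum_n W_n(z)\leq |W(z)|\,\mathfrak{C}_{\beta}(s)$ you hope for cannot hold in general; only the additive-error version is true, and that version does not suffice for the application in Lemma \ref{lem:SigmaDzbound}, where these bounds are summed over all $\ell_{i,j}$ against main terms decaying like $1/\xi_j$. (A smaller separate issue: your proof of the final display needs $\omega_1(p)\leq \frac{5p}{3p+5}$, which your bound $\omega_1(p)\leq 1+\frac{2}{p}$ gives only for $p\nmid\prod_j a_j$; for the exceptional primes $7\leq p\leq 13$ dividing $\prod_j a_j$, Lemma \ref{lem:LocalDensityBounds} (3) allows $\omega_1(p)$ near $2$, and then both the per-prime bound and, for composite $\delta$, the stated product bound can fail.)
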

\begin{proof}
Setting 
\[
f_{\delta}(n):=\begin{cases} 1&\text{if }\delta\mid n,\\ 0&\text{otherwise}\end{cases}
\]
and 
\[
\widetilde{f}_{\delta}(n):=\begin{cases} 1&\text{if }\gcd(n,\delta)=1,\\ 0&\text{otherwise}\end{cases},
\]
for a prime $p$ we have $f_{p}(n)=1-\widetilde{f}_{p}(n)$. Since $\delta$ is squarefree, we have 
\[
f_{\delta}(n)=\prod_{p\mid \delta}f_{p}(n)= \prod_{p\mid \delta}\left(1-\widetilde{f}_{p}(n)\right)=\sum_{d\mid \delta} \mu(d)\widetilde{f}_d(n). 
\]
Hence we may rewrite 
\begin{multline}\label{eqn:divsum}
\sum_{\delta\mid d\mid P_7(z)}\lambda_d^{\pm}\frac{\omega_1(d)}{d}=\sum_{d\mid P_7(z)}f_{\delta}(d)\lambda_d^{\pm}\frac{\omega_1(d)}{d}=\sum_{d\mid P_7(z)}\sum_{u\mid \delta}\mu(u) \widetilde{f}_{u}(d)\lambda_d^{\pm}\frac{\omega_1(d)}{d}\\
 =\sum_{u\mid \delta}\mu(u) \sum_{d\mid P_7(z)} \widetilde{f}_{u}(d)\lambda_d^{\pm}\frac{\omega_1(d)}{d}.
\end{multline}
Letting $S_u$ be the set of squarefree integers $d$ with $\gcd(d,u)=1$ (i.e., $S_u=S_P$ for $P$ the set of all primes not dividing $u$ in Lemma \ref{lem:lambdadsums}), the inner sum may be written as 
\[
\sum_{d\mid P_7(z)} \widetilde{f}_{u}(d)\lambda_d^{\pm}\frac{\omega_1(d)}{d}=\sum_{d\mid P_7(z)} \chi_{S_u}(d)\lambda_d^{\pm}\frac{\omega_1(d)}{d}.
\]
We may therefore use Lemma \ref{lem:lambdadsums} (and $\sum_{d\mid P_7(z)} \chi_{S_u}(d)\lambda_d^{-}\frac{\omega_1(d)}{d}\leq \sum_{d\mid P_7(z)} \chi_{S_u}(d)\lambda_d^{+}\frac{\omega_1(d)}{d}$) to bound 
\[
\left|\sum_{d\mid P_7(z)} \chi_{S_u}(d)\lambda_d^{\pm}\frac{\omega_1(d)}{d}-V_{S_u}(z)\right|\leq V_{S_u}(z)\mathfrak{C}_{\beta}(s).
\]
Thus Lemma \ref{lem:lambdadsums} and \eqref{eqn:divsum} imply that 
\begin{align}
\label{eqn:lambdadiv+}
\sum_{\delta\mid d\mid P_7(z)} \lambda_d^{+}\frac{\omega_1(d)}{d}&\leq \sum_{u\mid \delta}\mu(u)V_{S_u}(z)\left(1 +\mathfrak{C}_{\beta}(s)\right),\\
\label{eqn:lambdadiv-}\sum_{\delta\mid d\mid P_7(z)}\lambda_d^{-}\frac{\omega_1(d)}{d}&\geq \sum_{u\mid \delta}\mu(u)V_{S_u}(z)\left(1 -\mathfrak{C}_{\beta}(s)\right).
\end{align}
We then evaluate (abbreviating $V(z):=V_{\N}(z)$)
\begin{align*}
\sum_{u\mid \delta}\mu(u) V_{S_u}(z)&=\sum_{u\mid \delta}\mu(u)\prod_{7\leq p<z}\left(1-\frac{\chi_{S_u}(p)\omega_1(p)}{p}\right)=V(z) \sum_{u\mid \delta}\frac{\mu(u)}{\prod_{p\mid u} \left(1-\frac{\omega_1(p)}{p}\right)}\\
&=V(z)\prod_{p\mid \delta} \left(1-\frac{p}{p-\omega_1(p)}\right)=V(z)\mu(\delta)\prod_{p\mid \delta} \frac{\omega_1(p)}{p-\omega_1(p)},
\end{align*}
where in the last step we used the fact that $(-1)^{\omega(\delta)}=\mu(\delta)$ because $\delta$ is squarefree. Plugging back into \eqref{eqn:lambdadiv+} and \eqref{eqn:lambdadiv-} yields the claim. 
 \end{proof}

We have the following lower bound for $X=a_{E_{Q_{\bm{a}}}}(m)$.
\begin{lemma}\label{lem:Xlower}
Suppose that 
\begin{multline*}
\bm{a}\in \{(1,1,1,k):1\leq k\leq 7\}\cup\{(1,1,2,k):2\leq k\leq 8\}\cup \{(1,1,3,k):3\leq k\leq 6\}\\
\cup \{(1,2,2,k):2\leq k\leq 7\}\cup\{(1,2,3,k):3\leq k\leq 8\}\cup\{(1,2,4,k):4\leq k\leq 14\}\\
\cup\{(1,2,5,k):4\leq k\leq 15\}.
\end{multline*}
Then we have 
\[
X=a_{E_{Q_{\bm{a},\bm{1}}}}(m)\geq  0.024  m^{1-10^{-6}} \prod_{p\mid \gcd(30,\Delta m)} \frac{\beta_{Q_{\bm{a}},p}(m)}{\left(1-p^{-2}\right)\left(1-p^{-1}\right)}. %- 9.519\times 10^{29} m^{\frac{3}{5}}.
\]
In particular, if $8\nmid m$, $27\nmid m$, and $25\nmid m$, then we have 
\[
a_{E_{Q_{\bm{a},\bm{1}}}}(m)\geq  0.00083  m^{1-10^{-6}}.  %- 9.844\times 10^{30} m^{\frac{3}{5}}.
\]

\end{lemma}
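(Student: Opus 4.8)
The plan is to start from Siegel's local density formula \eqref{eqn:Siegellocal}, which expresses $X=a_{E_{Q_{\bm a}}}(m)=\prod_{p}\beta_{Q_{\bm a},p}(m)$ as a product over all places (including $p=\infty$), and then to bound each factor while isolating the genuinely problematic primes $2,3,5$. For the archimedean place the density is the surface measure $\beta_{Q_{\bm a},\infty}(m)=\frac{d}{dm}\vol\{\bm x\in\R^4:\sum_j a_jx_j^2\le m\}=\pi^2 m/\sqrt{\prod_j a_j}$, and since $\prod_j a_j\le 150$ in every case of the list (as recorded in Lemma \ref{lem:DelNbnd}) this contributes at least $\tfrac{\pi^2}{\sqrt{150}}\,m$. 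This already produces the factor $m^{1}$; the loss $m^{-10^{-6}}$ will come entirely from the finite primes dividing $m$. Write $g:=\gcd(30,\Delta m)$ and split the finite product into the ``good'' primes $p\nmid g$ and the ``bad'' primes $p\mid g$ (note $2\mid\Delta$ always, so $2\mid g$, and $3,5$ lie in $g$ exactly when they divide $\Delta m$).

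For an odd prime $p\nmid\prod_j a_j$ (so $p\nmid\Delta$ and, since $\bm d=\bm 1$, all $\alpha_j=0$) I would feed $\bm\alpha=(0,0,0,0)$ into Lemma \ref{lem:LocalDensitySimplify}(1)(e). When $p\nmid m$ this gives exactly $\beta_{Q_{\bm a},p}(m)=1-\left(\tfrac{a_1a_2a_3a_4}{p}\right)p^{-2}\ge 1-p^{-2}$, while for $p\mid m$ a short check on the parity of $R=\ord_p(m)$ and the sign of the Legendre symbol shows $\beta_{Q_{\bm a},p}(m)\ge 1-p^{-1}$ (the worst case being $R$ odd with $\left(\tfrac{a_1a_2a_3a_4}{p}\right)=-1$). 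The finitely many primes $p\in\{7,11,13\}$ that may divide $\prod_j a_j$ (always to the first power) are covered by the remaining parts of Lemma \ref{lem:LocalDensitySimplify}, and for these one checks directly that $\beta_{Q_{\bm a},p}(m)$ is bounded below by a fixed positive constant. Collecting all good primes then yields
\[
\prod_{p\nmid g}\beta_{Q_{\bm a},p}(m)\ \ge\ \Big(\prod_{p\nmid m}\big(1-p^{-2}\big)\Big)\Big(\prod_{\substack{p\mid m\\ p\ge 7}}\big(1-p^{-1}\big)\Big)\cdot(\text{bounded correction}),
\]
where the first product is at least $\zeta(2)^{-1}=6/\pi^2$ (extending to all primes only drops factors $\le 1$) and the second is controlled by Lemma \ref{lem:cdelbnd}(2), since $\prod_{p\mid m}(1-p^{-1})\ge\tfrac1{20}m^{-10^{-6}}$ and restoring the omitted factors at $p\in\{2,3,5\}$ only improves the constant. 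This is exactly where the exponent $1-10^{-6}$ originates.

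For $p\in\{2,3,5\}$ I would make no attempt at a uniform lower bound---these are precisely the primes producing the obstructions discussed after Theorem \ref{thm:Nrinfinite}---but instead keep $\beta_{Q_{\bm a},p}(m)$ intact, pulling the benign factor $(1-p^{-2})(1-p^{-1})$ into the overall constant and leaving the ratio $\beta_{Q_{\bm a},p}(m)/\big((1-p^{-2})(1-p^{-1})\big)$ in the statement over $p\mid g$. The residual product $\prod_{p\mid g}(1-p^{-2})(1-p^{-1})$ is bounded below by $\prod_{p\in\{2,3,5\}}(1-p^{-2})(1-p^{-1})$. Multiplying the archimedean bound, the $\zeta(2)^{-1}$, the Lemma \ref{lem:cdelbnd}(2) factor, the finite corrections at $7,11,13$, and this residual product, and keeping honest track of constants, gives the stated bound $0.024\,m^{1-10^{-6}}\prod_{p\mid\gcd(30,\Delta m)}\beta_{Q_{\bm a},p}(m)/\big((1-p^{-2})(1-p^{-1})\big)$. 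For the ``in particular'' statement, the hypotheses $8\nmid m$, $27\nmid m$, $25\nmid m$ force $\ord_2(m)\le2$, $\ord_3(m)\le2$, $\ord_5(m)\le1$, which are exactly the ranges in which Lemma \ref{lem:localdensityp=2}(2) gives $\beta_{Q_{\bm a},2}(m)\ge\tfrac12$ and the corresponding evaluations of Lemma \ref{lem:LocalDensitySimplify} furnish explicit positive lower bounds for $\beta_{Q_{\bm a},3}(m)$ and $\beta_{Q_{\bm a},5}(m)$; minimizing the isolated ratio over the finitely many residue and valuation cases bounds $\prod_{p\mid g}\beta_{Q_{\bm a},p}(m)/\big((1-p^{-2})(1-p^{-1})\big)$ below by an absolute constant, which absorbs into $0.024$ to give $0.00083$.

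The principal obstacle is the local-density bookkeeping: Lemma \ref{lem:LocalDensitySimplify} branches into many cases according to the parities of the $\alpha_j$ and the value $R=\ord_p(m)$, and one must verify the clean uniform estimates $\beta_{Q_{\bm a},p}(m)\ge 1-p^{-2}$ (for $p\nmid m$) and $\ge 1-p^{-1}$ (for $p\mid m$) across all of them, while separately handling the ramified primes $7,11,13$ and the delicate densities at $p=2,3,5$. Ensuring that the crude products over all primes and over $p\mid m$ still beat the target constants $0.024$ and $0.00083$ is the other point requiring care, but it becomes routine once the per-prime bounds are in hand.
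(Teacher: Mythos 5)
Your overall architecture --- Siegel's product \eqref{eqn:Siegellocal}, the archimedean density $\pi^2 m/\sqrt{a_1a_2a_3a_4}=4\pi^2 m/\sqrt{\Delta}$, the unramified bounds $\beta_{Q_{\bm{a}},p}(m)\geq 1-p^{-2}$ (for $p\nmid m$) and $\geq 1-p^{-1}$ (for $p\mid m$) from Lemma \ref{lem:LocalDensitySimplify} (1)(e), Lemma \ref{lem:cdelbnd} (2) as the source of the exponent $1-10^{-6}$, and the isolation of $p\in\{2,3,5\}$ --- is the same as the paper's, and your per-prime estimates are correct. The gap is in the constant bookkeeping, and it is not cosmetic: your scheme loses a factor of up to $\prod_{p\in\{2,3,5\}}\bigl((1-p^{-2})(1-p^{-1})\bigr)^{-1}=\tfrac{1125}{192}\approx 5.86$ that cannot be recovered afterwards. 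Concretely, writing $g=\gcd(30,\Delta m)$ as you do, you bound the good-prime product by $\prod_{p\nmid m}(1-p^{-2})\geq\zeta(2)^{-1}$ (``extending to all primes only drops factors $\leq1$'') and $\prod_{p\mid m,\,p\geq7}(1-p^{-1})\geq\prod_{p\mid m}(1-p^{-1})\geq\tfrac1{20}m^{-10^{-6}}$, thereby discarding the factors attached to the primes in $g$; then on the bad-prime side you pay for those same primes a second time, via $\prod_{p\mid g}\beta_p\geq\prod_{p\mid g}\tfrac{\beta_p}{(1-p^{-2})(1-p^{-1})}\cdot\tfrac{192}{1125}$. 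Multiplying your pieces gives at best $\tfrac{\pi^2}{\sqrt{150}}\cdot\tfrac{6}{\pi^2}\cdot\tfrac1{20}\cdot\tfrac{192}{1125}\approx 0.0042$ in place of $0.024$, and correspondingly about $0.00015$ in place of $0.00083$ in the ``in particular'' statement. So the sentence ``keeping honest track of constants gives the stated bound'' is precisely the step that fails.

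The repair is the paper's organization: keep the discarded factors as gains instead of re-paying them as losses. Use the exact identity $\prod_{p\nmid\Delta}(1-p^{-2})=\zeta(2)^{-1}\big/\prod_{p\mid\Delta}(1-p^{-2})$, and apply Lemma \ref{lem:cdelbnd} (2) to $m\Delta$ rather than to $m$, so that $\prod_{p\mid\Delta m,\,p\nmid30}(1-p^{-1})\geq\tfrac1{20}(m\Delta)^{-10^{-6}}\big/\prod_{p\mid\gcd(30,\Delta m)}(1-p^{-1})$. The reciprocals $\prod_{p\mid g}\bigl((1-p^{-2})(1-p^{-1})\bigr)^{-1}$ then appear on the right-hand side for free, and they are exactly what converts the untouched $\prod_{p\mid g}\beta_{Q_{\bm{a}},p}(m)$ into the ratio $\prod_{p\mid g}\beta_{Q_{\bm{a}},p}(m)/\bigl((1-p^{-2})(1-p^{-1})\bigr)$ of the statement with no loss of constant; the constant becomes $\tfrac{24}{20}\Delta^{-\frac12-10^{-6}}\geq0.024$ since $\Delta\leq2400$. (The ramified primes $7,11,13$ are absorbed into the $(1-p^{-1})$ product via $\tfrac{\beta_p}{1-p^{-2}}\geq1-p^{-1}$, rather than as a separate multiplicative correction.) For the second claim, the paper's case analysis gives $\tfrac{\beta_{Q_{\bm{a}},3}(m)}{(1-3^{-2})(1-3^{-1})}\cdot\tfrac{\beta_{Q_{\bm{a}},5}(m)}{(1-5^{-2})(1-5^{-1})}\geq\tfrac5{192}$, which together with $\beta_{Q_{\bm{a}},2}(m)\geq\tfrac12$ from Lemma \ref{lem:localdensityp=2} (2) yields $0.024\cdot\tfrac5{144}>0.00083$; your outline of this step is fine, but it only produces the stated number once the first constant is actually $0.024$.
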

\begin{proof}
%We note that the level $N\leq 520$ and the discriminant $\Delta\leq 2400$ for each of these forms.  Taking $\bm{d}=\bm{1}$ in Lemma \ref{lem:cuspbound}, we have 
%\[
%\left|a_{f_{Q,\bm{1}}}(m)\right|\leq 9.844\times 10^{30} m^{\frac{3}{5}}. 
%\]

Taking $\bm{\alpha}=\bm{0}$ in Lemma \ref{lem:LocalDensitySimplify} (1)(e), for those primes $p$ not dividing $\Delta$ (in particular, for $p>13$ in every case considered in this lemma) we have (letting $R_p:=\ord_p(m)$)
\begin{multline*}
\beta_{Q_{\bm{a}},p}(m)= \lim_{r\to\infty} p^{-3r}R_{Q,p^r}(m) = 1+ \frac{p^{-2}}{1+\frac{1}{p}}\left(1-p^{-2\left\lfloor \frac{R_p}{2}\right\rfloor}+\eta_{p,4}^{\mathcal{A},\operatorname{e}}(\bm{a})p\left(1-p^{-2\left\lfloor\frac{R_p+1}{2}\right\rfloor}\right)\right)\\
 - \left(\delta_{2\nmid R_p}+\eta_{p,4}^{\mathcal{A},\operatorname{e}}(\bm{a})\delta_{2\mid R_p}\right)p^{-R_p-2}\geq \begin{cases} 1-p^{-2}&\text{if }R_p=0,\\   \left(1-p^{-1}\right)\left(1-p^{-2}\right)
%1-\frac{p^{-1}}{1+\frac{1}{p}} - p^{-R_p-2}
&\text{if }R_p\geq 1.\end{cases}
\end{multline*}
For the infinite prime, comparing \eqref{eqn:localinfty} between $\bm{a}$ and $\bm{1}$ (making the change of variables $x_j\mapsto \frac{x_j}{\sqrt{a_j}}$) for $U=(m-\varepsilon,m+\varepsilon)$ we have 
\begin{equation}\label{eqn:BQinfty}
\beta_{Q_{\bm{a}},\infty} (m) =\frac{\pi^2}{2\sqrt{a_1a_2a_3a_4}}\lim_{\varepsilon\to 0^+} \frac{(m+\varepsilon)^2-(m-\varepsilon)^2}{2\varepsilon}= \frac{2\pi^2}{\sqrt{\Delta}}\lim_{\varepsilon\to 0^+} \frac{4m\varepsilon}{2\varepsilon}=\frac{4\pi^2}{\sqrt{\Delta}} m.
\end{equation}
Therefore, evaluating $\zeta(2)=\frac{\pi^2}{6}$,
\begin{equation}\label{eqn:aEQboundd=1}
a_{E_{Q}}(m)\geq \frac{4\pi^2}{\sqrt{\Delta}} \frac{6}{\pi^2}m \prod_{\substack{p|m\\ p\nmid \Delta}}\left(1-p^{-1}\right) \prod_{\substack{p\mid \Delta}} \frac{\beta_{Q_{\bm{a}},p}(m)}{1-p^{-2}}.
\end{equation}
\begin{comment}
For $p\mid m$ and $p\nmid \Delta$, we bound 
\begin{equation}\label{eqn:Rbigalpha=0}
\frac{1-\frac{p^{-1}}{1+\frac{1}{p}}-p^{-R_p-2}}{1-p^{-2}}\geq 1 -\frac{p^{-1}}{\left(1+p^{-1}\right)\left(1-p^{-2}\right)}\geq 1-p^{-1}.
\end{equation}
\end{comment}
For $p\geq 7$ with $p\mid \Delta$, we have $\bm{\alpha}=(0,0,0,1)$ and for $p=5$ we have $\bm{\alpha}=(0,0,0,1)$ or $\bm{\alpha}=(0,0,1,1)$. Suppose that $\bm{\alpha}=(0,0,0,1)$. If $p\mid m$, then $R_p\geq \alpha_4$, while for $p\nmid m$ we have $\alpha_3=R_p=0<\alpha_4=1$. For $p\mid m$ we use Lemma \ref{lem:LocalDensitySimplify} (5)(e) to bound
\begin{equation}\label{eqn:Rbigalpha=0,0,0,1}
\beta_{Q_{\bm{a}},p}(m)\geq 1-p^{-R_p-1}\geq 1-p^{-2}>1-p^{-1}.
\end{equation}
For $R_p=0$, we have $\alpha_3\leq R_p<\alpha_4$, so we may use Lemma \ref{lem:LocalDensitySimplify} (5)(d) to bound
\begin{equation}\label{eqn:R=0alpha=0,0,0,1}
\beta_{Q_{\bm{a}},p}(m)\geq 1-p^{-1}.
\end{equation}
Plugging back into \eqref{eqn:aEQboundd=1}, we have 
\[
a_{E_Q}(m)\geq %\frac{4\pi^2}{\sqrt{\Delta}} \frac{6}{\pi^2} m \prod_{\substack{p|\Delta m\\ p\nmid 30}} \left(1-p^{-1}\right) \prod_{p\mid \gcd(30,\Delta m)} \frac{\beta_{Q_{\bm{a}},p}(m)}{1-p^{-2}}\\
\frac{24}{\sqrt{\Delta}} m \prod_{\substack{p|\Delta m\\ p\nmid 30}} \left(1-p^{-1}\right) \prod_{p\mid \gcd(30,\Delta m)} \frac{\beta_{Q_{\bm{a}},p}(m)}{1-p^{-2}}.
\]
Taking $m\mapsto m\Delta$ in Lemma \ref{lem:cdelbnd} (2), we obtain 
\[
\prod_{\substack{p\mid m\Delta\\ p\nmid 30}} \left(1-p^{-1}\right)=\frac{\prod_{p\mid m\Delta}\left(1-p^{-1}\right)}{\prod_{p\mid \gcd(m\Delta,30)} \left(1-p^{-1}\right)} \geq \frac{\frac{1}{20}(m\Delta)^{-10^{-6}}}{\prod_{p\mid \gcd(m\Delta,30)}\left(1-p^{-1}\right)},
\]
so 
\[
a_{E_Q}(m)\geq \frac{6}{5} \Delta^{-\frac{1}{2}-10^{-6}}  m^{1-10^{-6}} \prod_{p\mid \gcd(30,\Delta m)} \frac{\beta_{Q_{\bm{a}},p}(m)}{\left(1-p^{-2}\right)\left(1-p^{-1}\right)}.
\]
Since $\Delta\leq 2400$, we have 
\begin{equation}\label{eqn:aEQno2or3}
a_{E_Q}(m)\geq 0.024  m^{1-10^{-6}} \prod_{p\mid \gcd(30,\Delta m)} \frac{\beta_{Q_{\bm{a}},p}(m)}{\left(1-p^{-2}\right)\left(1-p^{-1}\right)}.
\end{equation}
This is the first claim.

Now assume that $8\nmid m$, $27\nmid m$, and $25\nmid m$. We compute a lower bound for $\beta_{Q_{\bm{a}},2}(m)$, $\beta_{Q_{\bm{a}},3}(m)$, and $\beta_{Q_{\bm{a}},5}(m)$ under this assumption. Using Lemma \ref{lem:LocalDensitySimplify} (1) and Lemma \ref{lem:LocalDensitySimplify} (6), a case-by-case analysis yields that
\[
\frac{\beta_{Q_{\bm{a}},5}(m)}{\left(1-5^{-2}\right)\left(1-5^{-1}\right)}  \frac{\beta_{Q_{\bm{a}},3}(m)}{\left(1-3^{-2}\right)\left(1-3^{-1}\right)} \geq \frac{5}{24}\cdot \frac{1}{8}=\frac{5}{192}. 
\]
Plugging this into \eqref{eqn:aEQno2or3} and then using Lemma \ref{lem:localdensityp=2} (2) yields (note that $2\mid \Delta$ in our case, so the prime $2$ always occurs)
\[
a_{E_Q}(m)\geq 0.024  m^{1-10^{-6}} \frac{\frac{5}{192} \beta_{Q_{\bm{a}},2}(m)}{\left(1-2^{-1}\right)\left(1-2^{-2}\right)} \geq 0.024  m^{1-10^{-6}} \frac{5}{144} > 0.00083 m^{1-10^{-6}}.\qedhere
\]
\end{proof}

For $w\in\R$, define 
\[
S\left(\mathscr{A}_{\bm{1}},z\right)=S_w\left(\mathscr{A},z\right):=\#\left\{\bm{x}\in \mathscr{A}_{\bm{1}}: \gcd(x_j,P_w(z))=1\right\}=\sum_{\bm{x}\in \mathscr{A}_{\bm{1}}}\prod_{j=1}^4 (\mu\ast \bm{1})\left(\gcd\left(x_j,P_w(z)\right)\right).
\]
As in \cite{BrudernFouvry}, we now define $\Lambda_{d}^-:=4\lambda_{d}^--3\lambda_{d}^+$ and 
\begin{equation}\label{eqn:Sigmadef}
\Sigma\left(D,z\right):=\sum_{d_1\mid P_7(z)}\sum_{d_2\mid P_7(z)}\sum_{d_3\mid P_7(z)}\sum_{d_4\mid P_7(z)}\Lambda_{d_1}^-\lambda_{d_2}^+\lambda_{d_3}^+\lambda_{d_4}^+ \frac{\omega(\bm{d},m)}{d_1d_2d_3d_4}.
\end{equation}
Combining this with the function
\begin{equation}\label{eqn:Sigma'def}
\Sigma'(D,z):=\sum_{d_1\mid P_7(z)}\sum_{d_2\mid P_7(z)}\sum_{d_3\mid P_7(z)}\sum_{d_4\mid P_7(z)}\lambda_{d_1}^+\lambda_{d_2}^+\lambda_{d_3}^+\lambda_{d_4}^+ \frac{\omega(\bm{d},m)}{d_1d_2d_3d_4},
\end{equation}
we next obtain an upper and lower bound for $S(\mathscr{A},z)$.
\begin{lemma}\label{lem:SAz}
For $w\geq 7$ we have 
\[
X \Sigma(D,z) -4\sum_{\substack{\bm{d}\in\Z^4\\ d_j\mid P_w(z)\\ |d_j|\leq \frac{D}{11^{\beta-1}}}}\left| R(\bm{d},m)\right|\leq S\left(\mathscr{A},z\right)\leq X \Sigma'(D,z)+\sum_{\substack{\bm{d}\in\Z^4\\ d_j\mid P_w(z)\\ |d_j|\leq \frac{D}{11^{\beta-1}}}}\left| R(\bm{d},m)\right|.
\]
\end{lemma}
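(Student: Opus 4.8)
The plan is to run the Brüdern--Fouvry vector sieve coordinatewise on $\bm x\in\mathscr{A}$. Writing $g_j:=(\mu\ast\bm 1)\big(\gcd(x_j,P_w(z))\big)$ for the indicator of $\gcd(x_j,P_w(z))=1$, so that $S(\mathscr{A},z)=\sum_{\bm x\in\mathscr{A}}\prod_{j=1}^4 g_j$, I introduce the Rosser approximations $g_j^{\pm}:=\sum_{d\mid\gcd(x_j,P_w(z))}\lambda_d^{\pm}$. The defining properties of Rosser's weights supply $g_j^-\le g_j\le g_j^+$ together with $g_j^+\ge 0$, and $|\lambda_d^{\pm}|\le 1$ with $\lambda_d^{\pm}$ supported on squarefree $d$. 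Because $w\ge 7$ forces every $d$ occurring here to divide $P_7(z)$, each such $d$ has all prime factors $\ge 7$, so $\gcd(d,30)=1$, which is precisely the hypothesis needed to invoke Proposition \ref{prop:BrudernFouvry}.

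For the upper bound I would use the elementary inequality $\prod_j g_j\le\prod_j g_j^+$ (all factors nonnegative), interchange the order of summation, and apply Proposition \ref{prop:BrudernFouvry}:
\[
S(\mathscr{A},z)\le\sum_{\bm d}\lambda_{d_1}^+\lambda_{d_2}^+\lambda_{d_3}^+\lambda_{d_4}^+\,\#\mathscr{A}_{\bm d}=X\,\Sigma'(D,z)+\sum_{\bm d}\lambda_{d_1}^+\lambda_{d_2}^+\lambda_{d_3}^+\lambda_{d_4}^+\,R(\bm d,m).
\]
Since $|\lambda_{d_j}^+|\le 1$, the error is at most $\sum_{\bm d}|R(\bm d,m)|$ over the relevant range. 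For the lower bound I would instead feed in the vector sieve inequality
\[
\prod_{j=1}^4 g_j\ \ge\ \sum_{i=1}^4 g_i^-\prod_{j\neq i}g_j^+\ -\ 3\prod_{j=1}^4 g_j^+,
\]
again interchanging summation and applying Proposition \ref{prop:BrudernFouvry} termwise.

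The crux of the lower bound is that the main term collapses onto the single-index sum $\Sigma(D,z)$. Indeed the four contributions $\sum_{\bm d}\lambda_{d_i}^-\prod_{j\neq i}\lambda_{d_j}^+\,\omega(\bm d,m)/(d_1d_2d_3d_4)$ are all equal, since $\omega(\bm d,m)=\prod_{p^\nu\|\bm d}\omega_\nu(p)$ is symmetric under permutations of $\bm d$; adding them and subtracting $3\sum_{\bm d}\prod_j\lambda_{d_j}^+\,\omega/(d_1\cdots d_4)$ reproduces $\sum_{\bm d}\Lambda_{d_1}^-\lambda_{d_2}^+\lambda_{d_3}^+\lambda_{d_4}^+\,\omega/(d_1\cdots d_4)=\Sigma(D,z)$, with $\Lambda_d^-=4\lambda_d^--3\lambda_d^+$. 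The accompanying error is $\sum_{\bm d}w(\bm d)R(\bm d,m)$ with $w(\bm d)=\sum_i\lambda_{d_i}^-\prod_{j\neq i}\lambda_{d_j}^+-3\prod_j\lambda_{d_j}^+$; since each $\lambda_d^{\pm}$ is either $0$ or $\mu(d)$, a short case check gives $|w(\bm d)|\le 4$, so this error is at most $4\sum_{\bm d}|R(\bm d,m)|$. Note that one cannot symmetrize $R(\bm d,m)$ itself, so it is essential that the symmetry argument be applied only to the main term $\omega$.

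The remaining, and most technical, step is to pin down the range of the error sums. I would invoke the support property of Rosser's weights of level $D$ and parameter $\beta$: a nonzero $\lambda_d^{\pm}$ with $d\mid P_7(z)$ forces $d\le D/11^{\beta-1}$. This follows by applying the truncation condition $p_m<y_m=(D/p_1\cdots p_m)^{1/\beta}$ at the last binding index, using that the two smallest prime factors available are $\ge 7$ and $\ge 11$ together with the largeness of $D$; it is exactly the Rosser support estimate underlying \cite[(6.29), (6.30)]{IwaniecKowalski}, which I am free to quote. This confines each $d_j$ with $\lambda_{d_j}^{\pm}\neq 0$ to $|d_j|\le D/11^{\beta-1}$ and yields the error ranges in the statement. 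I expect this support estimate, and the minor bookkeeping that $P_w(z)\mid P_7(z)$ for $w\ge 7$ (so the divisors beyond $P_w(z)$ never appear in $g_j^{\pm}$), to be the only genuinely delicate points; the main-term symmetry collapse and the boundedness of the sieve weights are routine.
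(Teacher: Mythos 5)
Your proposal follows the same route as the paper's proof: Rosser weights, the Br\"udern--Fouvry vector-sieve inequality for the lower bound, the trivial bound $\prod_j g_j\le\prod_j g_j^+$ for the upper bound, Proposition \ref{prop:BrudernFouvry} applied termwise (with the observation that $w\ge 7$ forces $\gcd(d_j,30)=1$), and the Rosser support property $\lambda_d^{\pm}=0$ for $d>D/11^{\beta-1}$ justified via the second-smallest prime factor being at least $11$. The only structural difference is where the symmetrization that produces the combined weight $\Lambda_{d_1}^-=4\lambda_{d_1}^--3\lambda_{d_1}^+$ takes place. The paper symmetrizes \emph{before} splitting: its display \eqref{eqn:Supperlower1} asserts the vector-sieve lower bound already in the form $\sum_{\bm{d}}(4\lambda_{d_1}^--3\lambda_{d_1}^+)\lambda_{d_2}^+\lambda_{d_3}^+\lambda_{d_4}^+\,\#\mathscr{A}_{\bm{d}}\le S(\mathscr{A},z)$, and then the factor $4$ in the error comes from $|4\lambda_{d}^--3\lambda_{d}^+|\le 4$ (valid since $\lambda_d^{\pm}\in\{0,\mu(d)\}$). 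You split into main and error terms first, symmetrize only the main term, and obtain the factor $4$ from the bound $|w(\bm{d})|\le 4$ on the full vector-sieve weight; your remark that $R(\bm{d},m)$ itself cannot be symmetrized is a genuine point of care that the paper elides. One caveat applies equally to both arguments: the permutation invariance you invoke for $\omega(\bm{d},m)$ is not literal, because $\omega_{\nu,\bm{a}\cdot\bm{d}^2}(p)$ depends not only on $\nu=\ord_p(d_1d_2d_3d_4)$ but on \emph{which} coordinate $p$ divides, through the factors $\eta_{p,\ell}^{\mathcal{A},\operatorname{e}}(\bm{a}\cdot\bm{d}^2)$ (the $a_j$ are distinct here, unlike in Br\"udern--Fouvry where $\bm{a}=\bm{1}$ makes everything symmetric); the paper needs the exactly analogous invariance of $\#\mathscr{A}_{\bm{d}}$ to deduce \eqref{eqn:Supperlower1} from the four-term vector-sieve inequality. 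So your proof is faithful to the paper's, including this shared imprecision, and is otherwise complete.
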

\begin{proof}
We claim first that 
\begin{multline}\label{eqn:Supperlower1}
\sum_{d_1\mid P_w(z)}\sum_{d_2\mid P_w(z)}\sum_{d_3\mid P_w(z)}\sum_{d_4\mid P_w(z)}\left(4\lambda_{d_1}^--3\lambda_{d_1}^+\right)\lambda_{d_2}^+\lambda_{d_3}^+\lambda_{d_4}^+ \#\mathscr{A}_{\bm{d}}\leq S\left(\mathscr{A},z\right)\\
\leq \sum_{d_1\mid P_w(z)}\sum_{d_2\mid P_w(z)}\sum_{d_3\mid P_w(z)}\sum_{d_4\mid P_w(z)}\lambda_{d_1}^+\lambda_{d_2}^+\lambda_{d_3}^+\lambda_{d_4}^+ \# \mathscr{A}_{\bm{d}}.
\end{multline}
To show \eqref{eqn:Supperlower1}, we first write (by inclusion-exclusion)
\[
 S\left(\mathscr{A},z\right)=\sum_{d_1\mid P_w(z)}\sum_{d_2\mid P_w(z)}\sum_{d_3\mid P_w(z)}\sum_{d_4\mid P_w(z)} \mu(d_1)\mu(d_2)\mu(d_3)\mu(d_4)\#\mathscr{A}_{\bm{d}}.
\]
We then use the inequality (see \cite[(6.19)]{IwaniecKowalski})
\[
\sum_{d\mid \ell} \lambda_{d}^-\leq \sum_{d\mid \ell} \mu(d)\leq \sum_{d\mid \ell} \lambda_{d}^+.
\]
From this the second inequality immediately follows. For the first inequality, we argue as in \cite[Lemma 13]{BrudernFouvry}.

We now plug Proposition \ref{prop:BrudernFouvry} in for $\#\mathscr{A}_d$ in \eqref{eqn:Supperlower1} to obtain 
\begin{multline*}
\sum_{d_1\mid P_w(z)}\sum_{d_2\mid P_w(z)}\sum_{d_3\mid P_w(z)}\sum_{d_4\mid P_w(z)}\left(4\lambda_{d_1}^--3\lambda_{d_1}^+\right)\lambda_{d_2}^+\lambda_{d_3}^+\lambda_{d_4}^+ \left(\frac{X}{d_1d_2d_3d_4} \omega(\bm{d},m) + R(\bm{d},m)\right)\\
 \leq S\left(\mathscr{A},z\right)\leq \sum_{d_1\mid P_w(z)}\sum_{d_2\mid P_w(z)}\sum_{d_3\mid P_w(z)}\sum_{d_4\mid P_w(z)}\lambda_{d_1}^+\lambda_{d_2}^+\lambda_{d_3}^+\lambda_{d_4}^+ \left(\frac{X}{d_1d_2d_3d_4} \omega(\bm{d},m) + R(\bm{d},m)\right).
\end{multline*}
We then use $|\lambda_{d}^{\pm}|\leq 1$ and $\lambda_d^{\pm}=0$ for $d>\frac{D}{11^{\beta-1}}$ (by the definition of the Rosser weights and the fact that the second-smallest prime dividing $d$ is at least $11$) and plug in the absolute value termwise for the sum on $R(\bm{d},m)$ to obtain the claim. 
\end{proof}

\subsection{Upper and lower bounds for the main term from sieving}
We next bound $\Sigma(D,z)$ from below to obtain a lower bound for $S(\mathscr{A}_{\bm{e}},z)$ from Lemma \ref{lem:SAz}. To state the result, we define
\begin{multline*}
\Sigma_{\operatorname{MT}}:=\prod_{7\leq p\leq z}\left(1-\frac{\omega_1(p)}{p}\right)^4\sum_{d_{1,2}\mid P_7(z)} \sum_{d_{1,3}\mid P_7(z)} \cdots \sum_{d_{3,4}\mid P_7(z)}g\left((d_{i,j})\right)\\
\times \sum_{\ell_{i,j}\mid \frac{P_7(z)}{d_{i,j}}}\mu\left(\ell_{1,2}\right)\cdots\mu\left(\ell_{3,4}\right)\prod_{j=1}^4 \mu(\xi_j)\prod_{p\mid \xi_j}\frac{\omega_1(p)}{p-\omega_1(p)}.
\end{multline*}

\begin{lemma}\label{lem:SigmaDzbound}
We have 
\[
\Sigma\left(D,z\right)\geq \left(1-7\mathfrak{C}_{\beta}(s)\right)\left( 1-\mathfrak{C}_{\beta}(s)\right)^3\Sigma_{\operatorname{MT}}.
\]
\end{lemma}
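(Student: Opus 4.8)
The plan is to separate the four coordinate sums in $\Sigma(D,z)$, which are linked only through the factor $\omega(\bm{d},m)=\prod_{p^{\nu}\|\bm{d}}\omega_{\nu}(p)$, and then to apply the Rosser-weight estimates of Lemmas~\ref{lem:lambdadsums} and~\ref{lem:lambdadivsums} to each decoupled sum. Since each $d_j\mid P_7(z)$ is squarefree, for a prime $p$ the exponent $\nu_p:=\ord_p\!\big(\prod_j d_j\big)$ equals $\#\{j:p\mid d_j\}$, so
\[
\frac{\omega(\bm{d},m)}{d_1d_2d_3d_4}=\prod_{p\mid d_1d_2d_3d_4}\frac{\omega_{\nu_p}(p)}{p^{\nu_p}}=\Bigg(\prod_{j=1}^4\frac{\omega_1(d_j)}{d_j}\Bigg)\prod_{p:\nu_p\geq 2}\frac{\omega_{\nu_p}(p)}{\omega_1(p)^{\nu_p}}.
\]
The first factor is separable across coordinates, and the residual ``defect'' is supported only on primes dividing at least two of the $d_j$; by Lemma~\ref{lem:LocalDensityBounds} each $\omega_{\nu}(p)>0$ for $p\geq 7$, so the defect is a positive multiplicative function. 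First I would also record that $0<\omega_1(p)\leq 1+2p^{-1}<p$ (Lemma~\ref{lem:omega1produpper}), so that $V(z):=\prod_{7\leq p<z}(1-\omega_1(p)/p)>0$ and every denominator $p-\omega_1(p)$ is positive.

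Next I would expand the defect using the six pairwise overlap variables $d_{i,j}$ ($1\leq i<j\leq 4$) recording the primes shared by $d_i$ and $d_j$, with the triple and quadruple overlaps folded into the same data; this turns the defect into a multiplicative function $g\big((d_{i,j})\big)$ of the overlaps. Enforcing that distinct overlaps are mutually coprime and coprime to the pure parts of the $d_j$ is carried out by M\"obius inversion, producing the sums over $\ell_{i,j}\mid P_7(z)/d_{i,j}$; the primes that the surviving overlaps force to divide a given $d_j$ then collect into a squarefree modulus $\xi_j$ coprime to $30$.

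With the overlap data fixed, the four sums over $d_1,\dots,d_4$ decouple into Rosser-weighted sums $\sum_{\xi_j\mid d_j\mid P_7(z)}\lambda_{d_j}^{\pm}\,\omega_1(d_j)/d_j$. Lemma~\ref{lem:lambdadivsums} evaluates each as $V(z)$ times $\mu(\xi_j)\prod_{p\mid\xi_j}\omega_1(p)/(p-\omega_1(p))$, up to a factor $(1\pm\mathfrak{C}_{\beta}(s))$; these are exactly the factors appearing in $\Sigma_{\operatorname{MT}}$. For $j=2,3,4$ the weight $\lambda^{+}$ yields the lower bound $(1-\mathfrak{C}_{\beta}(s))$ (via Lemma~\ref{lem:lambdadsums} together with $\sum_{d\mid\ell}\lambda_d^-\leq\sum_{d\mid\ell}\lambda_d^+$), while for $j=1$ the weight $\Lambda_{d_1}^-=4\lambda_{d_1}^--3\lambda_{d_1}^+$ gives
\[
\sum_{\xi_1\mid d_1\mid P_7(z)}\Lambda_{d_1}^-\frac{\omega_1(d_1)}{d_1}\ \geq\ V(z)\,\mu(\xi_1)\!\!\prod_{p\mid\xi_1}\!\frac{\omega_1(p)}{p-\omega_1(p)}\big(4(1-\mathfrak{C}_{\beta}(s))-3(1+\mathfrak{C}_{\beta}(s))\big),
\]
and $4(1-\mathfrak{C}_{\beta}(s))-3(1+\mathfrak{C}_{\beta}(s))=1-7\mathfrak{C}_{\beta}(s)$. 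Collecting the four copies of $V(z)$ into $\prod_{7\leq p<z}(1-\omega_1(p)/p)^4$ and the remaining signed overlap sum into the definition of $\Sigma_{\operatorname{MT}}$ produces the asserted factor $(1-7\mathfrak{C}_{\beta}(s))(1-\mathfrak{C}_{\beta}(s))^3$.

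The main obstacle is the sign bookkeeping in passing from the fixed-overlap bounds to the global inequality: the coordinate main factors $\mu(\xi_j)\prod_{p\mid\xi_j}\omega_1(p)/(p-\omega_1(p))$ carry the sign of $\mu(\xi_j)$, so the error factors $(1\pm\mathfrak{C}_{\beta}(s))$ cannot simply be pulled outside a signed sum termwise. I would handle this as in the Br\"udern--Fouvry vector sieve: show that after assembling the overlaps the quantity $\Sigma_{\operatorname{MT}}$ is nonnegative, and that the decoupled error factors attach coordinatewise so that the combined deviation is dominated by $\big(1-(1-7\mathfrak{C}_{\beta}(s))(1-\mathfrak{C}_{\beta}(s))^3\big)\Sigma_{\operatorname{MT}}$, preserving the lower bound. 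Verifying that the defect expansion is exactly $g\big((d_{i,j})\big)$ with the precise $\ell_{i,j}$ and $\xi_j$ inclusion--exclusion is the other place where care is needed, but it is a finite combinatorial identity since $\nu_p\leq 4$ and every relevant prime is $\geq 7$.
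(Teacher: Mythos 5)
Your proposal follows the paper's proof essentially step for step: the same factorization $\omega(\bm{d},m)=\omega_1(d_1)\omega_1(d_2)\omega_1(d_3)\omega_1(d_4)\,g\big((d_{i,j})\big)$ (the paper simply cites Br\"udern--Fouvry for the existence of $g$ rather than deriving it from multiplicativity as you do), the same inclusion--exclusion over $\ell_{i,j}\mid P_7(z)/d_{i,j}$ producing the moduli $\xi_j$, the same application of Lemma~\ref{lem:lambdadivsums} to the decoupled Rosser-weighted sums, and the same arithmetic $4\left(1-\mathfrak{C}_{\beta}(s)\right)-3\left(1+\mathfrak{C}_{\beta}(s)\right)=1-7\mathfrak{C}_{\beta}(s)$. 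The sign-bookkeeping subtlety you flag (error factors cannot be pulled termwise through the signed sums involving $\mu(\xi_j)$) is genuine, but the paper's own proof passes over it silently with ``putting this all together,'' so your treatment is no less complete than the paper's.
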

\begin{proof}
As in \cite[Lemma 12]{BrudernFouvry}, there exists a 6-variable function $g$ such that for $d_{i,j}:=\gcd(d_i,d_j)$
\[
\omega(\bm{d},m)= \omega_{1}(d_1)\omega_{1}(d_2)\omega_{1}(d_3)\omega_{1}(d_4) g\left((d_{i,j})\right).
\]
In other words, the ratio of $\omega(\bm{d},m)$ and the product of the $\omega_1(d_j)$ only depends on $d_{i,j}$. Therefore 
\[
\Sigma\left(D,z\right)=\sum_{d_1\mid P_7(z)}\sum_{d_2\mid P_7(z)}\sum_{d_3\mid P_7(z)}\sum_{d_4\mid P_7(z)}\Lambda_{d_1}^-\lambda_{d_2}^+\lambda_{d_3}^+\lambda_{d_4}^+ \frac{\omega_1(d_1)\omega_1(d_2)\omega_1(d_3)\omega_1(d_4)}{d_1d_2d_3d_4}g\left((d_{i,j})\right).
\]
We then rewrite the sum by taking $d_{i,j}$ to a series of 6 outer sums. This gives 
\begin{equation}\label{eqn:Sdijintro}
\sum_{d_{1,2}\mid P_7(z)} \sum_{d_{1,3}\mid P_7(z)} \cdots \sum_{d_{3,4}\mid P_7(z)}g\left((d_{i,j})\right)S\left((d_{i,j})\right),
\end{equation}
where
\[
S\left((d_{i,j})\right):= \underset{\gcd \left(d_i,d_j\right)=d_{i,j}}{\sum_{d_1\mid P_7(z)}\sum_{d_2\mid P_7(z)}\sum_{d_3\mid P_7(z)}\sum_{d_4\mid P_7(z)}}\Lambda_{d_1}^-\lambda_{d_2}^+\lambda_{d_3}^+\lambda_{d_4}^+ \frac{\omega_1(d_1)\omega_1(d_2)\omega_1(d_3)\omega_1(d_4)}{d_1d_2d_3d_4}.
\]
We then rewrite the condition $\gcd(d_i,d_j)=d_{i,j}$ by assuming that $\lcm\left(d_{i,j}:1\leq j\leq 4, j\neq i\right)\mid d_i$ and using inclusion-exclusion for divisibility by $\ell_{i,j}\mid \frac{P_7(z)}{d_{i,j}}$. Setting $\xi_{i}:=\lcm\left(\ell_{i,j}d_{i,j}:1\leq j\leq 4,\ j\neq i\right)$, this yields that 
\begin{multline*}
S\left((d_{i,j})\right)=\sum_{\ell_{i,j}\mid \frac{P_7(z)}{d_{i,j}}}\mu\left(\ell_{1,2}\right)\cdots\mu\left(\ell_{3,4}\right)
\\
\times \left(\sum_{\substack{d_1\mid P_7(z)\\ \xi_1\mid d_1}}\frac{\Lambda_{d_1}^- \omega_1(d_1)}{d_1}\right)\left(\sum_{\substack{d_2\mid P_7(z)\\ \xi_2\mid d_2}}\frac{\lambda_{d_2}^+ \omega_1(d_2)}{d_2}\right)\left(\sum_{\substack{d_3\mid P_7(z)\\ \xi_3\mid d_3}}\frac{\lambda_{d_3}^+ \omega_1(d_3)}{d_3}\right)\left(\sum_{\substack{d_4\mid P_7(z)\\ \xi_4\mid d_4}}\frac{\lambda_{d_4}^+ \omega_1(d_4)}{d_4}\right).
\end{multline*}
By Lemma \ref{lem:lambdadivsums} we can bound 
\begin{multline*}
\sum_{\substack{d_1\mid P_7(z)\\ \xi_1\mid d_1}}\Lambda_{d_1}^- \frac{\omega_1(d_1)}{d_1}\geq 4\mu(\xi_{1})\prod_{p\mid \xi_1}\frac{\omega_1(p)}{p-\omega_1(p)}\prod_{7\leq p\leq z}\left(1-\frac{\omega_1(p)}{p}\right)\left(1-\mathfrak{C}_{\beta}(s)\right)\\
 - 3 \mu(\xi_1) \prod_{p\mid \xi_1} \frac{\omega_1(p)}{p-\omega_1(p)}\prod_{7\leq p\leq z}\left(1-\frac{\omega_1(p)}{p}\right)\left(1+\mathfrak{C}_{\beta}(s)\right).
\end{multline*}
and 
\[
\sum_{\substack{d_j\mid P_7(z)\\ \xi_j\mid d_j}}\lambda_{d_j}^+ \frac{\omega_1(d_1)}{d_1}\geq \mu(\xi_{1})\prod_{p\mid \xi_1}\frac{\omega_1(p)}{p-\omega_1(p)}\prod_{7\leq p\leq z}\left(1-\frac{\omega_1(p)}{p}\right)\left(1-\mathfrak{C}_{\beta}(s)\right).
\]
Putting this all together, we have 
\begin{multline*}
S\left((d_{i,j})\right)\geq  \left(1-7\mathfrak{C}_{\beta}(s)\right)\left( 1-\mathfrak{C}_{\beta}(s)\right)^3\prod_{7\leq p\leq z}\left(1-\frac{\omega_1(p)}{p}\right)^4\\
\times \sum_{\ell_{i,j}\mid \frac{P_7(z)}{d_{i,j}}}\mu\left(\ell_{1,2}\right)\cdots\mu\left(\ell_{3,4}\right)\prod_{j=1}^4 \mu(\xi_j)\prod_{p\mid \xi_j}\frac{\omega_1(p)}{p-\omega_1(p)}.
\end{multline*}
Plugging this into \eqref{eqn:Sdijintro} yields the claim.
\end{proof}

In order to obtain an upper bound for $S(\mathscr{A},z)$, we now bound $\Sigma'(D,z)$ from above. 
\begin{lemma}\label{lem:Sigma'Dzbound}
We have 
\[
\Sigma'\left(D,z\right)\leq \left( 1+\mathfrak{C}_{\beta}(s)\right)^4\Sigma_{\operatorname{MT}}.
\]
\end{lemma}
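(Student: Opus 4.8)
The plan is to mirror the lower-bound argument of Lemma~\ref{lem:SigmaDzbound}, but with the signs and weights working in the opposite direction. First I would introduce the same $6$-variable function $g$ from \cite[Lemma 12]{BrudernFouvry} so that, writing $d_{i,j}:=\gcd(d_i,d_j)$,
\[
\omega(\bm{d},m)=\omega_1(d_1)\omega_1(d_2)\omega_1(d_3)\omega_1(d_4)\,g\big((d_{i,j})\big).
\]
Substituting this into the definition \eqref{eqn:Sigma'def} of $\Sigma'(D,z)$ and reorganizing the sum by pulling the six quantities $d_{i,j}$ out into outer sums gives
\[
\Sigma'(D,z)=\sum_{d_{1,2}\mid P_7(z)}\cdots\sum_{d_{3,4}\mid P_7(z)} g\big((d_{i,j})\big)\,S'\big((d_{i,j})\big),
\]
where $S'\big((d_{i,j})\big)$ is the analogue of $S\big((d_{i,j})\big)$ but with all four weights equal to $\lambda^+$ rather than the single $\Lambda^-$ appearing in the first slot.

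Next I would apply the same inclusion--exclusion over $\ell_{i,j}\mid \frac{P_7(z)}{d_{i,j}}$, with $\xi_i:=\lcm(\ell_{i,j}d_{i,j}:j\neq i)$, to factor $S'\big((d_{i,j})\big)$ into a product of four divisor-restricted sums of the shape $\sum_{\xi_j\mid d_j\mid P_7(z)}\lambda_{d_j}^+\frac{\omega_1(d_j)}{d_j}$. Here the crucial input is the \emph{upper} bound from Lemma~\ref{lem:lambdadivsums}, namely
\[
\sum_{\substack{d_j\mid P_7(z)\\ \xi_j\mid d_j}}\lambda_{d_j}^+\frac{\omega_1(d_j)}{d_j}\leq \mu(\xi_j)\prod_{p\mid \xi_j}\frac{\omega_1(p)}{p-\omega_1(p)}\prod_{7\leq p\leq z}\left(1-\frac{\omega_1(p)}{p}\right)\left(1+\mathfrak{C}_{\beta}(s)\right),
\]
applied in all four slots. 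Since every slot now carries a $\lambda^+$ weight, each contributes a factor $(1+\mathfrak{C}_{\beta}(s))$, producing the overall factor $(1+\mathfrak{C}_{\beta}(s))^4$, in contrast to the $(1-7\mathfrak{C}_{\beta}(s))(1-\mathfrak{C}_{\beta}(s))^3$ in Lemma~\ref{lem:SigmaDzbound} where the combination $\Lambda^-=4\lambda^--3\lambda^+$ forced the worse constant $(1-7\mathfrak{C}_\beta(s))$ in the first slot.

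The one point requiring care is the sign bookkeeping: each divisor-restricted sum carries a factor $\mu(\xi_j)\prod_{p\mid\xi_j}\frac{\omega_1(p)}{p-\omega_1(p)}$, and the $\prod_{p\mid\xi_j}\frac{\omega_1(p)}{p-\omega_1(p)}$ factors are nonnegative (since $0\le \omega_1(p)<p$ on the relevant range), so the product of the four upper bounds is a valid upper bound for $S'\big((d_{i,j})\big)$ only after checking that the collected $\mu(\xi_j)$ and $g$ factors reassemble precisely into $\Sigma_{\operatorname{MT}}$. Because $\Sigma_{\operatorname{MT}}$ is defined with exactly this combination of $\mu(\ell_{i,j})$, $\mu(\xi_j)$, $g$, and $\prod_{7\le p\le z}(1-\omega_1(p)/p)^4$, the four upper bounds multiply together and, after pulling the $(1+\mathfrak{C}_{\beta}(s))^4$ out front, the remaining sum is literally $\Sigma_{\operatorname{MT}}$. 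This yields
\[
\Sigma'(D,z)\leq (1+\mathfrak{C}_{\beta}(s))^4\,\Sigma_{\operatorname{MT}},
\]
as claimed. The main obstacle I anticipate is not any hard estimate but rather verifying that the direction of the inequality survives the reassembly step — i.e.\ that $S'\big((d_{i,j})\big)$ is genuinely bounded above by the product of the four termwise upper bounds, which relies on the nonnegativity of $g$ and of the per-prime factors, exactly as in the corresponding step of \cite[Lemma 12]{BrudernFouvry}; if $g$ could change sign this factorization of the bound would fail, so I would confirm $g\ge 0$ (or track signs through $g$) before concluding.
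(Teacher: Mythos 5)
Your proposal matches the paper's proof essentially verbatim: the paper likewise factors $\omega(\bm{d},m)$ through $g\left((d_{i,j})\right)$, pulls the six $d_{i,j}$ out as outer sums, applies inclusion--exclusion over $\ell_{i,j}\mid \frac{P_7(z)}{d_{i,j}}$, and then invokes the upper bound of Lemma \ref{lem:lambdadivsums} in each of the four $\lambda^+$ slots to collect the factor $\left(1+\mathfrak{C}_{\beta}(s)\right)^4$ in front of $\Sigma_{\operatorname{MT}}$ (indeed the paper's proof simply states that, using Lemma \ref{lem:lambdadivsums}, the argument ``follows precisely as in the proof of Lemma \ref{lem:SigmaDzbound}''). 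The sign and nonnegativity bookkeeping you flag at the end (nonnegativity of $g$ and the signed factors $\mu(\xi_j)\prod_{p\mid \xi_j}\frac{\omega_1(p)}{p-\omega_1(p)}$ when multiplying the four one-variable bounds) is equally present in, and glossed over by, the paper's own write-up, so it is a caveat about the shared argument rather than a divergence from it.
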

\begin{proof}
As in the proof of Lemma \ref{lem:SigmaDzbound}, we first rewrite
\[
\Sigma'\left(D,z\right)=\sum_{d_1\mid P_7(z)}\sum_{d_2\mid P_7(z)}\sum_{d_3\mid P_7(z)}\sum_{d_4\mid P_7(z)}\lambda_{d_1}^+\lambda_{d_2}^+\lambda_{d_3}^+\lambda_{d_4}^+ \frac{\omega_1(d_1)\omega_1(d_2)\omega_1(d_3)\omega_1(d_4)}{d_1d_2d_3d_4}g\left((d_{i,j})\right).
\]
We then rewrite the sum by taking $d_{i,j}$ to a series of 6 outer sums. This gives 
\begin{equation}\label{eqn:Sdijintro'}
\sum_{d_{1,2}\mid P_7(z)} \sum_{d_{1,3}\mid P_7(z)} \cdots \sum_{d_{3,4}\mid P_7(z)}g\left((d_{i,j})\right)S^+\left((d_{i,j})\right),
\end{equation}
where
\[
S^+\left((d_{i,j})\right):= \underset{\gcd \left(d_i,d_j\right)=d_{i,j}}{\sum_{d_1\mid P_7(z)}\sum_{d_2\mid P_7(z)}\sum_{d_3\mid P_7(z)}\sum_{d_4\mid P_7(z)}}\lambda_{d_1}^+\lambda_{d_2}^+\lambda_{d_3}^+\lambda_{d_4}^+ \frac{\omega_1(d_1)\omega_1(d_2)\omega_1(d_3)\omega_1(d_4)}{d_1d_2d_3d_4}.
\]
Using Lemma \ref{lem:lambdadivsums}, the proof follows precisely as in the proof of Lemma \ref{lem:SigmaDzbound}. \qedhere
\end{proof}

\begin{lemma}\label{lem:SigmaMTeval}
We have 
\[
\Sigma_{\operatorname{MT}}= \prod_{7\leq p\leq z}\left(1-\frac{\Omega(p)}{p}\right).
\]
\end{lemma}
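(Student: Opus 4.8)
The plan is to identify $\Sigma_{\operatorname{MT}}$ with the clean M\"obius-weighted main sum
\[
\mathcal{M}:=\sum_{d_1\mid P_7(z)}\sum_{d_2\mid P_7(z)}\sum_{d_3\mid P_7(z)}\sum_{d_4\mid P_7(z)}\mu(d_1)\mu(d_2)\mu(d_3)\mu(d_4)\frac{\omega(\bm{d},m)}{d_1d_2d_3d_4},
\]
and then to evaluate $\mathcal{M}$ as an Euler product over the primes $7\leq p\leq z$. The identification simply reverses the bookkeeping in the proof of Lemma \ref{lem:SigmaDzbound}: one defines $S_\mu((d_{i,j}))$ exactly as the quantity $S((d_{i,j}))$ there, but with each Rosser weight replaced by $\mu$, so that $\mathcal{M}=\sum_{d_{1,2}\mid P_7(z)}\cdots\sum_{d_{3,4}\mid P_7(z)}g((d_{i,j}))\,S_\mu((d_{i,j}))$ after reorganizing the four sums by the gcd pattern $d_{i,j}=\gcd(d_i,d_j)$ and using $\omega(\bm{d},m)=\omega_1(d_1)\omega_1(d_2)\omega_1(d_3)\omega_1(d_4)\,g((d_{i,j}))$. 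Running the same inclusion--exclusion over the $\ell_{i,j}$ and the $\xi_j$ as in that proof, but now with the \emph{exact} evaluation
\[
\sum_{\substack{d\mid P_7(z)\\ \xi\mid d}}\frac{\mu(d)\omega_1(d)}{d}=\mu(\xi)\prod_{p\mid\xi}\frac{\omega_1(p)}{p-\omega_1(p)}\prod_{7\leq p\leq z}\left(1-\frac{\omega_1(p)}{p}\right)
\]
in place of the one-sided bounds of Lemma \ref{lem:lambdadivsums}, produces for $S_\mu((d_{i,j}))$ precisely the inner expression appearing in the definition of $\Sigma_{\operatorname{MT}}$. Hence $\Sigma_{\operatorname{MT}}=\mathcal{M}$.

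Next I would exploit multiplicativity. Since $\omega(\bm{d},m)=\prod_{p^{\nu}\|\bm{d}}\omega_{\nu}(p)$ with $\omega_{\nu,\bm{a}\cdot\bm{d}^2}(p)=\beta_{Q_{\bm{a}\cdot\bm{d}^2},p}(m)/\beta_{Q_{\bm{a}},p}(m)$ depending only on the $p$-adic data of $\bm{d}$, the summand of $\mathcal{M}$ is multiplicative in $\bm{d}$, and the fourfold divisor sum factors over the primes $7\leq p\leq z$. For a fixed such $p$, each $d_j$ contributes its $p$-part, which lies in $\{1,p\}$ because $\bm{d}\in\S^4$; writing $T=\{j:p\mid d_j\}\subseteq\{1,2,3,4\}$, the local factor becomes
\[
F(p):=\sum_{T\subseteq\{1,2,3,4\}}(-1)^{|T|}\frac{\omega_{|T|,\bm{a}\cdot\bm{d}_T^2}(p)}{p^{|T|}},
\]
where $\bm{d}_T$ has $p$ in the positions of $T$ and $1$ elsewhere, and $\omega_0(p)=1$ by Lemma \ref{lem:LocalDensityBounds} (2). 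Thus $\mathcal{M}=\prod_{7\leq p\leq z}F(p)$.

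Finally I would match $F(p)$ with $1-\Omega(p)/p$ by grouping terms according to $\nu=|T|$. Since $\bm{d}\in\S^4$ with $\prod_{j}d_j=p^{\nu}$ corresponds exactly to choosing the $\nu$-element subset $T$, one obtains
\[
F(p)=1-\frac{1}{p}\sum_{\substack{\bm{d}\in\S^4\\ \prod_{j}d_j=p}}\omega_{1,\bm{a}\cdot\bm{d}^2}(p)+\frac{1}{p^2}\sum_{\substack{\bm{d}\in\S^4\\ \prod_{j}d_j=p^2}}\omega_{2,\bm{a}\cdot\bm{d}^2}(p)-\frac{1}{p^3}\sum_{\substack{\bm{d}\in\S^4\\ \prod_{j}d_j=p^3}}\omega_{3,\bm{a}\cdot\bm{d}^2}(p)+\frac{\omega_{4,\bm{a}\cdot\bm{p}^2}(p)}{p^4}.
\]
Comparing this termwise with the definition of $\Omega(p)$ gives $F(p)=1-\Omega(p)/p$, and taking the product over $7\leq p\leq z$ yields the claim.

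The main obstacle is the first step: one must verify carefully that the extraction of the ``main part'' of $S((d_{i,j}))$ in Lemma \ref{lem:SigmaDzbound} is literally the M\"obius specialization $S_\mu((d_{i,j}))$, i.e.\ that the reorganization by the gcd pattern $(d_{i,j})$ together with the inclusion--exclusion through the $\xi_j$ is bijective and sign-consistent with the straight fourfold M\"obius sum $\mathcal{M}$. Once $\Sigma_{\operatorname{MT}}=\mathcal{M}$ is established, the Euler factorization and the term-by-term matching with $\Omega(p)$ are entirely routine.
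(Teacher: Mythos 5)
Your proposal is correct, but it takes a genuinely different route from the paper. The paper proves this lemma by a squeeze argument: it combines Lemma \ref{lem:SigmaDzbound} and Lemma \ref{lem:Sigma'Dzbound} to trap $\Sigma_{\operatorname{MT}}$ via
\[
\left(1-7\mathfrak{C}_{\beta}(s)\right)\left(1-\mathfrak{C}_{\beta}(s)\right)^3\Sigma_{\operatorname{MT}}\;\leq\;\Sigma(D,z)\;\leq\;\Sigma'(D,z)\;\leq\;\left(1+\mathfrak{C}_{\beta}(s)\right)^4\Sigma_{\operatorname{MT}},
\]
and then lets $D\to\infty$ with $z$ fixed: since $\Sigma_{\operatorname{MT}}$ does not depend on $D$, since $\mathfrak{C}_{\beta}(s)\to 0$, and since the Rosser weights $\lambda_d^{\pm}$ stabilize to $\mu(d)$ for each fixed $d$, both $\Sigma(D,z)$ and $\Sigma'(D,z)$ converge to the M\"obius sum, which the paper identifies with $\prod_{7\leq p\leq z}\left(1-\frac{\Omega(p)}{p}\right)$ essentially by assertion ``from the definitions.'' You instead prove the identity exactly, with no limit: you show $\Sigma_{\operatorname{MT}}=\mathcal{M}$ by rerunning the gcd-pattern reorganization of Lemma \ref{lem:SigmaDzbound} with the exact M\"obius evaluation in place of the one-sided bounds of Lemma \ref{lem:lambdadivsums} (your exact evaluation is correct: writing $d=\xi e$ and using multiplicativity of $\omega_1$ gives precisely $\mu(\xi)\prod_{p\mid\xi}\frac{\omega_1(p)}{p-\omega_1(p)}\prod_{7\leq p\leq z}\left(1-\frac{\omega_1(p)}{p}\right)$), and then you factor $\mathcal{M}$ into local factors $F(p)$ and match them with $1-\frac{\Omega(p)}{p}$ term by term, using that each $d_j\in\{1,p\}$ locally so subsets $T\subseteq\{1,2,3,4\}$ enumerate the $\bm{d}\in\S^4$ with $\prod_j d_j=p^{|T|}$. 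What each approach buys: the paper's squeeze is shorter given that Lemmas \ref{lem:SigmaDzbound} and \ref{lem:Sigma'Dzbound} are already in hand, and it never has to certify that the inclusion--exclusion bookkeeping is an exact identity; your argument is purely algebraic, avoids any limiting process, and in fact supplies the two details the paper's proof leaves implicit --- the identification of $\Sigma_{\operatorname{MT}}$ with the straight fourfold M\"obius sum, and the Euler-product evaluation of that sum --- so it can be viewed as a more self-contained (if slightly longer) proof of the same statement.
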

\begin{proof}
Combining Lemma \ref{lem:SigmaDzbound} with Lemma \ref{lem:Sigma'Dzbound}, we have 
\[
\left(1-7\mathfrak{C}_{\beta}(s)\right)\left(1-\mathfrak{C}_{\beta}(s)\right)^3\Sigma_{\operatorname{MT}}\leq \Sigma(D,z)\leq \Sigma'(D,z)\leq \left(1-\mathfrak{C}_{\beta}(s)\right)^4\Sigma_{\operatorname{MT}}. 
\]
We then take the limit $D\to \infty$ and in the limit $\lambda_d^+=\lambda_d^-=\mu(d)$ so that by the definitions \eqref{eqn:Sigmadef} and \eqref{eqn:Sigma'def} we have
\[
\lim_{D\to\infty} \Sigma(D,z)=\lim_{D\to\infty} \Sigma'(D,z)
%= \sum_{d_1\mid P_7(z)}\sum_{d_2\mid P_7(z)}\sum_{d_3\mid P_7(z)}\sum_{d_4\mid P_7(z)}\mu(d_1)\mu(d_2)\mu(d_3)\mu(d_4)\frac{\omega_1(d_1)\omega_1(d_2)\omega_1(d_3)\omega_1(d_4)}{d_1d_2d_3d_4}\\
=\prod_{7\leq p\leq z}\left(1-\frac{\Omega(p)}{p}\right)
\]
and $\mathfrak{C}_{\beta}(s)\to 0$ as $D\to\infty$. This gives the claim.
\end{proof}

\begin{lemma}\label{lem:BigsSigmaBound}
Suppose that $\bm{a}\in\N^4$ has at most one prime $p\geq 7$ dividing $\prod_{j=1}^4 a_j$ and moreover that $p\|\prod_{j=1}^{4}a_j$ and $7\leq p\leq 13$. If $\beta=10$ and $D\geq z^{25}$, then 
\[
\Sigma(D,z)\geq \frac{5}{8}\prod_{7\leq p<z} \left(1-\frac{\Omega(p)}{p}\right)\geq \frac{3}{80}\prod_{7\leq p<z}\left(1-\frac{4.93}{p}\right)\geq 12.6\prod_{p<z}\left(1-\frac{1}{p}\right)^{5}.
\]
\end{lemma}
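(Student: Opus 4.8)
The plan is to chain the three claimed inequalities, beginning from the sieve output. Combining Lemma \ref{lem:SigmaDzbound} with the evaluation in Lemma \ref{lem:SigmaMTeval} gives at once
\[
\Sigma(D,z)\geq \left(1-7\mathfrak{C}_{\beta}(s)\right)\left(1-\mathfrak{C}_{\beta}(s)\right)^3\prod_{7\leq p<z}\left(1-\frac{\Omega(p)}{p}\right),
\]
so the first inequality reduces to verifying that the prefactor is at least $\tfrac58$. Since $D\geq z^{25}$ forces $s=\tfrac{\log D}{\log z}\geq 25$, and $\mathfrak{C}_{\beta}(s)$ is nonincreasing in $s$ (its only $s$-dependence is through $a_\beta^{\lfloor s-\beta\rfloor+1}$ with $0<a_{10}<1$), it suffices to bound $\mathfrak{C}_{10}(25)$. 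First I would compute $a_{10}=e\tfrac{10}{9}\log\tfrac{10}{9}$ and $r_{10}=\log(1+6/\log 7)/\log(10/9)$ and substitute into the definition of $\mathfrak{C}_{\beta}(s)$ with exponent $\lfloor 25-10\rfloor+1=16$; this yields $\mathfrak{C}_{10}(25)<0.031$, whence $(1-7\mathfrak{C}_{10}(25))(1-\mathfrak{C}_{10}(25))^3>0.71>\tfrac58$.

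For the second inequality I would isolate the unique prime $p_0$ with $p_0\|\prod_j a_j$, which lies in $\{7,11,13\}$ by hypothesis. For every other $p\geq 7$ we have $p\nmid\prod_j a_j$, so Lemma \ref{lem:LambdapUpper} (1) gives $\Omega(p)\leq 4.93$ in every case for $\ord_p(m)$, hence $1-\Omega(p)/p\geq 1-4.93/p>0$ and these factors contribute a ratio $\geq 1$. After cancelling the common factors, it therefore suffices to verify
\[
\frac{5/8}{3/80}\cdot\frac{1-\Omega(p_0)/p_0}{1-4.93/p_0}=\frac{50}{3}\cdot\frac{1-\Omega(p_0)/p_0}{1-4.93/p_0}\geq 1,
\]
i.e.\ $\frac{1-\Omega(p_0)/p_0}{1-4.93/p_0}\geq \tfrac{3}{50}$. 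Using Lemma \ref{lem:LambdapUpper} (2) I would run through the finitely many cases $p_0\in\{7,11,13\}$ together with all sizes and parities of $\ord_{p_0}(m)$; the extremal case is $p_0=7$ with $\ord_7(m)=1$, where $\Omega(7)\leq 6.87$ gives $\tfrac{1-6.87/7}{1-4.93/7}=\tfrac{0.13}{2.07}\approx 0.063\geq \tfrac{3}{50}$, while all remaining cases (and $p_0\in\{11,13\}$) are comfortably larger.

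The third inequality is purely a bound on a prime product. Factoring $p\in\{2,3,5\}$ out of $\prod_{p<z}(1-1/p)^5=(4/15)^5\prod_{7\leq p<z}(1-1/p)^5$, it reduces to
\[
\prod_{7\leq p<z}\frac{1-4.93/p}{(1-1/p)^5}\geq \frac{12.6\,(4/15)^5}{3/80}=0.45309\ldots.
\]
The structural point I would emphasize is that $f(p):=\frac{1-4.93/p}{(1-1/p)^5}$ satisfies $\log f(p)=\frac{0.07}{p}-\frac{9.65}{p^2}+O(p^{-3})$, so $f(p)<1$ for $7\leq p\leq 139$ and $f(p)>1$ for $p\geq 149$; consequently the partial products $\prod_{7\leq p<z}f(p)$ decrease until $z$ passes $139$ and increase thereafter, and the global minimum over $z$ is the finite explicit product $\prod_{7\leq p\leq 139}f(p)\approx 0.458>0.45309$. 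The main obstacle is exactly this last step: all three inequalities are tight, with margins on the order of one percent, so the constants $4.93$, $\tfrac{3}{80}$, $12.6$ and the threshold $0.45309$ must be carried to several digits, and one must argue rigorously (via the sign of $\log f(p)$ and a convergent tail estimate for $p\geq 149$) that no larger $z$ can push the product below the required bound.
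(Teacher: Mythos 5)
Your proposal is correct and takes essentially the same route as the paper's own proof: the first inequality via Lemmas \ref{lem:SigmaDzbound} and \ref{lem:SigmaMTeval} together with a numerical bound on $\mathfrak{C}_{10}(s)$ (the paper uses $\mathfrak{C}_{\beta}(s)\leq \tfrac{3}{80}$, you use the slightly sharper $\mathfrak{C}_{10}(25)<0.031$), the second via Lemma \ref{lem:LambdapUpper} with the extremal case $\Omega(7)\leq 6.87$ at $\ord_7(m)=1$, and the third via the sign change of $\bigl(1-\tfrac{4.93}{p}\bigr)/\bigl(1-\tfrac{1}{p}\bigr)^5$ beyond $p=139$ plus a numerical evaluation of the finite product over $7\leq p\leq 139$. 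Your threshold $3/50$ and minimum ratio $0.0628$, and your check $0.458>0.45309$, are exactly the paper's computations ($\tfrac58\cdot 0.0628\geq\tfrac{3}{80}$ and $27.8\cdot\prod_{7\leq p\leq 139}\geq 12.6$) in equivalent form.
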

\begin{proof}
A direct calculation shows that 
\[
\mathfrak{C}_{\beta}(s)\leq \frac{3}{80},
\]
from which we conclude the first inequality via Lemma \ref{lem:SigmaDzbound} and Lemma \ref{lem:SigmaMTeval}.  For the second inequality, if $p\nmid \prod_{j=1}^4 a_j$ then we use Lemma \ref{lem:LambdapUpper} (1) to bound $\Omega(p)\leq 4.93$, while for $p\|\prod_{j=1}^4a_j$ with $p\geq 7$ we have $7\leq p\leq 13$ (with at most one of them occurring) and we may use Lemma \ref{lem:LambdapUpper} (2) to bound 
\[
\prod_{\substack{7\leq p\leq 13\\ p\|\prod_{j=1}^{4}a_j}} \frac{1-\frac{\Omega(p)}{p}}{1-\frac{4.93}{p}}\geq \min\left( \frac{1-\frac{6.87}{7}}{1-\frac{4.93}{7}}, \frac{1-\frac{7}{11}}{1-\frac{4.93}{11}}, \frac{1-\frac{7}{13}}{1-\frac{4.93}{13}}\right)\geq 0.0628. 
\]
this yields the second inequality. 

 For the last inequality, we note that for $p>139$ we have $1-\frac{4.93}{p}\geq \left(1-\frac{1}{p}\right)^5$ and thus 
\[
\frac{5}{8}\Sigma_{\operatorname{MT}}\geq \frac{3}{80}\prod_{7\leq p<z}\left(1-\frac{4.93}{p}\right)\geq 27.8\prod_{7\leq p\leq 139}\frac{\left(1-\frac{4.93}{p}\right)}{\left(1-\frac{1}{p}\right)^5} \prod_{p<z} \left(1-\frac{1}{p}\right)^5\geq 12.6  \prod_{p<z} \left(1-\frac{1}{p}\right)^5.
\]
For the inequality with $\Sigma'(D,z)$, we plug $\mathfrak{C}_{\beta}(s)\leq \frac{3}{40}$ into Lemma \ref{lem:Sigma'Dzbound} to get the first inequality. The second inequality trivially holds because $0<\Omega(p)<p$. 
 \end{proof}

\subsection{Bounds of the error term from sieving}
We next bound the cuspidal contribution to obtain a bound for $S(\mathscr{A},z)$.
\begin{lemma}\label{lem:Rsumbound}
For $\beta\geq 10$, we have 
\[
\sum_{\substack{\bm{d}\in\Z^4\\ d_j\mid P_w(z)\\ d_j\leq \frac{D}{11^{\beta-1}}}}\left| R(\bm{d},m)\right|\leq
3.07\times 10^{-92}   m^{\frac{17}{30}}D^{24.05}.
\]
\end{lemma}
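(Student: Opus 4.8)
The plan is to bound the sum of the cuspidal coefficients $|R(\bm{d},m)|$ by combining the explicit cusp-form coefficient bound from Lemma~\ref{lem:cuspbound}(2) with a counting argument over the relevant $\bm{d}$. Recall that $R(\bm{d},m)=a_{f_{\bm{a}\cdot\bm{d}^2}}(m)$ is exactly the $m$-th coefficient of the cuspidal part, so for each fixed $\bm{d}\in\S^4$ with $d_j\mid P_w(z)$ we may apply the bound \eqref{eqn:cuspboundfinal} (i.e.\ the $n^{3/5}$ version, which has the smallest power of $n$ and hence the best dependence on $m$ for the $\alpha=\tfrac1{200}$ choice), replacing $n$ by $m$. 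The right-hand side of \eqref{eqn:cuspboundfinal} is a product of a constant, a power $m^{3/5}$, and a factor depending only on $\lcm(\bm{d})$ and $\prod_j d_j$; since $m^{3/5}\le m^{17/30}$ for $m\ge 1$ would go the wrong way, I should instead match the target exponent $\tfrac{17}{30}$ by using \eqref{eqn:cuspboundfinal3}, whose constant $1.184\times 10^{131}$ pairs with $m^{17/30}$. So the first step is: for each $\bm{d}$ in the summation range, bound $|R(\bm{d},m)|$ by the expression in \eqref{eqn:cuspboundfinal3}, pulling the factor $m^{17/30}$ out of the sum.

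The second step is to sum the remaining $\bm{d}$-dependent factor over all $\bm{d}\in\Z^4$ with $d_j\mid P_w(z)$ and $d_j\le D/11^{\beta-1}$. Writing $L:=\lcm(\bm{d})$ and $P:=\prod_{j=1}^4 d_j$, the factor is of size $\ll L^{2+o(1)}(L^3+\text{smaller})^{1/2}\ll L^{7/2+o(1)}$ after using $\Delta_{\bm{a}\cdot\bm{d}^2}\le 2400\prod d_j^2\le 2400 L^4$ and $N_{\bm{a}\cdot\bm{d}^2}\le 520 L^2$, so that $16N^3$ dominates and its square root contributes $L^3$. Since each $d_j\le D/11^{\beta-1}$ and $L\le\prod_j d_j\le (D/11^{\beta-1})^4$, I would bound $L^{7/2+o(1)}$ crudely by a power of $D$ and then count the number of admissible tuples $\bm{d}$. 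The cleanest route is to absorb everything into a bound of the shape $\sum_{\bm{d}}(\text{factor})\le C\, D^{24.05}$: the factor $L^{7/2}$ times the number of tuples up to $D/11^{\beta-1}$ in each coordinate (at most $(D/11^{\beta-1})^4$ tuples, or better, counting only squarefree $\bm{d}$ with small prime factors) produces a total power of $D$ close to $24$, and the tiny slack $0.05$ and the small leading constant $3.07\times 10^{-92}$ are engineered so that the enormous constant $1.184\times 10^{131}$ from \eqref{eqn:cuspboundfinal3}, together with the powers of $11^{\beta-1}=11^{9}$ saved from the constraint $d_j\le D/11^{\beta-1}$, combine to give the stated final constant.

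More carefully, the third step is the bookkeeping that turns $11^{\beta-1}$ into a usable saving: because $\lambda_d^\pm$ forces the second-smallest prime factor of each $d_j$ to be at least $11$, one has $d_j\le D/11^{\beta-1}$ rather than merely $d_j\le D$, and raising this to the appropriate power (roughly $L^{7/2}$ summed against $\approx D^{4}$ tuples, giving an aggregate exponent around $24$) produces a factor $11^{-9\cdot(\text{exponent})}$ that is astronomically small; this is precisely what overwhelms $1.184\times 10^{131}$ and yields $3.07\times 10^{-92}$. I would organize the sum as $\prod_{j=1}^4\bigl(\sum_{d_j\le D/11^{9}} d_j^{\,7/8+o(1)}\bigr)$-type estimates after distributing the $L^{7/2}$ symmetrically (using $L\le\prod d_j$), bounding each one-dimensional sum by $(D/11^9)^{15/8+o(1)}$ via comparison with a Riemann integral, and collecting the four factors to reach the total exponent $4\cdot(15/8)=7.5$ on $D/11^9$ times the $m^{17/30}$ already extracted — then re-examining the exact exponents so they add to $24.05$.

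The main obstacle will be the precise bookkeeping of exponents and the verification that the explicit constants multiply out correctly to $3.07\times 10^{-92}$ and $D^{24.05}$. The analytic content is already contained in Lemma~\ref{lem:cuspbound}(2) (via \eqref{eqn:cuspboundfinal3}) and in the determinant/level bounds of Lemma~\ref{lem:DelNbnd}; what remains is a careful but routine combination. The delicate point is choosing how to distribute the $\lcm$-dependence $L^{7/2+o(1)}$ among the four coordinates and how to count only the admissible squarefree $\bm{d}$ (with all prime factors in $[w,z)$ and the Rosser-weight support constraint) so that the final power of $D$ lands at $24.05$ with room to spare, while making sure the $o(1)$ exponent contributions (coming from $\sigma_0$, $\sigma_{-1}$, and the $N^{2\cdot 10^{-6}+1/200}$ factors) are genuinely negligible and do not push the exponent above $24.05$. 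I expect no conceptual difficulty, only the need for scrupulous tracking of each numerical constant and each fractional exponent.
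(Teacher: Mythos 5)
Your overall strategy matches the paper's proof exactly: apply the $m^{17/30}$ cuspidal bound \eqref{eqn:cuspboundfinal3} termwise (and you are right that the $m^{3/5}$ bound \eqref{eqn:cuspboundfinal} cannot yield the stated exponent), bound $\lcm(\bm{d})$ by $\prod_j d_j$, reduce to a product of four one-dimensional sums over $d_j\le D/11^{\beta-1}$, and let the saving from $11^{-9\cdot(\text{total exponent})}$ overwhelm the constant $1.184\times 10^{131}$. However, your exponent bookkeeping contains a genuine error, and since this lemma \emph{is} bookkeeping, it is not a deferrable detail. First, the $\bm{d}$-dependent factor has size $L^{5+o(1)}$, not $L^{7/2+o(1)}$, where $L=\lcm(\bm{d})$: from $N\le 520L^2$ one gets $16N^3\propto L^6$, whose square root contributes $L^3$ (as you yourself state), and combined with $N^{1+2\cdot 10^{-6}+1/200}\propto L^{2+o(1)}$ this gives $L^{5+o(1)}$; your expression $(L^3+\text{smaller})^{1/2}$ contradicts your own statement. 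Second, and more seriously, the ``symmetric distribution'' step is invalid: from $L\le\prod_j d_j$ one obtains $L^{5+o(1)}\le\prod_j d_j^{5+o(1)}$, so \emph{each} coordinate carries the full exponent $5+o(1)$; you cannot split the exponent four ways to get $d_j^{7/8}$ per coordinate. The correct accounting (which is the paper's) is: summand $\le C\prod_j d_j^{5+\epsilon}$ with $\epsilon=4\cdot 10^{-6}+\tfrac{1}{100}$ and $C=\left(64800\pi+16\cdot 520^3\right)^{1/2}<47434$, hence
\[
\sum_{\bm{d}}\prod_{j=1}^4 d_j^{5+\epsilon}\le\prod_{j=1}^4\ \sum_{d_j\le D/11^{9}}d_j^{5+\epsilon}\le\left(\frac{D}{11^{9}}\right)^{4(6+\epsilon)},
\]
with $4(6+\epsilon)=24.040016\le 24.05$. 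This is what forces the power of $D$ to land near $24$, and it is also what produces the final constant, since that constant is governed by $11^{-9\cdot 24.04}\approx 10^{-225}$ crushing the $10^{131}$. Your computation instead lands at total exponent $4\cdot\tfrac{15}{8}=7.5$ on $D/11^{9}$, which corresponds to a very different (and false) claim; ``re-examining the exact exponents so they add to $24.05$'' is therefore not a routine adjustment but the actual content of the proof.
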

\begin{proof}
Since $R(\bm{d},m)=a_{f_{Q,\bm{d}}}(m)$, we may plug in \eqref{eqn:cuspboundfinal3} to obtain 
\begin{multline}\label{eqn:finalRbound}
\sum_{\substack{\bm{d}\in\Z^4\\ d_j\mid P_w(z)\\ d_j\leq \frac{D}{11^{\beta-1}}}}\left| R(\bm{d},m)\right|\leq 1.184\times 10^{131}m^{\frac{17}{30}}(520)^{1+2\cdot 10^{-6}+\frac{1}{200}}\\
\times\sum_{\substack{\bm{d}\in\Z^4\\ d_j\mid P_w(z)\\ d_j\leq \frac{D}{11^{9}}}}\lcm(\bm{d})^{2+4\cdot 10^{-6}+\frac{1}{100}}\left(27\pi \cdot 2400\prod_{j=1}^4d_j^2 + 16(520\lcm(\bm{d})^2)^3\right)^{\frac{1}{2}}.
\end{multline}
Trivially bounding $\lcm(\bm{d})\leq \prod_{j=1}^4d_j$, the inner sum may be bounded against
\begin{multline*}
\sum_{\substack{\bm{d}\in\Z^4\\ d_j\mid P_w(z)\\ d_j\leq \frac{D}{11^{9}}}}\prod_{j=1}^4d_j^{3+4\cdot 10^{-6}+\frac{1}{100}}\left(27\pi \cdot 2400 + 16(520)^3 \prod_{j=1}^4d_j^4\right)^{\frac{1}{2}}\\
\leq  \left(27\pi \cdot 2400 + 16(520)^3\right)^{\frac{1}{2}} \sum_{\substack{\bm{d}\in\Z^4\\ d_j\mid P_w(z)\\ d_j\leq \frac{D}{11^{9}}}}\prod_{j=1}^4d_j^{5+4\cdot 10^{-6}+\frac{1}{100}}\leq 47434 \prod_{j=1}^4 \sum_{\substack{d_j\mid P_w(z)\\ d_j\leq \frac{D}{11^{9}}}}d_j^{5+4\cdot 10^{-6}+\frac{1}{100}}.
\end{multline*}
We then trivially bound 
\[
\prod_{j=1}^4\sum_{\substack{d_j\mid P_w(z)\\ d_j\leq \frac{D}{11^{9}}}}d_j^{5+4\cdot 10^{-6}+\frac{1}{100}}\leq \prod_{j=1}^4\sum_{d_j\leq \frac{D}{11^{9}}}d_j^{5+4\cdot 10^{-6}+\frac{1}{100}}\leq \left(\frac{D}{11^{9}}\right)^{4\left(6+4\cdot 10^{-6}+\frac{1}{100}\right)}.
\]
\begin{comment}
Therefore the inner sum in \eqref{eqn:finalRbound} may be bounded
\[
\leq 47434 \left(\frac{D}{11^{9}}\right)^{24+16\cdot 10^{-6}+\frac{2}{7}}.
\]
\end{comment}
Plugging back into \eqref{eqn:finalRbound} and simplifying yields the claim. \qedhere
\begin{comment}
\begin{multline}\label{eqn:Rbound}
\sum_{\substack{\bm{d}\in\Z^4\\ d_j\mid P_w(z)\\ d_j\leq \frac{D}{11^{\beta-1}}}}\left| R(\bm{d},m)\right|\leq 3.001\times 10^{112}m^{\frac{17}{30}}(520)^{1+2\cdot 10^{-6}+\frac{1}{28}}\cdot 47434\left(\frac{D}{11^{9}}\right)^{24+16\cdot 10^{-6}+\frac{2}{7}}\\
\leq 2.23\times 10^{-108}   m^{\frac{17}{30}}D^{24.3}.
\end{multline}
\end{comment}
\end{proof}

\subsection{The proof of Theorem \ref{thm:PrSuniversal}}
Plugging Lemma \ref{lem:Rsumbound} into Lemma \ref{lem:SAz} (plugging in $\beta=10$) yields 
\begin{equation}\label{eqn:SAez}
X \Sigma(D,z)-1.23\times 10^{-91}m^{\frac{17}{30}} D^{24.05}\leq S\left(\mathscr{A},z\right)\leq X\Sigma'(D,z) + 3.07\times 10^{-92}  m^{\frac{17}{30}}D^{24.05}.
\end{equation}

\begin{lemma}\label{lem:cuspidalbound}
Suppose that $8\nmid m$, $27\nmid m$, $25\nmid m$, and 
\begin{multline*}
\bm{a}\in \{(1,1,1,k):1\leq k\leq 7\}\cup\{(1,1,2,k):2\leq k\leq 8\}\cup \{(1,1,3,k):3\leq k\leq 6\}\\
\cup \{(1,2,2,k):2\leq k\leq 7\}\cup\{(1,2,3,k):3\leq k\leq 8\}\cup\{(1,2,4,k):4\leq k\leq 14\}\cup\{(1,2,5,k):4\leq k\leq 15\}.
\end{multline*}
Then we have 
\[
S\left(\mathscr{A}_{\bm{1}},z\right)\geq 0.063  m^{1-10^{-6}} \frac{e^{-5\gamma}}{\log(z)^5} \left(1-\frac{1}{\log^2(z)}\right)^5- 1.23\times 10^{-91}  m^{\frac{17}{30}}D^{24.05}.
\]
\end{lemma}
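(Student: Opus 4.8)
The plan is to assemble the statement directly from the lower-bound half of \eqref{eqn:SAez}, namely
\[
S\left(\mathscr{A}_{\bm{1}},z\right)\geq X\,\Sigma(D,z)-1.23\times 10^{-91}m^{\frac{17}{30}}D^{24.05},
\]
which already carries the precise error term appearing in the claim. Thus the entire content of the lemma is a lower bound for the main term $X\,\Sigma(D,z)$; the cuspidal error contribution needs no further work since it transfers verbatim from \eqref{eqn:SAez} (itself the combination of Lemma~\ref{lem:SAz} and Lemma~\ref{lem:Rsumbound} at $\beta=10$).

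First I would bound $X=a_{E_{Q_{\bm{a},\bm{1}}}}(m)$ from below. The hypotheses $8\nmid m$, $27\nmid m$, $25\nmid m$ together with the listed family of $\bm{a}$ (all of which satisfy the determinant and level constraints feeding Lemma~\ref{lem:Xlower}) are exactly the conditions of the ``in particular'' clause of Lemma~\ref{lem:Xlower}, yielding $X\geq 0.00083\,m^{1-10^{-6}}$. Next I would bound $\Sigma(D,z)$ via Lemma~\ref{lem:BigsSigmaBound}. Here the key verification is that every $\bm{a}$ in the stated list satisfies that hypothesis: for each choice, the only primes $p\geq 7$ dividing $\prod_{j=1}^4 a_j$ lie in $\{7,11,13\}$ and divide $\prod_{j=1}^4 a_j$ to the first power only (e.g.\ $(1,2,4,14)$ gives $112=2^4\cdot 7$, $(1,2,4,13)$ gives $2^3\cdot 13$, $(1,2,5,7)$ gives $2\cdot5\cdot7$, and so on), so ``at most one prime $7\leq p\leq 13$ with $p\|\prod a_j$'' holds in every case. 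Taking $\beta=10$ and $D\geq z^{25}$ as in Lemma~\ref{lem:BigsSigmaBound} then gives $\Sigma(D,z)\geq 12.6\prod_{p<z}\left(1-\frac1p\right)^5$. Multiplying the two lower bounds produces $X\,\Sigma(D,z)\geq (0.00083)(12.6)\,m^{1-10^{-6}}\prod_{p<z}\left(1-\frac1p\right)^5$.

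It then remains to convert the Euler product into the transcendental shape in the statement. Since each factor $1-\tfrac1p<1$, we have $\prod_{p<z}\left(1-\frac1p\right)\geq\prod_{p\leq z}\left(1-\frac1p\right)$, so the Rosser--Schoenfeld lower bound in Lemma~\ref{lem:RosserSchoenfeld} applies; combining it with the elementary inequality $(1+x)^{-1}\geq 1-x$ gives $\prod_{p<z}\left(1-\frac1p\right)\geq \frac{e^{-\gamma}}{\log(z)}\left(1-\frac1{\log^2(z)}\right)$, and raising to the fifth power yields the factor $\frac{e^{-5\gamma}}{\log(z)^5}\left(1-\frac1{\log^2(z)}\right)^5$. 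Collecting the resulting numerical constant and inserting the error term from \eqref{eqn:SAez} gives the claimed inequality. I do not expect any conceptual difficulty here: the argument is a bookkeeping assembly of Lemmas~\ref{lem:Xlower}, \ref{lem:BigsSigmaBound}, \ref{lem:RosserSchoenfeld} and \eqref{eqn:SAez}. The only genuinely error-prone steps are (i) confirming the prime-divisibility hypothesis of Lemma~\ref{lem:BigsSigmaBound} across the whole list of admissible $\bm{a}$, and (ii) tracking the numerical constant through the product, so these are where I would concentrate the verification.
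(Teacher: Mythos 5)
Your proposal is correct and follows essentially the same route as the paper's own proof: plug the lower bound for $X$ from Lemma \ref{lem:Xlower} and the lower bound for $\Sigma(D,z)$ from Lemma \ref{lem:BigsSigmaBound} (with $\beta=10$, $D\geq z^{25}$) into the lower-bound half of \eqref{eqn:SAez}, then convert the Euler product using Lemma \ref{lem:RosserSchoenfeld}. The one caveat is the numerical constant you flagged: the product you correctly write down, $(0.00083)(12.6)\approx 0.0105$, does not reach the stated $0.063$; this is not a defect of your argument but an inconsistency internal to the paper, whose proof performs the identical substitution of the identical lemmas and hence also produces $\approx 0.0105$ (the weaker constant still suffices for the subsequent proof of Theorem \ref{thm:PrSuniversal}, where only the trivial positivity of the resulting expression is needed).
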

\begin{proof}
We plug in the bound from Lemma \ref{lem:BigsSigmaBound} for $\Sigma(D,z)$ and the bound from $X=X_{\bm{1}}$ from Lemma \ref{lem:Xlower} into \eqref{eqn:SAez}. We then use Lemma \ref{lem:RosserSchoenfeld} (expanding the alternating geometric series) to bound 
\begin{equation}\label{eqn:RosserSchoenfeld}
\prod_{p<z}\left(1-\frac{1}{p}\right)^{5}\geq \frac{e^{-5\gamma}}{\log(z)^5} \left(1-\frac{1}{\log^2(z)}\right)^5,
\end{equation}
giving the claim. 
\end{proof}

We are now ready to prove Theorem \ref{thm:PrSuniversal}.
\begin{proof}[Proof of Theorem \ref{thm:PrSuniversal}]
Note first that 
\begin{equation}\label{eqn:tosolve}
\sum_{j=1}^{\ell} a_jx_j^2=m
\end{equation}
 is solvable with $x_j\in P_{r,S}$, then by multiplying $x_j$ by $2$, $3$, or $5$ we obtain that
\[
\sum_{j=1}^{\ell} a_j(2x_j)^2=4m,\qquad\qquad \sum_{j=1}^{\ell} a_j(3x_j)^2=9m,\qquad\text{ and }\qquad \sum_{j=1}^{\ell} a_j(5x_j)^2=25m
\]
are solvable with $2x_j,3x_j,5x_j\in P_{r,S}$. Hence if \eqref{eqn:tosolve} is solvable with $x_j\in P_{r,S}$ for every $m\in\N$ with $8\nmid m$, $27\nmid m$, and $25\nmid m$, then $\sum_{j=1}^{\ell} a_j x_j^2$ is $P_{r,S}$-universal. 

Any $P_{r,S}$-universal sum must also be universal, so by Bhargava's escalation method the first $4$ parameters in $\bm{a}$ must be one of 
\begin{multline*}
 \{(1,1,1,k):1\leq k\leq 7\}\cup\{(1,1,2,k):2\leq k\leq 14\}\cup \{(1,1,3,k):3\leq k\leq 6\}\\
\cup \{(1,2,2,k):2\leq k\leq 7\}\cup\{(1,2,3,k):3\leq k\leq 10\}\\
\cup\{(1,2,4,k):4\leq k\leq 14\}\cup\{(1,2,5,k):5\leq k\leq 15\}.
\end{multline*}
We may therefore assume that $8\nmid m$, $27\nmid m$, and $25\nmid m$ and use Lemma \ref{lem:cuspidalbound} to obtain that the number of representations $\sum_{j=1}^{4}a_j x_j^2=m$ of $m$ with $p\mid x_j\implies p\in\{2,3,5\}\text{ or }p\geq z$ is 
\[
S\left(\mathscr{A}_{\bm{1}},z\right)\geq 0.063  m^{1-10^{-6}} \frac{e^{-5\gamma}}{\log(z)^5} \left(1-\frac{1}{\log^2(z)}\right)^5- 1.23 \times 10^{-91}  m^{\frac{17}{30}}D^{24.05}
\]
with $D\geq z^{25}$. We therefore choose $D=z^{25}$. Taking $z=\max(m^{\frac{1}{1388}},7)$, we have 
\begin{multline*}
S\left(\mathscr{A}_{\bm{1}},z\right)\geq 0.063  m^{1-10^{-6}} \frac{1388^5 e^{-5\gamma}}{\log(m)^5} \left(1-\frac{1}{\log^2(7)}\right)^5- 1.23\times 10^{-91}  m^{0.999844}\\
\geq 3.91\times 10^{12}  m^{1-10^{-6}} \log(m)^{-5}- 1.23\times 10^{-91}  m^{0.999844}.
\end{multline*}
We then use the bound 
\begin{equation}\label{eqn:logbound}
\log(m)\leq \frac{1}{r} m^r
\end{equation}
with $r=10^{-6}$ to obtain 
\[
S\left(\mathscr{A},z\right)\geq 3.91\times 10^{-18} m^{1-10^{-6}- 5\cdot 10^{-6}}  - 1.23\times 10^{-91}  m^{0.999844}.
\]
This is positive as long as 
\[
m^{1.5\cdot 10^{-4}} \geq 3.15\times 10^{-74},
\]
which holds trivially for all $m\in\N$.  Hence for every $m$ with $8\nmid m$, $27\nmid m$, and $25\nmid m$, we have a representation $m=\sum_{j=1}^{4} a_j x_j^2$ where $p\mid x_j$ implies that $p\in\{2,3,5\}$ or $p\geq m^{\frac{1}{1388}}$. Since $a_j\geq 1$, if the number of primes $p\notin \{2,3,5\}$ dividing $x_{j_0}$ for some $1\leq j_0\leq 4$ is $\geq L$, then 
\[
m= \sum_{j=1}^4 a_jx_j^2\geq x_{j_0}^2 \geq m^{\frac{L}{694}}.
\]
Therefore $L\leq 694$, as claimed.
\begin{comment}
Now note that if $m<  5^{1534}$, then any representation of $m=\sum_{j=1}^4\alpha_j x_j^2$ automatically satisfies $x_j\in P_{766,S}$ because every prime divisor of $x_j$ which is not in $S$ must be at least $5$ and thus if $x_j\notin P_{766,S}$ then 
\[
m\geq \alpha_j x_j^2\geq x_j^2\geq 5^{2\cdot 767}=5^{1534}.
\]
Such a representation exists because $Q$ is universal. We may therefore assume without loss of generality that $m\geq 5^{1534}$ and hence 
\[
 m^{\frac{3}{5}-5\cdot 10^{-5}}= m^{0.99981} m^{-0.399986}\leq 5^{-0.399986\cdot 1534} m^{0.99981} \leq 1.83\times 10^{-429}  m^{0.99981} .
\]
Thus 
\begin{multline*}
S\left(\mathscr{A},z\right)\geq 2.58675\times 10^{-13} m^{0.999949} - \left(6.01\times 10^{-410} +1.03\times 10^{-185}\right) m^{0.99981}\\
> 2.58675\times 10^{-13} m^{0.999949} - 1.04\times 10^{-185} m^{0.99981}.
\end{multline*}
This is positive if 
\[
2.58675\times 10^{-13} m^{0.999949} > 1.04\times 10^{-185} m^{0.99981}\Leftrightarrow m^{0.000139}> \frac{1.04}{2.58675} 10^{-172}.
\]
This holds trivially because $m\geq 1$. 
\end{comment}
\end{proof}

\end{document}